\newtheorem{theorem}{Theorem}[section]
\newtheorem{lemma}[theorem]{Lemma}
\newtheorem{proposition}[theorem]{Proposition}
\newtheorem{claim}[theorem]{Claim}
\newtheorem{Cor}[theorem]{Corollary}
\theoremstyle{definition}
\theoremstyle{remark}
\newtheorem{Rk}[theorem]{Remark}
\numberwithin{equation}{section}
\newcommand{\tend}[2]{\displaystyle\mathop{\longrightarrow}_{#1\rightarrow#2}}
\title{
\normalsize
\textbf{{
PHASE TRANSITION FOR THE LATE POINTS OF RANDOM WALK
}}}
\author{}
\date{}
\newcommand{\Q}{\mathbb{Q}}
\newcommand{\R}{\mathbb{R}}
\newcommand{\Z}{\mathbb{Z}}
\newcommand{\N}{\mathbb{N}}
\newcommand{\F}{\mathcal{F}}
\newcommand{\K}{\mathcal{K}}
\newcommand{\A}{\mathcal{A}}
\newcommand{\W}{\mathcal{W}}
\renewcommand{\L}{\mathcal{L}}
\renewcommand{\P}{\mathbb{P}}
\newcommand{\eps}{\varepsilon}
\newcommand{\1}{\mathds{1}}
\newcommand{\I}{{\cal I}}
\newcommand{\V}{{\cal V}}
\renewcommand{\phi}{\varphi}
\renewcommand{\tilde}{\widetilde}
\renewcommand{\hat}{\widehat}
\renewcommand{\epsilon}{\varepsilon}
\newcommand{\PI}{\mathbb{P}^I}
\newcommand{\tildePI}{\tilde{\mathbb{P}}^I}
\newcommand{\sast}{\scalebox{0.7}{$*$}}
\newcommand{\mast}{\scalebox{0.8}{$*$}}
\definecolor{Red}{rgb}{1,0,0}
\definecolor{Blue}{rgb}{0,0,1}
\definecolor{Olive}{rgb}{0.41,0.55,0.13}
\definecolor{Yarok}{rgb}{0,0.5,0}
\definecolor{Green}{rgb}{0,1,0}
\definecolor{MGreen}{rgb}{0,0.8,0}
\definecolor{DGreen}{rgb}{0,0.55,0}
\definecolor{Yellow}{rgb}{1,1,0}
\definecolor{Cyan}{rgb}{0,1,1}
\definecolor{Magenta}{rgb}{1,0,1}
\definecolor{Orange}{rgb}{1,.5,0}
\definecolor{Violet}{rgb}{.5,0,.5}
\definecolor{Purple}{rgb}{.75,0,.25}
\definecolor{Brown}{rgb}{.75,.5,.25}
\definecolor{Grey}{rgb}{.7,.7,.7}
\definecolor{Black}{rgb}{0,0,0}
\def\N{\mathbb{N}}
\def\Z{\mathbb{Z}}
\def\R{\mathbb{R}}
\def\F{\mathcal{F}}
\def\B{\mathcal{B}}
\def\BB{Q}
\def\LL{\mathcal{L}}
\renewcommand{\phi}{\varphi}
\renewcommand{\epsilon}{\varepsilon}
\def\K{\mathcal{K}}
\renewcommand{\emptyset}{\varnothing}
\newcommand{\til}{\widetilde}
\newcommand{\E}[1]{\mathbb{E}\!\left[#1\right]}
\newcommand{\Excri}{\mathcal{N}_{\rm{RI}}}
\newcommand{\Excrw}{\mathcal{N}_{\rm{RW}}}
\def\cN{\mathcal{N}}
\def\cA{\mathcal{A}}
\def\P{\mathbb{P}}
\newcommand{\tn}{|\kern-.1em|\kern-0.1em|}
\newcommand{\cp}{\mathrm{cap}}
\newcommand{\cpc}[2]{\mathrm{cap}_{#1}(#2)}
\newcommand\be{\begin{equation}}
\newcommand\ee{\end{equation}}
\def\eps{\varepsilon}
\newcommand{\tcov}{t_{\mathrm{cov}}}
\titleformat{\subsection}[runin]{\normalfont\bfseries}{\thesubsection.}{.5em}{}[.]\titlespacing{\subsection}{0pt}{2ex plus .1ex minus .2ex}{.8em}
\begin{document}
\thispagestyle{empty}
\maketitle
\vspace{0.1cm}
\begin{center}
\vspace{-1.9cm}
Alexis Pr\'evost$^1$, Pierre-Fran\c cois Rodriguez$^2$ and Perla Sousi$^3$

\end{center}
\vspace{0.1cm}
\begin{abstract}
\centering
\begin{minipage}{0.85\textwidth}
Let $X$ be a random walk on the torus of side length $N$ in dimension $d\geq 3$ with uniform starting point, and $\tcov$ be the expected value of its cover time, which is the first time that $X$ has visited every vertex of the torus at least once. 
For $\alpha > 0$, the set $\mathcal{L}^{\alpha}$ of $\alpha$-late points consists of those points not visited by $X$ at time $\alpha \tcov$. We prove the existence of a value $\alpha_{\mast} \in (\frac12,1)$ across which $\mathcal{L}^{\alpha}$ trivialises as follows: for all $\alpha > \alpha_{\mast}$ and $\epsilon\geq N^{-c}$ there exists a coupling of $\mathcal{L}^\alpha$ and two occupation sets $\mathcal{B}^{\alpha_\pm}$ of i.i.d.~Bernoulli fields having the same density as $\L^{\alpha\pm \eps}$, which is asymptotic to $N^{-(\alpha\pm\epsilon)d}$, with the property that the inclusion
$ \mathcal{B}^{\alpha_+} \subseteq \mathcal{L}^{\alpha} \subseteq \mathcal{B}^{\alpha_-}$ holds with high probability as $N \to \infty$. On the contrary, when $\alpha \leq \alpha_*$ there is no such coupling. Corresponding results also hold for the vacant set of random interlacements at high intensities. The transition at $\alpha_{\mast}$ corresponds to the (dis-)appearance of `double-points' (i.e.~neighboring pairs of points) in $\mathcal{L}^\alpha$. We further describe the law of $\L^{\alpha}$ for $\alpha>\frac12$ by adding independent patterns to $\B^{\alpha_{\pm}}$.
In dimensions $d \geq 4$ these are exactly all two-point sets. When $d=3$ one must also include all \textit{connected} three-point sets, but no other.
\end{minipage}
\end{abstract}

\vspace{5.5cm}
\begin{flushleft}

\noindent\rule{5cm}{0.4pt} \hfill September 2023 \\
\bigskip
\begin{multicols}{2}

$^1$Universit\'e de Gen\`eve\\
Section de Math\'ematiques\\
7-9, rue du conseil g\'en\'eral \\
1205 Geneva, Switzerland\\
\url{alexis.prevost@unige.ch}\\[2em]

$^2$Imperial College London\\
 Department of Mathematics\\
 London SW7 2AZ \\
 United Kingdom\\
 \url{p.rodriguez@imperial.ac.uk}

\columnbreak
\thispagestyle{empty}
\bigskip
\medskip

\hfill$^3$University of Cambridge\\
\hfill Faculty of Mathematics \\
\hfill Wilberforce Road\\
\hfill Cambridge CB3 0WA, United Kingdom\\
\hfill\url{p.sousi@statslab.cam.ac.uk}\\[2em]
\end{multicols}
\end{flushleft}

\newpage

\section{Introduction}
\label{sec:intro}
This article studies two models, the random walk on the $d$-dimensional torus $\mathbf{T}=(\Z/N\Z)^d$ of large side length $N$, for $d \geq 3$, at time scales close to the typical time it takes the walk to cover the whole torus, and random interlacements on $\Z^d$ at corresponding intensities. Let $\mathbf{P}$ denote the canonical law of the walk on the graph $\mathbf{T}$ started from uniform distribution, and $X=(X_n)_{n \geq 0}$ be the corresponding (discrete-time) canonical process. It is well-known that the cover time $C_N$ of $X$, which is the first time $X$ has visited every vertex of $\mathbf{T}$ at least once, satisfies 
\begin{equation}\label{e:covertime-scale}
\tcov \stackrel{\text{def.}}{=}\mathbf{E}[C_N]\sim g(0) N^d \log(N^d), \text{ as }N \to \infty, 
\end{equation}
where $g(0)$ denotes the Green's function of the simple random walk on $\Z^d$ at the origin, see Section~\ref{sec:notation} for notation, and $\sim$ means that the ratio of the two quantities tends to one in the given limit. 
In view of \eqref{e:covertime-scale}, letting
\begin{equation}
\label{e:vacant-set-RW}
\mathcal{V}_N^u = \mathbf{T} \, \setminus X_{[0,uN^d]}, \text{ for } u > 0,
\end{equation}
where $X_{[0,t]}=\{x \in \mathbf{T}: X_n=x \text{ for some }n \leq t\}$, it is natural to introduce
\begin{equation}
\label{e:u_N}
u_N(\alpha)= \alpha g(0) \log(N^d), \text{ for } \alpha > 0, \, N \geq 1
\end{equation}
(whence $u_N(\alpha)N^d\sim \alpha \tcov$) and consider the vacant set 
\begin{equation}\label{e:L^alpha-RW}
\mathcal{L}^{\alpha} \stackrel{\text{def.}}{=} \mathcal{V}_N^{u_N(\alpha)} \text{ (under $\mathbf{P}$).}
\end{equation} 
The elements of $\mathcal{L}^{\alpha}$ will be referred to as $\alpha$-late points. Note that $\mathcal{L}^{\alpha}$ is decreasing in $\alpha$, and the choices \eqref{e:vacant-set-RW}-\eqref{e:u_N} imply that $\mathcal{L}^{\alpha}$ has density (see~\eqref{eq:Lalphaasymp})
\begin{equation}\label{e:L^alpha-density}
\mathbf{P}(0 \in \mathcal{L}^{\alpha}) \sim N^{-\alpha d} \text{ as $N \to \infty$.}
\end{equation}
The parametrisation in \eqref{e:vacant-set-RW}-\eqref{e:L^alpha-RW} is a matter of convenience; our results do in fact remain true for any choice of `$\alpha$-late time scale' such that \eqref{e:L^alpha-density} holds (for instance, $u = \alpha N^{-d} \tcov$ in \eqref{e:vacant-set-RW}), see Remark \ref{rk:thmsprinkling},\ref{rk:otherparametrization} for this and more; see also Remark~\ref{rk:thmsprinkling},\ref{rk:latecalphapoint} regarding natural (on account of \eqref{e:L^alpha-density}) extensions to \textit{random} timescales such as the first time the vacant set of the walk contains exactly $\lceil N^{(1-\alpha )d} \rceil$ points.

We are interested in global (i.e.~macroscopic) properties of $\mathcal{L}^{\alpha}$ as a subset of $\mathbf{T}$.
One difficulty in addressing questions of this type stems from the long-range correlations inherent to $\mathcal{L}^{\alpha}$.

For the sake of clarity, we focus on \eqref{e:L^alpha-RW} in this introduction. As will turn out, all results presented below allow for either of two generalisations. First, we can deal with late points $\mathcal{L}^{\alpha}_F$ in arbitrary (large) (sub-)regions $F\subseteq \mathbf{T}$ 
at appropriate timescales, ensuring in essence that $\mathcal{L}^{\alpha}_F$ has asymptotic density $|F|^{-\alpha}$, cf.~\eqref{e:L^alpha-density}. Second,
the conclusions of all the results presented in this introduction continue to hold if one replaces \eqref{e:L^alpha-RW} by $\mathcal{L}^{\alpha}= \mathcal{V}^{u_N(\alpha)} \cap ([0,N) \cap \mathbb{Z})^d$ (identifying the vertices of $([0,N) \cap \mathbb{Z})^d$ with those of $\mathbf{T}$), 
where $\mathcal{V}^u$ refers to the vacant set of random interlacements at level $u$; see Remark~\ref{rk:thmsprinkling},\ref{R:Ri-ext} for details. The set $\mathcal{V}^u$ is characterised by the property that
\begin{equation}\label{e:V^u-def} 
\PI(K\subset\mathcal{V}^u )= \exp\{ - u \text{cap}(K)\}, \text{ for finite $K \subset \Z^d$,}
\end{equation}
and corresponds to a local limit of $\mathcal{V}_N^{u}$ in \eqref{e:vacant-set-RW} as $N\rightarrow\infty$. We refer to Corollary~\ref{cor:coupling} for an explicit coupling between $\V^u$ and $\V_N^u$. This coupling acts as a powerful transfer mechanism. For instance it allows to lift the formula \eqref{e:V^u-def} from $\V^u$ to $\V_N^u$, up to small error, see \eqref{eq:boundonlateRW}. We now present our results.

\subsection{Phase transition at \texorpdfstring{$\alpha_*$}{alpha*}} \label{Subsec:PT}
The transition we establish as part of our first main result exhibits a sharp regime of parameters $\alpha$ in which $\mathcal{L}^{\alpha}$ in \eqref{e:L^alpha-RW} completely `trivialises,' i.e.~resembles an i.i.d.~sample of appropriate density. This question has, directly and indirectly, already received considerable attention in the past, as we now briefly review. In view of \eqref{e:L^alpha-density}, the case $\alpha > 1$ is readily dispensed with since $\mathcal{L}^{\alpha}$ is empty with high probability for such values of $\alpha$. In the course of proving Gumbel fluctuations for $C_N$, Belius obtained \cite[(1.4)-(1.5) and Corollary 3.4]{BEL} that the suitably rescaled process of points `around' $\mathcal{L}^1$ converges to a Poisson point process on $(\R/\Z)^d$. Intuitively, this means that the `very late' points, i.e.~the last few vertices to be covered around $\alpha=1$, are roughly independent and uniform. The proof relies on similar results for interlacements \cite{BEL1}, and a coupling of the two objects. We return to this below.

Matters become all the more delicate in the regime $\alpha < 1$, notably because $|\mathcal{L}^{\alpha}|$ is no longer tight in $N$, cf.~\eqref{e:L^alpha-density}.  Let $(\mathcal{B}^{\alpha})_{\alpha \geq 0}$ denote a family of i.i.d.~Bernoulli (site) percolation processes on $\mathbf{T}$ with respective density $\mathbf{P}(0 \in \mathcal{L}^{\alpha})$ and coupled in a monotone fashion in $\alpha$ (e.g.~by means of uniform independent random variables). 
One is naturally led to wonder how $\mathcal{L}= (\mathcal{L}^{\alpha})_{\alpha \geq 0}$ and $\mathcal{B}=(\mathcal{B}^{\alpha})_{\alpha \geq 0}$ relate, if at all. This question was taken up in \cite{JasonPerla}, the main contribution of which can be paraphrased as stating that $\alpha_{**}<1$, where
\begin{equation}
\label{eq:alpha_**}
\alpha_{**} \stackrel{\text{def.}}{=} \inf \big\{ \alpha > 0 : \textstyle \lim_N d_{\rm{TV}}\big(\L^{\beta}, \B^{\beta} \big)= 0 \text{ for all } \beta \geq \alpha \big\},
\end{equation}
where $d_{\rm{TV}}$ denotes the total variation distance; the result that $ d_{\rm{TV}}(\L^{1}, \B^{1}) \to0$ had already been shown prior to this by Prata in \cite[Part~II]{Prata_thesis}.  Within the class of vertex transitive graphs, the recent work~\cite{berestycki2023universality} by Berestycki, Hermon and Teyssier actually gives an optimal  characterization in terms of the diameter for the appearance of Gumbel fluctuations and the uniformity of~$\mathcal{L}^1$.

The method of \cite{JasonPerla} originally gave $\alpha_{**}\leq \alpha_1$ for an explicit value of $\alpha_1=\alpha_1 (d)< 1$ satisfying $\alpha_1 \to 1$ as $d\to \infty$. This deficiency was later removed in~\cite{SamPerla}, where it is shown that 
\begin{equation}\label{e:Perla-Jason-main}
\alpha_{**} \leq \alpha_2 (<\alpha_1<1), 
\end{equation}
with $\alpha_2=\tfrac{3}{4}(d-\tfrac{2}{3})/(d-1)\rightarrow\frac34$ as $d\to \infty,$ see Remark~\ref{rk:thmsprinkling},\ref{rk:removingsprinkling}. To be precise, all afore mentioned results in~\cite{JasonPerla,Prata_thesis,SamPerla} deal with a slightly different set of late points in place of $\mathcal{L}^{\alpha}$, operating at time scales $\alpha t_*$, where $t_*$ is carefully chosen, see e.g.~\cite[(4.3)]{JasonPerla}, and satisfies $t_* = \tcov(1+o(1))$.

A natural barrier for proximity of $\mathcal{L}$ and $\mathcal{B}$ arises as follows. Let \begin{equation}\label{e:double-pts}
D^{\alpha}\stackrel{\text{def.}}{=} \frac12 \sum_{x\sim y} 1\{ x\in \mathcal{L}^{\alpha}, y \in \mathcal{L}^{\alpha}  \},
\end{equation}
where $x\sim y$ denote neighbouring vertices in $\mathbf{T}$, which counts `double-points' in $\mathcal{L}^{\alpha}$, and define
\begin{equation}
\label{eq:alpha_*}
\alpha_*=\alpha_*(d) \stackrel{\text{def.}}{=}\sup\big\{\alpha >0 : \, \textstyle \lim_N \mathbf{E}[D^{\alpha}] =0 \big\},
\end{equation}
(with the convention $\sup \emptyset =0$). The quantity $\mathbf{E}[D^{\alpha}]$ appearing in the limit as $N\rightarrow\infty$ above depends implicitly on $N$ via the choice of the side length for the torus $\mathbf{T}$ on which $\L^{\alpha}$ is defined; this will in fact be the case for all our limits in $N$. One can show, see Lemma~\ref{L:alpha_*} below, that
\begin{equation}
\label{eq:alpha_*-value}
\textstyle \alpha_*=  \frac12\big(1+P_0(\widetilde{H}_0 < \infty)\big) \  \big( \in \big( \textstyle\frac12, 1\big) \big) ,
\end{equation}
where $P_0$ denotes the law of simple random walk on $\Z^d$ with starting point $X_0=0$ and $\widetilde{H}_0=\inf\{ n \geq 1 : X_n =0 \}$. The inequality $\alpha_* >\frac12$ implied by \eqref{eq:alpha_*-value} is important and signals a different qualitative behaviour of $\mathcal{L}^{\alpha}$ and $\mathcal{B}^{\alpha}$. For, the quantity corresponding to $\mathbf{E}[D^{\alpha}] $ with $\mathcal{B}^{\alpha}$ in place of $\mathcal{L}^{\alpha}$ in \eqref{e:double-pts} diverges for all $\alpha > \frac12$. The threshold $\alpha_*$ already appears in \cite{JasonPerla}, where it is shown (for slightly different $\mathcal{L}^{\alpha}$) that $\alpha_{**}\geq \alpha_* (> \frac12)$, which together with \eqref{e:Perla-Jason-main} implies that the threshold $\alpha_{**}$ in \eqref{eq:alpha_**} is non-degenerate.

\medskip

Our first result establishes a sharp transition for the set $\mathcal{L}^{\alpha}$ in \eqref{e:L^alpha-RW} across the threshold~$\alpha_*$, for a slightly different measure of distance between random sets than \eqref{eq:alpha_**}, allowing the introduction of a small sprinkling parameter $\eps>0$ (the case of $d_{\rm{TV}}$ corresponds to setting $\varepsilon =0$). We note in passing that such a measure of proximity has a long history in the context of problems involving strong correlations, see \cite{BEL, MR3563197, MR2838338, MR2386070} for instance. In the sequel we denote by $\mathcal{Q}_{\alpha, \varepsilon, N}$ the family of all couplings between $\mathcal{L}^{\alpha}$ and $(\mathcal{B}^{\alpha - \varepsilon}, \mathcal{B}^{\alpha + \varepsilon})$, for $\alpha, \varepsilon > 0$ and integer $N \geq 1$.

\begin{theorem}[$d \geq 3$]\label{thm:uncoveredset}  With $\alpha_*$ as in \eqref{eq:alpha_*}, for suitable $\varepsilon_0= \varepsilon_0(\alpha, d) >0$, the following hold:
\begin{enumerate}[label=\roman*)]
\item\label{ite:imain} For all $\alpha > \alpha_*$, $\varepsilon \in(0, \varepsilon_0)$ and $N \geq1$, there exists a coupling $\mathbb{Q} \in \mathcal{Q}_{\alpha, \varepsilon, N} $ such that 
\begin{equation}\label{e:coup-subcrit}
\lim_N \mathbb{Q}\big( \mathcal{B}^{\alpha + \varepsilon} \subset \mathcal{L}^{\alpha}\subset \mathcal{B}^{\alpha - \varepsilon} \big )=1.
\end{equation}
\vspace{-6mm}
\item\label{ite:iimain} If $\alpha=\alpha_*,$ then for all $\varepsilon \in(0, \varepsilon_0)$, with the supremum ranging over $\mathcal{Q}_{\alpha_{\sast}, \varepsilon, N} $ below, 
 \begin{equation}\label{e:coup-crit}
\lim_N\sup_{\mathbb{Q}} \mathbb{Q}\big( \mathcal{B}^{\alpha_{\mast} + \varepsilon} \subset \mathcal{L}^{\alpha_{\mast}}\subset \mathcal{B}^{\alpha_{\mast} - \varepsilon} \big )=e^{-d}.
\end{equation}
\vspace{-4mm}
\item\label{ite:iiimain} For all $\alpha < \alpha_*,$  $\varepsilon \in(0, \varepsilon_0)$ and any coupling $\mathbb{Q} \in \mathcal{Q}_{\alpha, \varepsilon, N}  $ one has that 
\begin{equation}\label{e:coup-supercrit}
\lim\limits_{N} \mathbb{Q}\big( \mathcal{B}^{\alpha + \varepsilon} \subset \mathcal{L}^{\alpha}\subset \mathcal{B}^{\alpha - \varepsilon} \big )=0.
\end{equation}
\end{enumerate}
\end{theorem}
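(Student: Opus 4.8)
The plan is to prove the three regimes separately, with a common backbone: transfer the problem from the torus to random interlacements via the coupling referenced in Corollary~\ref{cor:coupling}, and then reduce the containment $\mathcal{B}^{\alpha+\varepsilon}\subseteq\mathcal{L}^\alpha\subseteq\mathcal{B}^{\alpha-\varepsilon}$ to statements about small-scale local structure (singletons, double-points, and their absence). First I would set up, for each of the two occupation sets, a decomposition of $\mathbf{T}$ into well-separated mesoscopic boxes on which $\mathcal{L}^\alpha$ looks like an independent sample; this is exactly where the soft local limit machinery from the earlier sections is used. The key observation is that $\mathbf{E}[D^\alpha]$ is, up to the normalisation in \eqref{e:double-pts}, the expected number of neighbouring pairs both lying in $\mathcal{L}^\alpha$, and by \eqref{e:V^u-def} a neighbouring pair $\{x,y\}$ has $\mathbf{P}(x,y\in\mathcal{L}^\alpha)\approx e^{-u_N(\alpha)\,\mathrm{cap}(\{x,y\})}$; since $\mathrm{cap}(\{x,y\})=2/(g(0)(1+P_0(\widetilde H_0<\infty)))$ for nearest neighbours, summing over the $\sim d N^d$ pairs gives $\mathbf{E}[D^\alpha]\sim \tfrac{d}{2}N^{d(1-2\alpha/(1+P_0(\widetilde H_0<\infty)))}$, which explains \eqref{eq:alpha_*-value} and pins down the critical exponent. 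This computation is the engine behind all three parts.

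For part~\ref{ite:iimain}, at $\alpha=\alpha_*$ the number $D^{\alpha_*}$ of double-points in $\mathcal{L}^{\alpha_*}$ converges in distribution to a Poisson random variable with parameter $d$ (a second-moment/Chen--Stein argument, using that distinct nearest-neighbour pairs are asymptotically independent because their joint capacities factor up to negligible correction when the pairs are far apart, and the diagonal terms contribute $o(1)$). On the event $\{D^{\alpha_*}=0\}$ one can build a coupling realising $\mathcal{B}^{\alpha_*+\varepsilon}\subseteq\mathcal{L}^{\alpha_*}\subseteq\mathcal{B}^{\alpha_*-\varepsilon}$ with probability $1-o(1)$ (the outer inclusion costs nothing since $\mathcal{L}^{\alpha_*}$ on that event is a union of isolated points at a density squeezed between those of $\mathcal{B}^{\alpha_*\pm\varepsilon}$, and the inner inclusion is handled by the sprinkling, exactly as in part~\ref{ite:imain}); conversely, on $\{D^{\alpha_*}\geq 1\}$, $\mathcal{L}^{\alpha_*}$ contains a neighbouring pair, and since $\mathcal{B}^{\alpha_*-\varepsilon}$ is an i.i.d.\ field with density $\sim N^{-(\alpha_*-\varepsilon)d}$ it contains \emph{no} such pair with high probability, so the upper inclusion must fail. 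Taking $\varepsilon\to0$ after $N\to\infty$ is unnecessary here since $\varepsilon$ is fixed; the limiting probability is $\mathbf{P}(D^{\alpha_*}=0)=e^{-d}$. Part~\ref{ite:iiimain} is then immediate: for $\alpha<\alpha_*$ one has $\mathbf{E}[D^{\alpha}]\to\infty$, and in fact $D^\alpha\to\infty$ in probability (second moment method again), so $\mathcal{L}^\alpha$ contains many neighbouring pairs while $\mathcal{B}^{\alpha-\varepsilon}$ — being i.i.d.\ of density $\to 0$ — contains none w.h.p.\ whenever $\varepsilon<\varepsilon_0$ is small enough that $\alpha-\varepsilon>1/2$ fails to help, so the inclusion $\mathcal{L}^\alpha\subseteq\mathcal{B}^{\alpha-\varepsilon}$ cannot hold under any coupling.

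Part~\ref{ite:imain} is the substantial half. Here the strategy is: (a) show that for $\alpha>\alpha_*$, with high probability $\mathcal{L}^\alpha$ has \emph{no} double-points (so it is a sprinkled perturbation of a collection of isolated vertices), using $\mathbf{E}[D^\alpha]\to0$; (b) on the complement of that bad event, establish the \emph{upper} inclusion $\mathcal{L}^\alpha\subseteq\mathcal{B}^{\alpha-\varepsilon}$ by a coupling that matches each isolated late point to an open site of the Bernoulli field — this works because conditionally on the mesoscopic box structure, the indicator $\{x\in\mathcal{L}^\alpha\}$ restricted to a box is dominated by an independent Bernoulli of slightly larger density $\sim N^{-(\alpha-\varepsilon)d}$, the gain of density being precisely what absorbs the long-range correlations at scale $N^\varepsilon$; and (c) the \emph{lower} inclusion $\mathcal{B}^{\alpha+\varepsilon}\subseteq\mathcal{L}^\alpha$ by the dual stochastic domination, $\{x\in\mathcal{L}^\alpha\}$ stochastically dominating an independent Bernoulli of the smaller density $\sim N^{-(\alpha+\varepsilon)d}$. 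I expect the main obstacle to be exactly this two-sided stochastic domination at the level of the local fields: one needs quantitative decoupling inequalities for $\mathcal{L}^\alpha$ (equivalently for $\mathcal{V}^u$ via Corollary~\ref{cor:coupling}) showing that over a mesoscopic box the joint law of the late-point indicators is sandwiched, up to a factor $N^{\pm\varepsilon d}$ in density, between two product measures — controlling the error terms uniformly over the $\sim N^{(1-\varepsilon')d}$ boxes, so that a union bound still closes, is the delicate part, and is where the restriction $\varepsilon\geq N^{-c}$ and the choice of $\varepsilon_0(\alpha,d)$ enter. The sprinkling is what makes this feasible: without it (the $\varepsilon=0$, i.e.\ $d_{\rm TV}$, case) the coupling fails at $\alpha_*$ rather than below it, consistent with \eqref{e:coup-crit}.
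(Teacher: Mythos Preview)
Your treatment of parts~\ref{ite:iimain} and~\ref{ite:iiimain} is broadly on the right track: for $\alpha<\alpha_*$ the double-point count $D^\alpha$ diverges in probability (this is Lemma~\ref{L:D_LB}) while $\mathcal{B}^{\alpha-\varepsilon}$ has far fewer neighbouring pairs, and at $\alpha=\alpha_*$ the constant $e^{-d}$ is indeed tied to the event that no neighbouring pair survives. The paper reaches $e^{-d}$ not by direct Poisson convergence of $D^{\alpha_*}$ but via the intermediate process $\tilde{\mathcal{B}}$ of Theorem~\ref{The:alpha>1/2} and the asymptotics \eqref{eq:condonalphatoobserveK''}; your direct Chen--Stein route for $D^{\alpha_*}$ is plausible, but as written it is circular, since the step ``on $\{D^{\alpha_*}=0\}$ build the coupling as in part~\ref{ite:imain}'' presupposes that~\ref{ite:imain} has already been carried out.

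The real gap is in part~\ref{ite:imain}. You propose a two-sided \emph{stochastic domination}, with the sprinkling ``absorbing the long-range correlations at scale $N^\varepsilon$''. But the obstruction is not long-range: it is the short-range positive correlation encoded in $\alpha_*>\tfrac12$. For neighbours $x\sim y$ one has $\mathbf{P}(x,y\in\mathcal{L}^\alpha)\sim N^{-\alpha d/\alpha_*}$ while $\mathbf{P}(x,y\in\mathcal{B}^{\alpha-\varepsilon})\sim N^{-2(\alpha-\varepsilon)d}$; since $1/\alpha_*<2$, the former is \emph{much larger} than the latter whenever $\varepsilon<\alpha(1-\tfrac{1}{2\alpha_*})$, so $\mathcal{L}^\alpha$ is \emph{not} stochastically dominated by $\mathcal{B}^{\alpha-\varepsilon}$ in the regime of the theorem. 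The inclusion $\mathcal{L}^\alpha\subseteq\mathcal{B}^{\alpha-\varepsilon}$ under some coupling with high probability is strictly weaker than stochastic domination, and the paper obtains it by a different mechanism: first the localization Theorem~\ref{thm:rwshortrange} (Proposition~\ref{The:shortrangeappro}) produces a finite-range approximation $\tilde{\mathcal{L}}^\alpha$ close to $\mathcal{L}^\alpha$ up to sprinkling, and then the modified Chen--Stein scheme (Lemma~\ref{lem:corofchenstein}) couples $\tilde{\mathcal{L}}^\alpha$ to product Bernoulli; the finite-range property kills the $b_3$ term outright, while $b_1,b_2$ are bounded via the two-point estimate~\eqref{eq:upperboundonprobalate} (see \eqref{eq:proofalpha>1/22} in the proof of Theorem~\ref{The:alpha>1/2}). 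Your mesoscopic box decomposition with per-box domination does not implement this and cannot, for the reason above.
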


\begin{figure}[ht]
\begin{subfigure}{0.5\textwidth} \centering
\includegraphics[scale=0.22]{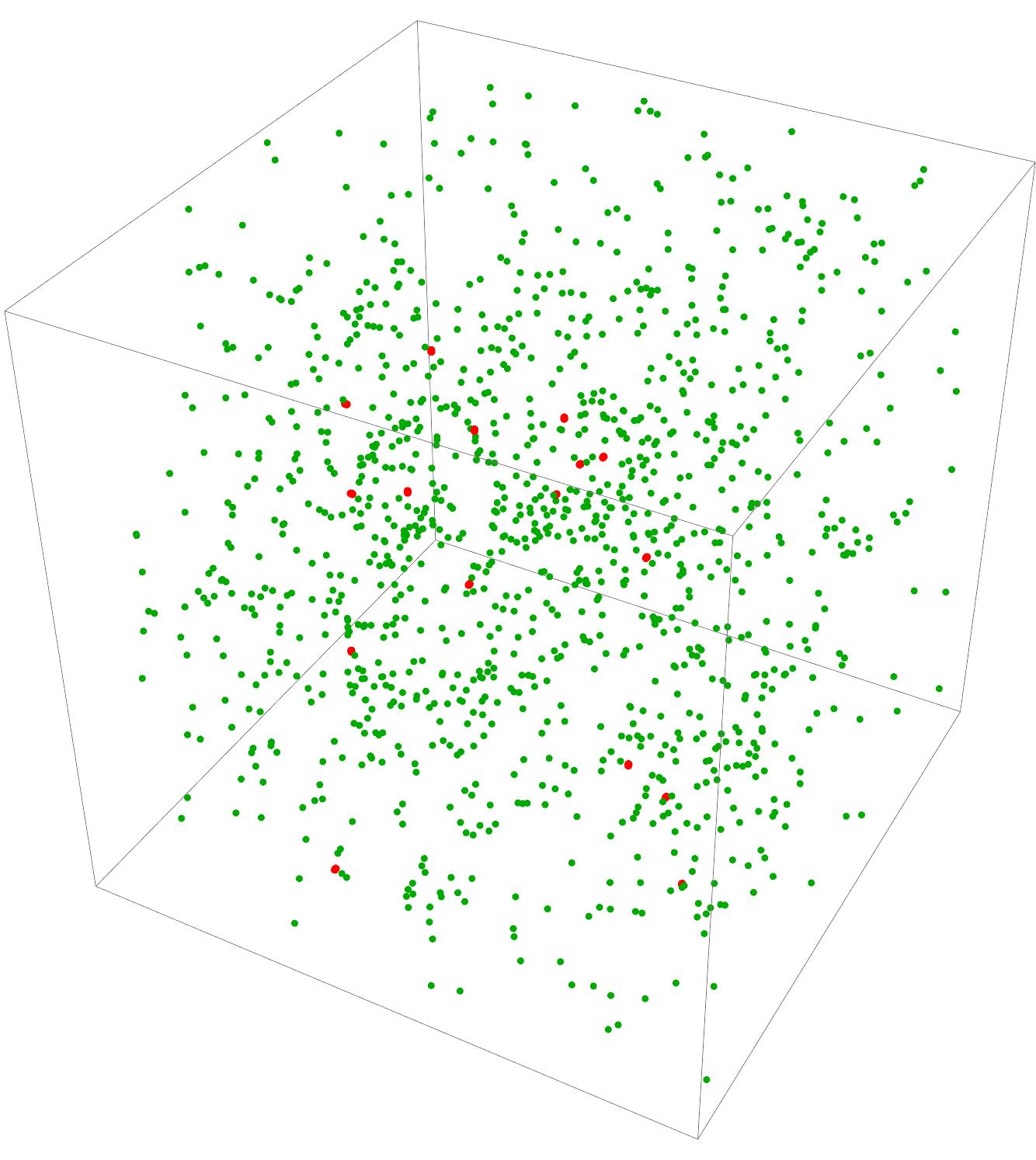}
\caption{Simulation of $\L^{0.6}$ for $N=400$}
\end{subfigure}
\begin{subfigure}{0.5\textwidth} \centering
\includegraphics[scale=0.22]{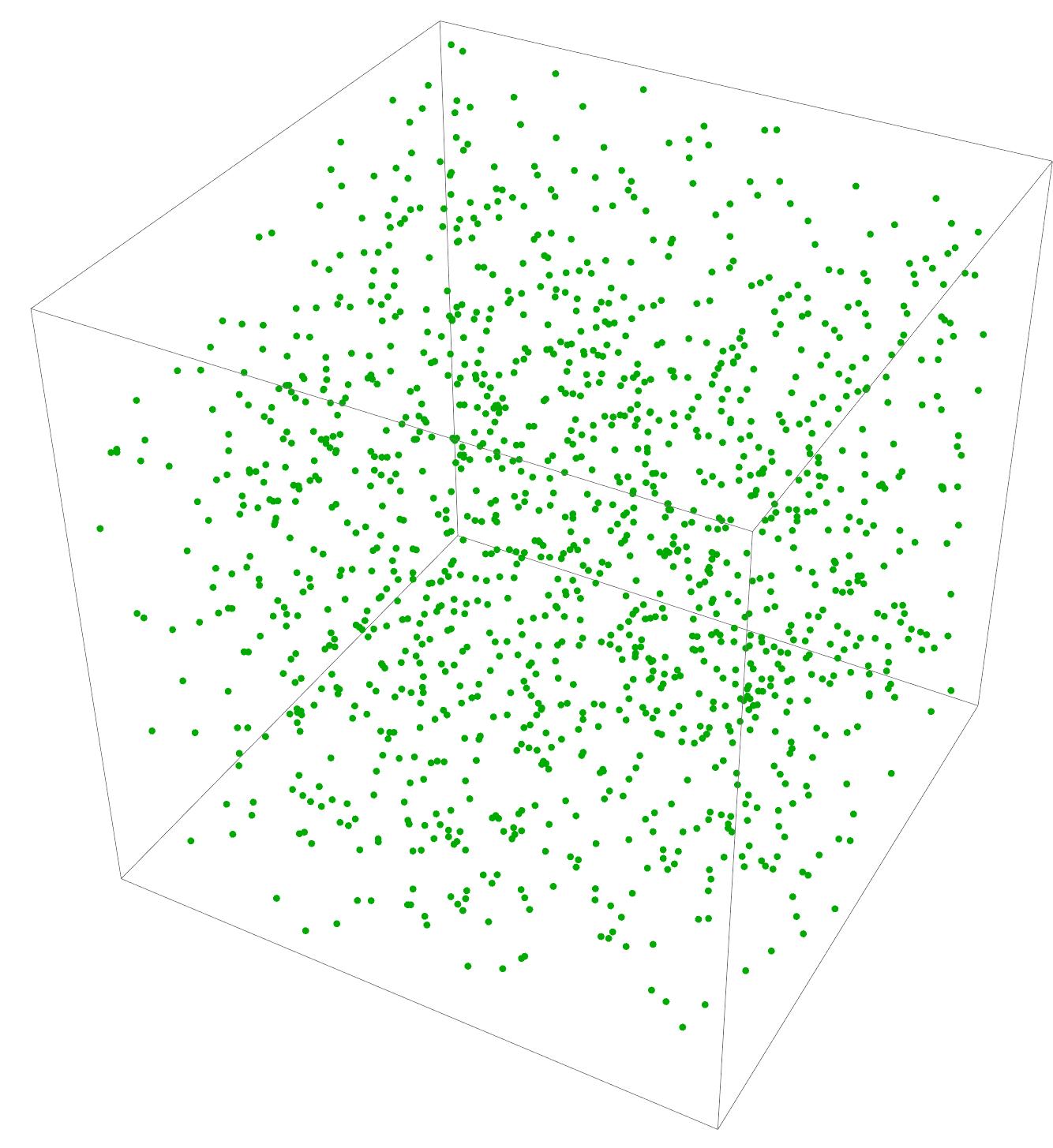}
\caption{Simulation of $\mathcal{B}^{0.6}$ for $N=400$}
\end{subfigure}
\caption{In red, the points of $\L^{\alpha}$ which have a neighbor in $\L^{\alpha}$. In dimension three for $\alpha=0.6$, there are many such ``double points'' in $\L^{\alpha}$, but they disappear at $\alpha_{*}\approx 0.67$. There are no such red points in $\B^{\alpha}$, $\alpha>0.5$.}
\end{figure}

In words, \eqref{e:coup-subcrit}-\eqref{e:coup-supercrit} indicate that $\L^{\alpha}$ is close to an i.i.d.~Bernoulli field, up to a sprinkling parameter $\eps,$ if and only if $\alpha>\alpha_*$. The constant $e^{-d}$ appearing in \eqref{e:coup-crit} corresponds to the probability not to have any neighbours in $\L^{\alpha_{\mast}},$ i.e.~the probability that there does not exist $x\sim y$ in $\mathbf{T}$ such that $\{x,y\}\subset\L^{\alpha_{\mast}},$ see Remark~\ref{rk:thmsprinkling},\ref{rk:constantatcriticality} for details. Theorem~\ref{thm:uncoveredset} is proved in Section~\ref{sec:denouement} and allows for various extensions, see Remark~\ref{rk:thmsprinkling}, which among other things, include an analogue of Theorem~\ref{thm:uncoveredset} for interlacements and give in either case quantitative bounds in $N$ and $\varepsilon$ for the probability of the event in \eqref{e:coup-subcrit}, see \eqref{eq:smallestsprinkling}. In fact the quantitative bounds allow to take $\varepsilon=\varepsilon_N$ polynomially small in $N$ in \eqref{e:coup-subcrit}. Up to a small sprinkling, Theorem \ref{thm:uncoveredset} thus answers Question~1 in Section~7 of \cite{JasonPerla}. Moreover, all afore mentioned results of \cite{JasonPerla,Prata_thesis,SamPerla} (in particular, \eqref{e:Perla-Jason-main}) as well as \cite{BEL1,BEL} can all be easily recovered from Theorem \ref{thm:uncoveredset}, see Remark \ref{rk:thmsprinkling},\ref{rk:removingsprinkling} for details. We return to the challenges in proving Theorem~\ref{thm:uncoveredset} in \S\ref{subsec:loc} below. We further note that the process of covering in dimension $d=2$ is very different, owing to the recurrence of the walk, which causes late points to cluster; see \cite{10.1214/009117905000000387} for more on this.

In order to support the intuition conveyed by \eqref{e:coup-subcrit}-\eqref{e:coup-supercrit} that $\mathcal{L}^{\alpha}$ consists a.s.~of asymptotically independent points if and only if $\alpha>\alpha_*,$ one could instead also examine the convergence of the rescaled point process $\sum_{x\in{\mathcal{L}^{\alpha}}}\delta_{x/N^{\alpha d}}$ to a Poisson point process on $\R^d,$ as done for instance in \cite[Corollary~0.2]{BEL1}, \cite[(1.5)]{BEL}, or \cite[Theorem~1.1]{ChiariniCiprianiSubhra} in the context of the high points of the Gaussian free field.  However, the resulting scaling limit is not fine enough to capture the phase transition of Theorem~\ref{thm:uncoveredset}: the limit is actually Poissonian for \textit{any} $\alpha\in{(0,1]},$ cf.~Remark~\ref{rk:final},\ref{rk:pointprocess}.

In view of Theorem~\ref{thm:uncoveredset}, a natural question is to describe the law of $(\mathcal{L}^{\alpha})_{\alpha>0}$ as a process in $\alpha$. This was first investigated in \cite[Theorem~3]{Prata_thesis} for $\alpha$ very close to $1$ -- more precisely for $\alpha=1+\frac{\beta}{\log N},$ $\beta>0$, at which $|\L^{\alpha}|$ remains tight -- and we will prove a similar result for $\alpha\in{(\alpha_*,1]}$ in \S\ref{subsec:alpha-process}. More precisely, defining
\begin{equation}
\label{eq:bfalpha-intro}
{\alpha}_x = \sup\{\alpha>0:\ x\in{\L^{\alpha}}\},\quad x\in\mathbf{T},
\end{equation}
we show in Theorem~\ref{thm:processusabovealpha*} that
\begin{equation}
\label{eq:alphahatalpha-intro}
\text{$({\alpha}_x-\alpha_{\mast})_{x \in \L^{\alpha_{\sast}}}$ is `close' to a family of i.i.d.~exponential  variables with mean $ d\log N$,}
\end{equation}
where proximity is again measured in terms of a coupling such that both processes differ by at most $\varepsilon$.

\subsection{Localization}\label{subsec:loc}
We now discuss one of the main difficulties permeating this work, which is that of `localizing' the dependence of $\mathcal{L}^{\alpha}$. Attending to it leads to Theorem~\ref{The:shortrangeapprointro-new} below (and its more elaborate version, Theorem~\ref{thm:rwshortrange}), which is of independent interest; see also \cite{10.1214/aop/1019160110} for other localization phenomena for Brownian motion in dimensions $d \geq 3$. To provide some context, we now comment on the proof of Theorem \ref{thm:uncoveredset}, and focus on the case $\alpha>\alpha_*$, 
which already highlights the essence of the issue. Following~\cite{SamPerla}, see also \cite{ChiariniCiprianiSubhra} in the context of the Gaussian free field, one may seek to apply the Chen-Stein method, e.g.~in the form presented in \cite[Theorem 3]{Arratia}, directly to $\mathcal{L}^{\alpha}$, which requires individual control on three terms, commonly referred to as $b_1,$ $b_2$ and $b_3$, that each need to be small. The problematic term is $b_3$, which carries the long-range information and leads to limitations of this method, giving rise e.g.~to the threshold $\alpha_2$ in \eqref{e:Perla-Jason-main}. 

To overcome this difficulty, we follow a modified scheme, see Lemma~\ref{lem:corofchenstein} below, which consists of introducing an intermediate (localized) family $\til{\mathcal{L}}= (\til{\mathcal{L}}^{\alpha})_{\alpha \geq 0}$, coupled to ${\mathcal{L}}$ in a way that the two are close (up to sprinkling), and to which \cite{Arratia} applies with $b_3=0$. 
The latter requires a (pointwise) short-range property: $\{x\in{\til{\mathcal{L}}^{\alpha}}\}$ needs to be independent of $\til{\mathcal{L}}^{\alpha}\cap {\BB}(x,R)^c$, for a suitable localization scale $R$. Here $\BB(x,R)$ denotes the box around $x$ with size length $N$ in $\mathbf{T},$ see \S \ref{sec:notation} for a precise definition.  
Our next result asserts that a process $\til{\mathcal{L}}$ with these features can indeed be constructed. In fact, this is not specific to the high intensity
regime \eqref{e:u_N} at all, and best stated in terms of local times. Thus let $({\ell}_{{x},u})_{{x}\in \mathbf{T}}$ denote the local times of $X_{u N^d}$ under~$\mathbf{P}$, cf.~\eqref{eq:deflocaltimes}, so that $\mathcal{V}_N^u=\{ x \in \mathbf{T} :  {\ell}_{{x},u}=0\}$, cf.~\eqref{e:vacant-set-RW}.
The following result is proved in Section~\ref{sec:shortrangeRWRI} (see Theorem~\ref{thm:rwshortrange} for a more general statement) and has an analogue
for random interlacements.

\begin{theorem}[Localization]
\label{The:shortrangeapprointro-new}
For all $N \geq 1$, $R \in [1, \frac{N}{10}]$ and $u_0>0,$ there exists a family $(\til{\ell}_{x,u})_{x\in{\Z^d},u>0}$ such that the following holds. For all $u\in{(0,u_0]}$ and $\varepsilon > 0$, there exists a coupling $\til{\mathbf{P}} $ of ${\ell}_{\cdot,u}$ with $\til{\ell}_{\cdot,u(1\pm \varepsilon)}$ such that:
\begin{align}
&\label{e:fr-intro} \til{\ell}_{x,u(1\pm \varepsilon)} \text{ is independent from } \sigma(\til{\ell}_{y,u(1\pm \varepsilon)} : y \notin \BB(x ,R)), \text{ for all $x \in \mathbf{T}$, and} \\
&\label{e:incl-intro} \til{\mathbf{P}} \big( \, \til{\ell}_{x,u(1-\varepsilon)} \leq {\ell}_{x,u} \leq \til{\ell}_{x,u(1+\varepsilon)}, \ \forall \, x\in \mathbf{T} \big) \geq 1-CN^dR^{2d}\exp\big(-c \epsilon  \sqrt{ u R^{d-2}}\big),
\end{align}
for some $c=c(d)>0$, $C=C(d)< \infty$.
\end{theorem}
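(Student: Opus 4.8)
\textbf{Proof plan for Theorem~\ref{The:shortrangeapprointro-new}.}

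The plan is to build $\til\ell$ by first decomposing the torus random walk into excursions between nested spheres and then replacing the genuinely long excursions (those that travel far from a reference point) by an independent, locally-generated family. Concretely, fix the target localization scale $R$. For each $x \in \mathbf{T}$ consider the annulus between $\partial\BB(x,R/2)$ and $\partial\BB(x,R)$ and record the successive excursions of $X$ from the inner to the outer boundary up to time $uN^d$. A standard fact (cf.\ the excursion-decomposition machinery underlying soft local times, \cite{MR3563197}, and the coupling of $\V^u_N$ with random interlacements, Corollary~\ref{cor:coupling}) is that the local time $\ell_{x,u}$ is, up to a negligible error, a functional of the number $N_x$ of such excursions together with the in-ball trajectories, and that $N_x$ is (close to) Poisson with parameter proportional to $u$ times a capacity ratio $\asymp u R^{d-2}$. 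The idea is then to define $\til\ell_{x,u}$ using an \emph{independent} Poisson number of excursions, sampled from the same in-ball excursion law, for each $x$ — this manifestly gives the independence property \eqref{e:fr-intro}, since $\til\ell_{x,\cdot}$ is by construction a function only of data attached to $\BB(x,R)$. The sprinkling $u \mapsto u(1\pm\varepsilon)$ is used to absorb, in a monotone way, both the Poisson fluctuations of $N_x$ around its mean and the small discrepancies coming from boundary effects and from excursions being shared between overlapping balls.

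The key steps, in order, would be: (i) set up the excursion decomposition of $X_{[0,uN^d]}$ through the concentric spheres $\partial\BB(x,R/2) \subset \partial\BB(x,R)$, and express $\ell_{x,u}$ as a monotone functional of the excursion count and the excursions' restrictions to $\BB(x,R/2)$; (ii) show that the excursion count is dominated above and below, with overwhelming probability, by Poisson variables with parameters $u(1\mp\varepsilon)\,\mathrm{cst}\cdot\cp(\BB(x,R/2))/\cp(\BB(x,R))$-type expressions — this is where the quantitative estimate kicks in, using concentration of the number of returns/excursions (a Chernoff bound for the number of excursions, whose natural scale is $\asymp uR^{d-2}$, yielding the $\exp(-c\varepsilon\sqrt{uR^{d-2}})$ factor after optimizing, with the prefactor $N^dR^{2d}$ coming from a union bound over the $\le N^d$ centers and over pairs of spheres/excursions); (iii) couple, for each $x$ simultaneously, the true excursions into $\BB(x,R/2)$ with an i.i.d.\ family of such excursions (again via soft local times, so that the true ones form a sub/super-family of the idealized ones with the sprinkled intensities); (iv) \emph{define} $\til\ell_{x,u}$ from the idealized independent excursions, check the sandwich $\til\ell_{x,u(1-\varepsilon)} \le \ell_{x,u} \le \til\ell_{x,u(1+\varepsilon)}$ on the good event by monotonicity of the local-time functional in the number of excursions, and collect the error terms into \eqref{e:incl-intro}; (v) note independence \eqref{e:fr-intro} is automatic from the construction in (iv), and that consistency in $u$ (a single family $(\til\ell_{x,u})_{u>0}$ for all $u \le u_0$) is obtained by driving all the Poisson excursion counts from a single Poisson process in $u$.

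I expect the main obstacle to be step (iii)–(iv): constructing a \emph{single} coupling that works for all centers $x\in\mathbf{T}$ at once. Different balls $\BB(x,R)$ overlap heavily, so the excursions used to define $\til\ell_{x,u}$ and $\til\ell_{x',u}$ for nearby $x,x'$ cannot literally be independent of each other, yet \eqref{e:fr-intro} only demands independence of $\til\ell_{x,u}$ from the \emph{$\sigma$-algebra generated by} $\{\til\ell_{y,u}: y\notin\BB(x,R)\}$ — so the construction must be arranged so that each $\til\ell_{x,u}$ depends only on an excursion process localized to $\BB(x,R)$, while still sandwiching the true, globally-correlated $\ell_{\cdot,u}$. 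The resolution is to let the idealized local times be driven by excursions of an \emph{auxiliary} random-interlacement-like process (or, equivalently, to work on $\Z^d$ with a translation-covariant family and then transfer to $\mathbf{T}$ via Corollary~\ref{cor:coupling}), for which local independence of $\til\ell_{x,\cdot}$ from data outside $\BB(x,R)$ holds by the finite range of the underlying excursion measure, and to control the transfer error by the same capacity/excursion-count estimates as in step (ii). A secondary technical point is handling the boundary effects near $\partial\mathbf{T}$ (absent in the $\Z^d$ formulation) and ensuring the estimate \eqref{e:incl-intro} degrades only polynomially, which is why the hypothesis $R\le N/10$ is imposed.
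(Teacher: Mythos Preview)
Your outline captures the correct architecture: excursion decomposition, concentration of the excursion count, coupling with an idealized independent family via soft local times, and then reading off $\til\ell$ from the idealized excursions. This is indeed how the paper proceeds (Theorem~\ref{The:shortrangeapprointro-new} is deduced from the more general Theorem~\ref{thm:rwshortrange}, whose proof has precisely this shape). You also correctly identify the main obstacle, namely making step (iii) work for \emph{all} centers $x\in\mathbf{T}$ at once against the \emph{same} walk~$X$.

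However, your proposed resolution of that obstacle has a genuine gap. Standard soft local times, as in \cite{softlocaltimes,AlvesPopov}, takes a Poisson process $\eta$ and produces from it a chain with the \emph{same law} as the target chain $Z$ --- not $Z$ itself. If you run this separately for each $x$, you obtain excursion families $z^{(x)}$ each distributed like the true excursions $Z^{(x)}$ of $X$, but there is no reason these different $z^{(x)}$ arise from a \emph{common} walk, which is exactly what you need for the sandwich \eqref{e:incl-intro} to hold simultaneously. The paper's solution is an \emph{inverse} soft local times construction (Section~\ref{sec:softlocaltimes}, Proposition~\ref{pro:couplingSLT}): starting from the actual chain $Z$ and auxiliary i.i.d.\ randomness, one \emph{reconstructs} a Poisson process $\eta$ such that forward soft local times applied to $\eta$ recovers $Z$ exactly; one can then build each $\til Z^{(x)}$ from the same $\eta$ (plus independent clotheslines $\til\lambda^{(x)}$), and these are automatically functions of $(X,\chi^{(x)},\til\lambda^{(x)})$ with the requisite finite-range property (Lemma~\ref{lem:independenceforfixedsets}, Proposition~\ref{pro:clotheslines}). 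Your suggestion to instead ``transfer to $\mathbf{T}$ via Corollary~\ref{cor:coupling}'' is circular: that corollary is \emph{derived} from Theorem~\ref{thm:rwshortrange}, which is the engine behind Theorem~\ref{The:shortrangeapprointro-new}.

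Two smaller points. First, the paper uses \emph{three} nested boxes $B_1\subset B_2\subset B_3$ rather than two spheres; the intermediate boundary $\partial B_2$ defines a clothesline process, and conditioning on it (following \cite{AlvesPopov}) renders the excursions conditionally independent, which is what makes the soft local time densities tractable. Second, the paper works with \emph{increments} $\ell_{\cdot,2u}-\ell_{\cdot,u}$ (and then invokes stationarity under $\mathbf{P}$) rather than $\ell_{\cdot,u}$ directly, because the excursion decomposition misses the initial segment of the walk before first exiting $B_3$; see Remark~\ref{rk:othershortrange}\ref{rk:increments}. Your plan does not address this, and without it the lower sandwich in \eqref{e:incl-intro} can fail near the starting point.
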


The set $\til{\mathcal{L}}^{\alpha}$ alluded to above is then obtained 
by applying Theorem~\ref{The:shortrangeapprointro-new} with $u=u_N(\alpha)$ 
upon defining $\til{\mathcal{L}}^{\alpha}= \{ x \in \mathbf{T} : \til{ {\ell}}_{x,u_N(\alpha)}=0\} $
with $R$ appropriately chosen, e.g.~$R\approx (\eps^{-2}\log(N^d))^{\frac1{d-2}}$, to make the error in \eqref{e:incl-intro} small. From this, \eqref{e:coup-subcrit} eventually follows from \cite[Theorem~3]{Arratia}.

The two localisation features \eqref{e:fr-intro} and \eqref{e:incl-intro} are difficult to engineer simultaneously. The requirements \eqref{e:fr-intro}-\eqref{e:incl-intro} can be seen to correspond to a kind of `approximate spatial Markov property' (at one point) for the local times. Indeed for the related Gaussian free field, a version of Theorem~\ref{The:shortrangeapprointro-new} can be obtained by exploiting the field's Markov property. In a follow-up article, we will exploit this to establish a phase transition for the high points of the field, studied previously in~\cite{ChiariniCiprianiSubhra,SamPerla}, as well as other long-range correlated models.  

The `approximate' spatial Markov property implied by Theorem \ref{The:shortrangeapprointro-new}  is however much harder to obtain, and the proof of Theorem \ref{The:shortrangeapprointro-new} is at the heart of this article. One of our main tools is the soft local times method from \cite{softlocaltimes,AlvesPopov}, with a twist. The method has been introduced to compare random interlacements trajectories on fixed well-separated sets $A_1$ and $A_2,$ to a version of random interlacements independent on $A_1$ and $A_2$, whose definition depend on the choice of $A_1$ and $A_2.$ In order to derive Theorem \ref{The:shortrangeapprointro-new} though, we need to compare random interlacements with a version of interlacements defined on the whole set $\mathbf{T}$ at once, and which is independent on \textit{any} pair of well-separated sets $A_1$ and $A_2$, when $A_1$ is a singleton. We achieve this by introducing an inverse soft local times technique, see Section \ref{sec:softlocaltimes}. 
Actually, our techniques also lead to a new, simpler, more explicit but slightly less general proof of the coupling between the random walk and random interlacements from \cite{MR3563197}, see the discussion below Theorem \ref{thm:rwshortrange} and Corollary~\ref{cor:coupling}.

\subsection{Law of the late points for \texorpdfstring{$\alpha\in{(\frac12,\alpha_*]}$}{alpha in (1/2,alpha*)}}

For $\alpha\leq \alpha_*,$ Theorem \ref{thm:uncoveredset} asserts that $\L^{\alpha}$ and the i.i.d.~set $\B^{\alpha}$ are not close anymore, which, in view of \eqref{e:double-pts}-\eqref{eq:alpha_*}, is due to the emergence of neighboring pairs of points in $\L^{\alpha}$ (but not in $\B^{\alpha}$). But could proximity for $\alpha$ smaller but close to $\alpha_*,$ perhaps be restored by adding such pairs, and, if so, in an independent fashion? What about other `clusters' of late points, e.g.~sets of two points at distance $k \geq 2$, or even finite sets of larger cardinality? Do such `clusters' appear in $\L^{\alpha}$ as $\alpha$ is reduced? Is their occurrence Poissonian? The next result sheds light on these questions. For finite $K \subset \Z^d$, denote by $\mathrm{cap}(K)$ the capacity of $K$ on the transient graph $\Z^d,$ $d\geq3.$ We also extend this definition to any set $K\subset\mathbf{T}$ with $\ell^{\infty}$-diameter $\delta(K),$ (see Section~\ref{sec:notation} for a definition) at most $N-1$ by essentially identifying $\mathbf{T}$ with the cube $Q(0,N)$ via an adapted bijection so that $K\subset Q(0,N-1),$ where $Q(0,R)$ is the box in $\Z^d$ around the origin $0$ of side length $R$ for all $R>0.$ For sets $K\subset\mathbf{T}$ with diameter $N$, we simply take the convention $\mathrm{cap}(K)=\infty$.  We refer to below \eqref{e:e_K} for precise definitions. Now for $K$ either in $\Z^d$ or in $\mathbf{T}$ with diameter at most $N-1,$ let 
\begin{equation}
\label{eq:defalpha*K}
    \alpha_*(K) \stackrel{\text{def.}}{=}(g(0)\mathrm{cap}(K))^{-1}
\end{equation}
and note that $\alpha_*(\{x\})=1$ for all $x \in \Z^d$ or $\mathbf{T}$ by \eqref{e:cap-2point}, while in fact $\alpha_*(\{x,y\})=\alpha_*$ as in \eqref{eq:alpha_*}, for $x\sim y$, see Lemma~\ref{L:alpha_*}. As will become clear, the parameter $\alpha_*(K)$ corresponds to the largest value of $\alpha$ at which one can find a translated version of the set $K$ in $\L^{\alpha}$ similarly as in the definition \eqref{eq:alpha_*} of $\alpha_*;$ cf.~\eqref{e:aalpha_*K-equiv} or Lemma~\ref{L:D_LB} for details.

Let $p^{\alpha}(A)=\mathbf{P}\big( \mathcal{L}^{\alpha} \cap \BB(A, R_{\mathbf{T}}) = A\big)$ for all $A\subset\mathbf{T}$ and $\alpha>0,$ where $R_{\mathbf{T}}= \log (|\mathbf{T}|)^{\frac1{d-2}}$, let $(U_A)_{A\subset{\mathbf{T}}}$ be an i.i.d.~family of uniform random variables on $[0,1],$   
and for $K\subset\subset\Z^d$, 
define the family ${\mathcal{B}}_{K} =({\mathcal{B}}^{\alpha}_{K})_{\alpha \geq 0}$ of sets 
\begin{equation}
\label{eq:defBFK}
{\mathcal{B}}^{\alpha}_{K}=\bigcup_{\substack {A\subset{\mathbf{T}}:\,\alpha_{\sast}(A)>\alpha_{\sast}(K)\\ U_{A}\leq p^{\alpha}(A)}}A.
\end{equation}
Note that $K\subset\Z^d$ in \eqref{eq:defBFK} (whereas $A\subset\mathbf{T}$). In particular $K$ does not depend on $N$, the side length of the torus, which is important when taking limits as $N \to \infty$, as in \eqref{eq:phasetransitionK} below. Notice also that sets $A\subset\mathbf{T}$ with diameter $N$ are never considered in the definition of $\B^{\alpha}_K$ by the above convention. Actually ${\mathcal{B}}^{\alpha}_{K}$ only depends on $K$ through $\textnormal{cap}(K)$. Moreover, if $K=\{x,y\}$ for $x\sim y$, then $\alpha_*(K)=\alpha_*\geq\alpha_*(A)$ for all $A\subset{\mathbf{T}}$ with $|A|\geq2,$ whence $A$ in \eqref{eq:defBFK} only ranges over singletons and ${\mathcal{B}}^{\alpha}_{K}$ is virtually equal in law to ${\mathcal{B}}^{\alpha}$, 
as will later be seen in detail. 
This is actually not entirely true, notably when $\alpha\leq \alpha_*$, see \eqref{e:TV-B-Btilde}, but one may pretend for the purposes of this introduction that $({\mathcal{B}}^{\alpha}_{K})_{\alpha \geq 0}\stackrel{\text{law}}{=}({\mathcal{B}}^{\alpha})_{\alpha \geq 0}$ when $K=\{x,y\}$, $x\sim y$. Admitting this, the following result can then essentially be viewed as a generalization of Theorem~\ref{thm:uncoveredset}.

To state it, we introduce one more convenient notation, which allows to capture transitions such as \eqref{e:coup-subcrit}-\eqref{e:coup-supercrit} in a concise (albeit less explicit) way. 
For two decreasing families of random subsets $\mathcal{S}=(\mathcal{S}^{\alpha})_{\alpha \geq 0}$ and $\mathcal{U}=(\mathcal{U}^{\alpha})_{\alpha \geq 0}$ of $\mathbf{T}$, we define for all $\alpha\in{(0,1)}$ and $\eps\in{(0,\alpha)}$ 
\begin{equation}
\label{eq:distancesprinkling}
d_{\eps}(\mathcal{S},\mathcal{U};\alpha)=\inf\bigg\{\delta\in{[0,1]}\,:\begin{array}{l}
\exists\,\text{a coupling $\mathbb{Q}$ between }\mathcal{S}^{\alpha} \text{ and }({\mathcal{U}}^{\alpha-\eps},\,{\mathcal{U}}^{\alpha+\eps})\\ \text{such that }  \mathbb{Q}\big({\mathcal{U}}^{\alpha+\eps}\subset {\mathcal{S}}^{\alpha}\subset {\mathcal{U}}^{\alpha-\eps}\big)\geq 1-\delta.
\end{array}
\bigg\}
\end{equation} 
Note that $\lim\limits_{\eps\rightarrow0}d_{\eps}(\mathcal{L},\mathcal{B};\alpha)=d_{\rm{TV}}(\mathcal{L}^{\alpha},\mathcal{B}^{\alpha})$ by continuity.

\begin{theorem}
\label{cor:phasetransition2-intro}
For $\L^{\alpha}$ as in \eqref{e:L^alpha-RW}, all $K\subset\subset\Z^d$ and all $\alpha\in{(0,1)}$,
	\begin{equation}
	\label{eq:phasetransitionK}
	\lim_{\eps\rightarrow0}\lim_{N\rightarrow\infty}d_\epsilon\big(\mathcal{L},{\B}_{K};\alpha\big)
	\begin{cases}
	=0 &\text{ if } 	\alpha>\alpha_*(K)>1/2\\
	=1 &\text{ if } \alpha<\alpha_*(K)\text{ or }\alpha_*(K)\leq1/2
	\end{cases}
	\end{equation}
and if $\alpha_*(K)>1/2$
\begin{equation}
	\label{eq:phasetransitionK'}
	0< \lim_{\eps\rightarrow0}\liminf_{N\rightarrow\infty}d_\epsilon\big(\mathcal{L},{\B}_{K};\alpha_*(K)\big) \leq \lim_{\eps\rightarrow0}\limsup_{N\rightarrow\infty}d_\epsilon\big(\mathcal{L},{\B}_{K};\alpha_*(K)\big) < 1.
\end{equation}
\end{theorem}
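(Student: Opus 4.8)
\textbf{Proof strategy for Theorem~\ref{cor:phasetransition2-intro}.}
The plan is to reduce the statement to Theorem~\ref{thm:uncoveredset} and its quantitative refinements by a careful bookkeeping of which ``patterns'' $A\subset\mathbf{T}$ are allowed to appear. The guiding principle is that $\mathcal{B}_K$ is built exactly so that it contains independent copies of every shape $A$ with $\alpha_*(A)>\alpha_*(K)$, placed at the same density $p^\alpha(A)$ as in $\mathcal{L}^\alpha$, and nothing else. So the comparison $\mathcal{L}\leftrightarrow\mathcal{B}_K$ should hold precisely when $\mathcal{L}^\alpha$ itself, localised at scale $R_{\mathbf{T}}$, contains no clusters $A$ with $\alpha_*(A)\le\alpha_*(K)$ — and that threshold is governed by whether $\alpha$ exceeds $\alpha_*(K)$, by the very definition \eqref{eq:defalpha*K} of $\alpha_*(K)$ as the largest scale at which a translate of $K$ survives in $\mathcal{L}^\alpha$.

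First I would set up the localised comparison. Apply Theorem~\ref{The:shortrangeapprointro-new} (in the form of Theorem~\ref{thm:rwshortrange}) with $u=u_N(\alpha)$ and $R\approx R_{\mathbf{T}}$ to obtain $\tilde{\mathcal{L}}^{\alpha\pm\eps}$ with the one-point short-range property and the sandwiching \eqref{e:incl-intro}; this costs only a sprinkling $\eps$ and a polynomially small error, so it suffices to compare $\tilde{\mathcal{L}}$ with $\mathcal{B}_K$. Next, for the \textbf{upper regime} $\alpha>\alpha_*(K)$: here one runs a Chen--Stein argument not on single late points but on occurrences of \emph{clusters}, i.e.\ on the family of events $\{\tilde{\mathcal{L}}^\alpha\cap\mathbf{B}(x,R_{\mathbf{T}})=\tau_x A\}$ for each shape $A$ with $\alpha_*(A)>\alpha_*(\{ \text{neighbouring pair}\})$ — equivalently, all $A$ up to diameter bounded in terms of $d$ for $d\ge4$, and connected triples included when $d=3$ (Remark following the theorem / the structural analysis of small-capacity sets). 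Because $\tilde{\mathcal{L}}$ has $b_3=0$ by construction, one gets Poisson approximation with the right intensities; matching these against \eqref{eq:defBFK} and using that $\alpha>\alpha_*(K)$ forces all shapes $A$ with $\alpha_*(A)\le\alpha_*(K)$ to have $\mathbf{E}[(\text{number of }A\text{-clusters})]\to0$ (the analogue of \eqref{eq:alpha_*}–\eqref{eq:alpha_*-value} applied to general $K$, via Lemma~\ref{L:alpha_*}/Lemma~\ref{L:D_LB}) yields the coupling with $\mathbb{Q}(\mathcal{B}_K^{\alpha+\eps}\subset\mathcal{L}^\alpha\subset\mathcal{B}_K^{\alpha-\eps})\to1$, hence $d_\eps\to0$.

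For the \textbf{lower regimes} one argues by a parity/counting obstruction in the spirit of the discussion around \eqref{e:double-pts}. If $\alpha<\alpha_*(K)$ (and $\alpha_*(K)>1/2$), then by \eqref{e:aalpha_*K-equiv} the expected number of translates of $K$ (or of some cluster $A$ with $\alpha_*(K)\ge\alpha_*(A)>\alpha$) inside $\mathcal{L}^\alpha$ diverges, whereas no such cluster is allowed into $\mathcal{B}_K^{\alpha-\eps}$ for $\eps$ small — so the inclusion $\mathcal{L}^\alpha\subset\mathcal{B}_K^{\alpha-\eps}$ fails with probability tending to $1$ under any coupling, giving $d_\eps\to1$. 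Quantitatively one needs a \emph{second-moment} (or concentration) statement: that the count of such clusters is not just large in mean but $\ge1$ with high probability; this is where one invokes the localisation to decorrelate far-apart potential cluster locations and a Paley--Zygmund bound, analogous to how \eqref{e:coup-supercrit} is established in Section~\ref{sec:denouement}. The case $\alpha_*(K)\le1/2$ is even easier: then already \emph{singletons} behave differently — for any $\alpha>1/2$ the i.i.d.\ set $\mathcal{B}_K^\alpha$ (which contains only shapes of capacity $\ge\mathrm{cap}(K)$, i.e.\ effectively points, but the relevant $A$'s now \emph{all} have $\alpha_*(A)\le\alpha_*(K)\le1/2<\alpha$, so $\mathcal{B}_K^\alpha$ is essentially empty on shapes while $\mathcal{L}^\alpha$ is not) cannot sandwich $\mathcal{L}^\alpha$; more carefully, one observes the density mismatch forces $d_\eps\to1$. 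Finally, the \textbf{critical case} $\alpha=\alpha_*(K)$: here the expected number of ``borderline'' clusters $A$ with $\alpha_*(A)=\alpha_*(K)$ in $\mathcal{L}^{\alpha_*(K)}$ converges to a positive finite constant $\lambda_K$ (the analogue of $\mathbf{E}[D^{\alpha_*}]\to$ a Poisson mean $d$, cf.\ the constant $e^{-d}$ in \eqref{e:coup-crit} and Remark~\ref{rk:thmsprinkling},\ref{rk:constantatcriticality}). By the Poisson approximation from the upper-regime argument applied at $\alpha=\alpha_*(K)$, the obstruction to $\mathcal{L}^{\alpha}\subset\mathcal{B}_K^{\alpha-\eps}$ occurs with probability $\to1-e^{-\lambda_K}\in(0,1)$, giving both bounds in \eqref{eq:phasetransitionK'}.

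\textbf{Main obstacle.} The delicate point is the \emph{complete} classification of which finite shapes $A$ satisfy $\alpha_*(A)>\alpha_*(K)$ versus $\le\alpha_*(K)$, and in particular the dimension-dependent statement that for $d\ge4$ exactly the two-point sets are the first new patterns to appear below $\alpha=1$ while for $d=3$ one must also admit connected three-point sets but no others — this requires sharp control of $\mathrm{cap}(A)$ for all small $A$, i.e.\ showing $g(0)\mathrm{cap}(A)$ strictly exceeds $g(0)\mathrm{cap}(\{x,y\})=1+P_0(\widetilde H_0<\infty)$ for every $A$ other than the listed ones, with no accumulation at the threshold. Establishing this capacity estimate uniformly, and feeding it into the Chen--Stein intensities so that the ``allowed'' and ``forbidden'' families in \eqref{eq:defBFK} line up exactly with the phase boundary, is the crux; the rest is a (careful) combination of the localisation theorem and the Poisson/second-moment machinery already developed for Theorem~\ref{thm:uncoveredset}.
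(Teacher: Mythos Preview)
Your overall architecture is right and matches the paper: localise via Theorem~\ref{The:shortrangeapprointro-new}, run Chen--Stein on the pattern events (this is the content of Theorem~\ref{The:alpha>1/2}), compare $\tilde{\mathcal B}$ to $\mathcal B_K$ via the event $E_K$ in Lemma~\ref{lem:condonalphatoobserveK}, and use a Paley--Zygmund second-moment bound (Lemma~\ref{L:D_LB}) for the subcritical obstruction. But two points are wrong or misleading.

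\textbf{The case $\alpha_*(K)\le 1/2$ is handled incorrectly.} You write that $\mathcal B_K^\alpha$ ``contains only shapes of capacity $\ge\mathrm{cap}(K)$'' and is ``essentially empty''. Both claims are false: by \eqref{eq:defBFK} the allowed shapes are those with $\alpha_*(A)>\alpha_*(K)$, i.e.\ $\mathrm{cap}(A)<\mathrm{cap}(K)$. When $\alpha_*(K)\le 1/2$, \emph{every} two-point set $\{x,y\}$ satisfies $\alpha_*(\{x,y\})>1/2\ge\alpha_*(K)$ by \eqref{e:cap-2point}, so $\mathcal B_K^\alpha$ samples all pairs and becomes \emph{much larger} than $\mathcal L^\alpha$. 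The paper's argument (see \eqref{eq:alpha*K<1/2}) bounds $\mathbb P(x\in\mathcal B_K^\alpha)$ from below by roughly $N^{d-2\alpha d}\wedge 1$, giving $|\mathcal B_K^\alpha|\gtrsim N^{2d(1-\alpha)}\wedge N^d$ whp, while $|\mathcal L^\alpha|\lesssim N^{d(1-\alpha)}\log N$; hence it is the inclusion $\mathcal B_K^{\alpha+\eps}\subset\mathcal L^\alpha$ that fails, not the one you indicate.

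\textbf{The ``main obstacle'' you identify is not needed here.} The classification of small-capacity sets (connected triples in $d=3$, only pairs in $d\ge4$) is the content of Theorem~\ref{thm:admissible} and is used \emph{only} for Corollary~\ref{c:trichotomy}. Theorem~\ref{cor:phasetransition2-intro} works with the abstract condition $\alpha_*(A)>\alpha_*(K)$ throughout; one only needs that $\{A:\alpha_*(A)\ge\alpha'\}$ has uniformly bounded cardinality per site for $\alpha'>1/2$ (a soft consequence of \eqref{e:cap-2point}) and that there is a gap $\delta(K)>0$ below $\alpha_*(K)$ (used in \eqref{eq:lbAKn}). No computer-assisted capacity computations enter. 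Finally, in the critical case you claim the obstruction probability converges to $1-e^{-\lambda_K}$; the paper only proves $\liminf>0$ and $\limsup<1$ (cf.\ Remark~\ref{rk:thmsprinkling},\ref{rk:constantatcriticality}), and existence of the limit for general $K$ is left open.
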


By applying Theorem~\ref{cor:phasetransition2-intro} with $K$ any pair of neighbors, e.g.~$K=K_0= \{0,x\}$, $x \sim 0$, one (essentially, cf.~above) recovers Theorem~\ref{thm:uncoveredset}. Now suppose that $K\subset\subset\Z^d$ is another set with $\alpha_*(K)>1/2,$ not isomorphic to $K_0$ under lattice symmetries; for instance $K= \{0,x\}$, $|x|_{1}=2$. Then \eqref{eq:phasetransitionK} asserts that $\mathcal{L}^{\alpha}$ will be close (as measured by $d_{\varepsilon}$) to ${\mathcal{B}}_{K}$ if and only if $ \alpha > \alpha_*(K)$.
The set ${\mathcal{B}}_{K}^{\alpha}$ comprises independent samples of all allowed `clusters' (namely, sets $A \subset{\mathbf{T}}$ with $\alpha_*(A)> \alpha_*(K)$), at the correct intensity, corresponding to their probability to be seen in $\mathcal{L}^{\alpha}$ and to be isolated, i.e.~with no other late points present in their $R_{\mathbf{T}}$-neighbourhood. This can be regarded as a positive answer to Question 2 in \cite[Section 7]{JasonPerla}.  Note the consistency of the transitions \eqref{eq:phasetransitionK} as $K$ varies: for instance, if $\alpha> \alpha_*(= \alpha_*(K_0))$, then, as it turns out, for every $K \in {\Z^d}$ with $\alpha_*(K)\in{(1/2,1)},$ the sets ${\mathcal{B}}^{\alpha}_{K}$ and ${\mathcal{B}}^{\alpha}_{K_0}$ are virtually indistinguishable as $N\rightarrow\infty,$ and actually also indistinguishable of $\B^{\alpha}.$ In fact \eqref{e:coup-subcrit} will be deduced from the first line of \eqref{eq:phasetransitionK} in precisely this way, cf.~\S\ref{sec:denouement}; see also Remark~\ref{rk:thmsprinkling},\ref{R:direct-Thm1} for alternatives. Note also that the critical case \eqref{eq:phasetransitionK'} is less explicit than \eqref{e:coup-crit}, see Remark~\ref{rk:thmsprinkling},\ref{rk:constantatcriticality} for more on this.

Theorem~\ref{cor:phasetransition2-intro} is mainly interesting when $\alpha_*(K)>1/2$, as it then describes precisely the law of $\L^{\alpha}$ for $\alpha>\alpha_*(K)$ via $\B_K^{\alpha\pm\eps}$. It is easy to see that the constraint $\alpha_*(K) > 1/2$ is saturated for $K=\{x,y\}$ in the limit when $|x-y| \to \infty$, see \eqref{e:cap-2point} below. In particular for all $\alpha>1/2$ there exists $K\subset\subset\Z^d$ with $\alpha>\alpha_*(K)>1/2$, for which $\B_K^{\alpha}$ is a good approximation of $\L^{\alpha}$ by \eqref{eq:phasetransitionK}. On the other hand if $\alpha_*(K)\leq 1/2$, then for all $x,y\in{\mathbf{T}}$ we have $\alpha_*(\{x,y\})>\alpha_*(K)$ and so the probability that $x\in{\B_K^{\alpha}}$, see \eqref{eq:defBFK}, will typically be much larger than the probability that $x\in{\L^{\alpha}}$, which explains the last part of \eqref{eq:phasetransitionK}; cf.~also \eqref{eq:alpha*K<1/2}. An important question is thus to determine which kind of sets $K$ are in
\begin{equation}
\label{eq:defAzd}
\cA_{\mathbf{T}}=\{K\subset \mathbf{T}: K\neq \emptyset, \,\alpha_*(K) \geq \textstyle\frac12\}, 
\end{equation}
for these are exactly the sets one needs to consider in Theorem~\ref{cor:phasetransition2-intro} to approximate $\mathcal{L}^{\alpha}$ for $\alpha>\frac12$.  By \eqref{e:cap-2point} below, $\cA_{\mathbf{T}}$ contains at least all singletons and two-point sets. But does $\mathcal{A}_{\mathbf{T}}$ contain larger sets, for instance containing three points, and, if so, which ones? Put differently, as $N\rightarrow\infty,$ does $\{\L^{\alpha}\cap \BB(x,R_{\mathbf{T}}),x\in{\mathbf{T}}\}$ contain sets with three points before containing all possible $2$ points sets with diameter at most $R_{\mathbf{T}}$? Viewing $\Z^2$ as $\Z^2 \times \{ 0\}^{d-2} \subset \Z^d$, let
\begin{equation}
\label{e:3points}
K_1=\{(0,0), (0,1), (0,2)\}, \ K_2=\{(0,0), (0,1), (1,0)\}.
\end{equation}
We denote by $\mathbf{K_1}$ and $\mathbf{K_2}$ the projections of $K_1$ and $K_2$ onto $\mathbf{T}$ (and generally bold sets will always be the projection on $\mathbf{T}$ of sets on $\Z^d$),  which are the only connected sets of cardinality $3,$  up to torus isomorphisms.  We prove in Appendix~\ref{app:B} that
\begin{equation}
\label{eq:enoughtodetermineA1}
\text{ if }\mathbf{K_1},\mathbf{K_2}\notin{\mathcal{A}_{\mathbf{T}}},\text{ then }\mathcal{A}_{\mathbf{T}}\text{ consists only of sets with cardinality at most }2,
\end{equation}
and there are sets $A_i\subset\Z^d,$ $i\in{\{1,\dots,8\}}$ with cardinality at least three such that, denoting by $\mathbf{A_i}$ the projection of $A_i$ on $\mathbf{T}$
\begin{equation}
\label{eq:enoughtodetermineA2}
\begin{gathered}
\text{ if }\mathbf{K_1},\mathbf{K_2}\in{\mathcal{A}_{\mathbf{T}}}\text{ and }\mathbf{A_i}\notin{\mathcal{A}_{\mathbf{T}}}\text{ for all }i\in{\{1,\dots,8\}},\text{ then }\mathcal{A}_{\mathbf{T}}\text{ consists only of sets}
\\\text{with cardinality at most }2\text{ and images of }\mathbf{K_1}\text{ and }\mathbf{K_2}\text{ by torus isomorphisms}.
\end{gathered}
\end{equation}
We then deduce in Theorem~\ref{thm:admissible} that \eqref{eq:enoughtodetermineA1} applies for all $d \geq 4$ and \eqref{eq:enoughtodetermineA2} when $d=3$. The proof of 
Theorem~\ref{thm:admissible} relies on computer-assisted methods partially inspired by \cite[Appendix~B]{MR1174248}, to determine the capacities of the sets $K_1,K_2$ and $A_i,$ $i\in{\{1,\dots,8\}}$ (the latter only when $d=3$) as well as $g(0),$ cf.~\eqref{eq:defalpha*K}, which are computed with precision $10^{-30}$, see Lemma~\ref{lem:computerassisted}.

Together, Theorems~\ref{cor:phasetransition2-intro} and~\ref{thm:admissible} readily yield the following:

\begin{Cor} \label{c:trichotomy}
For small enough $\eta>0$, there exists $D= D(\eta, d)< \infty$ such that, if $\mathcal{D}^{\alpha} \subset \mathbf{T}$, $\alpha = \frac12 + \eta$, is obtained as the union of all sets $K\subset \mathbf{T}$ which are either 
\begin{itemize}[noitemsep]
\item[i)] singletons, 
\item[ii)] pairs of points at $\ell^1$-distance $ \leq D$, 
\item[iii)] images of $\mathbf{K_1}$ or $\mathbf{K_2}$ by torus isomorphisms, that is connected sets of cardinality $3$,
\end{itemize}
each sampled independently with probability $p^{\alpha}(K)$, then there exists $\mathbf{Q}= \mathbf{Q}_\eta$ such that if $d=3$
\begin{equation}\label{e:coupling1/2+}
\mathbf{Q}\big(\mathcal{D}^{\frac{1+3\eta}{2}} \subset \mathcal{L}^{\frac12 + \eta} \subset \mathcal{D}^{\frac{1+\eta}{2}}\big) \to 1 \text{ as } N \to \infty
\end{equation}
Moreover if $d=3,$ there is no such $\mathbf{Q}$ if iii) is omitted, whereas if $d \geq 4$, all conclusions remains true upon discarding type $iii)$ from the construction of $\mathcal{D}^{\alpha}$.
\end{Cor}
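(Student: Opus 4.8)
\textbf{Proof plan for Corollary~\ref{c:trichotomy}.}
The plan is to deduce the corollary directly from Theorem~\ref{cor:phasetransition2-intro} and Theorem~\ref{thm:admissible}, by choosing the comparison set $K$ wisely and then identifying $\mathcal{D}^{\alpha}$ with (a suitable approximation of) $\mathcal{B}_K^{\alpha}$. First I would fix $\eta>0$ small and pick a finite set $K\subset\subset\Z^d$ with $\frac12<\alpha_*(K)<\frac12+\eta$; this is possible since, by \eqref{e:cap-2point}, $\alpha_*(\{x,y\})=(g(0)\mathrm{cap}(\{x,y\}))^{-1}\downarrow \frac12$ as $|x-y|\to\infty$, so a two-point set at large enough $\ell^1$-distance $D=D(\eta,d)$ does the job. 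Applying Theorem~\ref{cor:phasetransition2-intro} with this $K$ and with $\alpha=\frac12+\eta>\alpha_*(K)$, the first line of \eqref{eq:phasetransitionK} gives $\lim_{\eps\to0}\lim_N d_\eps(\mathcal{L},\mathcal{B}_K;\alpha)=0$; unpacking the definition \eqref{eq:distancesprinkling} of $d_\eps$, this yields, for a sequence $\eps=\eps_N\to0$ chosen appropriately (using the quantitative version behind Theorem~\ref{cor:phasetransition2-intro}, cf.~the polynomially small sprinkling discussed after Theorem~\ref{thm:uncoveredset}), a coupling $\mathbf{Q}$ with $\mathbf{Q}(\mathcal{B}_K^{\alpha+\eps}\subset\mathcal{L}^{\alpha}\subset\mathcal{B}_K^{\alpha-\eps})\to1$. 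With $\eta$ small and $\eps=\eps_N$ polynomially small, one has $\alpha-\eps\in(\frac{1+\eta}{2}-o(1),\frac12+\eta)$ and similarly for $\alpha+\eps$, so monotonicity of $\mathcal{B}_K$ in the parameter lets me replace the sprinkled parameters by $\frac{1+\eta}{2}$ and $\frac{1+3\eta}{2}$ at the cost of a further negligible error, matching the exponents in \eqref{e:coupling1/2+}.

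The second ingredient is to recognise that $\mathcal{B}_K^{\alpha}$ \emph{is} (up to a total variation error tending to $0$, cf.~\eqref{e:TV-B-Btilde} and the surrounding discussion) exactly the set $\mathcal{D}^{\alpha}$ described in the corollary. Indeed, by definition \eqref{eq:defBFK}, $\mathcal{B}_K^{\alpha}$ is the union over all $A\subset\mathbf{T}$ with $\alpha_*(A)>\alpha_*(K)$ of independently sampled copies of $A$, each included with probability $p^{\alpha}(A)$. So I need to identify the family $\{A:\alpha_*(A)>\alpha_*(K)\}$. Since $\alpha_*(K)$ can be taken arbitrarily close to $\frac12$ from above, this family is precisely $\mathcal{A}_{\mathbf{T}}\setminus(\text{sets with }\alpha_*=\frac12)$ together with those sets of $\ell^1$-diameter $\le D$; invoking Theorem~\ref{thm:admissible} (i.e.~\eqref{eq:enoughtodetermineA1} for $d\ge4$ and \eqref{eq:enoughtodetermineA2} for $d=3$), $\mathcal{A}_{\mathbf{T}}$ consists of exactly: all singletons, all two-point sets, and — only when $d=3$ — the images of $\mathbf{K_1},\mathbf{K_2}$ under torus isomorphisms. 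The two-point sets with $\alpha_*>\alpha_*(K)$ are exactly those at $\ell^1$-distance $\le D$ for the chosen $D$; larger two-point sets have $\alpha_*\le\alpha_*(K)$ and do not contribute. This reproduces items i)--iii) of the corollary, and gives the ``$d\ge4$: discard iii)'' clause for free, since for $d\ge4$ no three-point set lies in $\mathcal{A}_{\mathbf{T}}$ at all. Note here that $R_{\mathbf{T}}=\log(|\mathbf{T}|)^{1/(d-2)}$ eventually exceeds $D$ and the diameter of $\mathbf{K_1},\mathbf{K_2}$, so all these $K$ are seen within their $R_{\mathbf{T}}$-neighbourhood and $p^{\alpha}(K)$ is the right intensity.

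For the last assertion — that when $d=3$ there is \emph{no} such coupling if type iii) is omitted — I would argue by contradiction using the lower-bound part of Theorem~\ref{cor:phasetransition2-intro}. If $\mathcal{D}^{\alpha}$ is built without the connected three-point sets, then $\mathcal{D}^{\alpha}=\mathcal{B}_{K'}^{\alpha}$ for $K'$ a three-point set of the type $\mathbf{K_1}$ or $\mathbf{K_2}$ (the smallest sets \emph{excluded} from $\mathcal{D}^{\alpha}$), so $\alpha_*(K')=\alpha_*(\mathbf{K_i})$. Since $d=3$ and $\mathbf{K_i}\in\mathcal{A}_{\mathbf{T}}$ by Theorem~\ref{thm:admissible}, one has $\alpha_*(\mathbf{K_i})\ge\frac12$; and for $\eta$ small enough, $\alpha=\frac12+\eta<\alpha_*(\mathbf{K_i})$ (or $=$, handled by \eqref{eq:phasetransitionK'}). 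Then the second line of \eqref{eq:phasetransitionK} (or \eqref{eq:phasetransitionK'}) gives $\lim_{\eps\to0}\lim_N d_\eps(\mathcal{L},\mathcal{B}_{K'};\alpha)=1$ (resp.~stays bounded away from $0$), i.e.~no coupling with the desired inclusion can succeed with probability tending to $1$ — precisely because $\mathcal{L}^{\alpha}$ \emph{does} contain isolated connected triples at this scale with nonvanishing density whereas $\mathcal{B}_{K'}^{\alpha}$ does not (one can make this quantitative via the first-moment computation underlying $\alpha_*(K)$, cf.~Lemma~\ref{L:D_LB}). The main obstacle I anticipate is purely bookkeeping rather than conceptual: carefully matching the sprinkled parameters $\alpha\pm\eps_N$ to the clean exponents $\frac{1+\eta}{2},\frac{1+3\eta}{2}$ while simultaneously controlling the total variation gap between $\mathcal{B}_K^{\alpha}$ and the ``clean'' $\mathcal{D}^{\alpha}$ of the statement (this is the content hidden in \eqref{e:TV-B-Btilde}), and checking that the threshold $D(\eta,d)$ can be chosen uniformly so that the families of admissible $A$'s on either side of the sprinkling coincide — all of which is routine given Theorems~\ref{cor:phasetransition2-intro} and~\ref{thm:admissible} but must be done with some care.
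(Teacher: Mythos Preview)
Your approach is essentially the paper's: pick a two-point set $K$ with $\alpha_*(K)$ just above $\tfrac12$, apply the first line of \eqref{eq:phasetransitionK} at $\alpha=\tfrac12+\eta$, and identify $\mathcal{B}_K^{\cdot}$ with $\mathcal{D}^{\cdot}$ via Theorem~\ref{thm:admissible}. Two points are worth cleaning up.

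First, the detour through polynomially small $\eps_N$ is unnecessary. The exponents $\tfrac{1+\eta}{2}$ and $\tfrac{1+3\eta}{2}$ are exactly $\alpha\mp\tfrac{\eta}{2}$, so you can simply take $\eps=\tfrac{\eta}{2}$ fixed in \eqref{eq:distancesprinkling}; the first line of \eqref{eq:phasetransitionK} (together with monotonicity of $d_\eps$ in $\eps$) gives $\lim_N d_{\eta/2}(\mathcal{L},\mathcal{B}_K;\alpha)=0$ directly. The paper does precisely this, choosing $K$ with $\alpha_*(K)<\tfrac{1+\eta}{2}$ so that $\alpha-\tfrac{\eta}{2}>\alpha_*(K)$.

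Second, and more substantively, your identification $\mathcal{D}_-^{\alpha}=\mathcal{B}_{K'}^{\alpha}$ with $K'=\mathbf{K_i}$ for the negative part is not correct. The family $\{A:\alpha_*(A)>\alpha_*(K_i)\}$ consists of singletons and two-point sets at $\ell^1$-distance $\le D'$ for some $D'$ determined by $\alpha_*(K_i)$, whereas $\mathcal{D}_-^{\alpha}$ keeps two-point sets up to distance $D$ determined by the \emph{original} $K$; since $\alpha_*(K)<\tfrac{1+\eta}{2}<\tfrac12+\eta<\alpha_*(K_i)$, one has $D'<D$ in general (and if $\alpha_*(K_1)\ne\alpha_*(K_2)$, one of the $\mathbf{K_j}$ may even survive in $\mathcal{B}_{K_i}^{\alpha}$). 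So the second line of \eqref{eq:phasetransitionK} does not apply to $\mathcal{D}_-$ as stated. The paper instead argues directly: by Lemma~\ref{L:D_LB} with $K=K_i$ one has $D_{K_i}(\mathcal{L}^{\frac12+\eta})\ge N^{c(\eta)}$ with high probability, while a first-moment bound on the independent samples in $\mathcal{D}_-^{(1+\eta)/2}$ shows $D_{K_i}(\mathcal{D}_-^{(1+\eta)/2})<N^{c(\eta)}$ with high probability, so $\mathcal{L}^{\frac12+\eta}\not\subset\mathcal{D}_-^{(1+\eta)/2}$. You mention this route parenthetically; it is the one that actually works, and you should make it the main argument rather than the fallback.
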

We conclude by explaining the significance of the value $\alpha=\frac12$ in \eqref{eq:defAzd},  Theorem~\ref{cor:phasetransition2-intro} and Corollary~\ref{c:trichotomy}. 
The parameter $\alpha=\frac12$ has already been identified as the critical parameter for another question in \cite{MP_unif}, and on the torus the main result from \cite{MP_unif} can actually be deduced from Theorem~\ref{cor:phasetransition2-intro}, see Remark~\ref{rk:thmsprinkling},\ref{rk:MP}. For each $\alpha>\frac12,$ one can find a constant $C=C(\alpha)$ such that with high probability, each $x,y\in{\L^{\alpha}}$ verifies either $d(x,y)\leq C$ or $d(x,y)\geq (\log N)^{1/(d-2)}$ (in fact even $d(x,y)\geq N^{2\alpha-1-\eta}$ for some $\eta>0$), see Lemma \ref{lem:wellseparated}. In other words, any value of $\alpha>\frac12$ induces a natural localization scale for $\L^{\alpha}$, which is a union of small sets (with diameter at most $C$) far away from one another. This well-separatedness fails when $\alpha \leq \frac12$, and one cannot straightforwardly deduce an approximative version of the law of $\L^{\alpha}$ in this regime, see Remark \ref{rk:final},\ref{rk:alpha<1/2}. It would of course be interesting to assess whether a result similar to Theorem~\ref{The:alpha>1/2}, which is a consequence of our localization statement Theorem~\ref{The:shortrangeapprointro-new}, valid throughout the phase $\alpha > \frac12$, and one of the driving force behind our main results, still holds for $\alpha\leq \frac12$ or not.

\subsection{Organization of this article} Section \ref{sec:notation} introduces the setup and a minimal amount of useful notation. Section \ref{sec:chen-stein} is centred around the modified Chen-Stein method, which allows for a sprinkling.  Section \ref{sec:softlocaltimes} introduces the soft local time method to couple two different Markov chains from \cite{softlocaltimes}, and explains how to invert it. This method is then applied in Section \ref{sec:shortrangeRWRI} to the excursions of the random walk on $\mathbf{T}$ and random interlacements on $\Z^d$ to prove the localization result, Theorem \ref{The:shortrangeapprointro-new}. Its strengthened version, Theorem~\ref{thm:rwshortrange}, also immediately yields a state-of-the-art coupling between random walk and random interlacements with quantitative coupling error, stated in Corollary~\ref{cor:coupling}. Section~\ref{sec:loc-cons} discusses the consequences of localization for late points. In \S\ref{subsec:L_F}, we introduce the set of late points $\L^{\alpha}_{F}$ to be studied (see \eqref{defL}), which contains $\L^{\alpha}$ defined in \eqref{e:L^alpha-RW} as a special case, and gather its first properties, including precise estimates on the probability for a generic set to be late. In \S\ref{subsec:btilde-compa}, we then compare $\L^{\alpha}_{F}$ with a suitably localized version $\tilde{\mathcal{B}}^{\alpha}_{F}$ defined in \eqref{eq:hatdefBalphaF}. The main result is Theorem~\ref{The:alpha>1/2}.

The pieces are put together in Section \ref{sec:denouement}, where we derive Theorems \ref{thm:uncoveredset} and \ref{cor:phasetransition2-intro}. Remark~\ref{rk:thmsprinkling}, which appears at the end of \S\ref{sec:denouement}, deserves highlighting. It concerns various extensions of these results and also explains how to recover existing ones. Thereafter, \S\ref{sec:extensions} contains some interesting further results which can be derived using our methods. More precisely, \S\ref{subsec:alpha-process} deals with $\L^{\alpha}_{F}$ as a process in $\alpha$ (see~\eqref{eq:alphahatalpha-intro} and Theorem~\ref{thm:processusabovealpha*}), and  \S\ref{subsec:outlook} is an outlook to the regime $\alpha \leq \frac12$ containing a description of the law of large clusters, see Theorem~\ref{The:gen}. Finally, Appendix~\ref{sec:app} contains the proof of two technical ingredients, Lemmas~\ref{lem:concsoftrwri} and~\ref{lem:conc}, relegated from Section \ref{sec:shortrangeRWRI}, which concern certain large deviation estimates for the excursions of random walk and random interlacements. Appendix~\ref{app:B} revolves around the notion of admissible sets. In particular, Theorem~\ref{thm:admissible} identifies the elements of $\A_{\mathbf{T}}$ in \eqref{eq:defAzd}, which leads to Corollary~\ref{c:trichotomy}. The elements
of the proof of Theorem~\ref{thm:admissible} relying on computer assistance are all summarized in Lemma~\ref{lem:computerassisted}. We stress however that -- with the exception of Corollary~\ref{c:trichotomy} -- none of our results make use of Appendix~\ref{app:B}.

We conclude with our convention regarding constants. In the rest of
this article, we denote by $c,$ $c',\dots$  and $C,$ $C',\dots$ positive and finite constants changing from place to place. All constants may depend implicitly on the dimension $d,$ and their dependence on any other quantity will be made explicit.

\medskip

{\bf Acknowledgment:}
AP has been supported by the Engineering and Physical Sciences Research Council (EPSRC) grant EP/R022615/1, Isaac Newton Trust (INT) grant G101121, European Research Council (ERC) starting grant 804166 (SPRS), and the Swiss NSF.

\section{Notation and preliminaries}
\label{sec:notation}

We set up some notation that will be used throughout. We use $\pi: \Z^d \to \mathbf{T}= (\Z/N\Z)^d$ to denote the canonical projection from the infinite $d$-dimensional cubic latice $\Z^d$ to the $d$-dimensional torus $\mathbf{T}$ of size length $N$, for $d \geq 3$, $N \geq1.$ In order to simplify notation, we will often use the notation $\boldsymbol{x}$ to denote the projection $\pi(x)$ of $x\in{\Z^d}.$  We denote by $0$ the origin of $\Z^d,$ and thus call $\boldsymbol{0}:=\pi(0)$ the origin of $\mathbf{T}$.  For each $x\in{\Z^d}$ and $r>0,$ we let  $Q(x,r)=Q_r(x)=x+([-\lfloor(r-1)/2\rfloor,\lceil(r-1)/2\rceil] \cap \Z)^d$ the box of size length $r$ around $x$ in $\Z^d$, and we let $Q(\boldsymbol{x},r)=\pi\big(Q_r({x})\big),$ which only depends on $\boldsymbol{x}$ and $r$.  Our convention for boxes is tuned so that $Q_N(\mathbf{0})=\mathbf{T},$ and $\pi_{|Q_N(0)}$ is a bijection into $\mathbf{T}.$ Note that the definition of $Q_r(x)$ depends on whether $x\in{\Z^d}$ or $x\in{\mathbf{T}}.$ We will also introduce the notation $Q_N$ (see above \eqref{e:u_F}), which will only be used in Sections~\ref{sec:loc-cons}-\ref{sec:extensions} ($Q_N$ can correspond to either $Q_N(\mathbf{0})$ or $Q_N(0)$, depending on the model considered, which will allow for a uniform presentation). We also let $\BB(A,R)$ be the union of the balls $\BB(x,R)$ over all $x\in{A},$ for any $A\subset\Z^d$ or $A\subset \mathbf{T}.$

For a set $A\subset \Z^d$ or $A\subset{\bf T}$ we write $\partial A$ for the internal vertex boundary of $A$, i.e.~$\partial A= \{y\in A: \exists \ x\notin A \text{ adjacent to } y\},$ $|A|$ for the cardinality of $A$ and $\delta(A)$ for the ($\ell^{\infty}$-)diameter of $A,$ that is the smallest $R\in{\N}$ such that $A\subset Q(x_A,R)$ for some vertex $x_A\in{\Z^d}$ or $x_A\in{\mathbf{T}}.$ We use the notation $A\subset\subset\Z^d$ to say that $A$ is a finite subset of $\Z^d.$ Let also $d(x,y)=\delta(\{x,y\})$ for all $x,y\in{\Z^d}$ or $x,y\in{\mathbf{T}},$ which on $\Z^d$ corresponds to the $\ell^{\infty}$-distance between $x$ and $y$. For subsets $K,K'$ either of $\Z^d$ or $\mathbf{T}$ we write $d(K,K')=\inf_{x\in{K},y\in{K'}}d(x,x').$ 

 We write $P_x$ for the canonical law of the discrete-time simple random walk on $\Z^d$ starting at $x \in \mathbb{Z}^d$. We denote by $X=(X_n)_{n \geq 0}$ the corresponding canonical process, and by $\theta_n$, $n \geq 0$, the canonical shifts, so that $X \circ \theta_n = (X_{m+n})_{m \geq 0}$. For $K \subset \Z^d$ we define $H_K= \inf \{ n \geq 0 : X_n \in K \}$ and $T_{K}=H_{\Z^d \setminus K}$ the respective entrance time in $K$ and exit time from $K$, for $K \subset \Z^d$. We write $\widetilde{H}_K=\inf \{ n \geq 1 : X_n \in K \}$ for the hitting time of $K$. Here, we take the convention $\inf\varnothing=+\infty.$ For $n\geq 0$ we also introduce the time of last visit to $K$ (before time $n$), 
\begin{equation}
\label{eq:defLKt}
L_K (n)=\sup\{m\leq n: X_m\in K\}, \, L_K= \lim_{n\to \infty}  L_K(n)
\end{equation}
with the convention $\sup\varnothing=-\infty.$ 
For ${x}\in \mathbf{T}$ and $y\in{\pi^{-1}(\{x\})}$, we set $\mathbf{P}_{{x}}= \pi \circ P_y,$ which is well-defined (i.e.~does not depend on the choice of $y$).  With a slight abuse of notation, we also write $(X_n)_{n \geq 0}$ for the canonical process under $\mathbf{P}_{{x}},$ $x\in{\mathbf{T}}$, which is the simple random walk on $\mathbf{T}$, starting at ${x}$, and extend the definition of $H_K,$ $T_K,$ $\tilde{H}_K,$ $L_K(n)$ and $L_K$ under $\mathbf{P}_x.$ Let also $\mathbf{P}\stackrel{\text{def.}}{=} N^{-d} \sum_{{x} \in \mathbf{T}} \mathbf{P}_{{x}}.$ Under $\mathbf{P}_{{x}} $ (or $\mathbf{P}$), one defines for all $t,u>0$ the (discrete) local times 
\begin{equation}
\label{eq:deflocaltimes}
	{\ell}_{{x}}(t) = \sum_{0 \leq n \leq \lfloor t\rfloor}1\{ X_n={x}\}, \, t\in [0,\infty), \quad {\ell}_{{x},u}= {\ell}_{{x}}(uN^d).
\end{equation}

We denote by $g(x,y)$, $x,y \in \mathbb{Z}^d$ the Green's function of $X$ under $P_x$, which is known to be finite and symmetric. Moreover $g(x,y)=g(x-y,0)\equiv g(x-y)$ so in particular, $g(0)=g(0,0)$. For suitable $f$ on $\mathbb{Z}^d$ (e.g.~with finite support), we write $Gf(x)=\sum_{y\in \Z^d} g(x,y)f(y)$. For finite $K \subset \mathbb{Z}^d$, we denote by $e_K$ the equilibrium measure of $K$,
\begin{equation}\label{e:e_K}
e_K(x)=P_x(\widetilde{H}_K = \infty)1_{K}(x), \text{ for $x \in \mathbb{Z}^d$,} 
\end{equation}
which is finite and supported on $\partial K$, and by $\textnormal{cap}(K)$ its total mass, the capacity of $K$, which is monotone in $K$. We write $\overline{e}_{K}=\frac{e_K}{\text{cap}(K)}$ for the normalized equilibrium measure. If $K\subset \mathbf{T}$ with $\delta(K)<N,$ we also define the capacity of $K$ as follows: if $K\subset Q_{N-1}(\boldsymbol{0})$ and $K'\subset Q_{N-1}(0)(\subset\Z^d)$ is a set such that $\pi(K')=K,$ then we let $\mathrm{cap}(K)=\mathrm{cap}(K').$ For other $K$ with $\delta(K)<N,$ we define $\mathrm{cap}(K)$ by translation invariance. Moreover, we take $\mathrm{cap}(K)=\infty$ for the subsets of $\mathbf{T}$ which are not topologically trivial, i.e.\ when $\delta(K)=N$.  One knows that 
\begin{equation}
\label{eq:lastexit} Ge_K=h_K, \quad \text{ where $h_K(x)=P_x(H_K< \infty)$, $x\in \Z^d$ is the equilibrium~potential of $K$}.
\end{equation}
The following straightforward (strict) monotonicity result for $g$ will be repeatedly used. Let $|\cdot|_1$ denote the $\ell^1$-distance (i.e.~the graph distance) on $\Z^d$.
\begin{lemma} \label{lem:boundongreenfunction} For all integers $n \geq 0$,
\begin{equation}
\label{e:g-monot}\sup_{|x|_1 > n} g(x) < \sup_{|x|_1 = n} g(x).
\end{equation}
\end{lemma}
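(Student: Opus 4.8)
The plan is to prove the strict monotonicity of $g$ along $\ell^1$-distance shells by combining the maximum principle for the harmonic function $g(\cdot)$ away from the origin with the irreducibility (connectivity) of the simple random walk. Recall that $x \mapsto g(x)$ is harmonic on $\Z^d \setminus \{0\}$ for the simple random walk, i.e.~$g(x) = \frac{1}{2d}\sum_{y \sim x} g(y)$ for all $x \neq 0$, and that $g(x) \to 0$ as $|x|_1 \to \infty$ while $g(x) > 0$ for every $x$.

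First I would argue that $g$ attains its maximum over $\{|x|_1 \geq n\}$ and that this maximum is attained \emph{only} on the inner shell $\{|x|_1 = n\}$ (for $n \geq 1$; the case $n=0$ is immediate since $g(0) = \max_{\Z^d} g$ by a standard last-exit or Fourier argument, and $g(x)<g(0)$ for $x\neq 0$). Indeed, since $g$ vanishes at infinity and is positive, the supremum $M_n := \sup_{|x|_1 \geq n} g(x)$ is a maximum, attained at some $x_0$ with $|x_0|_1 \geq n$. Suppose for contradiction that $|x_0|_1 > n$, so $x_0 \neq 0$. Then the mean-value property at $x_0$ forces $g(y) = M_n$ for every neighbour $y$ of $x_0$; in particular for the neighbours $y$ with $|y|_1 = |x_0|_1 - 1 \geq n$. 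Iterating this propagation along a monotone lattice path that decreases the $\ell^1$-norm by one at each step — such a path exists from any $x_0$ down to any point of the shell $\{|x|_1 = n\}$, staying within $\{|x|_1 \geq n\}$ and never passing through $0$ since $n \geq 1$ — shows $g \equiv M_n$ on all of $\{|x|_1 \geq n\}$. But that contradicts $g(x) \to 0$. Hence the maximum $M_n$ is attained on $\{|x|_1 = n\}$, i.e.~$M_n = \sup_{|x|_1 = n} g(x)$.

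To upgrade this to the strict inequality $\sup_{|x|_1 > n} g(x) < \sup_{|x|_1 = n} g(x)$, I would take any $x_1$ with $|x_1|_1 = n$ achieving $g(x_1) = M_n$ (note $x_1 \neq 0$ as $n\geq 1$), and apply the mean-value property at $x_1$: its neighbours $y$ include at least one point with $|y|_1 = n+1$, and $g(x_1) = \frac{1}{2d}\sum_{y\sim x_1} g(y) \leq \frac{1}{2d}\sum_{y \sim x_1} M_n = M_n$ with equality only if $g(y) = M_n$ for all neighbours $y$, including those on shell $n+1$; but the previous paragraph's argument (propagation from such a $y$) would again give $g \equiv M_n$ on $\{|x|_1 \geq n+1\}$, contradicting decay. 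Therefore $g(y) < M_n$ for the neighbour(s) $y$ of $x_1$ on shell $n+1$, and more to the point every point of $\{|x|_1 > n\}$ has $g$-value strictly below $M_n$: if some $z$ with $|z|_1 > n$ had $g(z) = M_n$, the first paragraph's propagation argument (now starting the descent at $z$) yields the same contradiction. This gives \eqref{e:g-monot} for $n \geq 1$, and the case $n = 0$ follows from $g(0) = \max g$ and positivity/decay exactly as above.

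The main obstacle is getting the \emph{strict} inequality cleanly: the soft part (a non-strict bound $\sup_{|x|_1 > n} g \leq \sup_{|x|_1 = n} g$) is essentially immediate from harmonicity, but ruling out equality requires the propagation-of-maxima argument, which in turn hinges on the combinatorial fact that from any lattice point one can reach any inner shell via an $\ell^1$-monotone path avoiding the origin — this is where one must be slightly careful when $n$ is small, and why the origin must be handled as a separate base case. An alternative, perhaps slicker route would be a probabilistic one: write $g(x) = g(0)\,P_x(H_0 < \infty)$ and, for $x$ with $|x|_1 = n+1$, condition on the first visit to the shell $\{|x|_1 = n\}$ (which occurs before $H_0$ by the layered structure of $\ell^1$-balls), obtaining $P_x(H_0<\infty) = \sum_{w:\,|w|_1 = n} P_x(X_{H} = w) P_w(H_0 < \infty) \leq \max_{|w|_1=n} P_w(H_0<\infty)$, with strictness because the hitting distribution on shell $n$ from $x$ is genuinely spread over at least two points (for $d \geq 2$, $n\geq 1$) so cannot concentrate on an argmax unless $g$ is constant on that shell — and even then, a further step shows it cannot equal the shell-$n$ maximum. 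I expect the deterministic maximum-principle version to be the most economical to write, so I would go with that.
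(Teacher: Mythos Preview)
Your maximum-principle approach is correct and genuinely different from the paper's. The paper argues probabilistically and quantitatively: writing $g(x)/g(0) = P_0(H_x < \infty)$ for $|x|_1 > n$, applying the strong Markov property at the entrance time of $B_1(x,n)$, and using the uniform escape bound $P_y(H_{B_1(0,n)} = \infty) \geq c(n) > 0$ for $y \notin B_1(0,n)$, one obtains directly $g(x) \leq (1-c(n)) \sup_{|y|_1=n} g(y)$. This is close in spirit to the probabilistic alternative you sketch at the end and has the advantage of producing a multiplicative gap; your main argument, by contrast, is purely potential-theoretic and would apply verbatim to any positive function harmonic off a point and vanishing at infinity.

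Two repairs to your write-up. First, a monotone $\ell^1$-decreasing path from $x_0$ reaches neither \emph{every} point of shell $n$ (e.g.\ from $(5,0,\ldots,0)$ you cannot descend monotonically to $(0,2,0,\ldots,0)$) nor an unbounded set, so the propagation as written does not yield the contradiction with $g\to 0$. The clean version propagates through the connected set $\{|x|_1 > n\}$ (for $d\geq 2$): from any $x$ there with $g(x)=M_n$, all $2d$ neighbours lie in $\{|y|_1 \geq n\}$ and hence equal $M_n$ by the mean-value property, and iterating within $\{|x|_1>n\}$ forces $g\equiv M_n$ on that unbounded set. Second, in your second paragraph the bound $g(y)\leq M_n$ is not available for neighbours of $x_1$ on shell $n-1$; but as you then correctly observe, this step is unnecessary --- the strict inequality already follows from the first paragraph, since any $z$ with $|z|_1>n$ and $g(z)=M_n$ would be an interior maximizer.
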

\begin{proof} Set $B_1(x,n)=\{y\in{\Z^d}: |{y-x}|_1\leq n\}$. First, one classically knows that there exists $c(n)>0$ such that $P_x(H_{B_1(0,n)} = \infty) \geq c(n)$ for all $x \notin B_1(0,n)$. Hence, by the strong Markov property and translation invariance, one obtains, for all $x\in{\Z^d}$ with $|{x}|_1> n$,
\begin{multline*}
	\frac{g(0,x)}{g(0)}\stackrel{\eqref{eq:lastexit}}{=} P_0(H_x<\infty) =E_0\big[1\{H_{B_1(x,n)}<\infty\} P_{X_{H_{B_1(x,n)}}}(H_x<\infty) \big]\\
	 \leq (1-c(n)) \sup_{y\in \partial B_1(x,n)} P_y(H_x<\infty) 
	  \stackrel{\eqref{eq:lastexit}}{=} (1-c(n)) \sup_{|y|_1=n} \frac{g(0,y)}{g(0)},
\end{multline*}
from which \eqref{e:g-monot} follows.
\end{proof}

By evaluating \eqref{eq:lastexit} for $x \in K$ and solving the resulting linear system, one can explicitly determine $e_K$ and $\text{cap}(K)$ in terms of $g$. In case $K$ is a singleton or a two-point set, this gives
\begin{align}
&\text{cap}(\{ x,y\})= \frac{2}{g(0)+ g(x-y)}, \text{ for all $x, y \in \Z^d$} \label{e:cap-2point}
\end{align}
(in particular, $\text{cap}(\{ x\})= g(0)^{-1}$). For $K\subseteq U\subset\Z^d$ we further set $e_K^U(x)= P_x(\widetilde{H}_K > T_U)1_{K}(x)$, so that \eqref{e:e_K} corresponds to choosing $U=\Z^d$, and write $\text{cap}_U(K)= \sum_x e_K^U(x)$ for its total mass. We now collect some further facts about capacity, which will be useful in the rest of the article. First, as we now explain, combining \eqref{e:cap-2point} and Lemma~\ref{lem:boundongreenfunction} one obtains that 
\begin{equation}\label{eq:cap-distance2}
\text{cap}(K) > \text{cap}(\{ x,y\}) \text{ for any $K \subset \Z^d$ with $\text{diam}_{\ell^1}(K ) \geq 2$ and } x\sim y.
\end{equation}
Indeed by monotonicity of $K \mapsto \text{cap}(K)$, see \cite[Prop.~2.2.1]{Law91}, it is enough to consider the case $|K|=2$ with two-points at distance $n \geq 2$, from which \eqref{eq:cap-distance2} follows using the formula \eqref{e:cap-2point}, translation invariance, and \eqref{e:g-monot} for $n \geq 2$.
Next, by \cite[Lemma~1.11]{MR3308116} and \cite[Proposition~1.2]{ASSZd}, for any sets $K,K'\subset\subset\Z^d$, one has 
\begin{equation}
\label{eq:capdecoupling}
0\leq\mathrm{cap}({K})+\mathrm{cap}(K')-\mathrm{cap}(K\cup K')\leq c|K'||{K}|d(K,K')^{-(d-2)},
\end{equation}
for some constant $c$ depending only on $d.$ An important application of the right-hand side of \eqref{eq:capdecoupling} is that if $F\subset\subset\Z^d$ and  $K,K'\subset F$ are such that $d(K,K')\geq \log(|F|)^{1/(d-2)}/2,$ then for all $\alpha\in{(0,1]}$
\begin{equation}
\label{eq:decouplatepoints}
|F|^{-\alpha g(0)\mathrm{cap}(K\cup K')}\leq C|F|^{-\alpha g(0)\mathrm{cap}(K)}\cdot|F|^{-\alpha g(0)\mathrm{cap}(K')},
\end{equation} 
for some constant $C$ depending only on $d,$ $|K|$ and $|K'|.$ The following improvement of \eqref{eq:decouplatepoints} will also be used. If, as will commonly occur in practice, $|F| \to \infty$ and $ \frac{d(K,K')}{\log(|F|)^{1/(d-2)}} \to \infty$ whereas $|K|, |K'| \leq C'$, one can further replace the constant $C$ appearing in \eqref{eq:decouplatepoints} by $(1+ o(1))$ (as $|F| \to \infty$). Note that the reverse inequality in \eqref{eq:decouplatepoints} is also true with $C=1$ by \eqref{eq:capdecoupling}. Another interesting application of \eqref{eq:capdecoupling} is the following: for each $r>0,$
\begin{equation}
\label{eq:easy}
	\text{there exists $C=C(r)<\infty$ so that each $K\subset\subset \Z^d$ with $|K|\geq C$ satisfies $\cpc{}{K}\geq r$}.
\end{equation}
Indeed, in order to prove \eqref{eq:easy}, it suffices to prove that if $K$ contains $\lceil r/\mathrm{cap}(\{0\})\rceil+1$ points $x_i$ so that $d(x_i,x_j)$ is large enough for each $i\neq j$, then $\cpc{}{K}\geq r,$ which follows easily from~\eqref{eq:capdecoupling}. Finally note that \eqref{eq:cap-distance2}, \eqref{eq:capdecoupling}, \eqref{eq:decouplatepoints} and \eqref{eq:easy} still hold when $K,K'\subset\mathbf{T}$ as long as the capacities in question are not infinite, which is for instance the case when $\delta(K),\delta(K')<N/2.$

We now briefly introduce the random interlacement process and its associated local times to the extent we need them. We denote by $\omega$ the random interlacements process on $\Z^d$ with law $\PI,$ as defined in \cite{SZNIT_interlacements}, which is a Poisson process of bi-infinite random walk trajectories with positive labels. We will actually never need to describe the full law of $\omega$ here, but only its push-forward $\omega_{B}^u$ to the trajectories with label less than $u$ which hit a ball $B \subset\Z^d,$ and started after their first hitting time of $B,$  for some $u>0,$ that we now describe.  For each finite set $B\subset\Z^d,$ there exists a Poisson process $N_{B}=(N^u_{B})_{u\geq0}$ on $[0,\infty)$ with intensity $\cp(B)$ and an independent i.i.d.\ sequence of random walks $(X^i)_{i\geq1},$ each with law $P_{\overline{e}_{B}},$ such that for all $u>0$, under $\PI,$
\begin{equation}
\label{eq:definterprocess}
    \omega_{B}^u = \sum_{i=1}^{N_{B}^u}\delta_{X^i}.
\end{equation}
This description of $\omega_{B}^u$ via $N_{B}^u$ and $(X^i)_{i\geq1}$ entirely characterises its law. Note that the trajectories $(X^i)_{i\geq1}$ depend on the choice of $B,$ but this dependence does not appear in the notation for simplicity. We write $\I_{B}^u$ for the interlacements set in $B$ and $\ell_{y,u}$ for the associated local time at $y\in{B}$ at level $u$, i.e.
\begin{equation}\label{eq:deflocaltimes-ri}
  \ell_{y,u}= \ell_{y,u}(\omega)=\sum_{i=1}^{N_{B}^u} \sum_{k=0}^{\infty} 1\{X_k^i=y\},
\end{equation}
and
\begin{equation}
\label{eq:interlocal}
\I_{B}^u= \I^u \cap B = \bigcup_{i=1}^{N_{B}^u}X^i[0,\infty)\cap B \quad (= \{ y \in B: \ell_{y,u}>0 \})
\end{equation}
where $X^i[0,\infty)=\{X^i_k,\,k\in\N\}.$ The vacant set at level $u$ is defined as $\V^u= \Z^d \setminus \I^u$ and the formula \eqref{e:V^u-def} follows as $\PI(\V^u \subset B)=\PI(N_B^u=0)$.

\section{Modified Chen-Stein method with sprinkling}
\label{sec:chen-stein}

We now collect a result, of independent interest, which roughly speaking generates a coupling of some (Bernoulli) process $Y$ of interest --possibly highly correlated-- and an independent process $W$ from a given coupling between $Y$ with another process $Z$ in such a way that $(Y,W)$ are `close' whenever $(Y,Z)$ are. Here, `distance' will be quantified by the functional $d_{\varepsilon}$ introduced below, which allows for a sprinkling with parameter $\varepsilon>0$ in the underlying density of the processes. The main result appears in Lemma~\ref{lem:corofchenstein}.  For the applications in this article, $Y$ will be the occupation time field of RW/RI, $Z$ will be a suitable finite-range approximation, in a sense to be made precise, see \eqref{eq:Z}, and $W$ the target i.i.d.~field. We compare the philosophy underlying Lemma~\ref{lem:corofchenstein} with the traditional Chen-Stein method further in Remark~\ref{R:CS}.

In the sequel, given some probability space $(\Omega, \mathcal{A}, \P)$ and a finite set $S$ (typically a finite collection of subsets of $ \Z^d$ or $\Z_N^d$), we call \textit{Bernoulli process on $S$} any random variable $Z=(Z_x)_{x\in S}: \Omega \to \{0,1\}^S$. The process $Z$ will be referred to as \textit{independent} whenever $\{Z_x : x\in S\}$ constitutes an independent family of random variables.
Given $I \subset [0,\infty)$ an interval and two families $Z= (Z^\alpha)_{\alpha \in I}$, $Y= (Y^\alpha)_{\alpha \in I}$ of Bernoulli processes on a countable set $S$ (i.e.~for every $\alpha \in I$, $Y^\alpha,Z^\alpha$ are Bernoulli processes on $S$), we define, for every $\alpha, \varepsilon \geq 0$ such that $\alpha \pm \varepsilon \in I$,
\begin{equation}
\label{eq:defdepsilon}
d_{\varepsilon}(Y,Z;\alpha)=\inf\bigg\{\delta\in{[0,1]}\,:\begin{array}{l}
\exists\text{ a coupling $\hat{\P}$ between $Y^\alpha$ and $(Z^{\alpha-\epsilon}, Z^{\alpha+\epsilon})$} \\
\text{ s.t. } \hat{\P}\big({Z}^{\alpha+\eps}_x\leq {Y}_x^{\alpha}\leq {Z}_x^{\alpha-\eps}\,\forall x\in{S}\big)\geq 1-\delta
\end{array}
\bigg\}.
\end{equation}

Note that $d_{0}(Y,Z;\alpha)= d_{\textnormal{TV}}(Y^{\alpha},Z^{\alpha})$. This is consistent with the notation from \eqref{eq:distancesprinkling} by setting $d_{\varepsilon}(Y,Z;\alpha)=d_{\varepsilon}(\mathcal{S}_Y,\mathcal{S}_Z;\alpha)$, i.e.~identifying $Y= (Y^\alpha)_{\alpha \in I}$ with the corresponding family of occupations sets $\mathcal{S}_{Y}= (\mathcal{S}_{Y}^{\alpha})_{\alpha \in I}$, where $\mathcal{S}_{Y}^{\alpha} =\{x:Y_{x}^{\alpha}=1\}$ and similarly for $Z$. We will use $d_{\varepsilon}$ to measure proximity between $Y$ and $Z$.

The following setup is tailored to our purposes. The process $Y=(Y^{\alpha})_{\alpha \in I}$ is called a \textit{decreasing} family of Bernoulli processes on $S$ if $Y^{\alpha}$, $\alpha \in I$, are defined on a joint probability space and $Y_x^{\beta}\leq Y_x^{\alpha}$ a.s.~for all $x\in{S}$ and $\alpha, \beta \in I$ with $\beta\geq\alpha$. We consider two families $Y=(Y^{\alpha})_{\alpha \in I}$ and $W=(W^{\alpha})_{\alpha \in I}$ with the following properties:
\begin{align}
&\begin{array}{l}\text{$Y$ is a decreasing family of Bernoulli processes on $S$;}\end{array}\label{eq:Y}\\[0.4em]
&\begin{array}{l} 
\text{for each $\alpha \in I$, $W^{\alpha}$ is an independent Bernoulli process on $S$,}\\
\text{decreasing in $\alpha$, and $\P(W^\alpha_x=1)= \P(Y^\alpha_x=1)$ for all $\alpha \in I$, $x \in S$.} \end{array} \label{eq:W}
\end{align}
The goal will typically be to couple $Y$ and $W$ in a manner keeping $d_{\varepsilon}(Y,W;\cdot)$ small.
In practice, we will first approximate $Y$ by a family $Z=(Z^{\alpha})_{\alpha \in I}$ of finite-range processes. Note that our notion of finite range processes, see \eqref{eq:Z}, is slightly different from the usual notion of finite range, and is adapted to our context. The following lemma then allows to `lift' couplings between $(Y,Z)$ controlling the quantity $d_{\varepsilon}(Y,Z;\cdot)$, to couplings with similar properties between $(Y,W)$. 

\begin{lemma}\label{lem:corofchenstein} Let $Y,W$ satisfy \eqref{eq:Y}-\eqref{eq:W}. If
\begin{equation}
\begin{split}
&\textnormal{$(Z^{\alpha})_{\alpha \in I}$ is a family of Bernoulli processes on $S$ such that}\\[0.4em]
&\begin{array}{l}
\quad \textnormal{(Monotonicity): $Z=(Z^{\alpha})_{\alpha \in I}$ is a decreasing family, and}\\[0.2em]
\quad \textnormal{(Finite range): for every $x\in S$, there exists $\cN_x\subset S$ such that for $\alpha, \beta \in I$, $\alpha \leq \beta$,} \\ 
\qquad\qquad\qquad\quad \  \,\text{$Z^{\beta}_x$ and $Z^{\alpha}_x- Z^{\beta}_x$ are each independent of $\{Z^{\alpha}_y, Z^{\beta}_y : y\notin \cN_x\}$,}
\end{array}\label{eq:Z}
\end{split}
\end{equation}
then for all $\epsilon>0$ and $\alpha\in{I}$ such that $\alpha\pm 3\eps\in{I}$ one has 
		\begin{equation}
\label{eq:finalboundChenStein}
d_{2\eps}(Y,W;\alpha)\leq 400\sup_{\alpha'\in{\{\alpha-2\eps,\alpha\}}}\left\{b_1(\alpha')+b_2(\alpha')+d_{\varepsilon}(Y,Z;\alpha')\cdot|S|^2\right\},
	\end{equation}
	where  
	\begin{align}
	&b_1(\alpha) \stackrel{\textnormal{def.}}{=} \sum_{x\in S} \sum_{y\in \cN_x} \P(Y_x^\alpha=1)\P(Y_y^\alpha=1) \label{eq:b_1},\\
	&b_2(\alpha) \stackrel{\textnormal{def.}}{=} \sum_{x\in S}\sum_{y\in \cN_x\setminus\{x\}} \P(Y_x^\alpha=1, Y_y^\alpha=1).\label{eq:b_2}
	\end{align}
	\end{lemma}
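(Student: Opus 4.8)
The strategy is to interpolate between $Y$ and $W$ through the finite-range family $Z$, applying the classical Chen--Stein bound (in the form of \cite[Theorem~3]{Arratia}) to $Z$ rather than to $Y$, so that the troublesome long-range term $b_3$ vanishes identically. First I would fix $\alpha$ with $\alpha \pm 3\eps \in I$ and set $\delta' = d_{\varepsilon}(Y,Z;\alpha')$ for $\alpha' \in \{\alpha - 2\eps, \alpha\}$; by definition of $d_\varepsilon$ there is a coupling of $Y^{\alpha'}$ with $(Z^{\alpha'-\eps}, Z^{\alpha'+\eps})$ realising the sandwich $Z^{\alpha'+\eps}_x \leq Y^{\alpha'}_x \leq Z^{\alpha'-\eps}_x$ for all $x$ with probability at least $1 - \delta'$. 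Composing the two couplings (at $\alpha' = \alpha$ and $\alpha' = \alpha - 2\eps$) and using monotonicity of $Z$ in its parameter, one obtains on a single probability space all of $Y^{\alpha}$, $Y^{\alpha - 2\eps}$ together with $Z^{\beta}$ for $\beta \in \{\alpha - 3\eps, \alpha - \eps, \alpha + \eps\}$ (or however many levels are needed), with the corresponding sandwich relations holding off an event of probability $\leq 2\max_{\alpha'}\delta'$.

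Next I would produce the independent target $W$. The key observation is that $Z$, being finite-range in the sense of \eqref{eq:Z} with neighbourhoods $\cN_x$, is amenable to Chen--Stein: for each level $\beta$ we run the Poisson/Bernoulli approximation argument to couple $Z^{\beta}$ with an independent Bernoulli process $\widetilde W^{\beta}$ of the same marginals, at total-variation cost controlled by $b_1(\beta) + b_2(\beta)$ and with $b_3(\beta) = 0$ precisely because of the independence built into \eqref{eq:Z}. One must do this consistently across the finitely many levels $\beta$ appearing above and in a monotone fashion; this is where the factor $|S|^2$ and the numerical constant $400$ in \eqref{eq:finalboundChenStein} enter --- a union bound over levels and a standard monotone-coupling construction (e.g.\ via uniform random variables, matching the prescribed marginals $\P(Z^{\alpha+\eps}_x=1)$, $\P(Z^{\alpha-\eps}_x=1)$, etc.) converts the $d_{\mathrm{TV}}$ control into control of the sandwich event for $\widetilde W$. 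Since $\P(W^\beta_x=1) = \P(Y^\beta_x=1)$ and $Z^\beta$ shares the marginals of $Y^\beta$ only approximately, a small correction term must be absorbed; but this discrepancy is itself bounded by the $b_i$'s and $d_\varepsilon(Y,Z;\cdot)$, so it does not worsen the stated bound.

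Finally I would chain the sandwiches: $W^{\alpha + 2\eps}_x = \widetilde W^{\alpha+\eps+\eps}_x \leq Z^{\alpha+\eps}_x \leq Y^{\alpha}_x \leq Z^{\alpha-\eps}_x \leq \widetilde W^{\alpha - 2\eps}_x = W^{\alpha - 2\eps}_x$, valid off the union of the bad events collected along the way, whose total probability is at most a constant multiple of $\sup_{\alpha'} \{ b_1(\alpha') + b_2(\alpha') + d_\varepsilon(Y,Z;\alpha') |S|^2 \}$. Taking the infimum over $\delta$ in the definition \eqref{eq:defdepsilon} of $d_{2\eps}(Y,W;\alpha)$ then yields \eqref{eq:finalboundChenStein}. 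The main obstacle I anticipate is the bookkeeping needed to run Chen--Stein simultaneously at several shifted levels while preserving monotonicity in $\alpha$ for $W$ --- i.e.\ constructing a single coupling that is a decreasing family and simultaneously close to $Z$ at each level --- together with carefully tracking how the approximate (rather than exact) matching of marginals between $Z$ and $Y$ propagates through the composition; the quantitative loss there is exactly what forces the $|S|^2$ amplification and the large absolute constant, and getting that dependence right (as opposed to a worse power of $|S|$) is the delicate point.
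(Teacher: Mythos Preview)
Your overall strategy is right, and you correctly identify the crux: one must run Chen--Stein on $Z$ (so that $b_3$ vanishes by \eqref{eq:Z}) \emph{jointly at the two levels} $\alpha\pm\eps$ in a way that preserves the ordering $\tilde W^{\alpha+\eps}\le\tilde W^{\alpha-\eps}$. But the proposal stops short of the one idea that makes this work. Applying Chen--Stein separately at each level $\beta$ produces separate total-variation couplings $Z^\beta\approx\tilde W^\beta$; these cannot simply be ``combined by a union bound and a standard monotone-coupling construction,'' because a TV coupling is not a monotone coupling, and there is no mechanism that forces the two approximating processes to satisfy $\tilde W^{\alpha+\eps}\le\tilde W^{\alpha-\eps}$ on the same probability space. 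You flag this as ``the main obstacle,'' and it is, but your sketch does not resolve it.

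The paper's device is to \emph{double the index set}: set $S'=S\times\{-,+\}$ and define a single Bernoulli process $Z'$ on $S'$ by $Z'_{(x,+)}=Z^{\alpha+\eps}_x$ and $Z'_{(x,-)}=Z^{\alpha-\eps}_x-Z^{\alpha+\eps}_x$, with target $\tilde W'$ built the same way from a monotone family $\tilde W^\beta_x=1\{U_x\le\P(Z^\beta_x=1)\}$. One then applies Chen--Stein \emph{once} to the pair $(Z',\tilde W')$. This is exactly why the hypothesis \eqref{eq:Z} requires independence of both $Z^\beta_x$ \emph{and} the increment $Z^\alpha_x-Z^\beta_x$ from the complement of $\cN_x$: it is tailored so that $b_3'=0$ for the doubled process, not merely for each level separately. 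The resulting TV coupling gives $(Z^{\alpha+\eps},Z^{\alpha-\eps})=(\tilde W^{\alpha+\eps},\tilde W^{\alpha-\eps})$ with high probability on one space, and monotonicity of $\tilde W$ is built in from the start.

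Two smaller corrections. First, your final chain writes $W^{\alpha+2\eps}=\tilde W^{\alpha+2\eps}$, but these have different marginals ($Y$ versus $Z$); the paper handles this with a further error $\P^U(W^{\alpha\pm2\eps}_x\ne\tilde W^{\alpha\pm\eps}_x)\le d_\eps$ per site, contributing a $|S|\,d_\eps$ term. Second, the factor $|S|^2$ does not arise from a union bound over levels; it comes from bounding the Chen--Stein quantities $b_1',b_2'$ for $Z'$ in terms of the corresponding quantities for $Y$: each one-point probability for $Z$ differs from that for $Y$ by at most $d_\eps$, and the double sum over $S'\times S'$ produces $|S|^2\,d_\eps$.
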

We comment further on the utility of \eqref{eq:finalboundChenStein} in Remark~\ref{R:CS} at the end of the proof.
\begin{proof}
Let $\delta> 0$ and $\P^U$ denote a probability measure carrying a family $U_x$, $x \in S$, of i.i.d.~uniform random variables on $[0,1]$. With a slight abuse of notation, we realize $W$ on the space $\P^U$ along with an auxiliary process $\til{W}$ by setting
\begin{equation}
\label{eq:Wtilde}
{W}^\alpha_x=1\{U_x\leq\P(Y_x^\alpha=1)\}, \quad \til{W}^\alpha_x=1\{U_x\leq\P(Z_x^\alpha=1)\}, \quad x \in S, \, \alpha \in I.
\end{equation}
Clearly, \eqref{eq:W} holds and \eqref{eq:Wtilde} defines a coupling of $(W,\til{W})$. We will first work with $\til{W}$, which matches the one-point densities of $Z$, and return to $W$ towards the end of the proof. Henceforth, fix $\alpha \in I$ and $\varepsilon> 0$ such that $\alpha\pm 3\eps\in{I}$ and let $\gamma(\alpha, \varepsilon)$
refer to the supremum on the right-hand side of \eqref{eq:finalboundChenStein}. Observe that $Z^{\alpha-\varepsilon}-Z^{\alpha+ \varepsilon}$ is a (well-defined) Bernoulli process on $S$ by monotonicity of $Z$ and the same holds true for $\til{W}^{\alpha-\varepsilon}-\til{W}^{\alpha+ \varepsilon}$ on account of \eqref{eq:Wtilde}.
We will first construct a coupling $\widetilde{\mathbb Q}$ between $(Z^{\alpha-\eps},Z^{\alpha+\eps})$ and $(\tilde{W}^{\alpha-\eps},\til{W}^{\alpha+\eps})$ with the property that 
\begin{align}\label{eq:Q}
\widetilde{\mathbb Q}\big(Z^{\alpha-\eps} = \til{W}^{\alpha-\eps},Z^{\alpha+\eps} = \til{W}^{\alpha+\eps}\big)\geq &1 - 384 \gamma(\alpha, \varepsilon) - \delta.
\end{align}
To this end, let $S' = S \times \{-,+\}$ and consider the Bernoulli processes $Z'$ and $\tilde{W}'$ on $S'$ defined as
\begin{equation}
\label{eq:defZ'W'}
Z'_{(x,\sigma)} =
\begin{cases}
Z^{\alpha-\eps}_x-Z^{\alpha+\eps}_x, & \text{ if } \sigma=-\\
Z^{\alpha+\eps}_x, & \text{ if } \sigma=+
\end{cases}, 
\qquad
\tilde{W}'_{(x,\sigma)} =
\begin{cases}
\tilde{W}^{\alpha-\eps}_x-\tilde{W}^{\alpha+\eps}_x, & \text{ if } \sigma=-\\
\tilde{W}^{\alpha+\eps}_x, & \text{ if } \sigma=+
\end{cases}.
\end{equation}
Due to \eqref{eq:Wtilde}, $Z'$ and $\tilde{W}'$ have the same one-dimensional marginals, i.e.~$Z'_{(x,\sigma)}\stackrel{\text{law}}=\tilde{W}_{(x,\sigma)}'$ for all $(x,\sigma) \in S'$. For any point $x' = (x,\sigma) \in S'$, define its neighborhood $\mathcal{N}_{x'}= \mathcal{N}_{x} \times \{\pm \}$, with $\mathcal{N}_x$ as given by \eqref{eq:Z}. We want to bound the total variation distance between $Z'$ and $\til{W}'$. Because $\til{W}'$ is not an independent Bernoulli process, we let $W''$ be an independent Bernoulli process with the same one dimensional marginals as $Z'$. Using the triangle inequality for total variation distance and as we explain below, applying~\cite[Theorem~3]{Arratia} (see also~\cite{Chen, Stein}), one obtains that 
\begin{equation}
\label{eq:CS}
d_{\rm{TV}}(Z', \tilde{W}') \leq d_{\rm{TV}}(Z', {W}'') + d_{\rm{TV}}(W'', \tilde{W}')  \leq 24(b_1' + b_2'),
\end{equation}
where
\begin{equation}
\label{eq:defb1b2}
	b_1' = \sum_{x' \in S'} \sum_{y' \in \cN_{x'}} \P(Z'_{x'}=1)  \P(Z'_{y'}=1),
\quad
	b_2' = \sum_{x' \in S'} \sum_{y' \in \cN_{x'}\setminus \{x'\}}  \P(Z'_{x'}=1, Z'_{y'}=1).
\end{equation}
Indeed, the right-hand side of \eqref{eq:CS} a priori includes a third term
\begin{equation}
\label{eq:defb3}
	b_3' = \sum_{x' \in S'} \mathbb E\Big[\Big|\mathbb E\big[Z'_{x'} - \P(Z'_{x'}=1) \, \big| \, Z'_{y'}, y' \notin \cN_{x'}\big] \Big| +  \Big|\mathbb E\big[\til{W}'_{x'} - \P(\til{W}'_{x'}=1) \, \big| \, \til{W}'_{y'}, y' \notin \cN_{x'}\big] \Big| \Big],
\end{equation}
but the finite-range property in \eqref{eq:Z} and the definitions of $Z'_{x'}$, $\til{W}
'_{x'}$ and $\mathcal{N}_{x'}$ imply that $b_3'=0$ (note to this effect that the $\sigma$-algebra generated by $\{Z'_{y'} : y' \notin \cN_{x'}\}$ is the same as that generated by $\{Z^{\alpha-\eps}_y, Z^{\alpha+\eps}_y : y\notin \cN_x\}$). Note also that the $b_1$ and $b_2$ terms we obtain for $d_{\rm{TV}}(W'', \tilde{W}')$ are both smaller than $b_1'$, and hence this justifies the constant $24$ appearing in~\eqref{eq:CS}. The reason we consider the process $\tilde{W}'$ instead of only working with the independent process $W''$ here is that we want the Bernoulli process $W$ we obtain at the end to be decreasing in $\alpha$, see \eqref{eq:W} (note that we would avoid this problem if we used Poisson processes throughout as in \cite[Theorem~2]{Arratia}, instead of Bernoulli processes). 

We now proceed to bound the right-hand side of \eqref{eq:CS} in terms of $\gamma(\alpha, \varepsilon)$. By means of the defining properties of $d_{\varepsilon}$, one sees that for any $\delta' >0$ and $x \in S$,
\begin{equation}
\label{eq:CS9}
\begin{split}
&\P(Z'_{(x,+)}=1)= \P(Z^{\alpha+\eps}_x=1) \leq \P(Y^{\alpha}_x=1) + d_{\varepsilon}(Y,Z;\alpha) + \delta'\\
&\P(Z'_{(x,-)}=1) \leq \P(Z^{\alpha-\eps}_x=1) \leq \P(Y^{\alpha-2\eps}_x=1) + d_{\varepsilon}(Y,Z;\alpha-2\varepsilon) + \delta',
\end{split}
\end{equation}
Thus, letting $\delta' \to 0$, abbreviating $d_{\varepsilon}= d_{\varepsilon}(Y,Z;\alpha) \vee d_{\varepsilon}(Y,Z;\alpha-2\varepsilon) $, observing that $|S'|=2|S|$ and using monotonicity of $Y$ to have $ \P(Y^{\alpha}_x=1) \leq  \P(Y^{\alpha-2\eps}_x=1)$ for all $x\in S$, it follows from \eqref{eq:CS9}, in view of \eqref{eq:defb1b2} and \eqref{eq:b_1}, that
\begin{equation}
\label{eq:CS11}
b_1' \leq 4(b_1(\alpha-2\varepsilon)+ 3d_{\varepsilon} |S|^2).
\end{equation}
Similarly, using that $Z'_{(x,\sigma)} \leq Z_{x}^{\alpha- \varepsilon}$ for all $x \in S $ and $\sigma  \in \{ \pm\}$ by monotonicity in \eqref{eq:Z}, one obtains that whenever $x'= (x,\sigma)$ and $y' =(y,\rho)$ with  $y \neq x$,
$$\P(Z'_{(x,\sigma)}=1, Z'_{(y,\rho)}=1) \leq \P(Z_{x}^{\alpha- \varepsilon}=1, Z_{y}^{\alpha- \varepsilon}=1) \leq  d_{\varepsilon} + \P(Y_{x}^{\alpha- 2\varepsilon}=1, Y_{y}^{\alpha- 2\varepsilon}=1).$$
Observing that $\P(Z'_{(x,\sigma)}=1, Z'_{(x,-\sigma)}=1)=0$ for all $x \in S$ by monotonicity, this yields that
\begin{equation}
\label{eq:CS12}
b_2' \leq 4(b_2(\alpha-2\varepsilon)+ d_{\varepsilon} |S|^2).
\end{equation}
Substituting \eqref{eq:CS11} and \eqref{eq:CS12} into \eqref{eq:CS} readily implies that $d_{\rm{TV}}(Z', \tilde{W}') \leq 384 \gamma(\alpha, \varepsilon)$. The existence of a coupling $\tilde{\mathbb Q}$ having the property \eqref{eq:Q} immediately follows from this (in fact, one could even choose $\delta=0$ by using a maximal coupling but this won't be necessary).

With \eqref{eq:Q} at hand, we now prove \eqref{eq:finalboundChenStein}, which entails finding a coupling of $(W^{\alpha-2\varepsilon},  W^{\alpha+2\varepsilon})$ and $Y^{\alpha}$  with suitable properties. To this end, let $\widehat{\mathbb Q}$ denote a coupling of $Y^{\alpha}$ and $(Z^{\alpha-\eps},Z^{\alpha+\eps})$  satisfying 
\begin{equation}
\label{eq:Qhat}
 \hat{\mathbb Q}\big({Z}^{\alpha+\eps}_x\leq {Y}_x^{\alpha}\leq {Z}_x^{\alpha-\eps}\,\forall x\in{S}\big)\geq 1-d_\varepsilon(Y,Z; \alpha)- \delta,
\end{equation}
which exists by definition of $d_{\varepsilon}$. We proceed to define a measure $\widehat{\P}$ on $(\{0,1\}^S)^3$, with canonical coordinates $(\hat{W}^{-},\hat{Y}, \hat{W}^{+})$ as follows: for all $w^{\pm}, y \in \{0,1\}^S$,
\begin{equation*}
\begin{split}
 &\hat{\mathbb P}\big(\hat{W}^{-}=w^-, \,  \hat{Y}^{}=y, \,  \hat{W}^{+}= w^{+}\big)\stackrel{\text{def.}}{=} \\[0.3em]
 &\ \sum_{\tilde{w}^{+},\tilde{w}^-} \P^U\big({W}^{\alpha\pm2\eps}=w^{\pm} \big| \til{W}^{\alpha\pm\eps}=\tilde{w}^{\pm} \big) \sum_{{z}^{+},z^-} \tilde{\mathbb Q}\big( \til{W}^{\alpha\pm\eps}=\tilde{w}^{\pm}  \big| {Z}^{\alpha\pm\eps}={z}^{\pm}  \big)  \hat{\mathbb Q}\big({Z}^{\alpha \pm\eps}= z^{\pm}, \,  {Y}^{\alpha} =y\big),
 \end{split}
\end{equation*}
where $\{{W}^{\alpha\pm2\eps}=w^{\pm} \}$ is short for $\{ {W}^{\alpha-\eps}=w^{-}, {W}^{\alpha+\eps}=w^{+}\}$ and similarly for other events, and $\tilde{w}^{+},\tilde{w}^-, z^{+}$ and $z^-$ range over all points in $\{0,1\}^S$ such that the corresponding events appearing in each conditioning have non-zero probability. One readily checks using \eqref{eq:Wtilde} and the definition of the couplings~$\til{\mathbb{Q}}$ and~$\widehat{\mathbb{Q}}$ that
$\hat{\mathbb P}$ is a probability measure with marginals $(\hat{W}^{-},\hat{W}^{+})  \stackrel{\text{law}}{=}(W^{\alpha-2\varepsilon}, W^{\alpha+2\varepsilon})$ and $\hat{Y} \stackrel{\text{law}}{=}Y^{\alpha}$, with $Y^{\alpha}$ and $W^{\alpha\pm 2\varepsilon}$ as prescribed by \eqref{eq:Y} and \eqref{eq:W}. Finally it follows from \eqref{eq:Wtilde} combined with the bounds appearing in \eqref{eq:CS9} that ${W}_x^{\alpha-2\epsilon}< \til{W}_x^{\alpha-\epsilon}$ if and only if $ \P(Y^{\alpha-2\eps}_x=1) 
< U_x \leq \P(Z^{\alpha-\eps}_x=1)$, which has probability at most $d_{\varepsilon}$ (similar considerations apply to the event $\til{W}_x^{\alpha+\epsilon}< W_x^{\alpha+2\epsilon}$), whence
\begin{equation*}
\P^U\big(W_x^{\alpha+2\epsilon}\leq \til{W}_x^{\alpha+\epsilon}\leq\til{W}_x^{\alpha-\epsilon}\leq {W}_x^{\alpha-2\epsilon}, \forall \, x\in S\big)\geq 1 - 2|S| d_{\varepsilon},
\end{equation*}
which, together with \eqref{eq:Q} and \eqref{eq:Qhat}, implies that
$$
\hat{\P}\big(\hat{W}^{+}_x\leq \hat{Y}_x\leq \hat{W}_x^{-}\,\forall x\in{S}\big) \geq 1-d_\varepsilon(Y,Z; \alpha)- \delta - 384 \gamma(\alpha, \varepsilon) - \delta -  2 |S| d_{\varepsilon}.
$$
Since $d_\varepsilon(Y,Z; \alpha) \leq  d_{\varepsilon}\leq\gamma(\alpha,\varepsilon)$ and $\delta> 0$ was arbitrary, \eqref{eq:finalboundChenStein} follows.
\end{proof}
	\begin{Rk}\leavevmode\label{R:CS} 
	Of course, the utility of \eqref{eq:finalboundChenStein} as a means to compare $Y$ and $W$ around level $\alpha$ rests in particular on having a good bound on $\sup_{\alpha'\in{\{\alpha-2\eps,\alpha\}}} d_{\varepsilon}(Y,Z;\alpha')$ to begin with, for a process $Z$ satisfying \eqref{eq:Z}, which in the case of the random walk on the torus will be provided by Theorem~\ref{thm:rwshortrange} below. The presence of $d_{\varepsilon}(Y,Z;\alpha')$ acts as a surrogate for a certain quantity ``$b_3$'' (much like $b_3'$ above, cf.~\cite{Arratia}), which would arise when attempting to compare $Y$ and $W$ directly using the Chen-Stein method, as done for instance in \cite{SamPerla} in the present context. The issue with this is that $b_3$ typically turns out to be too large when $Y$ has long-range. In fact, limitations of the method in the presence of long-range correlations are well-known, see for instance the discussion in \cite{Arratia}, Sec.~2 ``Open problem,'' pp.12-13.
	\end{Rk}

\section{Soft local times and inverse soft local times}
\label{sec:softlocaltimes}

In this section we give a brief exposition of the method of soft local times introduced by Popov and Teixeira in \cite{softlocaltimes}. An `inversion' of this technique, introduced below, see \eqref{e:slt-Pext}-\eqref{eq:defetak+1}, leading to Proposition \ref{pro:couplingSLT}, will be used to manufacture couplings in the next section. We defer to Remark~\ref{R:inverse} for a discussion of the benefits of this construction and its interplay with the technique of~\cite{softlocaltimes}.

Consider the measure space $(\Sigma, \mu)$, where $\Sigma$ is a locally compact Polish metric space endowed with its Borel $\sigma$-algebra, carrying a (Radon) measure $\mu$. At its root, the method of \cite{softlocaltimes} is a particular way to sample sequences $Z= (Z_i)_{i\geq 0}$ of $\Sigma$-valued random variables from a Poisson process on $\Sigma \times \R_+$. Although in principle, any sequence $Z$ such that all $Z_i$'s have a density with respect to $\mu$ can be accomodated, cf.~\cite[Section~3]{AlvesPopov}, the following Markovian setup will be enough for our purposes. Note however that all results presented in this section continue to hold at this greater level of generality.

Let $Z= (Z_i)_{i\geq 0}$ be a time-inhomogeneous Markov chain on $\Sigma$. That is,  there exist transition densities $g_i:\Sigma\times \Sigma \to \R_+,$ $i\geq1,$ with respect to $\mu$ (i.e.~the functions $g_i$ are measurable and $\int g_i(x,y) \mu(\mathrm{d}y)=1$ for all $x \in \Sigma$) such that under a probability measure $P$,
\begin{equation}\label{eq:abs-cont}
P(Z_{i+1}\in{\mathrm{d}x}\,|\,Z_i) = g_{i+1}(Z_i,x) \mu(\mathrm{d}x)\text{ for all }i\geq0.
\end{equation}
Under an auxiliary probability $Q,$ let $\eta$ be a Poisson point process on $\Sigma\times \R_+$ with intensity measure $\mu\otimes \mathrm{d}v,$ where $\mathrm{d}v$ denotes Lebesgue measure on $\R_+$. Our assumptions on $(\Sigma, \mu)$ ensure that the construction of $\eta$ falls within the realm of standard theory. We assume that $Q$ carries a random variable having the same law as $Z_0$ under $P$, independent of $\eta$, which we continue to denote by $Z_0$.

 Letting $z_0=Z_0$, $v_0=0$ and $\eta_0= \eta,$ one defines recursively, for $i \geq 0$,
\begin{equation}
\label{eq:softlocalexpo}
\xi_{i+1} = \inf_{(z,v)\in \eta_i} \frac{v}{g_{i+1}(z_i,z)},
\end{equation}
where, in writing e.g.~$(z,v)\in \eta$ we tacitly identify the point measure $\eta$ with its support. Combining Propositions~4.1 and~4.10 of~\cite{softlocaltimes}, it follows that the infimum in \eqref{eq:softlocalexpo} is attained $Q$-a.s.~at a unique pair $(z_{i+1},v_{i+1})$, and defining
\begin{align}\label{eq:softlocalpoisson}
\eta_{i+1}=\sum_{(z,v)\in \eta_i \setminus\{(z_{i+1},v_{i+1})\}} \delta_{(z,v-\xi_{i+1}g_{i+1}(z_i,z))},
\end{align}
the following holds:
\begin{align}
&\label{slt-p1} \xi= (\xi_{i})_{i \geq 1} \text{ are i.i.d.~exponential random variables with parameter $1$;}\\
&\label{slt-p2} \text{for all $i \geq 1$, } (z_0,\dots,z_i) \stackrel{\text{law}}{=}(Z_0,\dots ,Z_i)  \text{ and is independent of $(\xi_1,\dots \xi_i)$;}\\[0.3em]
&\label{slt-p3} \text{for all $i \geq 1$, $\eta_{i}$ is a Poisson process of intensity $\mu\otimes\mathrm{d}v$ independent of $(\xi_{j}, z_j, v_{j})_{0 \leq j\leq i}$}
\end{align}
(with $\xi_0 =0$). We refer to the sequence $z =(z_i)_{i \geq 0}$ thereby constructed 
as obtained from $(\eta,Z_0,(g_i)_{i \geq 1})$ via soft local times. The associated soft local time process is defined as
\begin{equation}\label{e:slt-def}
G_0(z)=0, \quad G_{i+1}(z) = G_i(z) + \xi_{i+1}g_{i+1}(z_i,z), \, i \geq 0. 
\end{equation}
As the next proposition illustrates, one benefit of this construction is to supply a natural coupling in terms of $\eta$ of two (or more) chains having densities with respect to $\mu$. The coupling allows for a comparison between the ranges of these chains, which is controlled in terms of the scalar fields $G_i(\cdot)$. To wit, let $\til{Z}$ be another Markov chain on $\Sigma$ having transition densities $(\til{g}_i)_{i\geq1}$ with respect to the same measure $\mu$, cf.~\eqref{eq:abs-cont}. 
With hopefully obvious notation, we write $\til{z}_i$ (along with $\til{\xi}_{i}$, $\til{\eta}_i$), $i \geq 0$, under $Q$ (which is tacitly understood to carry a copy of $\til{Z}_0$ independent of $\eta,Z_0$) 
when referring to the chain obtained from $(\eta,\til{Z}_0,(\til{g}_i)_{i \geq 1})$ by soft local times. We denote by $\til{G}_i$, $i \geq 0$  the corresponding soft local times, defined analogously to \eqref{e:slt-def}.

\begin{proposition}\label{pro:corollary}
	The processes $(z_i)_{i\geq0}$, resp.~$(\til{z}_i)_{i\geq0},$ have the same law under $Q$ as $(Z_i)_{i\geq0},$ resp.~$(\til{Z}_i)_{i\geq0},$ and for each  $m,n\geq 1,$ on the event
\begin{equation} \label{eq:SLT-rel-bounds}
	G_m(z)\leq \til{G}_n(z)\text{ for all }z\in{\Sigma},
\end{equation}
one has 
\begin{equation}
\label{eq:zincludedZi}
	\{z_1,\ldots, z_m\} \subseteq \{\til{z}_1,\ldots, \til{z}_n\}.
\end{equation}
\end{proposition}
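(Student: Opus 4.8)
The plan is to establish the two assertions of Proposition~\ref{pro:corollary} separately: first that $(z_i)_{i\geq 0}$ and $(\til z_i)_{i\geq 0}$ have the claimed laws, then the pathwise inclusion \eqref{eq:zincludedZi} on the event \eqref{eq:SLT-rel-bounds}.

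\medskip

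\textbf{Step 1: Marginal laws.} The first claim is immediate from the soft local times construction. Applying \eqref{slt-p2} to the chain built from $(\eta, Z_0, (g_i)_{i\geq 1})$ gives $(z_0,\dots, z_i) \stackrel{\text{law}}{=} (Z_0,\dots, Z_i)$ for every $i$, hence $(z_i)_{i\geq 0} \stackrel{\text{law}}{=}(Z_i)_{i\geq 0}$ by Kolmogorov's extension theorem (these are processes on a Polish space); the same applies verbatim to $(\til z_i)_{i\geq 0}$ using $(\eta, \til Z_0, (\til g_i)_{i\geq 1})$. Note the two chains are run from the \emph{same} realization of $\eta$ but with independent starting points, which is exactly the coupling we exploit next.

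\medskip

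\textbf{Step 2: A key monotonicity of the soft local time.} The crux is the following deterministic observation about the construction \eqref{eq:softlocalexpo}--\eqref{e:slt-def}: the point $(z_{i+1}, v_{i+1})$ that achieves the infimum in \eqref{eq:softlocalexpo} is precisely the point of the \emph{original} process $\eta$ lying on the graph of the function $z \mapsto G_{i+1}(z)$, and more generally, after $i$ steps the points of $\eta$ that have been ``used up'' by the chain $z$ are exactly those $(z,v) \in \eta$ with $v \leq G_i(z)$. Equivalently, $\eta_i$ is obtained from $\eta$ by removing all points below the graph of $G_i$ and then shifting the survivors down by $G_i$; this is the content of the recursion \eqref{eq:softlocalpoisson} unwound. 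In particular the set of points of $\eta$ absorbed by the chain $z$ in its first $m$ steps is
\[
\{(z,v)\in\eta : v \le G_m(z)\},
\]
and its projection onto $\Sigma$ contains $\{z_1,\dots,z_m\}$. (One should check the edge case that the pair attaining the infimum is unique $Q$-a.s., which is quoted from Propositions~4.1 and~4.10 of \cite{softlocaltimes}, so that the ``shifting down'' description is unambiguous.)

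\medskip

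\textbf{Step 3: Comparison via the graphs and conclusion.} Now work on the event \eqref{eq:SLT-rel-bounds} that $G_m(z) \le \til G_n(z)$ for all $z \in \Sigma$. Each $z_j$, $1\le j\le m$, is the first coordinate of some point $(z_j, v_j) \in \eta$ with $v_j \le G_m(z_j) \le \til G_n(z_j)$, by Step~2 applied to the chain $z$ and then the hypothesis. But by Step~2 applied to the chain $\til z$, every point $(z,v)\in\eta$ with $v \le \til G_n(z)$ has its first coordinate in $\{\til z_1,\dots,\til z_n\}$ --- indeed these are exactly the points consumed by $\til z$ in its first $n$ steps. Hence $z_j \in \{\til z_1,\dots,\til z_n\}$ for every $j$, which is \eqref{eq:zincludedZi}. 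The only mild subtlety is that a single point of $\Sigma$ could in principle be visited more than once by a chain (so that ``the point of $\eta$ on the graph of $G_j$'' corresponds to a repeated value); this causes no issue because we only need the set inclusion, and a repeated visit still has its associated $\eta$-point below the final graph.

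\medskip

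The main obstacle is Step~2: making precise and rigorous the claim that $\eta_i$ is exactly ``$\eta$ restricted to above the graph of $G_i$, translated down by $G_i$,'' and consequently that the $\Sigma$-projection of $\{(z,v)\in\eta : v\le G_i(z)\}$ equals (or at least contains) $\{z_1,\dots,z_i\}$. This requires carefully unwinding the recursion \eqref{eq:softlocalpoisson} by induction on $i$: assuming $\eta_i = \sum_{(z,v)\in\eta,\, v>G_i(z)} \delta_{(z,\,v-G_i(z))}$, one computes $\xi_{i+1}$ from \eqref{eq:softlocalexpo} as the smallest $t$ such that the line $v = G_i(z)+t\,g_{i+1}(z_i,z)$ touches $\eta$, identifies $(z_{i+1},v_{i+1})$ as that contact point, and verifies via \eqref{eq:softlocalpoisson} and \eqref{e:slt-def} that $\eta_{i+1}$ has the analogous form with $G_{i+1} = G_i + \xi_{i+1} g_{i+1}(z_i,\cdot)$. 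Everything else is then a short deterministic argument, with the distributional statements \eqref{slt-p1}--\eqref{slt-p3} only needed to guarantee well-posedness (a.s.\ uniqueness of the minimizer).
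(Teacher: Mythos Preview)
Your proof is correct and follows essentially the same approach as the paper. Both arguments rest on the identity $\{z_1,\dots,z_m\}=\{z\in\Sigma:\exists\,(z,v)\in\eta\text{ with }v\le G_m(z)\}$ (and the analogous one for $\til z$), from which the inclusion under \eqref{eq:SLT-rel-bounds} is immediate; the paper simply asserts this identity ``by construction,'' while you spell out the supporting induction on $i$ showing $\eta_i=\sum_{(z,v)\in\eta,\,v>G_i(z)}\delta_{(z,\,v-G_i(z))}$.
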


\begin{proof}
The first part is immediate on account of \eqref{slt-p2}. To see that \eqref{eq:SLT-rel-bounds} implies \eqref{eq:zincludedZi}, observe that, by construction, cf.~\eqref{eq:softlocalexpo}-\eqref{eq:softlocalpoisson} and by definition of $G_i$, see \eqref{e:slt-def}, one has for all, $m ,n \geq 1$,
\begin{align*}
\{z_1,\ldots, z_m\} &= \{ z \in \Sigma : \text{ there exists } (z,v) \in \eta \text{ s.t. } G_m(z) \geq v\}\\
\{\til{z}_1,\ldots, \til{z}_n\} &= \{ z \in \Sigma : \text{ there exists } (z,v) \in \eta \text{ s.t. }\til{G}_n(z) \geq v\}.
\end{align*}
From this, \eqref{eq:SLT-rel-bounds} plainly yields the inclusion \eqref{eq:zincludedZi}. 
\end{proof}

Proposition \ref{pro:corollary} is not entirely adapted to our purpose. In Section \ref{sec:shortrangeRWRI}, see in particular the proof of Proposition \ref{pro:clotheslines}, we will actually need to couple the process $\til{z}$ with the initial chain $Z$ on a suitable extension of $P$. This is conveniently achieved using an inverse soft local time method, that we now explain.

With $P$ referring to the original measure under which the Markov chain $Z=(Z_i)_{i \geq 0}$ is defined, cf.~above \eqref{eq:abs-cont}, let
\begin{align}\label{e:slt-Pext}
\hat{P}: \,&\begin{array}{l}\text{extension of $P$ carrying an independent random variable $\chi=\big((\hat{\xi}_k)_{k\geq1},\hat\eta_0\big),$}\\ \text{where $\hat{\eta}_0$ is a Poisson point process on $\Sigma \times \R_+$ with intensity $\mu\otimes \mathrm{d}v$, and}\\ 
\text{$(\hat{\xi}_k)_{k \geq 1}$ are i.i.d.~exponential variables with mean $1$, independent of $\hat{\eta}_0$.}
\end{array}
\end{align}
Given a realisation of the time inhomogeneous Markov chain $(Z_i)_{0\leq i\leq T}$ up to some deterministic integer time $T< \infty$, we set inductively for $k=0,\ldots, T-1$ (under $\widehat{P}$)
\begin{equation}
\label{eq:defetak+1}
\hat{\eta}_{k+1} = \delta_{(Z_{T-k},\, 
\hat{\xi}_{ T-k} g_{T-k}(Z_{T-k-1},Z_{T-k}))}+\sum_{(z,v)\in \hat{\eta}_k} \delta_{(z,\, v + \hat{\xi}_{ T-k}g_{T-k}(Z_{T-k-1},z))}
\end{equation}
and write $\eta=\hat{\eta}_T$. Note that, albeit implicit in our notation, all processes $\hat{\eta}_{k}$, $1\leq k \leq T$, implicitly depend on the choice of $T$.

\begin{lemma}\label{lem:poisson}
Under $\widehat{P}$, for all integers $T \geq 1$, the process $\eta$ is a Poisson process of intensity $\mu\otimes \mathrm{d}v,$ independent of~$Z_0.$ Moreover, the sequence obtained by applying soft local times to $(\eta,Z_{0},(g_i)_{i\geq 1})$ up to time~$T$ is~$(Z_i)_{0\leq i\leq T}$, with corresponding exponential variables $(\hat{\xi}_i)_{1 \leq i \leq T}$.
\end{lemma}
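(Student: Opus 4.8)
The claim has two parts: first that $\eta = \hat\eta_T$ is a Poisson process of intensity $\mu \otimes \mathrm{d}v$ independent of $Z_0$; second, that running the soft local times construction of \eqref{eq:softlocalexpo}--\eqref{eq:softlocalpoisson} forwards on the triple $(\eta, Z_0, (g_i)_{i\geq 1})$ recovers exactly $(Z_i)_{0\leq i \leq T}$ together with the prescribed exponentials $(\hat\xi_i)_{1\leq i \leq T}$. The natural approach is to recognize \eqref{eq:defetak+1} as a literal step-by-step inversion of \eqref{eq:softlocalpoisson}: the map $\eta_i \mapsto \eta_{i+1}$ that deletes the minimizing point $(z_{i+1},v_{i+1})$ and shifts the rest downward by $\xi_{i+1} g_{i+1}(z_i, \cdot)$ is, on its image, invertible, with inverse given by shifting every point \emph{up} by $\hat\xi g_{i+1}(z_i,\cdot)$ and re-inserting the point $(z_{i+1}, \hat\xi\, g_{i+1}(z_i, z_{i+1}))$ — which is precisely \eqref{eq:defetak+1} with indices relabelled ($k \leftrightarrow T-1-i$, $z_i \leftrightarrow Z_{T-k-1}$, $z_{i+1}\leftrightarrow Z_{T-k}$).

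\textbf{Step 1: the distributional identity.} I would prove by downward induction on $k$ (from $k=0$ to $k=T$) that under $\widehat{P}$, conditionally on $(Z_i)_{0\leq i \leq T-k}$, the process $\hat\eta_k$ is a Poisson process of intensity $\mu\otimes\mathrm{d}v$, \emph{independent} of $(Z_i)_{0\leq i \leq T-k}$ and of $(\hat\xi_i)_{1\leq i \leq T-k}$. The base case $k=0$ is the definition of $\hat\eta_0$ in \eqref{e:slt-Pext}. For the inductive step, conditionally on $(Z_i)_{0\leq i\leq T-k-1}$ the function $x \mapsto g_{T-k}(Z_{T-k-1},x)$ is a fixed probability density w.r.t.\ $\mu$, $\hat\xi_{T-k}$ is an independent standard exponential, $Z_{T-k}$ has conditional law $g_{T-k}(Z_{T-k-1}, \cdot)\,\mu(\mathrm{d}x)$, and $\hat\eta_k$ is (by induction, after conditioning further on $Z_{T-k}$ — which is legitimate since the inductive hypothesis gives independence of $\hat\eta_k$ from $(\hat\xi_i)_{i\leq T-k}$ and $(Z_i)_{i\leq T-k}$) an independent Poisson process of the stated intensity. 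The content of \eqref{eq:defetak+1} is then exactly the operation whose distributional output is described in Propositions~4.1 and~4.10 of~\cite{softlocaltimes}: adding the point $(Z_{T-k}, \hat\xi_{T-k} g_{T-k}(Z_{T-k-1},Z_{T-k}))$ and shifting $\hat\eta_k$ up by $\hat\xi_{T-k} g_{T-k}(Z_{T-k-1},\cdot)$ produces a Poisson process $\hat\eta_{k+1}$ of intensity $\mu\otimes\mathrm{d}v$ which is independent of $(Z_i)_{0\leq i\leq T-k-1}$ and of $(\hat\xi_i)_{1\leq i\leq T-k-1}$; this is the ``undo one step'' lemma, and can be checked directly by a Laplace-functional / mapping-theorem computation on the Poisson process, using that a standard exponential scaled by a density and inserted at an independent $\mu$-distributed point fills in precisely the point that the forward deletion step removed. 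Taking $k=T$ gives the first assertion.

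\textbf{Step 2: pathwise recovery.} This is a deterministic statement about the realisation. I would show, again by induction, that when the forward soft-local-times recursion \eqref{eq:softlocalexpo}--\eqref{eq:softlocalpoisson} is applied to $(\eta, Z_0, (g_i)_{i\geq 1})$, the $i$-th step produces $z_i = Z_i$, $\xi_i = \hat\xi_i$, and $\eta_i = \hat\eta_{T-i}$. Granting $\eta_{i} = \hat\eta_{T-i}$ and $z_i = Z_i$, inspect \eqref{eq:defetak+1} at level $k = T-i-1$: it exhibits $\hat\eta_{T-i}$ as $\delta_{(Z_{i+1},\, \hat\xi_{i+1} g_{i+1}(Z_i, Z_{i+1}))}$ plus the up-shift by $\hat\xi_{i+1} g_{i+1}(Z_i,\cdot)$ of $\hat\eta_{T-i-1}$. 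Since all points of $\hat\eta_{T-i-1}$ have strictly positive second coordinate ($Q$-a.s., and this carries over to $\widehat P$), for every $(z,v)$ coming from $\hat\eta_{T-i-1}$ one has $v + \hat\xi_{i+1}g_{i+1}(Z_i,z) > \hat\xi_{i+1}g_{i+1}(Z_i,z)$, so the unique minimizer of $v/g_{i+1}(z_i,z)$ over $(z,v)\in\eta_i$ is the added point $(Z_{i+1}, \hat\xi_{i+1}g_{i+1}(Z_i,Z_{i+1}))$, whence $z_{i+1} = Z_{i+1}$ and $\xi_{i+1} = \hat\xi_{i+1}$; and then \eqref{eq:softlocalpoisson} deletes this point and shifts the rest down by exactly $\hat\xi_{i+1} g_{i+1}(Z_i,\cdot)$, returning $\hat\eta_{T-i-1} = \eta_{i+1}$. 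Iterating to $i = T$ yields $(z_i)_{0\leq i\leq T} = (Z_i)_{0\leq i\leq T}$ and $(\xi_i)_{1\leq i\leq T} = (\hat\xi_i)_{1\leq i\leq T}$.

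\textbf{Main obstacle.} The routine but genuinely load-bearing point is Step 1: making rigorous that the single ``inversion'' operation in \eqref{eq:defetak+1} turns an independent Poisson process of the right intensity into another one, independently of the chain data. One must be careful about the order of conditioning (first on the chain up to time $T-k-1$, then separately on $Z_{T-k}$ and $\hat\xi_{T-k}$), and about invoking the exact converse of Propositions~4.1 and~4.10 of~\cite{softlocaltimes} rather than a forward version — essentially one needs that the forward step $\eta_k \mapsto \eta_{k+1}$ is a measure-preserving bijection between ``Poisson process'' and ``Poisson process minus its $g$-minimal point'' once the auxiliary exponential and insertion point are supplied with the correct laws. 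Everything else (positivity of second coordinates $\widehat P$-a.s., uniqueness of the minimizer, the bookkeeping of indices $k = T-i-1$) is a direct transcription.
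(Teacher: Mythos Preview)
Your proposal is correct and follows essentially the same two-step inductive structure as the paper's proof. The one noteworthy difference is in how the ``undo one step'' lemma (your main obstacle) is handled: rather than a direct Laplace-functional computation, the paper isolates it as a separate lemma and proves it by a slick distributional argument --- starting from a Poisson process $\eta_0$, running the \emph{forward} step once produces $(\xi_1, z_{\lambda_1}, \eta_1)$ with the known joint law from \cite{softlocaltimes}, and then one simply rewrites $\eta_0 = \delta_{(z_{\lambda_1}, \xi_1 g(z_{\lambda_1}))} + f_{\xi_1}(\eta_1)$ and concludes by equality in distribution, since $(\xi_1, z_{\lambda_1}, \eta_1) \stackrel{\text{law}}{=} (\xi, Z, \hat\eta)$. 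This avoids any explicit computation and makes the ``converse of Propositions~4.1 and~4.10'' you were worried about completely transparent.
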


Before proving Lemma \ref{lem:poisson}, we isolate the case $T=1.$

\begin{lemma}\label{lem:poissonprocess}
	Let $\hat{\eta},\xi, Z$ be independent random variables, with $\hat{\eta}$ a Poisson process of intensity $\mu\otimes \mathrm{d}v$, $\xi$ exponentially distributed of parameter $1$ and $Z$ having density $g$ with respect to~$\mu$.  Then 
\[
\eta = \delta_{(Z,\xi g(Z))} + \sum_{(z,v)\in \hat{\eta}} \delta_{(z,v+\xi g(z))} 
\]
is also a Poisson point process with intensity $\mu\otimes \mathrm{d}v$. 
\end{lemma}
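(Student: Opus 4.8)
The plan is to verify directly that the superposition $\eta$ has the defining properties of a Poisson point process with intensity $\mu \otimes \mathrm{d}v$, namely that for any finite collection of disjoint Borel sets the numbers of points they contain are independent Poisson variables with the right means. Since the map $(z,v) \mapsto (z, v + \xi g(z))$ is, for each fixed value of $\xi$, a measurable bijection of $\Sigma \times \R_+$ that preserves the measure $\mu \otimes \mathrm{d}v$ (it is a vertical shift by a $z$-dependent amount, and Lebesgue measure on $\R_+$ is translation invariant in the fibre), the pushed-forward process $\sum_{(z,v)\in\hat\eta}\delta_{(z,\,v+\xi g(z))}$ is, conditionally on $\xi$, again a Poisson process of intensity $\mu\otimes\mathrm{d}v$ — and since this law does not depend on $\xi$, it is unconditionally Poisson of intensity $\mu\otimes\mathrm{d}v$ and independent of $\xi$.

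The substantive point is then to add back the single atom at $(Z,\xi g(Z))$ and check that the total is still Poisson. The natural way is to compute the Laplace functional of $\eta$. Conditionally on $(Z,\xi)=(z,\xi)$, the shifted process $\sum_{(z',v')\in\hat\eta}\delta_{(z',v'+\xi g(z'))}$ is Poisson($\mu\otimes\mathrm{d}v$) by the previous paragraph, and the atom $\delta_{(z,\xi g(z))}$ is deterministic, so for a nonnegative measurable $f$ on $\Sigma\times\R_+$,
\[
\E\!\left[e^{-\eta(f)}\,\middle|\,Z=z,\xi=s\right] = e^{-f(z,\,s g(z))}\exp\!\left\{-\int_{\Sigma\times\R_+}\!\big(1-e^{-f(z',v')}\big)\,\mu(\mathrm{d}z')\,\mathrm{d}v'\right\}.
\]
It remains to integrate over the law of $(Z,\xi)$. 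Since $\xi$ is Exp($1$) and $Z$ has density $g$ with respect to $\mu$, independently, we get for the first factor
\[
\E\!\left[e^{-f(Z,\,\xi g(Z))}\right] = \int_\Sigma g(z)\int_0^\infty e^{-s}e^{-f(z,\,s g(z))}\,\mathrm{d}s\,\mu(\mathrm{d}z)
= \int_\Sigma \int_0^\infty e^{-v/g(z)}e^{-f(z,v)}\,\frac{\mathrm{d}v}{g(z)}\,\mu(\mathrm{d}z),
\]
after the substitution $v = s g(z)$ (valid since $g(z)>0$ for $\mu$-a.e.\ $z$ on the relevant set). The key algebraic identity is then that this "extra atom" term, when combined with the Poisson factor, exactly rebuilds the Laplace functional of a Poisson process of intensity $\mu\otimes\mathrm{d}v$: one checks that
\[
\int_\Sigma\int_0^\infty e^{-v/g(z)}e^{-f(z,v)}\frac{\mathrm{d}v}{g(z)}\,\mu(\mathrm{d}z)\cdot \exp\!\left\{-\int\big(1-e^{-f}\big)\,\mathrm{d}\mu\,\mathrm{d}v\right\} = \exp\!\left\{-\int\big(1-e^{-f}\big)\,\mathrm{d}\mu\,\mathrm{d}v\right\},
\]
which, after cancelling the common exponential factor, reduces to the elementary identity $\int_0^\infty e^{-v/g(z)}\,\mathrm{d}v/g(z) = 1$ — this is precisely the statement that integrating out a single soft-local-time step returns the original intensity. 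Substituting back shows $\E[e^{-\eta(f)}] = \exp\{-\int(1-e^{-f})\,\mathrm{d}(\mu\otimes\mathrm{d}v)\}$, which characterises $\eta$ as the claimed Poisson process.

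The main obstacle is bookkeeping rather than conceptual: one must be careful that $g$ is a genuine density (so $g>0$ $\mu$-a.e.\ where it matters, making the change of variables $v=sg(z)$ legitimate), and one must justify the conditioning/Laplace-functional manipulations — in particular that, conditionally on $\xi$, the shifted Poisson process is still Poisson, which is the measure-preserving-transformation fact above, and that the atom and the shifted process are conditionally independent given $(Z,\xi)$ because $\hat\eta$ is independent of $(Z,\xi)$. An alternative to the Laplace-functional route, which avoids writing out functionals, is to invoke the well-known "Poisson process with one extra point" principle directly: if $\Pi$ is Poisson($\nu$) and one adds an independent point with distribution $\nu(\cdot\,)/\nu(\Sigma\times\R_+)$ the result is size-biased and not Poisson, so one instead argues via the Mecke equation / Palm calculus that the atom at $(Z,\xi g(Z))$, with $Z,\xi$ having exactly the stated joint law, is the Palm version making the superposition Poisson; but the direct Laplace computation above is cleaner and entirely self-contained, so that is the route I would write up.
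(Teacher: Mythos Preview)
Your argument contains a genuine error that invalidates the computation. The map $(z,v)\mapsto(z,v+\xi g(z))$ is \emph{not} a bijection of $\Sigma\times\R_+$ onto itself: its image is the epigraph $\{(z,v):v\ge \xi g(z)\}$. Consequently, conditionally on $\xi=s$, the shifted process $\sum_{(z,v)\in\hat\eta}\delta_{(z,v+sg(z))}$ is Poisson with intensity $1_{\{v\ge sg(z)\}}\,\mu(\mathrm{d}z)\,\mathrm{d}v$, not $\mu\otimes\mathrm{d}v$; the region below the curve $v=sg(z)$ is empty. Your conditional Laplace functional is therefore wrong, and the mistake propagates: after cancelling the common exponential, the identity you would actually need is
\[
\int_\Sigma\int_0^\infty e^{-v/g(z)}\,e^{-f(z,v)}\,\frac{\mathrm{d}v}{g(z)}\,\mu(\mathrm{d}z)=1,
\]
which is plainly false for any nontrivial $f\ge0$ --- you have silently dropped the factor $e^{-f(z,v)}$ when reducing to ``$\int_0^\infty e^{-v/g(z)}\,\mathrm{d}v/g(z)=1$''. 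The two errors do not cancel.

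A direct Laplace-functional proof \emph{can} be made to work, but one must carry the missing region: writing $\Phi(s)=\int_\Sigma\int_0^{sg(z)}(1-e^{-f(z,v)})\,\mathrm{d}v\,\mu(\mathrm{d}z)$ and $\Psi=\Phi(\infty)$, the correct conditional expectation leads to $\E[e^{-\eta(f)}]=e^{-\Psi}\int_0^\infty e^{-s}(1-\Phi'(s))e^{\Phi(s)}\,\mathrm{d}s$, and the integrand is $-\tfrac{d}{ds}[e^{-s+\Phi(s)}]$, giving $e^{-\Psi}$ after checking the boundary term vanishes. The paper takes a shorter and conceptually cleaner route: it starts from a Poisson process $\eta_0$, applies one step of the forward soft local times construction (producing $\xi_1$, $z_{\lambda_1}$, $\eta_1$ with the right joint law by the cited properties \eqref{slt-p1}--\eqref{slt-p3}), observes that $\eta_0=\delta_{(z_{\lambda_1},\xi_1 g(z_{\lambda_1}))}+f_{\xi_1}(\eta_1)$, and concludes that $\eta$ and $\eta_0$ have the same law. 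This avoids any explicit integration by leveraging the already-proved decomposition.
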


\begin{proof}
For $t \in \R$, consider the (measurable) function $f_t:  \Sigma \times \R \to \Sigma \times \R$ with
\begin{equation}\label{eq:f-t}
f_t(z,v)=(z,v + tg(z))
\end{equation}
and note that $f_{-t}=f_t^{-1}$. Given a point measure $\bar{\eta}=  \sum_{\lambda} \delta_{(z_{\lambda}, v_{\lambda})}$ on $\Sigma \times \R$, let $f_t(\bar{\eta})=  \sum_{\lambda} \delta_{f_t(z_{\lambda}, v_{\lambda})}$. Note that $(t,\bar{\eta})\mapsto f_t(\bar{\eta})$ is measurable, and thus $\eta=f_{\xi}(\hat{\eta})$ is also measurable. Consider the Poisson process $\eta_0$ on $\Sigma \times \R_+$ under the  (auxiliary) probability $Q$, cf.~above \eqref{eq:softlocalexpo}, which has intensity $\mu\otimes \mathrm{d}v$. In order to be consistent with the previous setup, fix an arbitrary point $z_0 \in \Sigma$ and define $g_1$ by declaring that $g_1(z_0,z)= g(z)$. Applying \eqref{eq:softlocalexpo}-\eqref{eq:softlocalpoisson} for $i=0$, one finds ${\xi}_1$ and a point $(z_{\lambda_1}, v_{\lambda_1})$, corresponding to the unique minimizer in \eqref{eq:softlocalexpo}, i.e.~with $\xi_1g(z_{\lambda_1})= v_{\lambda_1}$. In view of \eqref{eq:softlocalpoisson} and \eqref{eq:f-t}, one has, for $ \eta_0= \sum_{\lambda} \delta_{(z_{\lambda}, v_{\lambda})}$,
$$
{\eta}_1= \sum_{\lambda \neq \lambda_1} \delta_{f_{{\xi}_1}^{-1}(z_{\lambda}, v_{\lambda})}
$$
which, in particular, yields that
\begin{equation}
\label{e:eta-rewrite}
 \eta_0 = f_{\xi_1}(\eta_1) + \delta_{(z_{\lambda_1}, v_{\lambda_1})}=  f_{\xi_1}(\eta_1) + \delta_{(z_{\lambda_1},\xi_1g(z_{\lambda_1}))}.
\end{equation}
Now, by \eqref{slt-p1} one knows that $\xi_1\stackrel{\text{law}}{=}\xi$, by \eqref{slt-p2} one has that $z_{\lambda_1}\stackrel{\text{law}}{=}Z$ is independent of $\xi_1$ and by \eqref{slt-p3},  $\eta_1$ is a Poisson process of intensity $\mu\otimes \mathrm{d}v$ independent from $\xi_1$ and $z_{\lambda_1}.$ Therefore in view of \eqref{e:eta-rewrite} the point processes $\eta_0$ and $\eta$ have the same law, and so $\eta$ is also a Poisson process of intensity $\mu\otimes \mathrm{d}v.$
\end{proof}

One now easily deduces Lemma \ref{lem:poisson} inductively from Lemma \ref{lem:poissonprocess}.

\begin{proof}[Proof of Lemma \ref{lem:poisson}]
 For $k\in{\{0,\dots,T-1\}},$ assume that $\hat{\eta}_k$ as defined in \eqref{eq:defetak+1} (see also~\eqref{e:slt-Pext} regarding $\hat{\eta}_0$) is a Poisson point process under $\widehat{P}$ independent of $\mathcal{A}_k :=\sigma(Z_i,\hat{\xi}_{i}, 0 \leq i \leq T-k),$ with the convention $\hat{\xi}_0=0$. Note in particular that this is automatically satisfied in case $k=0$ on account of \eqref{e:slt-Pext}.
 Then by Lemma \ref{lem:poissonprocess}, applied with $\hat{\eta}=\hat{\eta}_k,$ $Z=Z_{T-k},$ $\xi=\hat{\xi}_{T-k}$ and $g(z)=g_{T-k}(Z_{T-k-1},z),$ one deduces that, conditionally on  $\mathcal{A}_{k+1},$ $\hat{\eta}_{k+1}$ is a Poisson point process with intensity $\mu\otimes \mathrm{d}v.$ In particular, $\hat{\eta}_{k+1}$ is independent of $\mathcal{A}_{k+1}.$ By induction, we conclude that $\eta=\hat{\eta}_T$ is a Poisson point process with intensity $\mu\otimes \mathrm{d}v$ independent of $\mathcal{A}_{T}=\sigma(Z_0).$

Referring to \eqref{eq:softlocalexpo}-\eqref{eq:softlocalpoisson}, let ${\xi}_i,$ ${\eta}_i,$ $v_i$ and $z_i,$ $0 \leq i \leq k$, be the variables obtained by applying soft local times to $(\eta,Z_0,(g_i))$ up to time $k$ (so in particular $\xi_0= v_0=0$, $z_0=Z_0$ and $\eta_0= \eta (= \hat{\eta}_T)$). Assume that for some $k\in{\{0,\dots,T-1\}}$ we have ${\xi}_i=\hat{\xi}_{i}$, 
${\eta}_i=\hat{\eta}_{T-i},$ $v_i=\hat{\xi}_{i}g_{i}(Z_{i-1},Z_{i})$ and $z_i=Z_i$ for all $0 \leq i \leq k$ (with $v_0=0$). Then 
\[
 \xi_{k+1} \stackrel{\eqref{eq:softlocalexpo}}{=}  \inf_{(z,v)\in {\eta}_{k}} \frac{v}{g_{k+1}(z_k,z)}\stackrel{\substack{\eta_k = \hat{\eta}_{T-k}\\ z_k=Z_k }}{=}\inf_{(z,v)\in \hat{\eta}_{T-k}} \frac{v}{g_{k+1}(Z_k,z)}=\hat\xi_{k+1}
\]
and the infimum is a.s.~uniquely attained at $(Z_{k+1},\hat{\xi}_{k+1}g_{k+1}(Z_k,Z_{k+1}))$ by definition of $\hat{\eta}_{T-k}$ in \eqref{eq:defetak+1}. Moreover one easily checks that ${\eta}_{k+1}=\hat{\eta}_{T-k-1},$ and it follows with a simple induction argument that $z_i=Z_i$ for all $i\leq T.$
\end{proof}

Combining Proposition \ref{pro:corollary} and Lemma \ref{lem:poisson}, one can 
couple $Z= (Z_i)_{i\geq0}$ under the extended measure $\widehat{P}$ defined in \eqref{e:slt-Pext} to any other (inhomogenous) Markov chain having transition densities relative to $\mu$,  with good control on the ranges in terms of appropriately defined (inverse) soft local times, see Remark~\ref{R:inverse} below regarding the terminology. 
\begin{proposition}
\label{pro:couplingSLT} 
For all $T \geq 1$, $\til{z}\in{\Sigma}$ and any family $(\tilde{g}_i)_{i\geq1}$ of transition densities with respect to $\mu$, one can define under $\hat{P}$ 
two sequences $\til{Z}= (\til{Z}_i)_{i\geq0}$, 
$\til{\xi}= (\til{\xi}_i)_{i\geq1}$ such that,
letting 
\begin{equation}\label{eq:new-SLT}
G_i(z)=\sum_{1\leq k \leq i}\hat{\xi}_k g_k(Z_k,z), \quad \til{G}_i(z)=\sum_{1 \leq k \leq i}\til{\xi}_k\til{g}_k(\til{Z}_k,z)\quad \text{for } i \geq 1, \, z\in{\Sigma}
\end{equation}
(see \eqref{e:slt-Pext} regarding $\hat{\xi}_k$), the following hold:
\begin{itemize}
\item[i)] $\til{Z}$ is a Markov chain with $\til{Z}_0=\til{z}$ and transition densities $(\til{g_i})_{i\geq1}$;
\item[ii)]
$\til{\xi}$ are i.i.d.~exponential variables with mean one, independent of $\til{Z}$; 
\item[iii)]For each $p,m,n\in{\{1,\dots,T\}},$ 
\begin{multline}\label{eq:ifthen}
		\big\{ \til{G}_p(z)\leq G_m(z)\leq \til{G}_n(z), \, \text{ for all }z\in{\Sigma} \big\}\\
\subset \big\{ \{\til{Z}_1,\dots,\til{Z}_p\}\subset\{Z_1,\dots,Z_m\}\subset\{\til{Z}_1,\dots,\til{Z}_n\} \big\}.
\end{multline}
\end{itemize}
\end{proposition}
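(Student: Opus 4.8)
The plan is to dovetail the ``inverse'' soft local times construction of Lemma~\ref{lem:poisson} with the ``forward'' scheme \eqref{eq:softlocalexpo}--\eqref{eq:softlocalpoisson}, running the latter twice on a single Poisson process. First I would fix $T\ge1$, set $\eta:=\hat{\eta}_T$ (the output of \eqref{eq:defetak+1} run up to time $T$), and invoke Lemma~\ref{lem:poisson}: under $\hat{P}$ the process $\eta$ is a Poisson process of intensity $\mu\otimes\mathrm{d}v$ independent of $Z_0$, applying soft local times to $(\eta,Z_0,(g_i)_{i\ge1})$ returns $(Z_i)_{0\le i\le T}$ together with the exponentials $(\hat{\xi}_i)_{1\le i\le T}$, and the associated soft local time process is $(G_i)_{1\le i\le T}$, cf.~\eqref{e:slt-def} and \eqref{eq:new-SLT}. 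Since the point $\til{z}$ is deterministic, hence trivially independent of $\eta$, I would then run the forward scheme \eqref{eq:softlocalexpo}--\eqref{eq:softlocalpoisson} a second time on $(\eta,\til{z},(\til{g}_i)_{i\ge1})$; this produces a sequence $(\til{z}_i)_{i\ge0}$ and variables $(\til{\xi}_i)_{i\ge1}$ obeying \eqref{slt-p1}--\eqref{slt-p3}, with associated soft local time process $(\til{G}_i)_{i\ge1}$, cf.~\eqref{e:slt-def} and \eqref{eq:new-SLT}. Then set $\til{Z}:=(\til{z}_i)_{i\ge0}$ and $\til{\xi}:=(\til{\xi}_i)_{i\ge1}$.

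Items i) and ii) would follow by unwinding \eqref{slt-p1}--\eqref{slt-p3} for this second chain. Property \eqref{slt-p2} gives that each finite segment $(\til{z}_0,\dots,\til{z}_i)$ has the law of the first $i+1$ steps of the time-inhomogeneous Markov chain started at $\til{z}$ with transition densities $(\til{g}_i)$; since a process is determined by its finite-dimensional distributions, $\til{Z}$ is such a chain, establishing i). For ii), \eqref{slt-p1} gives that $\til{\xi}$ is i.i.d.~exponential with mean one, and the independence of $\til{\xi}$ and $\til{Z}$ reduces to the factorization of the law of $(\til{z}_0,\dots,\til{z}_i)$ against $(\til{\xi}_1,\dots,\til{\xi}_j)$ for all $i,j\ge1$: this is \eqref{slt-p2} when $j\le i$, while for $j>i$ it holds because $(\til{\xi}_{i+1},\dots,\til{\xi}_j)$ is measurable with respect to the residual process $\til{\eta}_i$, which by \eqref{slt-p3} is independent of $(\til{\xi}_\ell,\til{z}_\ell)_{0\le\ell\le i}$.

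For iii), the crucial point is that, after the first step, both $(Z_i)_{0\le i\le T}$ (via Lemma~\ref{lem:poisson}) and $(\til{Z}_i)_{0\le i\le T}$ (by construction) are forward soft-local-times outputs of the \emph{same} Poisson process $\eta$, so the deterministic description used in the proof of Proposition~\ref{pro:corollary} applies verbatim: for $1\le m\le T$, $\{Z_1,\dots,Z_m\}=\{z\in\Sigma:\exists\,(z,v)\in\eta\text{ with }G_m(z)\ge v\}$, and likewise $\{\til{Z}_1,\dots,\til{Z}_\ell\}=\{z\in\Sigma:\exists\,(z,v)\in\eta\text{ with }\til{G}_\ell(z)\ge v\}$ for $1\le\ell\le T$. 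Now fix $p,m,n\in\{1,\dots,T\}$ and argue on the event on the left-hand side of \eqref{eq:ifthen}: a point $z\in\{\til{Z}_1,\dots,\til{Z}_p\}$ is witnessed by some $(z,v)\in\eta$ with $v\le\til{G}_p(z)\le G_m(z)$, whence $z\in\{Z_1,\dots,Z_m\}$; and a point $z\in\{Z_1,\dots,Z_m\}$ is witnessed by $(z,v)\in\eta$ with $v\le G_m(z)\le\til{G}_n(z)$, whence $z\in\{\til{Z}_1,\dots,\til{Z}_n\}$. This is exactly the right-hand side of \eqref{eq:ifthen}.

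The step I expect to be the main obstacle is the very first one: one needs $\eta=\hat{\eta}_T$ to be not merely distributed as a Poisson process, but genuinely a Poisson process of intensity $\mu\otimes\mathrm{d}v$ from which the original chain $Z$ is simultaneously recovered by soft local times with precisely the exponentials $\hat{\xi}$. This is exactly (and only) what Lemma~\ref{lem:poisson} provides, and it is what legitimizes feeding $\eta$ into the forward scheme a second time while keeping $Z$ pinned down; once this compatibility is secured, iii) collapses to the essentially one-line comparison above and i)--ii) are soft consequences of \eqref{slt-p1}--\eqref{slt-p3}.
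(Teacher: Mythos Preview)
Your proposal is correct and follows essentially the same approach as the paper: construct $\eta=\hat{\eta}_T$ via Lemma~\ref{lem:poisson}, run forward soft local times on $(\eta,\til{z},(\til{g}_i))$ to produce $\til{Z}$ and $\til{\xi}$, derive i)--ii) from \eqref{slt-p1}--\eqref{slt-p3}, and obtain iii) from the fact that both chains are read off the same Poisson process $\eta$. The only cosmetic difference is that for iii) the paper simply cites Proposition~\ref{pro:corollary} twice (swapping the roles of $(g_i)$ and $(\til{g}_i)$), whereas you spell out the underlying set identity directly; the content is the same.
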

\begin{proof}
Recall $\eta=\hat{\eta}_T$ from below \eqref{eq:defetak+1} and let $(z_i)_{i}$ and $(\til{z}_i)_{i}$ be the Markov chains obtained by applying soft local times respectively to $(\eta,Z_0, (g_i))$ and $(\eta,\til{z},(\til{g}_i))$. Define $\til{Z}_i=\til{z_i}$, $i \geq0$, and $\til{\xi}$ the corresponding sequence of exponential random variables produced by applying soft local times, cf.~\eqref{eq:softlocalexpo} and \eqref{slt-p1}. In particular, this implies that $(\til{G}_i(\cdot))_{i \geq 0}$ defined in \eqref{eq:new-SLT} is the corresponding soft local time process.

With these choices for $\til{Z}$ and $\til{\xi}$, i) and ii) 
follow immediately from the first part of Proposition~\ref{pro:corollary} and by Lemma \ref{lem:poisson}, which guarantees that $\eta$ has the correct law. 
 Finally, one notices that, due to Lemma \ref{lem:poisson}, $z_i=Z_i$  for all $0 \leq i\leq T$ and $(G_i(\cdot))_{0 \leq i\leq T}$ as defined in \eqref{eq:new-SLT} is the corresponding soft local time process (up to time $T$). 
From this, iii) follows upon applying Proposition \ref{pro:corollary} twice, swapping the roles of $(g_i)$ and $(\til{g}_i),$ to deduce \eqref{eq:ifthen} from \eqref{eq:SLT-rel-bounds}-\eqref{eq:zincludedZi}.
\end{proof}

\begin{Rk}\leavevmode\label{R:inverse}
\begin{enumerate}[label=\arabic*)]
\item It is natural to refer to $G_i$ defined in \eqref{eq:new-SLT} as an \textit{inverse} (or backwards) soft local time. Unlike its `forward' counterpart \eqref{e:slt-def}, in which the random variables $\xi=(\xi_k)_{k \geq 1}$ emerge as minimizers in \eqref{eq:softlocalexpo},  the random variables $\hat{\xi}=(\hat{\xi}_k)_{k \geq 1}$ involved in \eqref{eq:new-SLT} are given by fiat, see \eqref{e:slt-Pext}. Loosely speaking, this corresponds to the fact that, instead of constructing the chain $Z$ (and the variables $\xi$) from $\eta$, one reconstructs $\eta$ from a given realization of $Z$ (with the help of additional independent randomness, comprising $\hat{\xi}$). The benefit of doing this (and the gist of Proposition~\ref{pro:couplingSLT}) is that, with $\eta$ at hand, one can now apply (forward) soft local times to couple any other chain $\til{Z}$ having transition densities relative to $\mu$ to the original chain $Z$ via $\eta$.
\item\label{R:dependencytildeZ} In Proposition~\ref{pro:couplingSLT}, the random variables $\til{Z}$ and $\til{\xi}$ depend only on the Markov chain $Z$ and on the variable $\chi$ from \eqref{e:slt-Pext}  (as well as the choice of $T,$ of the probability $P$ and of the densities $g$ and $\til{g}$). This will be important in Section~\ref{sec:shortrangeRWRI}, where we will apply Proposition~\ref{pro:couplingSLT} several times simultaneously for varying choices of $Z$ and $\chi.$ These varying choices are coupled together on a common probability space, thus the corresponding varying processes $\tilde{Z}$ and $\tilde{\xi}$ are also naturally defined on the same probability space, see around \eqref{eq:deftildeZji}. 
\end{enumerate}
\end{Rk}

\section{Localization}
\label{sec:shortrangeRWRI}

In this section we prove our main localization result, Theorem~\ref{The:shortrangeapprointro-new}, which will follow from a more general result, Theorem~\ref{thm:rwshortrange} below. This result is of independent interest and is not specific to the ``late'' or ``high-intensity'' regime, to which it will later be applied. Such applications are discussed separately in Section~\ref{sec:loc-cons}. The proof of Theorem~\ref{thm:rwshortrange} is split over \S\ref{sec:constructionexcursions}-\ref{sec:proofthmshortrange}, and involves inverse soft local times, cf.~Proposition~\ref{pro:couplingSLT}. An overview of the proof appears atop of \S\ref{sec:constructionexcursions}.

For the purposes of Theorem \ref{thm:rwshortrange}, which couples processes with range in both $\mathbf{T} (= (\Z/ N\Z)^d)$ and $\Z^d$, it will be important to distinguish clearly between the two.  
Recall from \S \ref{sec:notation} that $\pi: \Z^d \to \mathbf{T}$ denotes the canonical projection, that for $x\in{\Z^d}$ we often abbreviate $\boldsymbol{x}=\pi(x),$ that $0$ is the origin of $\Z^d$ whereas $\mathbf{0}$ is the origin of $\mathbf{T},$ and that $Q(x,r)=Q_r(x)$ are boxes around $x$ of side length $r$ either in $\Z^d$ or in $\mathbf{T},$ depending on whether $x\in{\Z^d}$ or $\mathbf{T}$.


In what follows, a family of (point) processes $(\omega^{(x)})_{x\in{Q_N(0)}}$ is said to have range $R$ in $\Z^d,$ resp.~in~$\mathbf{T},$ if $\omega^{(x)}$ and $\{\omega^{(x')}: x'\in Q_N(0)\setminus Q_R(x)\},$ resp.~$\{\omega^{(x')}: \boldsymbol{x'}\notin Q(\boldsymbol{x},R)\},$ are independent for each $x\in{Q_N(0)}.$ Intuitively, if one identifies $Q_N(0)$ with~$\mathbf{T}$, then $(\omega^{(x)})_{x\in \mathbf{T}}$ has range $R$ in $\mathbf{T}$ if for all $x\in \mathbf{T}$, $\omega^{(x)}$ is independent of $\omega^{(x')}$ for all $x'\notin Q(x,R)$. We refrain from doing such an identification, since it could cause confusion in the next statement, in which the random walk and random interlacements appear jointly. Following is our main localisation result, from which Theorem~\ref{The:shortrangeapprointro-new} will follow as a special case.

\begin{theorem}[Localization]\label{thm:rwshortrange}
For all $\delta\in (0,1),$ there exist $c=c(\delta)>0$ and $C=C(\delta)<\infty$ such that the following holds. For every $N\in\N,$ $R\in [1, \frac{N}{1+\delta}]$ and $u_0>0,$ there exists a probability measure $\til{\mathbf{P}}_{\boldsymbol{0}}$ extending $\mathbf{P}_{\boldsymbol{0}}$, resp.~$\tildePI$, extending $\PI,$ carrying a family of processes $(\omega^{(x)})_{x\in{Q_N(0)}}$ such~that
\begin{align}
&\label{e:loc-law}\text{$\omega^{(x)}$ has law $\PI$ for each $x \in Q_N(0)$,}\\
&\label{e:loc-range} \text{$(\omega^{(x)})_{x\in{Q_N(0)}}$ has range $2(1+\delta)R$ in $\mathbf{T},$ resp.~$\Z^d,$}
\end{align}
 and, writing  $({\ell}^{(x)}_{y,u})_{y\in{\Z^d}, u \geq 0}$ for the field of local times associated to $\omega^{(x)},$ cf.~\eqref{eq:deflocaltimes-ri}, for all $F\subseteq Q_N(0),$ $0< v<u\leq u_0,$  and $\epsilon\in(0,1)$ with $u(1-\eps)>v(1+\eps)$ one has
 	\begin{multline}\label{eq:couplelltilell}
	\til{\mathbf{P}}_{\boldsymbol{0}}\left(
	{\ell}_{y,u(1-\epsilon)}^{(x)}-{\ell}_{y,v(1+\epsilon)}^{(x)}\leq {\ell}_{\boldsymbol{y},u}-{\ell}_{\boldsymbol{y},v} \leq {\ell}_{y,u(1+\epsilon)}^{(x)}-{\ell}_{y, v(1-\epsilon)}^{(x)}, \,
	\forall x\in F, \, y\in{Q(x,R)}
	\right)
	\\[0.3em] \geq 1 - C|F|R^{2d}\lceil uR^{d-2}\rceil\exp\big(-c\cdot \epsilon\cdot \sqrt{v \cdot R^{d-2}} \big),
	\end{multline}
resp.\ for all $u\leq u_0$ and $\eps\in{(0,1)}$,
	\begin{multline}
	\label{eq:couplingshortrangeinter}\tag{\ref*{eq:couplelltilell}'}
	\tildePI\left({\ell}_{y, u(1-\epsilon)}^{(x)}\leq \ell_{y,u}\leq {\ell}_{y, u(1+\epsilon)}^{(x)}, \ \forall \, x\in F, \, y\in{Q(x,R)}\right)
	\geq 1-C|F|R^{2d}\exp\big(-c \epsilon^2 u R^{d-2}\big).
	\end{multline}
\end{theorem}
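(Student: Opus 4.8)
The plan is to construct, for each $x \in Q_N(0)$, a random interlacement-like process $\omega^{(x)}$ as a local modification of the ambient process (the walk on $\mathbf{T}$, resp.\ random interlacements on $\Z^d$) inside the box $Q(x,R)$, in such a way that the excursions of the ambient process across the annulus between $Q(x,R)$ and $Q(x,2(1+\delta)R)$ are replaced by \emph{independent} excursions coming from i.i.d.\ walks started on the equilibrium measure of $Q(x,R)$. The key device is the inverse soft local times coupling, Proposition~\ref{pro:couplingSLT}: the ambient process, restricted to its successive excursions into $Q(x,R)$, is itself a (time-inhomogeneous) Markov chain on an appropriate state space $\Sigma$ (the space of excursion paths, with $\mu$ built from the hitting/exit kernels), and likewise the target process $\omega^{(x)}$ — which should just be random interlacements in $Q(x,R)$ driven by a Poisson($\mathrm{cap}$) number of i.i.d.\ excursions — is a Markov chain with transition densities $\til g_i$ relative to the same $\mu$. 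Applying Proposition~\ref{pro:couplingSLT} simultaneously for all $x$ on the common extended space $\hat{\mathbf P}_{\boldsymbol 0}$ (here Remark~\ref{R:inverse}\ref{R:dependencytildeZ} is essential: each $\til Z^{(x)}, \til\xi^{(x)}$ depends only on the ambient chain and an independent auxiliary $\chi^{(x)}$, so varying $x$ causes no conflict) produces the family $(\omega^{(x)})_{x}$ with \eqref{e:loc-law}. The range property \eqref{e:loc-range} holds because $\omega^{(x)}$ is a deterministic function of the ambient excursions into $Q(x,2(1+\delta)R)$ together with $\chi^{(x)}$, and these ingredients for different $x$ with $d(x,x') > 2(1+\delta)R$ involve disjoint regions and independent auxiliaries; the factor $2(1+\delta)$ is what is needed so that excursions contributing to $\omega^{(x)}$ stay within $Q(x,2(1+\delta)R)$.

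The inclusion of local times in \eqref{eq:couplelltilell}--\eqref{eq:couplingshortrangeinter} is then exactly the conclusion iii) of Proposition~\ref{pro:couplingSLT}: on the event that the (inverse) soft local time surfaces satisfy $\til G^{(x)}_{p} \le G^{(x)}_{m} \le \til G^{(x)}_{n}$ for suitable excursion-count indices $p \le m \le n$, the range of the ambient chain up to $m$ excursions is sandwiched between the ranges of $\omega^{(x)}$ at the two sprinkled levels, and since every visit of the ambient process to $y \in Q(x,R)$ occurs during one of these excursions, this transfers to the local times $\ell_{\boldsymbol y, u} - \ell_{\boldsymbol y, v}$ (resp.\ $\ell_{y,u}$). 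The sprinkling in the level $u(1\pm\eps)$, $v(1\mp\eps)$ is used to absorb the discrepancy between the number of excursions the ambient process makes into $Q(x,R)$ by times $vN^d$ and $uN^d$ and the Poisson number of excursions generating $\omega^{(x)}$ at levels $u,v$: by standard facts, the number of excursions into a box of capacity $\asymp R^{d-2}$ up to time $uN^d$ concentrates, with Gaussian-type deviations on scale $\sqrt{uR^{d-2}}$, around $u\cdot\mathrm{cap}(Q(x,R))$, so choosing the excursion indices $p,m,n$ as the deterministic values corresponding to levels $v(1+\eps), u, u(1+\eps)$ (resp.\ for the lower bound) makes the bad event a large-deviation event of probability $\exp(-c\eps\sqrt{v R^{d-2}})$ (resp.\ $\exp(-c\eps^2 u R^{d-2})$ in the stationary interlacements case, where the count is genuinely Poisson). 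One then also needs a comparison of the soft local time surfaces $G^{(x)}_i$ and $\til G^{(x)}_i$ on $\Sigma$ itself: these are sums of $i$ i.i.d.-ish terms $\hat\xi_k g_k(\cdot,z)$, and the requisite inequality $\til G_p \le G_m \le \til G_n$ at the chosen indices again follows from concentration of $G_i(z)/i$ and $\til G_i(z)/i$ around the same deterministic profile, uniformly in $z \in Q(x,R)$ — this is where the $R^{2d}\lceil uR^{d-2}\rceil$ prefactor comes from (union bound over $z$, over the $\asymp uR^{d-2}$ excursions, and over $x \in F$). The estimates needed here are the content of Lemmas~\ref{lem:concsoftrwri} and~\ref{lem:conc}.

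The main obstacle, and the technically heaviest part, will be setting up the excursion decomposition for the walk on the torus $\mathbf{T}$ correctly so that it genuinely fits the Markov-chain-with-densities framework of Section~\ref{sec:softlocaltimes}, and controlling the resulting soft local times \emph{uniformly over the starting configuration}. The delicate points are: (i) the excursions are between $\partial Q(x,R)$ and $\partial Q(x,2(1+\delta)R)$, and the relevant transition density $g_i(z,z')$ is the density (w.r.t.\ a fixed reference measure $\mu$ on excursion paths) of the law of the next excursion into $Q(x,R)$ given the exit point of the previous one — showing these densities are bounded above and below, with the constants only depending on $\delta$, requires Harnack-type estimates for the walk in the annulus; (ii) handling the boundary effects and the fact that on the torus the walk eventually wraps around, which is why one truncates at a deterministic excursion count $m \asymp u R^{d-2}$ corresponding to time $uN^d$ and must separately argue that by that time the relevant number of excursions has not been exceeded except on an exponentially small event; and (iii) doing all of this simultaneously for interlacements on $\Z^d$, where the analogue is cleaner (the excursion counts are exactly Poisson, giving the stronger $\eps^2$ rate) but one must still check the soft-local-time comparison. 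I would first carry out the interlacements case \eqref{eq:couplingshortrangeinter} in detail as a template, then adapt it to the torus walk, paying attention to the extra sprinkling needed to pass from "number of excursions by time $uN^d$" to the idealized Poisson count.
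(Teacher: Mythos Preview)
Your plan follows the paper's approach --- inverse soft local times per box, concentration of excursion counts (Lemma~\ref{lem:conc}) and of soft local times (Lemma~\ref{lem:concsoftrwri}), then a union bound --- but your argument for the range property~\eqref{e:loc-range} has a real gap. You write that $\omega^{(x)}$ is a deterministic function of the ambient excursions into $Q(x,2(1+\delta)R)$ plus $\chi^{(x)}$, and that for distant $x,x'$ these ``involve disjoint regions and independent auxiliaries''. But the ambient excursions of a \emph{single} walk into disjoint regions are not independent: the piece of walk that leaves one region is precisely what enters the other. So this reasoning fails as stated, and in fact $\tilde Z^{(x)}$ is \emph{not} a function of the walk restricted to $Q(x,2(1+\delta)R)$ alone --- it depends on the whole walk through the return times.

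The paper's mechanism is subtler and requires one more layer of structure. The excursion decomposition uses \emph{three} nested boxes $B_1=Q(x,R)\subset B_2\subset B_3=Q(x,(1+\delta)R)$ together with a \emph{clothesline} process $\zeta_i\in\partial B_2\times\partial B_3^c$; soft local times are applied \emph{conditionally on $\zeta$}, and the target chain $\tilde Z^{(x)}$ is built using an \emph{independent} clothesline $\tilde\lambda^{(x)}$ drawn from a fresh interlacement $\tilde\omega^{(x)}$. The key is Lemma~\ref{lem:independenceforfixedsets}: because the conditional law of $\tilde Z^{(x)}$ given $(\zeta^{(x)},Y^{(x)},\tilde\lambda^{(x)})$ depends only on $\tilde\lambda^{(x)}$, and $\tilde\lambda^{(x)}$ is independent of the walk, $\tilde Z^{(x)}$ ends up \emph{statistically independent} of $(\zeta^{(x)},Y^{(x)})$ --- hence of the walk outside $B_3^{(x)}$ --- despite being constructed from it. Since for $y$ at distance ${>}\,2r_3$ from $x$ the inputs $(Z^{(y)},\zeta^{(y)})$ are $Y^{(x)}$-measurable, the independence of $\tilde Z^{(x)}$ from $\tilde Z^{(y)}$ follows (Proposition~\ref{pro:clotheslines},\,i)). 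The three-box setup with the gap $B_2\subsetneq B_3$ is also what makes the Harnack estimates of \S\ref{sec:harnack} yield density bounds uniform in the clothesline, which you will need for Lemma~\ref{lem:concsoftrwri}.
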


We refer to Remark~\ref{rk:othershortrange} at the end of this section for various extensions of the above result, reflecting in particular a certain flexibility for the requirement \eqref{e:loc-law}, and alternatives to \eqref{eq:couplelltilell} (see \eqref{eq:couplelltilell'}-\eqref{eq:couplelltilell2}), which do not involve \textit{increments}; these matters are best explained after giving the proof. Applications of Theorem~\ref{thm:rwshortrange} specific to our purposes are postponed to Section~\ref{sec:loc-cons}.

\medskip
Before proceeding any further, let us give the short:

\begin{proof}[Proof of Theorem~\ref{The:shortrangeapprointro-new} (assuming  Theorem~\ref{thm:rwshortrange})]
Define $\til{\mathbf{P}} = N^{-d} \sum_{{x} \in \mathbf{T}} \til{\mathbf{P}}_{{x}}$ where $ \til{\mathbf{P}}_{{x}}$ is obtained from  $\til{\mathbf{P}}_{\boldsymbol{0}}$ through translation by $x\in{\mathbf{T}}$. One then applies Theorem~\ref{thm:rwshortrange} for the choices $\delta=1$, with $2u$ in place of $u$, $v=u$, $F=Q_N(0)$ and $\frac{R}{4}$ in place of $R$. Property \eqref{e:loc-range} is still verified under $\til{\mathbf{P}}$ since the law of $\omega^{(y)}$ under $\til{\mathbf{P}}_x$ does not depend on $x\in{\mathbf{T}}$ for each $y\in{Q_N(0)}$ by \eqref{e:loc-law}. Upon defining $\til{{\ell}}_{\boldsymbol{x},u(1\pm \varepsilon)}= {\ell}_{x,2(1\pm \varepsilon)u}^{(x)}-  {\ell}_{x,(1\mp \varepsilon)u}^{(x)} $ for all $x\in{Q_N(0)},$ and observing that ${\ell}_{\cdot,2u}-{\ell}_{\cdot,u}$ has the same law under $\til{\mathbf{P}} $ as ${\ell}_{\cdot,u}$ under $\mathbf{P}$ due to stationarity, the desired properties \eqref{e:fr-intro} resp.~\eqref{e:incl-intro} follow readily from \eqref{e:loc-range} resp.~\eqref{eq:couplelltilell}.
\end{proof}

As explained at length in \S\ref{subsec:loc}, the main upshot of Theorem~\ref{thm:rwshortrange} is the combined effect of \eqref{e:loc-range} and \eqref{eq:couplelltilell}/\eqref{eq:couplingshortrangeinter}, which yields a  ``close + local approximation'' to the field of interest, the (increments of) local times. By virtue of the additional requirement \eqref{e:loc-law}, Theorem~\ref{thm:rwshortrange} also entails a coupling between random interlacements and random walk, which will prove useful as well; cf.~Lemma~\ref{lem:boundonlateRIRW}. Similar couplings at a mesoscopic scale were first obtained in \cite{MR2386070}, and then improved in \cite{MR2838338,BEL}. At a macroscopic scale, that is, outside of an annulus of size $\delta N$ for some small $\delta>0,$ a similar coupling was proved in \cite[Theorem 4.1]{MR3563197}. In fact, Theorem~\ref{thm:rwshortrange} directly implies the coupling from \cite[Theorem~4.1]{MR3563197} for fixed $\delta>0,$ as we now explain. 

  \begin{Cor}\label{cor:coupling}
    For all $\delta>0,$ there exist $c,C \in (0,\infty)$ depending only on $\delta $ and $d$ such that the following holds. For every $u>0,$ $\eps\in{(0,1)}$ and $N\in{\N}$ there exists a coupling $\til{\mathbf{P}}$ between $({\ell}_{x,u})_{x\in{\mathbf{T}}}$ under $\mathbf{P}$ and $(\ell_{x,u(1\pm\eps)}: x\in Q_N(0))$ under $\PI$ so that
   \begin{align*}
   \til{\mathbf{P}}\left(\ell_{x,u(1-\epsilon)} \leq{ \ell}_{\boldsymbol{x},u}\leq \ell_{x,u(1+\epsilon)},\,  \forall x\in 
   Q_{N(1-\delta)}(0)\right) \geq  1 -CN^{2d}\lceil uN^{d-2}\rceil \exp\big(-c \epsilon \sqrt{u  N^{d-2}} \big).
   \end{align*}
    \end{Cor}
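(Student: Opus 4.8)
The plan is to read Corollary~\ref{cor:coupling} off Theorem~\ref{thm:rwshortrange} by specialising the latter to the single point $F=\{0\}$ but with a \emph{macroscopic} localisation radius, and then to rearrange the increments that appear. Fix $\delta\in(0,1)$, choose $\bar\delta=\bar\delta(\delta)\in(0,1)$ so small that $\tfrac1{1+\bar\delta}>1-\tfrac\delta2$, and set $R:=\lfloor N/(1+\bar\delta)\rfloor\in[1,N/(1+\bar\delta)]$; then (assuming $N$ large, the bounded range being absorbed into the constant) $Q_{N(1-\delta)}(0)\subseteq Q(0,R)\subseteq Q_N(0)$ and $R\ge c(\delta)N$. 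I would apply Theorem~\ref{thm:rwshortrange} with $\bar\delta$ in place of $\delta$, $u_0=2u$, $F=\{0\}$, the pair $(2u,u)$ in place of $(u,v)$, and $\eps/3$ in place of $\eps$ --- the admissibility condition $u(1-\eps)>v(1+\eps)$ then reads $2u(1-\eps/3)>u(1+\eps/3)$, i.e.\ $\eps<1$, as required. This yields $\til{\mathbf P}_{\boldsymbol 0}$ extending $\mathbf P_{\boldsymbol 0}$ and carrying $\omega:=\omega^{(0)}$ of law $\PI$ (by \eqref{e:loc-law}), so that, writing $L^\pm_y:=\ell^{(0)}_{y,2u(1\pm\eps/3)}-\ell^{(0)}_{y,u(1\mp\eps/3)}$, the estimate \eqref{eq:couplelltilell} gives, after absorbing the factors $2$ and $\tfrac13$ and using $R\le N\le R/c(\delta)$,
\[
\til{\mathbf P}_{\boldsymbol 0}\Big(L^-_y\le\ell_{\boldsymbol y,2u}-\ell_{\boldsymbol y,u}\le L^+_y,\ \forall\,y\in Q(0,R)\Big)\ \ge\ 1-CN^{2d}\lceil uN^{d-2}\rceil\exp\!\big(-c\eps\sqrt{uN^{d-2}}\big),
\]
with $c,C$ depending only on $\delta,d$.

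It then remains to identify the two marginals of this sandwich. On the interlacement side, $L^-$ (resp.\ $L^+$) is the occupation field of the trajectories of $\omega$ carrying a label in the interval $(u(1+\eps/3),\,2u(1-\eps/3)]$ (resp.\ $(u(1-\eps/3),\,2u(1+\eps/3)]$), of length $u(1-\eps)$ (resp.\ $u(1+\eps)$); since the first interval is nested in the second, the Poissonian structure of $\omega$ --- the restriction to a label interval $(a,b]$ being interlacements at level $b-a$, with restrictions to disjoint intervals independent, cf.\ \eqref{eq:definterprocess} --- gives $L^+=L^-+\Delta$ with $\Delta$ independent of $L^-$, so that $(L^-,L^+)$ has exactly the law of $(\ell_{\cdot,u(1-\eps)},\ell_{\cdot,u(1+\eps)})$ read off from a single realisation under $\PI$, i.e.\ the natural monotone coupling of the two levels. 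On the random-walk side, the Markov property applied at time $\lfloor uN^d\rfloor+1$ together with the stationarity and rapid mixing (mixing time of order $N^2$) of SRW on $\mathbf T$ shows that $\ell_{\boldsymbol\cdot,2u}-\ell_{\boldsymbol\cdot,u}$ under $\til{\mathbf P}_{\boldsymbol 0}$ coincides, up to a shift of $u$ by $O(N^{-d})$ (absorbed into the sprinkling) and up to total variation $Ce^{-cuN^{d-2}}$ (negligible against the bound above), with the law of $\ell_{\boldsymbol\cdot,u}$ under $\mathbf P$; when $N$ is even one additionally arranges the window $(uN^d,2uN^d]$ to span an odd number of lattice steps in order to neutralise the period-$2$ parity of the walk. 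Splicing in a maximal coupling of these two fields at that cost produces a coupling $\til{\mathbf P}$ with marginals $(\ell_{x,u})_{x\in\mathbf T}$ under $\mathbf P$ and $(\ell_{x,u(1\pm\eps)})_{x\in Q_N(0)}$ under $\PI$, on which, with the stated probability, $\ell_{x,u(1-\eps)}\le\ell_{\boldsymbol x,u}\le\ell_{x,u(1+\eps)}$ for all $x\in Q_{N(1-\delta)}(0)\subseteq Q(0,R)$.

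The substance lies entirely in Theorem~\ref{thm:rwshortrange}; what is left is bookkeeping. The only points requiring care are: (i) choosing $\bar\delta$ and $R$ so that $Q(0,R)$ both covers $Q_{N(1-\delta)}(0)$ and respects $R\le N/(1+\bar\delta)$; (ii) checking, via the nesting of label intervals, that the $\omega$-increments in the sandwich realise the \emph{joint} (not merely marginal) coupling of $\ell_{\cdot,u(1\pm\eps)}$ under $\PI$; and (iii) the mixing/parity argument that passes from the increment $\ell_{\boldsymbol\cdot,2u}-\ell_{\boldsymbol\cdot,u}$ of the walk started at $\boldsymbol 0$ to $\ell_{\boldsymbol\cdot,u}$ under $\mathbf P$ --- this last being the main, though routine, obstacle. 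One could alternatively bypass (iii) by invoking a variant of the localisation coupling that compares $\ell_{\boldsymbol\cdot,u}$ directly with $\ell^{(x)}_{\cdot,u(1\pm\eps)}$ rather than their increments.
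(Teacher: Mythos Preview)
Your overall strategy --- specialise Theorem~\ref{thm:rwshortrange} to $F=\{0\}$ with a macroscopic localisation radius, use the $(2u,u)$ increment and $\eps/3$ sprinkling, then identify the interlacement increments $L^\pm$ with $(\ell_{\cdot,u(1-\eps)},\ell_{\cdot,u(1+\eps)})$ via the nesting of label intervals --- is exactly the paper's, and your bookkeeping in (i) and (ii) is fine.

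The difference, and the gap, is in step (iii). You work under $\til{\mathbf P}_{\boldsymbol 0}$ and try to pass from the increment $\ell_{\boldsymbol\cdot,2u}-\ell_{\boldsymbol\cdot,u}$ to $\ell_{\boldsymbol\cdot,u}$ under $\mathbf P$ by a mixing argument, claiming total variation $Ce^{-cuN^{d-2}}$. But when $N$ is even, simple random walk on $\mathbf T$ is bipartite (period $2$), so it does not mix to the uniform measure at all: the law of $X_{\lfloor uN^d\rfloor}$ is supported on a single parity class, and the TV distance to uniform stays bounded below by $\tfrac12$. Adjusting the window length by one step does not help --- it changes the parity of the number of steps, not the parity of the starting point of the increment walk --- so the claimed TV bound is false as stated, and splicing in a maximal coupling at that cost does not go through.

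The paper sidesteps this entirely, and more cheaply than the ``variant of the localisation coupling'' you allude to: it simply averages over the starting point, defining $\til{\mathbf P}=N^{-d}\sum_{x\in\mathbf T}\til{\mathbf P}_x$ with $\til{\mathbf P}_x$ the translate of $\til{\mathbf P}_{\boldsymbol 0}$. Under this measure the walk is stationary, so $\ell_{\boldsymbol\cdot,2u}-\ell_{\boldsymbol\cdot,u}$ has \emph{exactly} the law of $\ell_{\boldsymbol\cdot,u}$ under $\mathbf P$ (modulo the one-step rounding you already absorb), and the law of $\omega^{(0)}$ is unchanged by the averaging thanks to \eqref{e:loc-law}. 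No mixing, no parity, no TV splicing --- step (iii) becomes a one-line observation about stationarity.
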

    \begin{proof}
    The desired coupling $\til{\mathbf{P}}$ of $({\ell}_{{x},u})_{{x} \in \mathbf{T}}$, $(\ell_{x,u(1-\eps)})_{x\in Q_N(0)}$ and $(\ell_{x,u(1+\eps)})_{x\in Q_N(0)}$ is obtained as the joint law of $({\ell}_{{x},2u}-{\ell}_{{x},u})_{{x} \in \mathbf{T}}$, $(\ell^{(0)}_{x, 2u(1-\eps/3)}-\ell^{(0)}_{x ,u(1+\eps/3)})_{x \in Q_N(0)}$ and $(\ell^{(0)}_{x, 2u(1+\eps/3)}-\ell^{(0)}_{x, u(1-\eps/3)})_{x\in Q_N(0)}$ under $N^{-d}\sum_{{x}\in{\mathbf{T}}}\til{\mathbf{P}}_{{x}}.$
     Using \eqref{eq:couplelltilell} (replacing $v$ by $u$ and $u$ by $2u$) with $u_0=2u,$ $F=\{0\}$ and $R=\lceil N(1-\delta)\rceil,$ the claim immediately follows from the independence and stationarity of the increments of random interlacements.
    \end{proof}

Note that $\til{\mathbf{P}}$ appearing in Corollary~\ref{cor:coupling} is in fact the same measure as in Theorem~\ref{The:shortrangeapprointro-new}, hence the identical notation. As opposed to \cite[Theorem 4.1]{MR3563197}, the coupling error obtained here is more explicit in $u,\eps$ and $N.$ We believe moreover that our proof is more elementary since it does not involve the coupling appearing in \cite[Theorems 3.1-2]{MR3563197} of general Markov chains via soft local times. As in \cite{MR3563197}, one could probably also take $\delta=\delta(N)\rightarrow0$ slowly enough as $N\rightarrow\infty$ in Theorem~\ref{thm:rwshortrange}. Actually, Corollary~\ref{cor:coupling} is the only reason we chose to prove Theorem~\ref{thm:rwshortrange} for all $\delta>0,$ instead of for all $\delta$ large enough, which would be sufficient for our purposes. The proof of Theorem~\ref{thm:rwshortrange} would be slightly simpler for $\delta$ large, for instance one would not have to use Lemma~\ref{lem:concsoftrwri} below by proceeding similarly as in \cite[Lemma~2.1]{MR3126579} (upon adapting the results from Appendix~\ref{sec:harnack} to obtain Harnack bounds with constants close to $1$ as $\delta\rightarrow\infty$), and the proof of Lemma~\ref{lem:green-killed} would also have been simpler. But Lemma~\ref{lem:concsoftrwri} is actually easy to prove given the tools developed in Appendix~\ref{sec:largedeviationapp}, which are anyway required to prove Lemma~\ref{lem:conc} below. In other words, proving Theorem~\ref{thm:rwshortrange} for all $\delta>0$ does not require a lot of additional work, and provides a more elementary proof of a version of \cite[Theorem 4.1]{MR3563197}.
 
\medskip

In the remainder of this section, we prove Theorem~\ref{thm:rwshortrange}. We shortly  explain the general strategy for the random walk; the case of random interlacements is similar. First, in Section~\ref{sec:constructionexcursions}, for each $B_1\subset B_2\subset B_3 (\subset \Z^d)$, we approximate the excursions $(Z_i)_{i\geq0}$ of the random walk from (the projection of) $\partial B_1$ to $\partial B_2$ before hitting $\partial B_3,$ see~\eqref{eq:defexcrw}, by a process $(\til{Z}_i)_{i\geq0}$ of excursions which is independent of the walk outside of $B_3,$ see Lemma~\ref{lem:independenceforfixedsets}, using the inverse soft local times method from Proposition~\ref{pro:couplingSLT}. Then in Section~\ref{sec:costshortrange} for $r_1<r_2<r_3,$ we put together the excursions $(\til{Z}_i^{(x)})_{i\geq0},$ $x\in{Q_N(0)},$ each corresponding to the choice $B_i=Q({x},r_i)$ for each $i\in{\{1,2,3\}},$ and show that they satisfy a short range property, see Proposition~\ref{pro:clotheslines}. Moreover for each fixed $m\in{\N},$ we estimate the probability that $(\til{Z}_i^{(x)})_{i\leq m}$ is close to the initial excursions $(Z_i^{(x)})_{i\leq m}$ in Lemma~\ref{lem:concsoftrwri}, and show that the number of excursions performed by the random walk or random interlacements at a given time is well concentrated around some deterministic $m$ in Lemma~\ref{lem:conc}. The proof of these two lemmas is given in Appendix~\ref{sec:app}. Finally, the different pieces of the proof are put together in Section~\ref{sec:proofthmshortrange} by defining $\omega^{(x)}$ for each $x\in{Q_N(0)}$ as an interlacement process whose excursions are given by the short-range excursions $(\til{Z}_i^{(x)})_{i\geq0}$ for $r_1=R$, $r_2=R\sqrt{1+\delta}$ and $r_3=R(1+\delta).$

\subsection{Construction of short-range excursions in a fixed set}
\label{sec:constructionexcursions}

We start with a realization $X$ of the random walk on $\mathbf{T}$ under $\mathbf{P}_{\boldsymbol{0}}$, and for  $B_1\subseteq B_2\subseteq B_3\subset Q_{2N}(0)$ with diameter smaller than $N$, we first define a process of excursions $(Z_j)$ in $B_3$ from $B_1$ to $\partial B_2,$ and a corresponding clothesline process $(\zeta_j)$ in~$\partial B_2\times\partial B_3,$  such that for each $j,$ up to projection onto $\mathbf{T}$, $X$ first visits the first coordinate of $\zeta_j,$ then after hitting~$B_1$ follows the excursion $Z_j$ until the last hitting time in $B_2$ before reaching $\partial B_3,$ and reaches $\partial B_3$ in the second coordinate of $\zeta_j,$ similarly as in~\cite[Section~3]{AlvesPopov} and~\cite[Section~4]{MR3563197}. Then, coupling via the inverse soft local time method of Section \ref{sec:softlocaltimes}, conditionally on $(\zeta_j),$ we define excursions $(\til{Z}_j)$ close to $(Z_j)$ and independent of the walk $X$ outside of $B_3,$ see Lemma \ref{lem:independenceforfixedsets}, which will form the basis of the finite-range process $\omega^{(x)}$ from Theorem \ref{thm:rwshortrange} in the random walk case. Finally, we extend this construction to the case of random interlacements.

 Let $B_1\subseteq B_2\subseteq B_3
 \subset Q_{2N}(0)$ be 
 three concentric boxes with diameter at most $N-1.$ For a sequence $x=(x_n)_{n \geq 0}$ in $\Z^d,$ we introduce two sequences of successive return and departure times as $R_0(x,B_2,B_3)=0$, $D_0(x,B_2,B_3)= H_{{\partial B_3^c}}$, where $\partial B_3^c=\partial (B_3^c)$ is the exterior boundary of $B_3$, and inductively for integers $k\geq 0$ 
 \begin{equation}
\label{eq:deftilrhorho}
\begin{split}
R_{k+1}(x,B_2,B_3)&= \inf\{n\geq D_{k}(x,B_2,B_3): x_n\in \partial B_2\}, \\
	D_{k+1}(x,B_2,B_3)&=\inf\{n\geq R_{k+1}(x,B_2,B_3): x_n\in \partial B_3^c\},
\end{split} 
\end{equation}
Thus, $D_k \leq R_{k+1} \leq D_{k+1}$ for all $k \geq 0$.
We further define, for $k\geq 1$,
\begin{align*}
	H_k(x,B_1,B_2,B_3)&=\inf\{n\in [R_{k}(x,B_2,B_3),D_{k}(x,B_2,B_3)] : x_n\in \partial B_1\}
\end{align*}
(we use the convention $\inf\emptyset= + \infty$). Attached to these stopping times are the process $(\zeta_i(x,B_2,B_3))_{i\geq 1}$, where \begin{align}\label{eq:defclothesline}
\zeta_i(x,B_2,B_3) = (x_{R_{i}(x,B_2,B_3)}, x_{D_i(x,B_2,B_3)}) \in  \partial B_2\times \partial B_3^c \ \text{ for } \ i\geq 1,
\end{align} 
and the excursions $(Z_i(x,B_1,B_2,B_3))_{i\geq 1}$ as
\begin{align}\label{eq:defexcrw}
	Z_i(x,B_1,B_2,B_3) = \begin{cases}
 	x_{\left[ H_{i}(x,B_1,B_2,B_3), L_{B_2}(D_i(x,B_2,B_3))   \right]} \quad &\text{ if } \ H_{i}(x,B_1,B_2,B_3)<\infty,\\
 	\Theta &\text{ otherwise,}
 \end{cases}
 \end{align}
where $x_{[s,t]}= \{x_k:\,k\in{[s,t]\cap\N}\},$ the time $L_{B_2}(D_i(x,B_2,B_3))$ is defined similarly as in \eqref{eq:defLKt} but relative to $x$ and $\Theta$ is a cemetery state corresponding to excursions that do not hit $\partial B_1$. In words, $Z_i(x,B_1,B_2,B_3)$ is the part of the $i$-th excursion from~$B_2$ to~$B_3$ from the first time it hits~$\partial B_1$ until the last time it is in $B_2$ before hitting~$\partial B_3^c$. The attentive reader will have noticed that the system of excursions defined by \eqref{eq:defexcrw}, though designed to keep track of $x$ inside $B_1$, actually neglects its first part until first  exiting $B_3$ (which may well intersect $B_1$). 

We now introduce $B_4$, a box of side length $N$ satisfying $ B_4 \supset B_3$, which is otherwise arbitrary. With $X$ the random walk in~$\mathbf{T}$ under $\mathbf{P}_{\mathbf{0}}$, the restriction $\pi_{|B_4}: B_4 \to \mathbf{T}$ is a bijection, and we let $\widehat{X}=(\widehat{X}_n)_{n \geq 0}$ with
\begin{equation}\label{e:X-hat}
\widehat{X}_n=(\pi_{|B_4})^{-1}(X_n), \text{ for all integers $n \geq 0.$}
\end{equation} 
Even though $\widehat{X}$ in \eqref{e:X-hat} depends on the choice of $B_4$, the subsequent construction (in particular the definition of the processes $Z$, $\zeta$ and $\lambda$ below) do not (so long as $B_4 \supset B_3$). We will be interested in the system $Z=(Z_i)_{i \geq 1}$ of excursions (on $\Z^d$)
\begin{equation}\label{e:Z-i-real}
Z_i \stackrel{\text{def.}}{=} Z_i(\widehat{X},B_1,B_2,B_3), \, i \geq 1,
\end{equation}
with corresponding clothesline process $\zeta=(\zeta_i)_{i\geq1}$, where $\zeta_i= \zeta_i(\widehat{X},B_2,B_3)$. For $u>0$, we denote by $\Excrw(X,B_2,B_3,u)$ the total number of excursions $X$ performs across $\pi(B_3\setminus B_2)$ before time $uN^d$ and after time $D_0(\widehat{X},B_2,B_3)$; that is,
\begin{equation}
\label{eq:defnumberexcursionRW}
\Excrw({X},B_2,B_3,u) = \sup\{k\geq 0: R_{k}(\widehat{X},B_2,B_3)< uN^d\}.
\end{equation}
We now adapt this setup to random interlacements. Let~$\omega_{Q_{2N}(0)}=(\omega^u_{Q_{2N}(0)})_{u>0}$ be the restriction of a random interlacements process $\omega$ under $\PI$ to trajectories hitting~$Q_{2N}(0)$. Recalling~\eqref{eq:definterprocess}, $\omega_{Q_{2N}(0)}$ is defined in terms of the Poisson counting process $u  \mapsto N^u= N_{Q_{2N}(0)}^u$ of intensity $\text{cap}(Q_{2N}(0))$ and a family $X^j$, $j \geq 1$, of independent simple random walks having law $P_{\bar{e}_{Q_{2N}(0)}}$ each. Let
\begin{equation}
\label{eq:defTj}
T^j=T(X^j, B_2,B_3) = \sup\{k\geq 0:  R_k(X^j,B_2,B_3)<\infty\}.
\end{equation}
Note that $T^j=0$ with positive probability, and that actually $T^j=0$ if and only if  $X^j$ does not visit $B_2$ in view of \eqref{eq:deftilrhorho} and since we assumed $B_3\subset Q_{2N}(0).$ Let us now define the clothesline process $ \lambda = (\lambda_j(\omega,B_2,B_3))_{j\geq1}$ and the excursions process $(W_j(\omega,B_1,B_2,B_3))_{j\geq1}$ of random interlacements by taking for all $j\geq1$ and $1\leq i\leq T^j$ (cf.~\eqref{eq:defclothesline} and~\eqref{eq:defexcrw} for notation)
\begin{equation}\label{eq:defclotheslineri}
\begin{gathered}
\lambda_{i+\sum_{k=1}^{j-1}T^k}(\omega,B_2,B_3)=\zeta_i(X^j,B_2,B_3) \text{ and } \\W_{i+\sum_{k=1}^{j-1}T^k}(\omega,B_1,B_2,B_3)=Z_i(X^j,B_1,B_2,B_3).
\end{gathered}
\end{equation}
In analogy with \eqref{eq:defnumberexcursionRW}, we  write $\Excri(\omega,B_2,B_3,u)$ for the total number of excursions across $B_3\setminus B_2$ performed by the interlacement at level $u$, i.e.
\begin{equation}
\label{eq:defnumberexcursionRI}
\Excri(\omega,B_2,B_3,u) =  \sum_{j=1}^{N^u} T^j,
\end{equation}
with $T^j$ as in \eqref{eq:defTj} and where $N^u$ is a Poisson random variable of parameter $u\cp(Q_{2N}(0)),$ see above \eqref{eq:defTj}. We will frequently omit various arguments, e.g.~the sets $B_1,B_2, B_3$, from the above notation whenever those are clear from the context. 

We now aim to couple the process $Z= (Z_i)_{i \geq 1}$ from \eqref{e:Z-i-real} via inverse soft local times, i.e.~using Proposition~\ref{pro:couplingSLT}, with a process $\til{Z}$ independent from $X$ outside $B_3,$ corresponding to excursions of random interlacements. We start by defining the appropriate state space $\Sigma,$ measure $\mu$ and transition densities $g,$ cf.~\eqref{eq:abs-cont}. Let $\K$ denote the set of finite nearest-neighbor paths in $B_3$ from $\partial B_1$ to $\partial B_2$, i.e.
\begin{equation}
\label{eq:defK}
\K= \left\{z=(z_0,\ldots, z_\ell): z_0\in  \partial B_1, z_j\in B_3\ \forall \ j\leq \ell, z_\ell\in \partial B_2  \right\}.
\end{equation}
Recalling $\Theta$ from \eqref{eq:defexcrw}, which represents a cemetery state corresponding to the excursions from $\partial B_2$ to $\partial B_3^c$ that do not hit~$B_1$, we set $\Sigma= \K \cup \{\Theta\}$ and define for $S\subseteq \Sigma$ 
\begin{equation}\label{e:mu-ref}
\mu(S) = \sum_{x\in  \partial B_1, y\in \partial B_2} \mathbf{P}_{\boldsymbol{x}} \big(\widehat{X}_{[0,L_{B_2}(T_{B_3})]}\in S \, \big| \, \widehat{X}_{L_{B_2}(T_{ B_3})}=y\big) + 1\{\Theta\in S\},
\end{equation}
with the convention $\mathbf{P}_x(\cdot\,|\,A)=0$ for any event $A$ with $\mathbf{P}_x(A)=0,$ and where $L_{B_2}$ and $T_{B_3}$ ($=H_{\partial B_3^c}$ when starting from $B_3$) are defined as in Section~\ref{sec:notation}, but for $\widehat{X}$ instead of $X$ (since $B_i\subset{\Z^d}$).
For every $(y,w)\in \partial B_2\times \partial B_3^c$ and $z=(z_0,\ldots,z_\ell)\in \K$, abbreviating $H_{K}(\widehat{X})= H_{K}$ we let 
\begin{equation}
\label{eq:defg}
g_{(y,w)}(z) = \mathbf{P}_{\boldsymbol{y}} \big( T_{ B_3} \geq H_{B_1}, \widehat{X}_{H_{ B_1}}=z_0, \widehat{X}_{L_{B_2}(T_{ B_3})}=z_{\ell} \, \big| \, \widehat{X}_{T_{ B_3}}=w\big)
\end{equation}
and also
\begin{equation}
\label{eq:defg2}
g_{(y,w)}(\Theta) =  \mathbf{P}_{\boldsymbol{y}} \big( T_{ B_3}<H_{B_1} \, \big| \, \widehat{X}_{T_{ B_3}}=w \big).
\end{equation}
 It then follows that for all $z \neq \Theta$, 
  \begin{multline*}
	 \mathbf{P}_{\boldsymbol{y}} \big(Z_1=z \, | \, \widehat{X}_{T_{ B_3}}=w\big)  \\= g_{(y,w)}(z)
 \mathbf{P}_{\boldsymbol{y}} \big(Z_1=z \, \big| T_{ B_3} \geq H_{B_1}, \widehat{X}_{H_{ B_1}}=z_0, \widehat{X}_{L_{B_2}(T_{ B_3})}=z_{\ell},\widehat{X}_{T_{ B_3}}=w\big) 
  =	  g_{(y,w)}(z) \mu(\{z\}) ,
\end{multline*}
where the last equality follows by writing the relevant conditional probability as a ratio and applying the simple(!) Markov property separately to numerator and denominator by summing over all possible values of $L_{B_2}(T_{ B_3})$. One readily finds that the equality $ \mathbf{P}_{\boldsymbol{y}} (Z_1=z \, | \, \widehat{X}_{T_{B_3}}=w) = g_{(y,w)}(z) \mu(\{z\})$ continues to hold for $z= \Theta$.
That is, $g_{(y,w)}$ is the density with respect to $\mu$ of the image on $B_4$ of the random walk path from the first time it hits~$B_1$ until the last time it visits $B_2$ during one excursion which starts from $y$ and ends at $w$, cf.~\eqref{eq:abs-cont}.

By the Markov property of $X$, conditionally on $\zeta=(\zeta_i)_{i\geq1}$ where $\zeta_i = \zeta_i(\widehat{X},B_2,B_3))$, $(Z_i)_{i\geq1}$ is a Markov process with transition densities $(g_{\zeta_i})_{i\geq1}$ with respect to $\mu.$ Here and in the rest of the section, we identify $z\mapsto g_{\zeta_i}(z)$ with the function $(z',z) \in \Sigma \times \Sigma \mapsto g_{\zeta_i}(z),$ since the transition densities of $Z$ only depend on the second variable. This is owed to the fact that, conditionally on $\zeta,$ the random variables $(Z_i)_{i\geq1}$ are independent. With regards to fitting the setup of \eqref{eq:abs-cont}, the latter property also makes specifying $Z_0$ obsolete (for definiteness, the reader may wish to choose some $z\in \Sigma$ and set $Z_0 =z$).


For $(y,w)\in{\partial B_2\times\partial B_3^c},$ the measure $\mu$ and the function $g_{(y,w)}$ stay the same when replacing in their definition the image $\widehat{X}$ of the random walk under $\mathbf{P}_{\cdot}$  by the random walk $X$ on $\Z^d$ under the probability $P_{\bar{e}_{B_2}}.$ In particular, conditionally on $\lambda=(\lambda_i)_{i\geq1},$ $(W_i)_{i\geq1}$ is a Markov process with transition densities $(g_{\lambda_i})_{i\geq1}$ with respect to $\mu.$  We refer to \cite[Section~3]{AlvesPopov} for a detailed account, with illustrative figures, of  clothesline process, excursions and the resulting density in the case of random interlacements.

With this setup, which fits the framework of Section~\ref{sec:softlocaltimes}, cf.~around \eqref{eq:abs-cont}, we proceed to explain how to use Proposition~\ref{pro:couplingSLT} to approximate the random walk excursions $(Z_i)_{i\geq1}$ in \eqref{e:Z-i-real} by some random interlacements excursions independent of the walk outside of $B_3.$ To this end, assume ${\mathbf{P}}_{\boldsymbol{0}}$ to be suitably extended as to carry, independently of $X$, a family $\chi$ as appearing in \eqref{e:slt-Pext} and an independent clothesline process $\til{\lambda}= (\til{\lambda}_i)_{i\geq1}$ having the same law as $\lambda$ in \eqref{eq:defclotheslineri}. Now consider for each sequence $x=(x_n)_{n\geq0}$ in $\Z^d$ the map
\begin{equation}\label{e:Y-def-outside} 
Y_i(x,B_2,B_3)=\begin{cases}
x_{[0,R_{1}(x,B_2,B_3)]} &\text {if } i = 0,\\
x_{[D_i(x,B_2,B_3),R_{i+1}(x,B_2,B_3)]}, &\text {if } i \geq {1},\end{cases}
\end{equation}
assuming $R_{i+1}(x,B_2,B_3)< \infty$; in case $R_{i+1}(x,B_2,B_3)= \infty$ for some $i \geq 0$ the right endpoint is excluded in the corresponding formula for $Y_i$ in \eqref{e:Y-def-outside}, and by convention $Y_i= \emptyset$ if $D_i(x,B_2,B_3)=\infty$.
Thus, $Y_i(x,B_2,B_3)$ represents the part of $x$ occurring before the clothesline $\zeta_1(x,B_2,B_3)$ or between the clotheslines $\zeta_{i}(x,B_2,B_3)$ and $\zeta_{i+1}(x,B_2,B_3).$ We abbreviate $Y_i=Y_i(\widehat{X},B_2,B_3).$ Importantly, conditionally on $\zeta,$ the processes $(Z_i)_{i\geq1}$ and $(Y_i)_{i\geq0}$ are independent. 
Thus, conditionally on $\zeta,$ $\tilde{\lambda}$ and $(Y_i)_{i\geq0},$ $(Z_i)_{i\geq1}$ is still a Markov chain with transition densities $(g_{\zeta_i})$ with respect to $\mu.$ 
Therefore, applying Proposition~\ref{pro:couplingSLT} conditionally on $(\zeta_i)_{i\geq1},$ $(\til{\lambda}_i)_{i\geq1},$ and $(Y_i)_{i\geq0}, $ we can construct under the extended measure ${\mathbf{P}}_{\mathbf{0}}$ for every integer $T \geq 1$ and initial state $\til{Z}_0=\til{z}$ a Markov chain
\begin{equation}
\label{eq:deftildeZi}
	\big(\til{Z}_i\big)_{i\geq0}=\big(\til{Z}_i(X,\chi,\til{\lambda},T,B_1,B_2,B_3)\big)_{i\geq0}
\end{equation}
with transition densities $\til{g}_i=g_{\til{\lambda}_i},$ $i\geq 1$ and satisfying~\eqref{eq:ifthen}. 
Note that since the transition densities $(z',z)\mapsto g_{\til{\lambda}_i}(z)$ do not depend on the first variable, and since both $(Z_i)_{i\geq1}$ and $(Y_i)_{i\geq0}$ do not depend on $B_4$ (as long as it contains $B_3$), $(\til{Z}_i)_{i \geq 1}$  does not depend on the choice of the initial excursion $\tilde{z},$ which we will henceforth omit, nor on the choice of $B_4.$  
For later reference, we denote by $G^{\rm{RW}}$ (corresponding to $G$ in \eqref{eq:new-SLT}) the soft local times associated to $Z$, that is
\begin{equation}
\label{eq:softlocattimeRW}
G_{m}^{\rm{RW}}(z)=\sum_{k=1}^m\widehat{\xi}_kg_{\zeta_k}(z)\text{ for all }z\in{\Sigma} \text{ and } m \geq 1;
\end{equation}
cf.~\eqref{e:slt-Pext} regarding $\hat{\xi}_k$ (part of $\chi$), which are independent of $Z$. 
In view of \eqref{eq:defg}, $G_{m}^{\rm{RW}}(z)$ actually only depends on its argument through the start- and endpoint of $z$. The clothesline process $\zeta$ corresponding to $Z$ below \eqref{e:Z-i-real} is a Markov chain, as follows readily from the strong Markov property. One can show, see Lemma~\ref{lem:invarexitchain} and \eqref{eq:ident}, that the stationary distribution of $\zeta$ is given by $\bar{e}_2^3(x)P_{x}(X_{T_{B_3}} = y)$ for $(x,y) \in (\partial B_2\times\partial B_3^c)$, where $\bar{e}_2^3= \bar{e}_{B_2}^{B_3}$ denotes the normalized equilibrium measure of $B_2$ relative to $B_3$ (see below \eqref{e:cap-2point} for notation). Abbreviating ${P}_{\mu}= \sum \mu(x){P}_x$ and similarly $\mathbf{P}_{\mu}$, writing $\overline{\mathbf{e}}_{2}^{3}$ for the projection of $\bar{e}_2^3$ onto $\mathbf{T}$ (i.e.~the measure such that $\overline{\mathbf{e}}_{2}^{3}\circ \pi(x) = \overline{e}_{B_{2}}^{B_3}(x)$ for all $x\in{B_4}$), 
 and letting 
\begin{equation}\label{eq:gbar-def}
\bar{g}(z)  =  \mathbf{E}_{\bar{\mathbf{e}}_2^3}[g_{\zeta_1}(z)]\ (=\mathbf{E}_{\bar{\mathbf{e}}_2^3}[g_{\zeta_0}(z)]),
\end{equation} 
(where $\zeta_0$ is declared as in \eqref{eq:defclothesline} but with $i=0$), which amounts to the average of the random
variable $\xi\mapsto g_{\xi}(z)$ of interest under the stationary distribution for the process $\zeta$, it then readily follows that $m\bar{g}(z)$ is the expectation of $G_m^{\rm{RW}}(z)$ starting from stationarity. 

The process $\tilde{Z}$ introduced in \eqref{eq:deftildeZi} by means of Proposition~\ref{pro:couplingSLT} will be the basis of the construction of the short range process ${\omega}^{(x)}$ (and $\ell^{(x)}$) from Theorem~\ref{thm:rwshortrange}, with $x$ denoting the common centre of $B_1$-$B_3$. As one of its central features, which will eventually give rise to the finite-range property \eqref{e:loc-range}, the process $\tilde{Z}$ is independent of $X$ outside of $B_3,$ as argued next.

\begin{lemma}
\label{lem:independenceforfixedsets}
	For all boxes $B_1\subseteq B_2\subseteq B_3 \subset Q_{2N}(0)$ with diameter at most $N-1$ and each $T \geq 1$, the process $\til{Z}= \big(\tilde{Z}_i(X,\chi,\til{\lambda},T,B_1,B_2,B_3)\big)_{i\geq1}$ is independent of $\zeta= (\zeta_i(\widehat{X},B_2,B_3))_{i\geq1}$ and $ Y=(Y_i(\widehat{X},B_2,B_3))_{i\geq0}.$
\end{lemma}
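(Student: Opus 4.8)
The plan is to establish the (formally stronger) statement that $\til Z$ is jointly independent of the pair $(\zeta,Y)$, by conditioning on the $\sigma$-algebra $\mathcal{G}:=\sigma\big(\zeta,\til\lambda,Y\big)$ and recognising that, on the resulting conditional probability space, the construction \eqref{eq:deftildeZi} of $\til Z=\big(\til Z_i(X,\chi,\til\lambda,T,B_1,B_2,B_3)\big)_{i\geq1}$ is exactly an instance of Proposition~\ref{pro:couplingSLT}. The conclusion will then follow by integrating out $\til\lambda$, using its independence from $X$.

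First I would record the relevant measurability and independence facts. By construction of the extended measure, $X$, the auxiliary family $\chi$ from \eqref{e:slt-Pext} and the clothesline process $\til\lambda$ are mutually independent; since $Z=(Z_i)_{i\geq1}$, $\zeta$ and $Y$ are deterministic functionals of $\widehat X$, hence $\sigma(X)$-measurable, we get $\chi\perp\big(\sigma(Z)\vee\mathcal{G}\big)$, so that conditionally on $\mathcal{G}$ the variable $\chi$ is still independent of $Z$ and retains its unconditional law (namely that prescribed in \eqref{e:slt-Pext}). Next, conditionally on $\zeta$ the processes $(Z_i)_{i\geq1}$ and $(Y_i)_{i\geq0}$ are independent and $(Z_i)_{i\geq1}$ is a (time-inhomogeneous) Markov chain with transition densities $(g_{\zeta_i})_{i\geq1}$, as noted just above \eqref{eq:deftildeZi}; since moreover $\til\lambda\perp X$, conditioning additionally on $Y$ and on $\til\lambda$ does not alter this, so conditionally on $\mathcal{G}$ the chain $(Z_i)_{i\geq1}$ is Markov with transition densities $(g_{\zeta_i})_{i\geq1}$, which are now \emph{deterministic} (the value of $Z_0$ being immaterial as in the discussion following \eqref{e:mu-ref}).

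With this in hand, the construction \eqref{eq:deftildeZi} of $\til Z$, read conditionally on $\mathcal{G}$, is precisely an application of Proposition~\ref{pro:couplingSLT}: the underlying measure is $\mathbf{P}_{\boldsymbol 0}(\cdot\mid\mathcal{G})$, the Markov chain is $(Z_i)_{i\geq1}$ with deterministic transition densities $g_k=g_{\zeta_k}$, the auxiliary randomness is $\chi$ (independent of $Z$ given $\mathcal{G}$ by the previous paragraph, with the correct law), and the target densities are $\til g_k=g_{\til\lambda_k}$, which are again deterministic given $\mathcal{G}$. That $\til Z$ really is the output of this instance of the proposition — and does not secretly depend on further randomness — is exactly the content of Remark~\ref{R:inverse},\ref{R:dependencytildeZ}. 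Part i) of Proposition~\ref{pro:couplingSLT} then gives that, conditionally on $\mathcal{G}$, $\til Z$ is a Markov chain with transition densities $(g_{\til\lambda_k})_{k\geq1}$; since such a law is determined by the transition densities alone, $\mathbf{P}_{\boldsymbol 0}\big(\til Z\in\cdot\mid\mathcal{G}\big)$ is a measurable function of $\til\lambda$ only, say $\nu_{\til\lambda}$. Finally, for bounded measurable $\phi$,
\[
\mathbf{E}_{\boldsymbol 0}\big[\phi(\til Z)\,\big|\,\zeta,Y\big]
=\mathbf{E}_{\boldsymbol 0}\big[\,\mathbf{E}_{\boldsymbol 0}[\phi(\til Z)\mid\mathcal{G}]\,\big|\,\zeta,Y\big]
=\mathbf{E}_{\boldsymbol 0}\big[\nu_{\til\lambda}(\phi)\,\big|\,\zeta,Y\big]
=\mathbf{E}_{\boldsymbol 0}\big[\nu_{\til\lambda}(\phi)\big],
\]
the last equality because $\til\lambda\perp X$, hence $\til\lambda\perp(\zeta,Y)$. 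The right-hand side being deterministic, $\til Z$ is independent of $(\zeta,Y)$, which is the assertion.

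I do not expect a genuine analytic obstacle here: the entire content is the bookkeeping of which objects are measurable with respect to which $\sigma$-algebra, so that conditioning on $\mathcal{G}=\sigma(\zeta,\til\lambda,Y)$ places us \emph{verbatim} in the hypotheses of Proposition~\ref{pro:couplingSLT}. The one point that requires a moment's care is that $\til\lambda$ must be conditioned upon early (to make the target densities $g_{\til\lambda_k}$ deterministic, as Proposition~\ref{pro:couplingSLT} demands) and only removed at the very end via $\til\lambda\perp X$; and that conditioning on $Y$ in addition to $\zeta$ preserves the conditional Markov property of $Z$, which is immediate from $(Z_i)\perp(Y_i)$ given $\zeta$.
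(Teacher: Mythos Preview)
Your proof is correct and follows essentially the same route as the paper: condition on $\sigma(\zeta,\til\lambda,Y)$, invoke Proposition~\ref{pro:couplingSLT} to see that the conditional law of $\til Z$ depends only on $\til\lambda$, and then use $\til\lambda\perp(\zeta,Y)$ to conclude. The paper's proof is a two-sentence version of exactly this argument; you have simply spelled out the measurability bookkeeping (in particular the roles of $\chi$ and of Remark~\ref{R:inverse},\ref{R:dependencytildeZ}) more carefully.
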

\begin{proof}
By Proposition \ref{pro:couplingSLT}, conditionally on $\til{\lambda},$ $\zeta$ and $Y,$ $\til{Z}$ is a Markov chain with transition densities $(g_{\til{\lambda}_i})_{i\geq1}$ with respect to $\mu,$ and so its conditional law depends only on $\tilde{\lambda}.$ Since $\til{\lambda}$ is independent of $\zeta$ and $Y,$ it thus follows that $\til{Z}$ is independent of $\zeta$ and $Y.$
\end{proof}

Thus, up to controlling their respective soft local times, $(\tilde{Z}_i)_{i\geq1}$ are excursions close to $(Z_i)_{i\geq1}$ by Proposition~\ref{pro:couplingSLT}, but with an extra independence property, similarly as the processes $\I_k^u$ from \cite[Proposition~5.3]{softlocaltimes}. There are however two main differences in our construction: first the soft local times method is used conditionally on the clothesline process $\zeta$ instead of unconditionally as in \cite{softlocaltimes}, following ideas from \cite{AlvesPopov}, and second, using Proposition~\ref{pro:couplingSLT}, the process $(Z_i)_{i\geq1}$ is defined directly in terms of the Markov chain $X$ under $\mathbf{P}_{\boldsymbol{0}}$ (and additional independent randomness) instead of a process on some other probability space having the same law as $(Z_i)_{i\geq1}$, as in \cite{softlocaltimes, AlvesPopov}. These two changes in the method serve the same purpose: we can construct simultaneously the processes $(\til{Z}_i)_{i\geq1}$ for different choices of sets $B_1\subseteq B_2\subseteq B_3$ that are not necessarily disjoint and all have the desired independence properties. Moreover, for each three sets $B_1\subseteq B_2\subseteq B_3$, they are close to the excursion process $(Z_i)_{i\geq1}$ associated to the same initial chain $X.$ We refer to Proposition~\ref{pro:clotheslines} for the exact statement.

We now adapt the previous construction to the case of random interlacements. Suppose that $\PI$ is extended with the same independent processes $(\til{\lambda},\chi)$ as in the case of $
{\mathbf{P}}_{\boldsymbol{0}}$, cf.~above \eqref{e:Y-def-outside} regarding their respective laws. Conditionally on $\lambda$ (see \eqref{eq:defclotheslineri}) and $Y_i(X^j,B_2,B_3)$ for all $ 0 \leq i\leq T^j$ and $j\geq1,$ see \eqref{e:Y-def-outside}, the process $(W_i)_{i\geq1}$ introduced in \eqref{eq:defclotheslineri} is a Markov chain with transition densities $(g_{\lambda_i})$ with respect to $\mu.$ Thus, applying Proposition~\ref{pro:couplingSLT} with any choice of $\til{W}_0$, one obtains for every $T \geq 1$ a Markov chain 
\begin{equation}
\label{eq:deftildeWi}
	\til{W}=\big(\til{W}_i\big)_{i\geq1}=\big(\til{W}_i(\omega,\chi,\til{\lambda},T,B_1,B_2,B_3)\big)_{i\geq1}
\end{equation}
with transition densities $(g_{\til{\lambda}_i})$ with respect to $\mu,$ (independent of $\til{W}_0$). The soft local time associated to $W=(W_i)_{i \geq1}$ is given by
\begin{equation}
\label{eq:GnRI}
G_{m}^{\rm{RI}}(z)=\sum_{k=1}^m \hat{\xi}_kg_{\lambda_k}(z)\text{ for all }z\in{\Sigma} \text{ and } m \geq 1,
\end{equation}
(and similarly for $\til{W}$ with $\til{\xi}$, $\til{\lambda}$ in place of $\hat{\xi}$, $\lambda$). 
By \cite[Lemma 6.1]{MR3563197}, starting from the invariant distribution of $\lambda,$ see \eqref{eq:defclotheslineri}, the expectation of $G_m^{\rm RI}$ is equal to $m\bar{g}(z),$ see \eqref{eq:gbar-def}, i.e.~it is equal to the expectation of $G_m^{\rm RW}$ starting from the invariant distribution of $(\zeta_i)_{i\geq0}$; 
to see this recall $\zeta$ from \eqref{eq:defclothesline} (which can also be defined for $i=0$) and note that the expectation in \eqref{eq:gbar-def} equals $E_{\bar{e}_2^3}[g_{\zeta_0(X,B_2,B_3)}(z)]$. Moreover one proves similarly as in Lemma~\ref{lem:independenceforfixedsets} above that $\til{W}$ is independent of the clothesline process $\lambda$ and the excursions $\{Y_i(X^j,B_2,B_3) : 0\leq i\leq T^j, j\geq1\}$.

\subsection{Simultaneous approximation by short range excursions}
\label{sec:costshortrange}

Towards proving Theorem~\ref{thm:rwshortrange}, we now apply the construction of \S\ref{sec:constructionexcursions} jointly to the sets 
\begin{equation}\label{e:B_i-choice}
B_k=Q(x,r_k), \, k=1,\dots, 4, \, \text{ where } 0<r_1<r_2<r_3< r_4= N
\end{equation}
(with $Q(x,r)=Q_r(x)$ referring to the boxes introduced at the beginning of Section~\ref{sec:notation}) as $x \in Q_N(0)$ varies. In particular, $B_3\subset Q_{2N}(0)$ and has diameter at most $N-1$ as assumed in~\S\ref{sec:constructionexcursions}. The resulting sequences $\til{Z}^{(x)}=(\til{Z}^{(x)}_j)_{j \geq 1} $, see \eqref{eq:deftildeZi} and \eqref{eq:deftildeZji} below, will provide the excursions in terms of which the processes $\omega^{(x)}$, $x \in Q_N(0)$ will later be defined. 
For each $x\in{Q_N(0)}$ and $j\geq 1$, recalling \eqref{eq:defexcrw} for notation, let
\begin{equation}\label{eq:defZji-x}
Z_j^{(x)}(r_1,r_2,r_3)\stackrel{\text{def.}}{=} Z_j(\widehat{X},Q(x,r_1),Q(x,r_2),Q(x,r_3)),
\end{equation}
where $\widehat{X}$ is as in \eqref{e:X-hat} with $B_4=Q(x,N)$,
and write $\zeta^{(x)}= (\zeta^{(x)}_j)_{j\geq 1}$ for the associated clothesline process, i.e.~$\zeta_j^{(x)} = \zeta_j(\widehat{X},Q(x,r_2), Q(x,r_3))$, cf.~\eqref{eq:defclothesline}, and $g_{\zeta^{(x)}}^{(x)}$, the transition densities of $(Z_j^{(x)})_{j\in{\mathbb{N}}}$, cf.~\eqref{eq:defg} and \eqref{eq:defg2}. Let $\til{\mathbf{P}}_{\boldsymbol{0}}$ be an extension of $\mathbf{P}_{\boldsymbol{0}}$ carrying the additional independent processes $\til{\omega}^{(x)},$ $\chi^{(x)},$ $x\in{Q_N(0)},$ having the following distributions. For each $x$, $\til{\omega}^{(x)}$ is an interlacement process, i.e.~it has the same law as $\omega$ above \eqref{eq:definterprocess}. Its induced clothesline process for the choice \eqref{e:B_i-choice} will be denoted by $(\til{\lambda}_j^{(x)})_{j\geq1}$. The process $\chi^{(x)}$ is specified by \eqref{e:slt-Pext} with underlying measure space $(\Sigma,\mu)\equiv (\Sigma^{(x)},\mu^{(x)})$ in \eqref{e:mu-ref} corresponding to \eqref{e:B_i-choice}.

Now, applying Proposition \ref{pro:couplingSLT} in the same manner as above \eqref{eq:deftildeZi} but simultaneously for each $x\in{Q_N(0)}$ yields, for each such $x$  and integers $j,T\geq 1$, the random variables
\begin{equation}
\label{eq:deftildeZji}
\til{Z}_j^{(x)}(T,r_1,r_2,r_3)\stackrel{\text{def.}}{=} \til{Z}_j(X,\chi^{(x)},\til{\lambda}^{(x)},T,Q(x,r_1),Q(x,r_2),Q(x,r_3)).
\end{equation}
As in~\eqref{eq:softlocattimeRW}, the soft local times associated to $Z^{(x)}=(Z_j^{(x)})_{j \geq 1}$ will be denoted by
$G_{m,x}^{\rm{RW}}(z)$, for $m \geq 1$ and $z \in \Sigma^{(x)}$, with inherent i.i.d.~exponential random variables $\hat{\xi}_j^{(x)},$ $j \geq 1$, carried by $\chi^{(x)}$, cf.~\eqref{e:slt-Pext}. The expectation under the stationary distribution of $g_{\zeta_1^{(x)}}^{(x)}(z)$ is written as $\bar{g}^{(x)}(z)$, $z \in \Sigma^{(x)}$, similarly as in \eqref{eq:gbar-def}.

Finally, for each $x\in{Q_N(0)},$ replacing every occurrence of $Z_j,$ $\zeta_j,$ $\til{Z}_j$ and $G_{m}^{\rm{RW}}$ above by $W_j,$ $\lambda_j,$ $\til{W}_j$ and $G_{m}^{\rm{RI}},$ one similarly defines under the extended measure $\tildePI$ (with the same extension $(\til{\omega}^{(x)}, \chi^{(x)})_{x\in{Q_N(0)}}$ as $\til{\mathbf{P}}_{\boldsymbol{0}}$) the processes  $W_j^{(x)},$ $\lambda_j^{(x)},$ $\til{W}_j^{(x)}$ and $G_{m,x}^{\rm{RI}}$ for random interlacements, which correspond to the processes introduced in \eqref{eq:defclotheslineri}, \eqref{eq:deftildeWi} and \eqref{eq:GnRI} for the choices of boxes $B_k$ in \eqref{e:B_i-choice}. As a result, by Proposition \ref{pro:couplingSLT}, for every $x\in{Q_N(0)},$ 
\begin{equation}\label{e:law-Ztilde-x}
 \text{$(\tilde{Z}_j^{(x)})_{j\geq1}$  has the same law conditionally on $\til{\lambda}$ as $({W}_j^{(x)})_{j\geq1}$ conditionally on ${\lambda}$.}
 \end{equation}

In the sequel, mimicking the notion introduced above Theorem~\ref{thm:rwshortrange}, a collection
$\Pi=(\Pi^{(x)}_j)_{x\in{Q_N(0)},j\geq 1}$ of $\Z^d$-valued random paths is said to have range $R$  in $\Z^d$ (resp.~in $\mathbf{T})$ if $(\Pi_j^{(x)})_{j\geq 1}$ is independent of $ (\Pi^{(y)}_j)_{j\geq 1, \, y\in{Q_N(0)\setminus Q(x,R)}}$ (resp.~of $(\Pi^{(y)}_j)_{j\geq 1, \,  \boldsymbol{y}\notin{Q(\boldsymbol{x},R)}}$) for each $x\in{Q_N(0)}.$ Here following the convention from the beginning of \S\ref{sec:notation}, we used the symbol $\boldsymbol{y}=\pi(y),$ and $\boldsymbol{y}\notin{Q(\boldsymbol{x},R)}$ means that $y$ is such that $\boldsymbol{y}\notin{Q(\boldsymbol{x},R)}.$
\begin{proposition}[$N, T \geq 1 , \, 1\leq r_1<r_2<r_3< N$]\label{pro:clotheslines}

\medskip
\noindent\begin{itemize}
\item[i)] The excursion process $\big(\til{Z}^{(x)}_j(T,r_1,r_2,r_3)\big)_{x\in{Q_N(0)},j\geq1},$ resp.\  $\big(\til{W}^{(x)}_j(T,r_1,r_2,r_3)\big)_{x\in{Q_N(0)},j\geq1},$ have range at most $2r_3$ in $\mathbf{T},$ resp.\ in $\Z^d.$
\item[ii)]  For all $F\subset Q_N(0),$ integer $m\geq 1$ and $\eps\in{(0,1)}$ such that $m\epsilon\geq 3$ and $m_{+} \leq T$ where $m_{\pm}= \lceil m(1\pm\epsilon)\rceil$, one has with sup ranging over $x\in{F}$, $z\in{\Sigma^{(x)}}$ and $n\in[m_{-},m_{+}]$ below,
	\begin{equation}
	\label{eq:clotheslines}
	\begin{split}
	&\til{\mathbf{P}}_{\boldsymbol{0}}\big(\{\til{Z}_j^{(x)}: j\leq m_{-}\} \subseteq \{Z_j^{(x)}: j\leq m\}\subseteq \{\til{Z}_j^{(x)}: j\leq m_{+}\},\ \forall \ x\in{F}\big)
	\\ &\qquad \begin{array}{rl}\geq  1 - |F|r_3^{2d}\displaystyle \sup_{x,z,n}  & \hspace{-1ex} \Big\{\til{\mathbf{P}}_{\boldsymbol{0}}\big(|G_{n,x}^{\rm{RW}}(z)-n \bar{g}^{(x)}(z)|\geq \textstyle \frac{\eps}{4} n \bar{g}^{(x)}(z)\big) \\[0.5em]
	& \ \  +  2 \tildePI\big(|G_{n,x}^{\rm{RI}}(z)-n \bar{g}^{(x)}(z)|\geq \textstyle \frac{\eps}{4} n \bar{g}^{(x)}(z)\big) \Big\},\end{array}\\
	&\tildePI\big(\{\til{W}_j^{(x)}: j\leq m_{-}\} \subseteq \{W_j^{(x)}: j\leq m\}\subseteq \{\til{W}_j^{(x)}: j\leq m_{+}\},\ \forall \ x\in{F}\big)
	\\&\qquad \geq 1 - 3|F|r_3^{2d}\sup_{x,z,n}\tildePI\big(|G_{n,x}^{\rm{RI}}(z)-n \bar{g}^{(x)}(z)|\geq \textstyle\textstyle  \frac{\eps}{4} n \bar{g}^{(x)}(z)\big) .
	\end{split}
	\end{equation}
\end{itemize}
\end{proposition}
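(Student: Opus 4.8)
The plan is to prove both displays in \eqref{eq:clotheslines} by essentially the same argument, reducing the inclusion of excursion ranges to a comparison of soft local times via Proposition~\ref{pro:couplingSLT}, and then controlling the probability that the soft local times fail to lie in the desired window by a union bound over the (few) possible start/endpoints of an excursion together with a concentration estimate. The range assertion in i) is the easy part: for each fixed $x$, the process $\til{Z}^{(x)}$ depends on $X$ (or $\omega$) only through $\zeta^{(x)}$ and through the independent randomness $(\til{\omega}^{(x)},\chi^{(x)})$, cf.~Lemma~\ref{lem:independenceforfixedsets}; since $\zeta^{(x)}_j\in\partial Q(x,r_2)\times\partial Q(x,r_3)^c$ is measurable with respect to the part of the path inside $Q(x,r_3)$, and the boxes $Q(y,r_3)$ for $y$ with $\boldsymbol{y}\notin Q(\boldsymbol{x},2r_3)$ do not meet $Q(x,r_3)$, the independence structure follows once one checks that the additional randomness $(\til{\omega}^{(x)},\chi^{(x)})$ is i.i.d.~across $x$ and independent of $X$. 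I would spell this out first, as it fixes the meaning of ``range at most $2r_3$'' and is needed nowhere else.

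For ii), fix $x\in F$ and apply part iii) of Proposition~\ref{pro:couplingSLT} conditionally on $\zeta^{(x)}$, $\til\lambda^{(x)}$ and the ``outside'' excursions $Y^{(x)}$, with the roles of $(g_i)$ and $(\til g_i)$ being $(g^{(x)}_{\zeta^{(x)}_i})$ and $(g^{(x)}_{\til\lambda^{(x)}_i})$ respectively, and with $p=m_-$, $n=m_+$, $m=m$ (in the notation of that proposition). The event $\{\til G^{(x)}_{m_-}(z)\le G^{\rm RW}_{m,x}(z)\le \til G^{(x)}_{m_+}(z)\ \forall z\in\Sigma^{(x)}\}$ then implies the desired double inclusion of excursion sets. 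Since by \eqref{eq:defg} the soft local time $G^{\rm RW}_{n,x}(z)$ depends on $z$ only through its start- and endpoint, and likewise for $\til G^{(x)}_n$, it suffices to control these processes on the set of at most $|\partial Q(x,r_1)|\cdot|\partial Q(x,r_2)|\le r_3^{2d}$ relevant pairs. On the good event where $G^{\rm RW}_{m_-,x}(z)\ge (1-\tfrac\eps4)m_-\bar g^{(x)}(z)\ge m\bar g^{(x)}(z)$ — using $m_-(1-\tfrac\eps4)\ge m$, which holds for $m\eps\ge 3$ — and, using \eqref{e:law-Ztilde-x} so that $\til G^{(x)}_n$ has the law of $G^{\rm RI}_{n,x}$, the corresponding two-sided bounds $|\til G^{(x)}_{m_\pm}(z)-m_\pm\bar g^{(x)}(z)|\le\tfrac\eps4 m_\pm\bar g^{(x)}(z)$ give $\til G^{(x)}_{m_-}(z)\le m\bar g^{(x)}(z)\le G^{\rm RW}_{m,x}(z)\le m\bar g^{(x)}(z)\le\til G^{(x)}_{m_+}(z)$ for all relevant $z$. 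A union bound over $x\in F$, over the $\le r_3^{2d}$ endpoint pairs, and over the finitely many values $n\in[m_-,m_+]$ that appear then yields the stated right-hand side, with the $G^{\rm RW}$ term coming from the lower bound at $n=m_-$ and the two $G^{\rm RI}$ terms from the upper and lower bounds at $n=m_\pm$ (using \eqref{e:law-Ztilde-x} to rewrite $\til G^{(x)}$-probabilities as $G^{\rm RI}_{n,x}$-probabilities). The interlacements display is identical but simpler: here $W^{(x)}$ and $\til W^{(x)}$ are \emph{both} built from interlacement excursions, so all three events in the union bound are $G^{\rm RI}$-probabilities, giving the factor $3|F|r_3^{2d}$.

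The main obstacle — or rather the one point requiring genuine care rather than bookkeeping — is the conditioning structure in the application of Proposition~\ref{pro:couplingSLT}: one must verify that, conditionally on $\zeta^{(x)}$, $\til\lambda^{(x)}$ and $Y^{(x)}$, the excursion chain $(Z^{(x)}_j)_{j\ge1}$ is still Markov with the claimed transition densities $(g^{(x)}_{\zeta^{(x)}_i})$ with respect to $\mu^{(x)}$, which was established above \eqref{eq:deftildeZi} and relies on the fact that, given $\zeta$, the $Z_j$ are conditionally independent and also independent of the ``outside'' pieces $Y_j$; and that the extra randomness $\chi^{(x)}$ used to invert the soft local times is independent of everything relevant. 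The other slightly delicate point is the elementary inequality $m\le m_-(1-\tfrac\eps4)$ (equivalently $\lceil m(1-\eps)\rceil(1-\tfrac\eps4)\ge m$), valid for $\eps\in(0,1)$ and $m\eps\ge3$, which is what makes the $\tfrac\eps4$-windows on the soft local times translate into the \emph{exact} inclusions $\{j\le m_-\}\subseteq\{j\le m\}\subseteq\{j\le m_+\}$ of excursion labels; I would isolate this as a one-line computation before the union bound. Everything else is a finite union bound and the substitution \eqref{e:law-Ztilde-x}.
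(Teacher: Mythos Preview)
Your overall plan matches the paper's proof, but there are genuine errors in both parts.

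For i), you write that $\til Z^{(x)}$ ``depends on $X$ only through $\zeta^{(x)}$'' and then argue via disjointness of the boxes $Q(x,r_3)$ and $Q(y,r_3)$. This is not correct: $\til Z^{(x)}$ is built from $Z^{(x)}$ by inverse soft local times (Remark~\ref{R:inverse},\ref{R:dependencytildeZ}), so it depends on $X$ through both $Z^{(x)}$ and $\zeta^{(x)}$. More importantly, disjointness of boxes does not give independence of the corresponding clotheslines---the walk is a single process. The actual mechanism (as in the paper) is: Lemma~\ref{lem:independenceforfixedsets} shows $\til Z^{(x)}$ is independent of the \emph{outside} pieces $(Y^{(x)}_j)_{j\ge0}$; then one observes that for $\boldsymbol{y}\notin Q(\boldsymbol{x},2r_3)$ the entire family $(Z^{(y)}_j,\zeta^{(y)}_j)_{j\ge1}$ is measurable with respect to $(Y^{(x)}_j)_{j\ge0}$, since the excursions of $X$ relative to $Q(y,r_2),Q(y,r_3)$ occur entirely during the $Y^{(x)}$-times. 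Combined with the independence of $(\chi^{(y)},\til\lambda^{(y)})$ across $y$, this gives the range bound.

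For ii), the structure is right but the arithmetic is wrong in two places. First, in Proposition~\ref{pro:couplingSLT}(iii) the role of $G$ is played by the random-walk soft local time at index $m$, not $m_-$; the indices $m_\pm$ appear only in $\til G$. Second, the inequality you isolate, $m_-(1-\tfrac\eps4)\ge m$, is simply false since $m_-<m$ and $1-\tfrac\eps4<1$. The three good events the paper uses (at fixed $x,z$) are
\[
\big|G^{\rm RW}_{m,x}(z)-m\bar g^{(x)}(z)\big|\le\tfrac\eps4\, m\bar g^{(x)}(z),\qquad
\big|\til G^{\rm RI}_{m_\pm,x}(z)-m_\pm\bar g^{(x)}(z)\big|\le\tfrac\eps4\, m_\pm\bar g^{(x)}(z),
\]
and the elementary inequalities one actually needs are $(1+\tfrac\eps4)m_-\le(1-\tfrac\eps4)m$ and $(1+\tfrac\eps4)m\le(1-\tfrac\eps4)m_+$, both of which hold for $\eps\in(0,1)$ and $m\eps\ge3$. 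Your displayed chain ``$m\bar g^{(x)}(z)\le G^{\rm RW}_{m,x}(z)\le m\bar g^{(x)}(z)$'' cannot be what you intended either. Once these are corrected, the union bound over $x\in F$ and over the at most $r_3^{2d}$ start/end pairs (plus $\Theta$) proceeds exactly as you describe, with \eqref{e:law-Ztilde-x} converting $\til G^{(x)}$-probabilities into $G^{\rm RI}_{n,x}$-probabilities; the sup over $n\in[m_-,m_+]$ in the statement is just because $m,m_-,m_+$ all lie in this range.
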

\begin{proof}
	We first give the proof for the random walk. We start with item $i).$ For every $x\in Q_N(0)$, the family $(Z_j^{(y)},\zeta_j^{(y)})_{j\geq1,\boldsymbol{y}\notin{Q(\boldsymbol{x},2r_3)}}$ only depends on the excursions of $X$ from $Q({y},r_2)$ to $Q({y},r_3)$ for $ y\notin{Q(\boldsymbol{x},2r_3)},$ and is thus measurable with respect to $\big(Y_j(\widehat{X},Q(x,r_2),Q(x,r_3))\big)_{j\geq0}$ defined in \eqref{e:Y-def-outside}, with $\widehat{X}$ as in \eqref{e:X-hat} for $B_4=B(x,N)$. Hence, it is independent of $(\tilde{Z}_j^{(x)})_{j \geq 1}$ by Lemma~\ref{lem:independenceforfixedsets}. 
On account of \eqref{eq:deftildeZi} and Remark~\ref{R:inverse},\ref{R:dependencytildeZ} (the latter implies that $\tilde{Z}_j$ depends on $X$ only through $(Z_j)_{j \geq 1}$ as well $(\zeta_j)_{j\geq1}$ via the densities $(g_{\zeta_j})_{j\geq1}$, the process $(\tilde{Z}_j^{(y)})_{j\geq1}$ depends by construction only on $(Z_j^{(y)},\zeta_j^{(y)})_{j\geq1},$ $\chi^{(y)}$ and $\til{\lambda}^{(y)}$, for all $y\in{Q_N(0)}$. Using independence of $(\chi^{(y)}, \til{\lambda}^{(y)})$ as $y$ varies, it follows overall that $(\tilde{Z}_j^{(x)})_{j\geq 1}$ is independent of $(\tilde{Z}_j^{(y)})_{j\geq1,\boldsymbol{y}\notin{Q(\boldsymbol{x},2r_3)}},$ as claimed.
	
We now show $ii).$ For each $x\in{Q_N(0)},$ let $(\til{G}_{j,x}^{\rm{RI}})_{j\geq 1}$ denote the soft local times corresponding to $(\tilde{G}_j)_{j \geq 1}$ in \eqref{eq:new-SLT} when constructing $(\til{Z}^{(x)}_j)_{j\geq1}$ in \eqref{eq:deftildeZi} by means of Proposition~\ref{pro:couplingSLT}. In view of the choice above \eqref{e:Y-def-outside}  $(\lambda,\hat{\xi})$ and $(\tilde{\lambda},\tilde{\xi})$ have the same law, and thus $\til{G}_{n,x}^{\rm{RI}}(z)$ has the same law as ${G}_{n}^{\rm{RI}}(z)$ in \eqref{eq:GnRI}. Proposition~\ref{pro:couplingSLT} (see \eqref{eq:ifthen}) now gives that for all $m\in \N$ and $\epsilon\in (0,1)$, the event appearing in the first line of \eqref{eq:clotheslines} is implied by
\begin{equation*}
	\bigcap_{x \in F} \bigcap_{z\in \Sigma^{(x)}}\left\{\til{G}^{\rm{RI}}_{m_{-},x}(z) \leq G_{m,x}^{\rm{RW}}(z) \leq \til{G}^{\rm{RI}}_{m_{+},x}(z)\right\}.
\end{equation*}
Moreover, for each $m\in\N,$ $\epsilon\in (0,1)$ such that $m\epsilon\geq 3$, recalling that $m_{\pm}= \lceil m(1\pm\epsilon)\rceil$, the latter event (for fixed $x\in{F}$ and $z\in{\Sigma^{(x)}}$) is implied by
\begin{multline*}
\left\{|G_{m,x}^{\rm{RW}}(z)-m\bar{g}^{(x)}(z)|\leq \frac{\eps}{4} m\bar{g}^{(x)}(z)\right\}\cap\left\{|\til{G}_{m_{+},x}^{\rm{RI}}(z)-m_{+} \bar{g}^{(x)}(z)|\leq \frac{\eps}{4} m_{+} \bar{g}^{(x)}(z)\right\}
\\\cap\left\{|\til{G}_{m_{-},x}^{\rm{RI}}(z)-m_{-} \bar{g}^{(x)}(z)|\leq \frac{\eps}{4} m_{-} \bar{g}^{(x)}(z)\right\}.
\end{multline*}
The assertion \eqref{eq:clotheslines} now follows by a union bound over $x$ and $z$, upon noting that $g_{\zeta^{(x)}}^{(x)}(z)$ as defined in \eqref{eq:defg} with $B_k$ as in \eqref{e:B_i-choice} only depends on the first and last point of the excursion $z$. The same is thus true of $G_{m,x}^{\rm{RW}}(z),$ $\til{G}^{\rm{RI}}_{m_{\pm},x}(z)$ and  $\bar{g}^{(x)}(z)$, leading to the factor $r_3^{2d}$ in \eqref{eq:clotheslines}. 

The proof for random interlacements is similar, using this time in item $i)$ that the family $(W_j^{(y)},\lambda_j^{(y)})_{j\geq1,y\in{Q(0,N)\setminus Q(x,2r_3)}}$ depends only on the excursions of the trajectories of random interlacements from $Q(y,r_2)$ to $Q(y,r_3)(\subset Q_{2N}(0))$ for $y\in{Q(0,N)\setminus Q(x,2r_3)}$ (if a trajectory does not hit $Q(y,r_2)$ its excursions are just the full trajectory), and is thus measurable with respect to $\{Y_i(X^j,B_2,B_3) : 0\leq i\leq T^j, j\geq1\},$ see above \eqref{eq:defTj} and \eqref{e:Y-def-outside}. \end{proof}

The interlacement processes ${\omega}^{(x)}$ appearing in Theorem \ref{thm:rwshortrange} will be constructed using the short range excursions $(\tilde{Z}_j^{(x)})$ (in case of $\til{\mathbf{P}}_{\boldsymbol{0}}$) or $(\tilde{W}_j^{(x)})$ (in case of $\tildePI$) from Proposition \ref{pro:clotheslines}. Items i) and ii) above thereby roughly correspond to \eqref{e:loc-range} and \eqref{eq:couplelltilell}/\eqref{eq:couplingshortrangeinter}. The latter requires good control on the proximity between the short-range excursion processes $(\til{Z}_j^{(x)})$/$(\til{W}_j^{(x)})$ and the initial excursion processes $(Z_j^{(x)})$/$(W_j^{(x)})$, which is the object of the next two lemmas. 

In view of \eqref{eq:clotheslines}, one central aspect is showing that the relevant soft local times concentrate around their mean. For unconditional soft local times, i.e.~without conditioning on the clothesline process as in the construction of $\tilde{Z}$ in \eqref{eq:deftildeZi}, this was first achieved for random interlacements in \cite[Section 6]{softlocaltimes}, and then for random walk if $\delta$ is large enough in \cite[Lemma 2.1]{MR3126579}. For conditional soft local times as in \eqref{eq:GnRI}, concentration around the mean was proved for interlacements in \cite[Proposition 4.1]{AlvesPopov}. In the following lemma, the proof of which is relegated to Appendix~\ref{sec:app}, we extend this concentration of conditional soft local times to the random walk case, cf.~\eqref{eq:softlocattimeRW}. In doing so we also give a shorter proof of \cite[Proposition 4.1]{AlvesPopov} when the sets $\partial B_2$ and $\partial B_3$ are well-separated, as implied by the parameter $\delta >0$ below.

In view of \eqref{e:B_i-choice} and with a slight abuse of notation, $G_{m}^{\xi}(\cdot)$, $\xi \in \{ {\rm RW, RI}\}$, refers in the sequel to the quantities introduced in \eqref{eq:softlocattimeRW} and \eqref{eq:GnRI} but with the choice $B_k=Q(x,r_k)$ for arbitrary $x \in Q_N(0)$ (implicit below; note however that translation invariance is spoiled under ${\mathbf{P}}_{\boldsymbol{0}}$ so one cannot simply set $x=0$). In particular, $G_{m}^{\xi}(\cdot)$ depends on the parameters $r_k$, $1\leq k \leq 3$. Recall the function $\bar{g}$ from \eqref{eq:gbar-def}.

\begin{lemma}\label{lem:concsoftrwri}
For all $\delta\in{(0,1)},$ there exist $c=c(\delta)$, $C=C(\delta)$, such that for all $N \geq 1$, all $0<r_1<r_2<r_3< N$ with $r_{k+1}\geq (1+\delta)r_{k}$, $k=1,2$, and all $\epsilon\in (0,1),$ $m \geq 1$, $z\in \Sigma$, 
\begin{align*}
{\mathbf{P}}_{\boldsymbol{0}}\left(|G_m^{\rm{RW}}(z) -m\bar{g}(z) |\geq \epsilon m\bar{g}(z)\right) &\leq Cm\exp\big(-c \sqrt{\epsilon^2 m}\big),\\
{\PI}\left(|G_m^{\rm{RI}}(z) - m\bar{g}(z)|\geq \epsilon m\bar{g}(z)\right)&\leq C\exp(-c\epsilon^2 m).
\end{align*}
\end{lemma}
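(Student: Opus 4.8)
The two bounds in Lemma~\ref{lem:concsoftrwri} concern concentration of the soft local times $G_m^{\rm{RW}}(z)=\sum_{k=1}^m\hat\xi_k g_{\zeta_k}(z)$ and $G_m^{\rm{RI}}(z)=\sum_{k=1}^m\hat\xi_k g_{\lambda_k}(z)$ around their common mean $m\bar g(z)$. I would treat the interlacements case first, since there the clothesline chain $\lambda$ and the $\hat\xi_k$ are arranged so that the summands are genuinely i.i.d.: by the construction in \eqref{eq:defclotheslineri}, starting $\lambda$ from its invariant distribution $\bar e_2^3(x)P_x(X_{T_{B_3}}=y)$ makes $(\lambda_k)_{k\ge1}$ a stationary sequence, and by \cite[Lemma~6.1]{MR3563197} the mean of $g_{\lambda_1}(z)$ equals $\bar g(z)$. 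The hard part is that $(\lambda_k)$ is \emph{not} independent, only Markov; but the separation hypothesis $r_{k+1}\ge(1+\delta)r_k$ gives a uniform Doeblin/Harnack bound on the exit chain $\zeta$ (this is where the results of Appendix~\ref{sec:harnack} enter), so the chain mixes in a bounded number of steps with constants depending only on $\delta$. The cleanest route is then a block decomposition: split $\{1,\dots,m\}$ into blocks of length $\asymp 1$ and apply a Bernstein-type inequality for the resulting nearly-independent blocks, or alternatively a renewal/regeneration argument, using that $g_{\zeta_k}(z)=g_{(y,w)}(z)$ depends on $\zeta_k=(y,w)$ only through bounded data, so the summands are bounded by $Cg(0)$ times something controlled and have a well-defined exponential moment once multiplied by the exponential $\hat\xi_k$. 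The $\exp(-c\epsilon^2 m)$ rate for RI is exactly what a sub-exponential Bernstein bound yields once $m$ is the (deterministic) number of excursions.

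**The random walk case and the source of the extra factor $m$.** For $G_m^{\rm{RW}}$ the subtlety flagged in the text is that under ${\mathbf{P}}_{\boldsymbol 0}$ translation invariance is broken (we start from a fixed point, not stationarity), and the random walk on $\mathbf{T}$ is \emph{recurrent}, so the clothesline chain $\zeta$ keeps returning and the excursion counts are not as cleanly i.i.d.\ as for interlacements. I would handle this by comparing $\zeta$ started from an arbitrary point with $\zeta$ started from its invariant distribution: the Harnack/Doeblin bound again gives a coupling of the two within $O(1)$ steps with probability $\ge c(\delta)$, so up to a union-bound cost of order $m$ (one factor $m$ for the number of possible ``bad'' return epochs, matching the extra $m$ in front of $Cm\exp(-c\sqrt{\epsilon^2m})$) one reduces to the stationary case. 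The weaker rate $\exp(-c\sqrt{\epsilon^2 m})$ rather than $\exp(-c\epsilon^2 m)$ reflects that for the random walk one only controls the soft local time after cutting the trajectory at a number of regeneration-like times that is itself random with stretched-exponential tails — this is the standard loss when passing from interlacements (Poissonian excursion counts) to the walk, and it is consistent with \cite[Lemma~2.1]{MR3126579}, \cite[Proposition~4.1]{AlvesPopov}. Concretely: write $G_m^{\rm{RW}}(z)-m\bar g(z)=\sum_k\hat\xi_k(g_{\zeta_k}(z)-\bar g(z))+\bar g(z)\sum_k(\hat\xi_k-1)$; the second sum is a clean i.i.d.\ centered exponential sum, handled by Bernstein with rate $\exp(-c\epsilon^2 m)$; the first sum is the term needing the Markov-chain large-deviation input, and the tools for that are exactly those developed in Appendix~\ref{sec:largedeviationapp} which the text says are needed anyway for Lemma~\ref{lem:conc}.

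**Main obstacle and how I would organise the writeup.** The genuine obstacle is the large-deviation estimate for the additive functional $\sum_{k\le m} g_{\zeta_k}(z)$ of the Markov chain $\zeta$, \emph{uniformly} in $z\in\Sigma$, in $x\in Q_N(0)$, and in the geometry $r_1<r_2<r_3$, with constants depending only on $\delta$. Uniformity in $z$ is bought cheaply because $g_{(y,w)}(z)$ depends on $z$ only through its endpoints, so there are $\le r_3^{2d}$ relevant values, already accounted for in Proposition~\ref{pro:clotheslines}; uniformity in the geometry is bought by the $\delta$-dependent Harnack constants. So the structure of the proof is: (1) invoke the Harnack/Doeblin estimates from the appendix to get a uniform spectral-gap / Doeblin minorization for $\zeta$ (resp.\ the exit structure for RI); (2) identify $\mathbf{E}[g_{\zeta_1}(z)]=\bar g(z)$ under the invariant measure, citing \cite[Lemma~6.1]{MR3563197}; (3) prove a Bernstein-type deviation bound for $\sum\hat\xi_k g_{\zeta_k}(z)$, directly for RI, and for RW after paying the $O(m)$ union bound to reduce to stationarity and absorbing the regeneration randomness into the $\sqrt{\cdot}$ rate; (4) combine and optimise constants. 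I would defer all of steps (1) and (3) to Appendix~\ref{sec:app} as announced, keeping the main-text statement of Lemma~\ref{lem:concsoftrwri} as the clean interface used in \eqref{eq:clotheslines}.
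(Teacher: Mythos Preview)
Your plan identifies the right ingredients---the Harnack estimates (Lemmas~\ref{lem:separation} and~\ref{lem:indepofy}) supply the uniform ratio bound $g_\zeta(z)\le C(\delta)g_{\zeta'}(z)$, and this feeds a Bernstein-type argument---but the paper organises the two cases differently from your sketch. For interlacements, the paper does \emph{not} treat $(\lambda_k)$ as a mixing Markov chain. It groups excursions by trajectory, setting $V_j=\sum_{0\le i\le T^j}\hat\xi_i^j\,g_{\zeta_i(X^j)}(z)$; since distinct interlacement trajectories are independent, the $V_j$ are genuinely i.i.d., and Proposition~\ref{prop:largedeviationri} (Bernstein for an i.i.d.\ sum plus Poisson concentration of the trajectory count) applies at a fixed level $u$. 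The translation from level $u$ back to a fixed excursion count $m$ then uses the already-proved concentration of $\mathcal{N}_{\rm RI}$ from Lemma~\ref{lem:conc}. Your Doeblin route on $\lambda$ would also give $e^{-c\epsilon^2 m}$, but the paper exploits the trajectory-level independence directly and avoids any Markov-chain argument for RI.

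For the walk, your account of the $\sqrt{\epsilon^2 m}$ rate is not the actual mechanism: a Doeblin minorization here gives \emph{geometric} regeneration tails, not stretched-exponential ones, and the prefactor $m$ is not a union bound over return epochs. The paper applies the general Proposition~\ref{prop:largedeviationrw} with $V_i=\hat\xi_i g_{\zeta_i}(z)$, verifying the moment condition~\eqref{eq:assumptionV1} via the Harnack bounds. That proposition is proved by splitting $\{1,\dots,m\}$ into $M$ arithmetic progressions of spacing $M$, comparing each to i.i.d.\ via the TV-mixing bound~\eqref{eq:boundmixing} (cost $\lesssim Ke^{-c(\delta)M}$ with $K\approx m/M$), and applying Bernstein on each progression (rate $e^{-c(\theta)\epsilon^2 m/M}$). \emph{Balancing} $M$ against $\epsilon^2 m/M$ at $M=\lceil\sqrt{\epsilon^2 m}\rceil$ is what produces the $\sqrt{\epsilon^2 m}$ exponent; the prefactor $Cm$ arises from $MK\le m$ in the TV-coupling error together with the union over the $M$ progressions. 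Your decomposition $\sum_k\hat\xi_k(g_{\zeta_k}(z)-\bar g(z))+\bar g(z)\sum_k(\hat\xi_k-1)$ is not used; the paper treats $\hat\xi_k g_{\zeta_k}(z)$ as a single functional of the excursion chain.
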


Proposition~\ref{pro:clotheslines} and Lemma~\ref{lem:concsoftrwri} deal with a fixed number of excursions (parametrized by $m$).
For this to be successfully deployed, one needs to show that the actual number of excursions, which is random and given by $\Excrw$/$\Excri$ see \eqref{eq:defnumberexcursionRW}/\eqref{eq:defnumberexcursionRI}, suitably concentrates. Recalling the relevant notation from \S\ref{sec:notation}, see below \eqref{e:cap-2point}, let
\begin{equation}
\label{e:avg-number}
M=M(B_2,B_3)\stackrel{\text{def.}}{=} \text{cap}_{B_3}(B_2);
\end{equation}
the quantity $uM$ represents an `asymptotic mean' number of excursions until the terminal time $u N^d$ for the walk; see also \eqref{e:avg-number'} for an alternative formula for $M$ conveying this intuition. 
\begin{lemma}\label{lem:conc}
	For all $\delta\in{(0,1)},$ there exist $c,C \in (0,\infty)$ depending on $\delta$ so that for all $ N \geq1$,  $u>0$, $\epsilon\in (0,1),$ $1\leq r_2< r_3< N$ with $r_3\geq (1+\delta) r_2,$ and $B_2$, $B_3$ as in \eqref{e:B_i-choice},	
	 \begin{align*}
	\mathbf{P}_{\boldsymbol{0}}\big(|\Excrw(B_2,B_3,u) - uM|\geq \epsilon u M \big) &\leq C\lceil uM\rceil\exp\big( -c \sqrt{\epsilon^2uM} \big),\\
 \PI\big(|\Excri(B_2,B_3,u) - uM| \geq \epsilon u M\big) &\leq C\exp\big(-c\epsilon^2u M\big).
	\end{align*}
\end{lemma}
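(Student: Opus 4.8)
I would prove the two bounds by parallel arguments, treating random interlacements first as it is the model for the walk. Recall from \eqref{eq:defnumberexcursionRI} that $\Excri(\omega,B_2,B_3,u)=\sum_{j=1}^{N^u}T^j$ is a compound Poisson variable: $N^u=N^u_{Q_{2N}(0)}$ is Poisson with parameter $u\,\mathrm{cap}(Q_{2N}(0))$, and independently the $T^j=T(X^j,B_2,B_3)$ of \eqref{eq:defTj} are i.i.d.\ copies of the number of $\partial B_2$-to-$\partial B_3$ excursions made by a walk started from $\overline{e}_{Q_{2N}(0)}$. The first step is to identify the mean as $uM$ with $M=\cpc{B_3}{B_2}$ as in \eqref{e:avg-number}: writing $R_k=R_k(X^1,B_2,B_3)$, one has $\mathbf{E}[\Excri]=u\,\mathrm{cap}(Q_{2N}(0))\sum_{k\geq1}P_{\overline{e}_{Q_{2N}(0)}}(R_k<\infty)$, the $k=1$ term of which equals $u\sum_x e_{Q_{2N}(0)}(x)P_x(H_{B_2}<\infty)=u\,\mathrm{cap}(B_2)$ by the last-exit formula \eqref{eq:lastexit} (since $h_{Q_{2N}(0)}\equiv1$ on $B_2$), while summing the remaining terms with the strong Markov property at the successive exit times of $B_3$ reconstructs exactly the equilibrium-measure expansion of $\cpc{B_3}{B_2}$ — equivalently, the alternative formula \eqref{e:avg-number'} for $M$. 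The second step is a tail bound on $T^1$: the standard escape estimate for concentric boxes with $r_3\geq(1+\delta)r_2$ in $d\geq3$ gives $\sup_{y\in\partial B_3^c}P_y(H_{B_2}<\infty)\leq 1-c(\delta)$, whence $P(T^1\geq k)\leq(1-c(\delta))^{k}$ and $\mathbf{E}[e^{\theta T^1}]<\infty$ for $0\leq\theta\leq\theta_0(\delta)$. A Chernoff bound for the compound Poisson variable, $\mathbf{E}[e^{\theta\Excri}]=\exp\{u\,\mathrm{cap}(Q_{2N}(0))(\mathbf{E}[e^{\theta T^1}]-1)\}$, optimised over $\theta\in(0,\theta_0(\delta))$, then gives $\PI(|\Excri(B_2,B_3,u)-uM|\geq\epsilon uM)\leq 2\exp(-c(\delta)\epsilon^2uM)$ for all $\epsilon\in(0,1)$ (the sub-Gaussian, rather than Poissonian, regime being binding since $\epsilon<1$), which is the second assertion.

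For the random walk, $\Excrw(X,B_2,B_3,u)=\sup\{k:R_k(\widehat X,B_2,B_3)<uN^d\}$ in \eqref{eq:defnumberexcursionRW} is a renewal-type count. Setting $\tau_1=R_1$ and $\tau_k=R_k-R_{k-1}$ for $k\geq2$ gives $\{\Excrw(B_2,B_3,u)\leq m\}=\{\sum_{k\leq m+1}\tau_k\geq uN^d\}$, so it suffices to concentrate $\sum_{k\leq m}\tau_k$. The entry points $\widehat X_{R_k}\in\partial B_2$ form a Markov chain with invariant law as in Lemma~\ref{lem:invarexitchain}, which mixes in $O_\delta(1)$ steps (the escape estimate again: after $O(1)$ loops out to $\partial B_3$ the walk re-enters $B_2$ with an essentially stationary entrance distribution). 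Two inputs then remain: (i) the rate — the asymptotic excursion rate for the walk is $M/N^d$, i.e.\ $\mathbf{E}_{\mathbf{P}_{\boldsymbol{0}}}[\Excrw(u)]=uM+O(1)$, the random-walk analogue of the computation above, with boundary, initial-condition, and stationarity-gap effects contributing only $O(1)$ since the walk mixes on a timescale $\ll uN^d$; and (ii) the fluctuations of $\sum_{k\leq m}\tau_k$ around $mN^d/M$, for which the weakly (Markov-)dependent increments $\tau_k$ are fed into the large-deviation estimates for excursions of Appendix~\ref{sec:largedeviationapp} — as for the companion Lemma~\ref{lem:concsoftrwri} — yielding $\mathbf{P}_{\boldsymbol{0}}(|\sum_{k\leq m}\tau_k-mN^d/M|\geq\eta\, mN^d/M)\leq C\lceil m\rceil\exp(-c\sqrt{\eta^2 m})$ for $\eta\in(0,1)$. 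Taking $m=m_\pm:=\lceil(1\mp\tfrac{\epsilon}{2})uM\rceil$, so that $m_\pm N^d/M$ lies a multiplicative $(1\pm\tfrac{\epsilon}{2})$ away from $uN^d$, and applying the last display with $\eta\asymp\epsilon$ gives $\mathbf{P}_{\boldsymbol{0}}(|\Excrw(B_2,B_3,u)-uM|\geq\epsilon uM)\leq C\lceil uM\rceil\exp(-c\sqrt{\epsilon^2 uM})$, which is the first assertion.

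The main obstacle is input (ii) of the random-walk step — the estimate borrowed from Appendix~\ref{sec:largedeviationapp}. The increments $\tau_k$ are only weakly dependent through the entrance chain and, more importantly, a single loop out of $B_3$ and back to $B_2$ may wander over a macroscopic, $\asymp_\delta N^d/M$-scale portion of the torus, so the moment control available is far coarser than the i.i.d., light-tailed (geometric) structure behind the interlacement bound; this is precisely why the random-walk estimate carries the $\sqrt{\,\cdot\,}$ in the exponent and the polynomial prefactor, just as in Lemma~\ref{lem:concsoftrwri}. A secondary, essentially bookkeeping, point is the exact identification of the excursion rate $M/N^d$ and the absorption of boundary- and initial-segment effects into the $O(1)$ error, which the last-exit decomposition \eqref{eq:lastexit} handles as indicated.
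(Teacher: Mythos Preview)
Your approach matches the paper's: for the walk you invert the renewal and concentrate $R_m=\sum_{k\leq m}\tau_k$ via Proposition~\ref{prop:largedeviationrw}, and for interlacements you exploit the compound-Poisson structure together with the geometric tail of $T^1$ (the paper packages this as Proposition~\ref{prop:largedeviationri}, applied with $r=r_2$ rather than $Q_{2N}(0)$, but the difference is cosmetic). The one ingredient you leave implicit is the verification of hypothesis~\eqref{eq:assumptionV1} for $V_i=\tau_{i+1}=R_{i+1}-R_i$: this requires the uniform two-sided hitting-time estimate $\mathbf{E}_x[H_{B_2}]\asymp_\delta N^d/r_2^{d-2}$ on the torus and the bootstrapped higher moments $\mathbf{E}_x[R_1^k]\leq k!\,C(\delta)^k\mathbf{E}_x[R_1]^k$, which the paper supplies separately as Lemma~\ref{L:hit-torusbox} --- not mere bookkeeping, but exactly the ``coarse moment control'' you correctly flag as the main obstacle.
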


Lemma \ref{lem:conc} is essentially proved in \cite[Proposition 9.1]{MR3563197} for the random walk and in \cite[Proposition~9.3]{MR3563197} for random interlacements, but the bounds obtained therein are not explicit. We prove Lemma \ref{lem:conc} in App.\ \ref{sec:app} using general large deviation results for random walk excursions and random interlacements trajectories, see Propositions \ref{prop:largedeviationrw} and \ref{prop:largedeviationri}, from which Lemma \ref{lem:concsoftrwri} follows as well.

\subsection{Proof of Theorem~\ref{thm:rwshortrange}}
\label{sec:proofthmshortrange}

With Proposition~\ref{pro:clotheslines} and Lemmas~\ref{lem:concsoftrwri} and \ref{lem:conc} at hand, we are now ready to proceed to the:
\begin{proof}[Proof of Theorem~\ref{thm:rwshortrange}]
We focus on the case of the random walk $X$ under $\mathbf{P}_{\boldsymbol{0}}$ and discuss the necessary modifications to accommodate random interlacements at the end of the proof. With $R$ as appearing in the statement of Theorem~\ref{thm:rwshortrange}, let $r_1=R,$ $r_2=r_1(1+\delta')$ and $r_3=r_2(1+\delta'),$ where $\delta'>0$ is such that $(1+\delta')^2=1+\delta.$ In particular, $r_3\leq N$ by assumption on $R$ and this fixes the value of $M=M(Q(x,r_2),Q(x,r_3))$ in \eqref{e:avg-number}, which by a standard capacity estimate satisfies \begin{equation}
\label{e:M-bound}
M\geq \mathrm{cap}(Q(x,r_2))
\geq cr_2^{d-2}\geq c'R^{d-2}.
\end{equation}
Throughout the proof we write $\eps'=\frac\eps 3$ for a given $\varepsilon \in (0,1)$.
We first introduce an approximation for (the increments of) the local times of $X$, which count a fixed number of excursions.
With $Z_j^{(x)}$ as introduced below \eqref{e:B_i-choice} (a (finite) excursion in $\Z^d$), writing $l_j^{(x)}$ for its total length, so that $Z_j^{(x)}=\{ Z_j^{(x)}(n) :  0 \leq n \leq l_j^{(x)}\}$, we set
\begin{equation}\label{e:hat-ell}
\hat{\ell}_{y,[v,u]}^{(x)}=\sum_{j=\lceil vM\rceil }^{\lceil uM\rceil}\sum_{n=0}^{l_j^{(x)}}1\{Z_j^{(x)}(n) =y\}, \  \text{ for all $x\in{Q_N(0)}$,  $y\in{Q_R(x)}$ and $0<v<u$,}
\end{equation}
which counts the total number of visits to $y$ by all excursions with index $j$ between $\lceil vM\rceil$ and $\lceil uM\rceil$. Similarly, let ${\ell}_{{y},[v,u]}= {\ell}_{{y},u}- {\ell}_{{y},v}$, for ${y} \in \mathbf{T}$, which is the relevant quantity appearing in \eqref{eq:couplelltilell}. By \eqref{eq:deflocaltimes} and \eqref{eq:defnumberexcursionRW}, ${\ell}_{{y},[v,u]}$ admits a similar representation as \eqref{e:hat-ell}, but counting excursions with label $j \in [ \Excrw(B_2,B_3,v), \Excrw(B_2,B_3,u)]$ instead, where $B_i=Q(x,r_i)$ as in \eqref{e:B_i-choice}. We now claim that  for all $x\in{Q_N(0)}$, $y\in{Q(x,R)}$ and $0<v<u$,
\begin{multline}
\label{e:inclu-increments}
\big( \big\{ \Excrw(B_2,B_3,v) \geq \lceil v(1-\eps')M \rceil \big\} \\
\cap \big\{ \Excrw(B_2,B_3,u) \leq \lceil u(1+\eps')M \rceil\big\}\big)
\subset \big\{ \ell_{\boldsymbol{y},[v,u]} \leq \hat{\ell}^{(x)}_{y,[v(1-\eps'), u(1+\eps')]} \big\}
\end{multline}
(with $\boldsymbol{y}= \pi(y)$). Indeed, first notice that a possibly non-vanishing contribution to the local time increment can arise from $X_{ \cdot \wedge R_1}$, but only in case where $R_1\geq \lfloor vN^d\rfloor$ which is equivalent to $\Excrw(B_2,B_3,v)=0$ on account of \eqref{eq:defnumberexcursionRW}. This additional contribution to $\ell$ compared to $\hat{\ell}$ is owed to the fact, noted below \eqref{eq:defexcrw}, that the process $Z^{(x)}$ defined in \eqref{eq:defexcrw} neglects the very first excursion of the random walk in $B_1$ before time $R_1$. To obtain~\eqref{e:inclu-increments} one then uses the definition of $\ell$, $\hat{\ell}$ and $\mathcal{N}_{\text{RW}}$ together with the fact that the event in the first line of \eqref{e:inclu-increments} implies $\Excrw(B_2,B_3,v) >0$, which prevents $\ell_{\boldsymbol{y},[v,u]}$ from counting the very first excursion before time~$R_1$.

Combining \eqref{e:inclu-increments}, a similar inclusion yielding a reverse inequality (for which over-counting the first excursion is not an issue), applying a union bound over $x\in F (\subset Q_N(0))$ and using Lemma~\ref{lem:conc}, one thus infers that for all $0< v<u$ and $\epsilon\in (0,1)$ with $u(1-\eps')>v(1+\eps')$,
\begin{multline}\label{eq:couplingforlocaltimes}
	\mathbf{P}_{\boldsymbol{0}}
	\Big( \, \hat{\ell}^{(x)}_{y,[v(1+\eps'), u(1-\eps')]}  \leq \ell_{\boldsymbol{y},[v,u]} \leq \hat{\ell}^{(x)}_{y,[v(1-\eps'), u(1+\eps')]}, \, \text{ for all } x  \in F,y\in{Q(x,R)} \Big)\\
	\geq 1- C\lceil uM\rceil|F|\exp\big(-c \epsilon  \sqrt{v M}\big),
\end{multline}
 for positive constants $c,C$ depending only on~$\delta$. 

To proceed further, we now work under the extended measure $\til{\mathbf{P}}_{\boldsymbol{0}}$ introduced at the beginning of \S\ref{sec:costshortrange}, which will form the basis of the desired coupling. Recall the process $(\til{Z}^{(x)}_j)$ introduced in \eqref{eq:deftildeZji}
 which has range at most~$2r_3$ in $\mathbf{T}$ by Proposition~\ref{pro:clotheslines},i), and choose $T=\lceil 4u_0M\rceil,$ for a (fixed) $u_0>0$ as appearing in the statement of Theorem~\ref{thm:rwshortrange}. Mimicking \eqref{e:hat-ell}, set
\begin{equation}
\label{eq:deftildeL}
	\til{\ell}_{y,[v,u]}^{(x)} =
	\sum_{j=\lceil vM\rceil}^{\lceil uM\rceil}\sum_{n=0}^{\til{l}_j^{(x)}}1\{\til{Z}_j^{(x)}(n) =y\},  \text{ for all }x\in{Q_N(0)},\,y\in{Q_x(R)}\text{ and }0\leq v<u\leq 4u_0,
	\end{equation}
with $\til{l}_j^{(x)}$ denoting the length of $\til{Z}^{(x)}_j$. 
Combining Proposition~\ref{pro:clotheslines},ii), applied for the choices $m= \lceil uM\rceil, \lceil vM\rceil$, with Lemma~\ref{lem:concsoftrwri} and a union bound, 
it follows that for all $0< v<u\leq u_0$ and $\epsilon\in(0,1)$ such that $u(1-2\eps')>v(1+2\eps')$,
\begin{multline}\label{eq:proofsuccess}
	\til{\mathbf{P}}_{\boldsymbol{0}}\left(\begin{array}{c}\til{\ell}_{y, [v(1+2\eps'), u(1-2\epsilon')] }^{(x)} \leq\hat{\ell}_{y,[v(1+\eps'), u(1-\epsilon')]}^{(x)}, \\[0.2em] \hat{\ell}_{y,[v(1-\eps'), u(1+\epsilon')]}^{(x)} \leq \til{\ell}_{y, [v(1-2\eps'), u(1+2\epsilon')] }^{(x)}, \\[0.5em] \text{for all }  x  \in F\text{ and }y\in{Q(x,R)} \end{array}\right) \geq 1- Cr_3^{2d}\lceil uM\rceil|F|\exp\big(-c \epsilon\sqrt{v M}\big).
\end{multline}

To complete the proof, we now define a random interlacements process ${\omega}^{(x)}$ (i.e.,~satisfying \eqref{e:loc-law}), which will inherit the finite-range property of $(\til{Z}^{(x)}_j)$ (so as to satisfy \eqref{e:loc-range}) and  whose local times in $Q(x,R)$ are close to $\til{\ell}^{(x)}_{y,[0,\cdot]}$, $ y \in Q(x,R),$ up to sprinkling (thus leading to \eqref{eq:couplelltilell}).

We first construct a family $(X^{(x),\, i})_{ i\geq1, x\in{Q_N(0)}}$ of independent random walks, where $X^{(x),\, i}$ has law $P_{\bar{e}_{ B_2}}$, $B_2=Q(x,r_2)$, for every $i \geq 1$. Importantly, for each $x\in{Q_N(0)}$ the excursions by any of the walks $(X^{(x),\, i})_{i\geq1}$ between $\partial Q(x,r_1)$ and the last exit time of $Q(x,r_2)$ before exiting $Q(x,r_3)$ will be given precisely by $(\til{Z}_j^{(x)})_{j\geq 1},$ and the remaining parts of the random walks will be conditionally independent as $x\in{Q_N(0)}$ varies.

 Recall from above \eqref{eq:deftildeZji} that  $(\til{\omega}^{(x)})_{x\in{\Z^d}}$ is a family of independent random interlacements processes, each with corresponding clothesline process $\til{\lambda}^{(x)}$ associated to the choice $B_k =Q(x,r_k)$, $k=1,\dots 3$, cf.~\eqref{e:B_i-choice}. For $B\subseteq \Z^d$ we denote by $\til{\omega}_{B}^{(x)}$ the restriction of $\til{\omega}^{(x)}$ to forward (unlabeled) trajectories hitting $B,$ and started at their entrance time in $B.$ We call $(\til{X}^{(x),\, i})_{i \geq 1}$, the trajectories thereby obtained from $\til{\omega}^{(x)}_{B_2}$, corresponding to the trajectories in \eqref{eq:definterprocess} when $B=B_2$ and $\omega=\til{\omega}^{(x)}$. Note that by definition, see \eqref{eq:defclotheslineri}, each clothesline $\til{\lambda}_j^{(x)},$ $j\geq1,$ arises from a certain trajectory $\til{X}^{(x),k}.$ As part of the ranges of $(\til{X}^{(x),\, i})_{i \geq 1}$, we now define the sequence $(\til{Y}^{(x)}_j)_{j\geq1}$ as follows. Whenever $\til{\lambda}^{(x)}_j$ and $\til{\lambda}_{j+1}^{(x)}$ correspond to the same trajectory $\til{X}^{(x),\, k}$, we let $\til{Y}^{(x)}_j$ be the excursion starting from $\partial B_3^c$ until first hitting $B_2$ between the last time the clothesline $\til{\lambda}_j^{(x)}$ is visited and the first time the clothesline $\til{\lambda}_{j+1}^{(x)}$ is visited. If the clotheslines $\til{\lambda}^{(x)}_j$ and $\til{\lambda}_{j+1}^{(x)}$ correspond to two different trajectories $\til{X}^{(x),\, k}$ and $\til{X}^{(x),\, k+1}$  of $\til{\omega}^{(x)}_{B_2}$, then $\til{Y}^{(x)}_j$ is defined to be equal to the part of $\til{X}^{(x),\, k}$ after last visiting $\partial B_3^c$.  Now define recursively $\til{V}_0^{(x)}=0$ and $\til{V}_i^{(x)}=\inf\{k> \til{V}_{i-1}^{(x)}:\, \text{range}(\til{Y}_k^{(x)}) \text{ is unbounded}\}.$  Intuitively, $\til{V}_i^{(x)}$ equals the number of excursions from $B_2$ to $\partial B_3^c$ before the $(i+1)$-st walk from $\til{\omega}^{(x)}_{B_2}$ starts, and $\til{V}_{i+1}^{(x)}-\til{V}_{i}^{(x)}-1$ is precisely the number of excursions performed by this walk. Recall from above \eqref{e:mu-ref} that $\Theta$ is the cemetery point of $\Sigma,$ and intuitively corresponds to excursions which do not hit $B_1.$ Lastly, by suitable extension, assume that $\til{\mathbf{P}}_{\boldsymbol{0}}$ carries for each $x \in Q_N(0)$ and $i \geq 1$ independent families $B^{(x), i}=\{ B^{(x), i}_{y,z;k} : y \in \partial B_2, \, z \in \partial B_1\cup\{\Theta\}, k \geq 1\}$ and $\hat{B}^{(x), i}=\{ \hat{B}^{(x), i}_{v,w; k} : v \in \partial B_2, \, w \in \partial B_3^c, k \geq 1 \}$ of independent random variables, whereby $B^{(x), i}_{y,z;k}$ has the same law as $(X_t)_{t\leq H_{B_1}}$ under $P_y(\, \cdot \, | \, H_{B_1}< T_{B_3}, X_{H_{B_1}}=z)$ if $z\in{\partial B_1}$ and the same law as $(X_t)_{t\leq L_{B_2}(T_{B_3})}$ under $P_y(\,\cdot\,|\,H_{B_1}>T_{B_3})$ if $z=\Theta,$ and $\hat{B}^{(x), i}_{v,w;k}$ has the same law as $(X_t)_{t\leq T_{B_3}}$ under $P_v(\, \cdot \, | \, \til{H}_{B_2}> T_{B_3}, X_{T_{B_3}}=w)$.

We can now define the walk $X^{(x),\, i}$ for any $x \in Q_N(0)$ and $i\geq1,$ as follows. We introduce four sequences $(y_k)$, $(z_k)$, $(v_k)$, $(w_k)$ (all implicitly depending on $x$ and $i$), each with $k$ ranging from $1 \leq k \leq \til{V}_i^{(x)}-\til{V}_{i-1}^{(x)}$.
One sets $y_1= \til{X}_{0}^{(x), \, i}$ and for each $1 \leq k < \til{V}_i^{(x)}-\til{V}_{i-1}^{(x)}$,
the vertex $y_{k+1}$ (in $\partial B_2 $) is defined as the endpoint of  $\til{Y}^{(x)}_{\til{V}_{i-1}^{(x)}+k}$. The points $z_k$ and $v_k$
are the start- and endpoints of the excursion $\til{Z}^{(x)}_{\til{V}_{i-1}^{(x)}+k}$ when it is not equal to $\Theta,$ and we take $z_k=\Theta$ and $v_k=y_k$ when $\til{Z}^{(x)}_{\til{V}_{i-1}^{(x)}+k}=\Theta,$  and $w_k$ is the starting point of $\til{Y}^{(x)}_{\til{V}_{i-1}^{(x)}+k}.$
{Now $X^{(x),\, i}$ starts in $y_1$.} Then, for every $k$, the process
 $X^{(x),\, i}$ first follows $B^{(x), i}_{y_k,z_k; k}$, then $\til{Z}^{(x)}_{\til{V}_{i-1}^{(x)}+k}$ if $\til{Z}^{(x)}_{\til{V}_{i-1}^{(x)}+k}\neq\Theta$ (and otherwise stays in place), then $\hat{B}^{(x), i}_{v_k,w_k; k}$ and finally $\til{Y}^{(x)}_{\til{V}_{i-1}^{(x)}+k}$ and the pieces thereby obtained are concatenated as $k\in{\big\{1,\dots,\tilde{V}_i^{(x)}-\tilde{V}_{i-1}^{(x)}\big\}}$ increases to form the sample path of $X^{(x),\, i}$ (note in particular that the last piece is indeed unbounded).

The starting point of $X^{(x),i}$ is the same as $\til{X}^{(x),i},$ and thus has law $\bar{e}_{Q(x,r_2)}$ by the representation \eqref{eq:definterprocess} of random interlacements.  Using \eqref{e:law-Ztilde-x} and a similar calculation as following \eqref{eq:defg2} (in order to witness the correct conditional distributions of the bridges $B^{(x), i}$ and $\hat{B}^{(x), i}$), one concludes that $(X^{(x),\, i})_{i\geq1}$ are i.i.d.~random walks with starting distribution $\bar{e}_{Q(x,r_2)}$ each, as desired. Moreover, by construction,
\begin{equation}\label{e:X-indep}
\text{ $(X^{(x),\, i})_{i\geq1}$ is independent of $\{(X^{(y),i})_{i\geq1}:\,\boldsymbol{y}\notin{Q(\boldsymbol{x},2(1+\delta)R)}\}$,}
\end{equation}
 since $\til{Z}^{(x)}$ has range $2r_3= 2R(1+\delta)$ in $\mathbf{T}$ and $X^{(x),\, i}$ only involves additional randomness which is independent as $x$ varies: namely, $B^{(x), i}$, $\hat{B}^{(x), i}$ and $(\til{Y}^{(x)}_j)_{j\geq1}$ (function of $\til{\omega}^{(x)}$).

To complete the construction of $\omega^{(x)}$, let $(N^{(x),u})_{u\geq0},$ $x\in{Q_N(0)},$ be an i.i.d.~family of Poisson processes with intensity $\mathrm{cap}( B_2)$ and define
\begin{equation*}
{\omega}_{ B_2}^{(x),  u}=\sum_{1\leq i \leq N^{(x),u}}\delta_{X^{(x),\, i}}\text{ for all }u>0.
\end{equation*} 
Then $\big({\omega}_{ B_2}^{(x),  u}\big)_{u>0}$ has the same law as the restriction of $(\omega^u)_{u>0}$ to forward trajectories hitting $ B_2$ after entering $ B_2$ and one completes it independently to obtain an interlacements process $\omega^{(x)}=(\omega^{(x), u})_{u>0}$ at all levels on $\Z^d,$ which has the desired law, see \eqref{e:loc-law}, and satisfies \eqref{e:loc-range} by 
means of \eqref{e:X-indep}.

 It remains to show \eqref{eq:couplelltilell}. Denoting by $(\ell^{(x)}_{y,u})_{u\geq0,y\in{\Z^d}}$ the field of local times associated to $\omega^{(x)}$, recalling \eqref{eq:deftildeL} and noting that the trace of $\omega^{(x)}$ inside $B_1=Q(x,R)$ coincides with that of the excursions $(\tilde{Z}_j^{(x)})_{j \geq 1 }$ which enter it, it then follows
 by Lemma~\ref{lem:conc} and a union bound that for all $0< v<u\leq u_0$ and $\epsilon\in(0,1)$ with $u(1-3\eps')>v(1+3\eps')$,
\begin{equation}\label{eq:proofsuccess2}
\begin{split}
	&\til{\mathbf{P}}_{\boldsymbol{0}}\left(\begin{array}{c}  {\ell}_{y,[v(1+3\eps'), u(1-3\epsilon')]}^{(x)} \leq \til{\ell}_{y, [v(1+2\eps'), u(1-2\epsilon')] }^{(x)}, \\[0.2em] \til{\ell}_{y, [v(1-2\eps'), u(1+2\epsilon')] }^{(x)} \leq {\ell}_{y,[v(1-3\eps'), u(1+3\epsilon')]}^{(x)} , \\[0.5em] \text{for all }  x  \in F\text{ and }y\in{Q(x,R)} \end{array}\right) 
		\geq 1- C|F|\exp\left(-c\cdot \epsilon^2 v M\right).
\end{split}
\end{equation}
Thus, \eqref{eq:couplelltilell} follows by combining \eqref{eq:couplingforlocaltimes}, \eqref{eq:proofsuccess} and \eqref{eq:proofsuccess2} with the lower bound \eqref{e:M-bound} on~$M$.

The proof in the case of random interlacements follows a similar three-step pattern: first one shows using Lemma \ref{lem:conc} for random interlacements that $\ell_{y,u}$ under $\PI$ is well-approximated by a process $\hat{\ell}^{(x)}_{y,u(1\pm\eps')},$ for  $y\in{Q(x,R)}$ and $x\in{F}$, having a fixed excursion count, thus yielding an analogue of \eqref{eq:couplingforlocaltimes}.
This step is somewhat streamlined since there is no subtlety regarding the first excursion, as opposed to $X$.
In the second step, one uses the interlacement parts of Proposition~\ref{pro:clotheslines} and Lemma~\ref{lem:concsoftrwri} to approximate $\hat{\ell}^{(x)}_{y,u(1\pm\eps')}$ by a short-range process $\til{\ell}^{(x)}_{y,u(1\pm2\eps')},$ similarly as in \eqref{eq:proofsuccess}. Finally one reconstructs a short-range family of interlacement processes $({\omega}^{(x)})_{x\in{Q_N(0)}}$  such that their associated local times ${\ell}_{y, u(1\pm3\eps')}^{(x)}$ are good approximations of $\til{\ell}^{(x)}_{y,u(1\pm2\eps')}.$ The second and third of these steps are virtually identical as above upon setting $v=0$.
\end{proof}

\begin{Rk}[Extensions of Theorem~\ref{thm:rwshortrange}]\leavevmode
\label{rk:othershortrange}
\begin{enumerate}[label=\arabic*)]
\item\label{i} (Flexibility with \eqref{e:loc-law}-\eqref{e:loc-range}). One could relax \eqref{e:loc-law}-\eqref{e:loc-range} by requiring instead that $ \ell^{(x)}=({\ell}^{(x)}_{y,u})_{y\in{Q(x,R)}, u \geq 0}$ be \textit{some} field
having a finite-range property, satisfying \eqref{eq:couplelltilell}/\eqref{eq:couplingshortrangeinter} and whose law is translation invariant (that is $(\ell^{(x)}_{y,\cdot})_{y\in{Q(x,R)}}$ has the same law as $(\ell^{(0)}_{y-x,\cdot})_{y\in{Q(x,R)}}$),  see the proof of Theorem~\ref{The:shortrangeapprointro-new} as to why this is necessary. Under these less stringent conditions, one can afford to simply choose ${\ell}^{(x)}_{y,u}= \til{\ell}_{y,[0,u]}^{(x)}$ as in \eqref{eq:deftildeL} and finish the proof with \eqref{eq:proofsuccess} in two steps instead of three; note that the law of $\ell^{(x)}$ is translation invariant by \eqref{e:law-Ztilde-x}. This weaker result is in fact sufficient to
deduce Proposition~\ref{The:shortrangeappro} below, which will be the driving force behind the proof of Theorems~\ref{thm:uncoveredset} and~\ref{cor:phasetransition2-intro} in \S\ref{sec:denouement}  (see also Theorem~\ref{The:alpha>1/2}). Various parts of the coupling also simplify in the process. Indeed, one can define the reference values $M$ in \eqref{e:avg-number} and $\bar{g}(z)$ in \eqref{eq:gbar-def} without identifying the relevant stationary distribution of the clothesline process, see \eqref{e:avg-number'} for $M$ and the proof of Lemma~\ref{lem:concsoftrwri} for $\bar{g}(z),$ thus bypassing the use of exact identities such as \eqref{eq:ident} and \eqref{eq:boundrho1}, see also \cite[Lemma 6.1 and eq.~(9.4)]{MR3563197} which, albeit instructive, are not trivial.

Apart from giving a concrete idea as to what $ \ell^{(x)}$ is, the conditions \eqref{e:loc-law}--\eqref{e:loc-range} present the additional benefit of producing an independent proof of the coupling from \cite{MR3563197} between the random walk and random interlacements, cf.~Corollary~\ref{cor:coupling}, for which $ \ell^{(x)}$ crucially needs to have the correct law.
Moreover, knowing that $ \ell^{(x)}$ are interlacement local times is also essential in the proof of Lemma \ref{lem:boundonlateRIRW} below.

 In a similar vein, one may require as part of  Theorem \ref{thm:rwshortrange} that $ \ell^{(x)}$ be  
the local times associated to a short range family of  random walks on $\mathbf{T}$ (instead of  interlacements). This is essentially a matter of replacing the interlacement clothesline $\til{\lambda}^{(x)}$ in the construction of $\til{Z}^{(x)}$ in \eqref{eq:deftildeZji} by a random walk clothesline $\til{\zeta}^{(x)}$. The proof suffers very minor modifications (mostly trading one of the estimates in either of Lemmas~\ref{lem:concsoftrwri} or~\ref{lem:conc} for the other). In particular, in the context of \eqref{eq:proofsuccess2}, the increments of $\ell^{(x)}$ (now associated to a random walk) will not overcount the first excursion for similar reasons as in \eqref{e:inclu-increments}. Moreover, the law of $\ell^{(x)}$ is still translation invariant when starting the corresponding random walks from the uniform measure on $\mathbf{T}.$
 
 \item \label{rk:increments} (Increments in \eqref{eq:couplelltilell}). The choice of observable ${\ell}_{{y},[v,u]}= {\ell}_{{y},u}- {\ell}_{{y},v}$, for ${y} \in \mathbf{T}$ is a means to avoid potential issues with the very first excursion of $X$, see the discussion leading to \eqref{e:inclu-increments}: the excursion process $Z^{(x)}$ does not count the first excursion of the random walk in $Q(\boldsymbol{x},r_1)$ before time $D_0(X,Q(\boldsymbol{x},r_2),Q(\boldsymbol{x},r_3)) = H_{\partial Q(x,r_3)^c}(X)$, cf.~\eqref{eq:deftilrhorho}, \eqref{eq:defexcrw} and \eqref{eq:defZji-x}, hence this excursion does not appear in \eqref{e:hat-ell} either. Note that this issue does not arise for interlacements since trajectories arrive ``from infinity,'' whence \eqref{eq:couplingshortrangeinter} rather than \eqref{eq:couplelltilell}. As we now explain, instead of the increment ${\ell}_{{y},[v,u]}$ one could consider the field
  $$
 \bar{\ell}_{{y},u}^{(x)} \stackrel{\text{def.}}{=} \sum_{n \geq 0} 1\{ X_n ={y}, \, n \geq H_{\partial Q(\boldsymbol{x}, 
 R')^c}\}, \text{ for } {y} \in Q( \boldsymbol{x}, R ),\, x \in Q(0,N),
 $$
 with $R'= r_3=(1+\delta)R$, and replace \eqref{eq:couplelltilell} by
 \begin{multline}\label{eq:couplelltilell'} 
	\til{\mathbf{P}}_{\boldsymbol{0}}\left(
	{\ell}_{y,u(1-\epsilon)}^{(x)}\leq  \bar{\ell}_{\boldsymbol{y},u}^{(x)}  \leq {\ell}_{y,u(1+\epsilon)}^{(x)} \,
	\forall x\in F, \, y\in{Q(x,R)}
	\right) \\\geq 1 - C|F|R^{2d}\lceil uR^{d-2}\rceil\exp\big(-c \epsilon \sqrt{u R^{d-2}} \big).
	\end{multline}
	Observe in particular that $ \bar{\ell}_{\boldsymbol{y},u}^{(x)} ={\ell}_{\boldsymbol{y},u}$ for all $y\in{Q(x,R)}$ under $\mathbf{P}_{\boldsymbol{0}}$ whenever $x \notin Q_{R'}(0)$, for then $X_n\in{Q(\boldsymbol{x},R)}$ implies $n\geq H_{\partial Q(\boldsymbol{x},R')^c}$. Thus \eqref{eq:couplelltilell'} yields a true analogue of  \eqref{eq:couplingshortrangeinter} if one restricts to $x \in F \setminus Q_{R'}(0)$. The proof of \eqref{eq:couplelltilell'} does not require any amendments to the above argument: the restriction on $ n$ inherent to $\bar{\ell}_{\boldsymbol{y},u}^{(x)}$ allows to carry out the proof of Theorem~\ref{thm:rwshortrange} with $v=0$ (and  $\bar{\ell}_{\boldsymbol{y},u}^{(x)}$ in place of ${\ell}_{\boldsymbol{y},[0,u]}$), which in particular does not create issues in \eqref{e:inclu-increments}. Alternatively, one replaces $X$ by $X^{(x)}= X\circ \theta_{H_{\partial Q(\boldsymbol{x}, R')^c}}$ in the definition \eqref{eq:defZji-x} of $Z_j^{(x)}$, which leaves the associated clothesline $\zeta^{(x)}$ unchanged. The field $\bar{\ell}_{\boldsymbol{y},u}^{(x)}$ is measurable in terms of this modified $Z^{(x)}$. The issue with the first excursion disappears in this context in essence because $D_0(X^{(x)},Q(x,r_2),Q(x,r_3))=0,$ cf.~\eqref{eq:deftilrhorho}. 
	
	In fact  \eqref{eq:couplelltilell'} also implies directly an approximation of $\ell_{\boldsymbol{y},u}$ by $\ell_{y,u(1\pm \eps)}^{(x)}+\ell_{\boldsymbol{y},u}-\bar{\ell}_{\boldsymbol{y},u}^{(x)}$ for all $x\in{F}$ and $y\in{Q(x,R)},$ which is also a short-range field under $\til{\mathbf{P}}_{\boldsymbol{0}}$ since it is equal to $\ell_{y,u(1\pm \eps)}^{(x)}$ outside of $Q_{R'}(0).$ However its law is not translation invariant, and thus does not necessarily have range $2(1+\delta)R$  under the probability measure $\til{\mathbf{P}}$ from Theorem~\ref{The:shortrangeapprointro-new}.

\item \label{rk:coup-u}(Coupling in $u$). The following extension of \eqref{eq:couplelltilell'} is tailored to later purposes (see the proof of Theorem~\ref{thm:processusabovealpha*}), but noteworthy in its own right. Let $0< u_1< u_0$. Then, applying \eqref{eq:couplelltilell'} (for $\frac{\eps}{3}$ instead of $\eps$) at levels $u= u_1+k u_1\eps/3$ for each $k\in{\{0,1,\dots,\lceil \frac{3(u_0-u_1)}{\eps u_1}\rceil\}},$ using a union bound and monotonicity of all the relevant fields in $u$, one deduces (as alternative to \eqref{eq:couplelltilell} in the statement of Theorem \ref{thm:rwshortrange}) that
 \begin{multline}\label{eq:couplelltilell2} 
	\til{\mathbf{P}}_{\boldsymbol{0}}\left(
	{\ell}_{y,u(1-\epsilon)}^{(x)}\leq  {\ell}_{\boldsymbol{y},u}  \leq {\ell}_{y,u(1+\epsilon)}^{(x)},
	\text{ for all } x\in F \setminus Q_{R'}(0), \, y\in{Q(x,R)}, \, u\in{[u_1,u_0]}
	\right) \\[0.5em] \geq 1 - C|F|R^{2d} \textstyle  \frac{u_0}{\eps u_1} \lceil u_0R^{d-2}\rceil\exp\big(-c \epsilon \sqrt{u_1  R^{d-2}} \big)
	\end{multline}
(with $R'= (1+\delta)R$). Note here that we used that the field $\ell^{(x)}$ from \eqref{eq:couplelltilell'} does not depend on the choice of $u\in{(0,u_0]}$, similarly as in Theorem~\ref{thm:rwshortrange}.   An obvious analogue of \eqref{eq:couplelltilell2} holds for random interlacements, with ${\ell}_{{y},u}$  in place of ${\ell}_{\boldsymbol{y},u}$ and without further restriction on $x \in F$. In closer analogy to \eqref{eq:couplelltilell}, one could also formulate a version of \eqref{eq:couplelltilell2} for increments.
\end{enumerate}
\end{Rk}

\section{Consequences of localization}
\label{sec:loc-cons}

Our main localization result, Theorem~\ref{thm:rwshortrange}, derived in the previous section, has two main applications in the context of late points. First, as asserted in Proposition~\ref{The:shortrangeappro} below, it allows to introduce a (localized) family $\til{\mathcal{L}}= (\til{\mathcal{L}}^{\alpha})_{\alpha \geq 0}$, coupled to ${\mathcal{L}}$ (recall \eqref{e:L^alpha-RW}) in a way that i) the two are close up to sprinkling (see~\eqref{eq:boundonSshortrange} below) and ii) $\til{\mathcal{L}}$ is amenable to Chen-Stein (due to its finite-range property).
Second, as alluded to below \eqref{e:V^u-def}, it allows us by means of Corollary~\ref{cor:coupling} to compute various key quantities of interest related to the random walk with sufficient precision using interlacements, see Lemmas~\ref{lem:boundonlateRIRW} and \ref{L:alpha_*}. Combining these two ingredients with Lemma~\ref{lem:corofchenstein} (the modified Chen-Stein scheme) then leads to Theorem~\ref{The:alpha>1/2}, which is the main result of this section, and will be one of the driving forces behind our main results, proved in \S\ref{sec:denouement}. By exploiting $\til{\mathcal{L}}$ as an intermediate link, Theorem~\ref{The:alpha>1/2} gives quantitative control on the difference between the true set of late points $\L$ and its `Poissonized' version $\tilde{\B}$, comprising a suitable class of \textit{independently} sampled shapes, see \eqref{eq:defAF}. Our arguments hint at a generic phenomenon, which ought to be valid for a variety of models of interest, see Remark~\ref{rk:final},\ref{R:univ}.

\subsection{The set \texorpdfstring{$\mathcal{L}_F^{\alpha}$}{LFalpha} and first properties}\label{subsec:L_F}
We start by introducing a setup that fits all needs. As announced in the introduction, see above \eqref{e:V^u-def}, this includes treating both late-point/high-intensity regimes for random walk/random interlacements, each in a subset $F$ of (but not necessarily equal to) the full torus/box (of side length $N$), at appropriate timescales. Recall from~\S\ref{sec:notation} that $0$ denotes the origin of $\Z^d$ and $\mathbf{0} =\pi(0)$ where $\pi: \Z^d \to \mathbf{T}$ is the canonical projection, and that $Q_R(x)$ is the cube of side length $R$ centred at $x,$ either in $\Z^d$ if $x\in{\Z^d}$ or in $\mathbf{T}$ if $x\in{\mathbf{T}}.$ 

In order to allow for a unified presentation, we introduce the following notation, valid from here on and throughout Sections~\ref{sec:loc-cons}-\ref{sec:extensions}. In writing $\P$ in the sequel, we tacitly allow for either choice $\P\in \{ \P^I, \mathbf{P}\}$, i.e.~all statements made under the measure $\P$ hold for either model (recall that $\P^I$ denotes the canonical law of random interlacements on $\Z^d$ and $\mathbf{P}$ the law of the random walk on $\mathbf{T}$ with uniform starting point). We further define the set $Q_R$ for any $R \leq N$ (where $N$ denotes the side length of $\mathbf{T}$) as
$Q_R=Q_R(0) (\subset \Z^d)$ when $\P=\P^I$ and $Q_R=Q_R(\mathbf{0}) (\subset \mathbf{T})$ when $\P=\mathbf{P}$.

With the above notation, we introduce for finite $F \subset Q_N$ and $\alpha> 0$,
\begin{equation}
\label{e:u_F}
u_F(\alpha)= \alpha g(0) \log(|F|), 
\end{equation}
whence $u_N(\alpha)= u_F(\alpha)|_{F= Q_N}$ in view of \eqref{e:u_N} (when $\mathbb{P}= \mathbf{P}$). The scaling \eqref{e:u_F} is explained in Remark~\ref{R:asymp},\ref{R:density} below. With $\mathcal{V}_N^{\cdot}$ as in \eqref{e:vacant-set-RW} and $\mathcal{V}^{\cdot}$ as below \eqref{eq:interlocal}, we now define under $\mathbb{P}$ the random set $\L_{F}= (\L^{\alpha}_{F})_{\alpha \geq 0}$ for arbitrary finite $F\subset Q_N$ to be 
\begin{equation}
\label{defL}
\L^{\alpha}_{F}=\begin{cases}
\mathcal{V}_N^{u_F(\alpha)}\cap F, &\text{if $\P= \mathbf{P}$ 
}\\
\V^{u_F(\alpha)}\cap F,&\text{if $\P= \mathbb{P}^I$ 
}.
\end{cases}
\end{equation}
We simply write $\L^{\alpha}$ when $F=Q_N$, which is consistent with \eqref{e:L^alpha-RW}; the results of the introduction thus deal with the case $\mathbb{P}= \mathbf{P}$ in \eqref{defL} for the specific choice $F=Q_N (=Q_N(\mathbf{0}))$. As will become clear, all of these results can be generalized (with suitable amendments) to the more general framework of \eqref{defL}. 
We start by gathering a few key properties of $\L^{\alpha}_{F}$. 

\begin{lemma}
\label{lem:boundonlateRIRW}
\smallskip
\noindent
\begin{itemize}
\item[{i)}] For all $K\subset F \subset \subset \Z^d$ and $\alpha>0$, 
\begin{equation}
\label{eq:boundonlateRI}
\PI( K\subset \L^{\alpha}_{F})=|F|^{-\alpha g(0)\cp(K)}.
\end{equation}
\item[{ii)}] For all $N \geq1$, $K\subset F \subset  \mathbf{T}$, and all $\beta_0>0,$ the bound
\begin{equation}
\label{eq:boundonlateRW}
\bigg| \, \frac{\mathbf{P} ( K \subset  \L^{\alpha}_{F} )}{|F|^{-\alpha g(0)\cp(K)}}-1 \, \bigg|\leq  C(\beta_0)\frac{\log(N)^{3/2}}{ N^{(d-2)/2}}
\end{equation}
holds whenever $\alpha\in{(0,2]}$ and $\cp(K)\leq \beta_0.$
\end{itemize} 
\end{lemma}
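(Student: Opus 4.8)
\textbf{Proof plan for Lemma~\ref{lem:boundonlateRIRW}.}

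Part i) is essentially immediate. By \eqref{defL} one has $\{K \subset \L^\alpha_F\} = \{K \subset \V^{u_F(\alpha)}\}$ (since $K \subset F$), and the defining formula \eqref{e:V^u-def} for the vacant set of random interlacements gives $\PI(K \subset \V^{u_F(\alpha)}) = \exp\{-u_F(\alpha)\,\cp(K)\}$. Substituting $u_F(\alpha) = \alpha g(0)\log(|F|)$ from \eqref{e:u_F} yields $\exp\{-\alpha g(0)\cp(K)\log|F|\} = |F|^{-\alpha g(0)\cp(K)}$, which is \eqref{eq:boundonlateRI}.

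Part ii) is the substantive one, and the plan is to transfer the clean interlacements identity \eqref{eq:boundonlateRI} to the torus walk via the coupling of Corollary~\ref{cor:coupling}. First note $\mathbf{P}(K \subset \L^\alpha_F) = \mathbf{P}(\ell_{\boldsymbol x, u_F(\alpha)} = 0 \ \forall \boldsymbol x \in K)$, i.e.\ an event measurable with respect to the local time field at level $u = u_F(\alpha)$. Since $\cp(K) \leq \beta_0$ and $\alpha \leq 2$, we have $u_F(\alpha) \leq 2g(0)\log|F| \leq 2g(0)\log(N^d) =: u_{\max}$, a quantity of order $\log N$; set $u_0 = u_{\max}$ so all the relevant levels are bounded. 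Now apply Corollary~\ref{cor:coupling} at level $u = u_F(\alpha)$ with a sprinkling $\eps = \eps_N$ to be optimized, and with $\delta$ a fixed small constant (note $K \subset F \subset \mathbf{T}$, and we may assume $K$, hence $F$ after translating, lies in a sub-box $Q_{N(1-\delta)}$ — if $K$ is spread across the whole torus one first translates $\mathbf{T}$ so that $K$ sits in a box of side length $N-1$, using that $\cp(K) < \infty$ forces $\delta(K) < N$; the capacity is torus-translation invariant). On the coupling event $G$ of Corollary~\ref{cor:coupling}, $\ell_{x,u(1-\eps)} \leq \ell_{\boldsymbol x, u} \leq \ell_{x, u(1+\eps)}$ for all $x$ in the sub-box, so $\{\ell_{\cdot, u(1+\eps)} = 0 \text{ on } K\} \subseteq \{\ell_{\boldsymbol\cdot, u} = 0 \text{ on } K\} \subseteq \{\ell_{\cdot, u(1-\eps)} = 0 \text{ on } K\}$ up to the event $G^c$. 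This sandwiches
\[
\PI(K \subset \V^{u_F(\alpha)(1+\eps)}) - \mathbf{P}(G^c) \leq \mathbf{P}(K \subset \L^\alpha_F) \leq \PI(K \subset \V^{u_F(\alpha)(1-\eps)}) + \mathbf{P}(G^c).
\]
By part i), $\PI(K \subset \V^{u_F(\alpha)(1\pm\eps)}) = |F|^{-\alpha(1\pm\eps) g(0)\cp(K)} = |F|^{-\alpha g(0)\cp(K)} \cdot |F|^{\mp \alpha\eps g(0)\cp(K)}$, and since $\alpha g(0)\cp(K)\log|F| \leq 2g(0)\beta_0 \log(N^d)$, the multiplicative correction factor $|F|^{\mp\alpha\eps g(0)\cp(K)} = 1 + O(\eps \log N)$ provided $\eps\log N$ stays bounded. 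Dividing through by $|F|^{-\alpha g(0)\cp(K)}$ and using the trivial lower bound $|F|^{-\alpha g(0)\cp(K)} \geq N^{-2dg(0)\beta_0}$ to control the contribution of $\mathbf{P}(G^c)$, one gets
\[
\bigg|\frac{\mathbf{P}(K \subset \L^\alpha_F)}{|F|^{-\alpha g(0)\cp(K)}} - 1\bigg| \leq C(\beta_0)\big(\eps \log N + N^{2dg(0)\beta_0}\,\mathbf{P}(G^c)\big).
\]
From Corollary~\ref{cor:coupling} with $R = N(1-\delta)$-scale and $u \leq u_{\max} \asymp \log N$, the error is $\mathbf{P}(G^c) \leq C N^{2d}\lceil u_{\max} N^{d-2}\rceil \exp(-c\eps\sqrt{u N^{d-2}})$; bounding $u$ from below is the only care needed, and for $\alpha$ bounded below this is harmless, while for small $\alpha$ one should instead invoke Corollary~\ref{cor:coupling} at a slightly larger level or argue the claim is trivial when $|F|$ is small — in any case $\sqrt{u N^{d-2}} \gtrsim \sqrt{N^{d-2}}$ once we note $\mathbf{P}(G^c)$ is only needed to be beaten by the prefactor. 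Choosing $\eps = \eps_N = N^{-(d-2)/2}\log N$ (so that $\eps\log N = N^{-(d-2)/2}(\log N)^2$ — actually one should take $\eps = \eps_N \asymp N^{-(d-2)/2}\log^{1/2} N$ to land exactly on the stated exponent $\tfrac{3}{2}$, since then $\eps\log N \asymp N^{-(d-2)/2}\log^{3/2}N$), the term $\eps\log N$ contributes $O(N^{-(d-2)/2}(\log N)^{3/2})$, which is the claimed bound, while the exponential term $N^{O(1)}\exp(-c N^{-(d-2)/2}\log^{1/2}N \cdot \sqrt{N^{d-2}\cdot(\text{order }1)}) = N^{O(1)}\exp(-c'\log^{1/2}N \cdot \sqrt{\text{order }1})$ — here one must be a touch more careful: with $u \asymp \log N$ we actually get $\exp(-c\eps\sqrt{uN^{d-2}}) = \exp(-c\log N)$ for the above choice of $\eps$, i.e.\ polynomially small, which after multiplying by the $N^{O(1)}$ factors is still negligible against $N^{-(d-2)/2}$ for $\eps$ chosen with a large enough constant. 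Collecting terms gives \eqref{eq:boundonlateRW}.

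\textbf{Main obstacle.} The delicate point is the bookkeeping of the two competing errors as functions of the free parameter $\eps$: the ``sprinkling cost'' $\eps \log N$ (coming from $|F|^{\pm\alpha\eps g(0)\cp(K)}$, which is large precisely because $\log|F| \asymp \log N$ is large — this is exactly the phenomenon that makes late points subtle) versus the ``coupling failure'' $\mathbf{P}(G^c)$, which is super-polynomially or at least polynomially small in $N$ but carries a large $N^{2d}$-type prefactor and, crucially, a prefactor $|F|^{\alpha g(0)\cp(K)} = N^{O(\beta_0)}$ when normalizing. One must verify that the optimal choice of $\eps$ — balancing these — lands at $N^{-(d-2)/2}\log^{3/2}N$, and in particular that the exponential decay $\exp(-c\eps\sqrt{u R^{d-2}})$ with $R \asymp N$ and $u \asymp \log N$ beats the polynomial prefactors $N^{2d + O(\beta_0)}$ for this $\eps$; this uses $d \geq 3$ so that $\sqrt{R^{d-2}}$ is a genuine power of $N$, and it is where the dimension restriction enters. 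A secondary technical nuisance is ensuring $K$ (and a $\delta N$-neighbourhood of it) fits inside $Q_{N(1-\delta)}$ after a torus translation, which is fine since $\cp(K) < \infty \Rightarrow \delta(K) < N$, and one may even first reduce to $K$ with $\delta(K) \leq N/2$ or handle larger $K$ by a direct (cruder) argument, noting $\mathbf{P}(K \subset \L^\alpha_F)$ and $|F|^{-\alpha g(0)\cp(K)}$ are both extremely small then.
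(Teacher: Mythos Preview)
Your approach is the same as the paper's: part~i) is identical, and part~ii) proceeds via the sandwich coupling of Corollary~\ref{cor:coupling} to transfer \eqref{eq:boundonlateRI} to the torus. Two points in your execution differ from the paper's and leave gaps.

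First, the paper chooses $\eps = \lambda N^{-(d-2)/2}\,(\log N)/\sqrt{\alpha\log|F|}$, i.e.\ $\eps$ depends on $\alpha$. With this choice $\eps\sqrt{uN^{d-2}} = \lambda\sqrt{g(0)}\log N$ is \emph{independent} of $\alpha$, so the coupling error is $N^{-c\lambda}$ uniformly, and taking $\lambda=\lambda(\beta_0)$ large kills the prefactor $|F|^{\alpha g(0)\cp(K)}\le N^{2dg(0)\beta_0}$. The sprinkling cost $\eps\cdot\alpha g(0)\cp(K)\log|F| \le \lambda g(0)\beta_0\sqrt{\alpha\log|F|}\cdot N^{-(d-2)/2}\log N$ then gives exactly the $(\log N)^{3/2}/N^{(d-2)/2}$ bound. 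Your $\alpha$-independent choice $\eps\asymp N^{-(d-2)/2}\sqrt{\log N}$ yields $\eps\sqrt{uN^{d-2}} \asymp \sqrt{u\log N}$, which is only $\gtrsim \log N$ when $u\gtrsim\log N$; for small $\alpha$ (hence small $u=\alpha g(0)\log|F|$) this is $o(\log N)$ and the coupling error is not polynomially small, so the $N^{O(\beta_0)}$ prefactor is not beaten. Your remark that one should ``invoke the coupling at a larger level or argue triviality for small $|F|$'' does not close this: $|F|$ can be large while $\alpha$ is small, and the trivial bound $|\text{ratio}-1|\le C(\beta_0)u$ only controls the regime $u\lesssim N^{-(d-2)/2}(\log N)^{3/2}$, leaving a genuine intermediate range uncovered.

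Second, the paper places $K$ inside $Q_{N(1-\delta)}$ with $\delta=\delta(\beta_0)>0$ \emph{fixed} by first invoking \eqref{eq:easy}: $\cp(K)\le\beta_0$ forces $|K|\le C(\beta_0)$, hence $\delta(K)\le N(1-1/|K|)$. Your reduction ``$\cp(K)<\infty\Rightarrow\delta(K)<N$'' only gives $\delta=1/N$, but the constants $c(\delta),C(\delta)$ in Corollary~\ref{cor:coupling} degenerate as $\delta\to0$, so a fixed $\delta$ is needed. (Also, only $K$ needs to lie in the sub-box, not $F$: the set $F$ enters solely through the timescale $u_F(\alpha)$.)
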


\begin{proof}
The equality \eqref{eq:boundonlateRI} follows directly from \eqref{e:V^u-def}, \eqref{e:u_F} and \eqref{defL}. To deduce \eqref{eq:boundonlateRW} first note that the condition $\mathrm{cap}(K)\leq \beta_0$ implies $|K| \leq C(\beta_0)$ by virtue of \eqref{eq:easy}. Since $\delta(K) \leq N(1-\frac{1}{|K|})$ for any $K \subset \mathbf{T}$, using translation invariance we may therefore assume that $K\subset Q_{N(1-\delta)}(\boldsymbol{0})$ for some $\delta = \delta(\beta_0) > 0$. We then apply Corollary~\ref{cor:coupling} for this $\delta$ with the choice $\eps=\lambda N^{-(d-2)/2}\frac{\log N}{\sqrt{\alpha\log(|F|)}}$ for $\lambda >0$ to find that
\begin{equation*}
\mathbf{P} ( K \subset  \L^{\alpha}_{F} ) \leq \PI\big(K' \subset \L^{\alpha(1-\varepsilon)}_{F} \big)+C \alpha \log(|F|) N^{3d}\exp\big(-c(\delta)\lambda  \log N \big),
\end{equation*}
where $K'\subset  Q_{N(1-\delta)}(0)$ is such that $\pi(K')=K,$ similarly as in the definition of $\mathrm{cap}(K)$ below \eqref{e:e_K}. From this, one of the two bounds implied by \eqref{eq:boundonlateRW} readily follows using \eqref{eq:boundonlateRI} upon taking $\lambda$ large enough in a manner depending on $\beta_0$. The other bound is obtained similarly.
\end{proof}

\begin{Rk}\leavevmode\label{R:asymp}
\begin{enumerate}[label=\arabic*)]
\item \label{R:density} (Asymptotic density of $\L^{\alpha}_{F} $). Applying Lemma~\ref{lem:boundonlateRIRW} for $K=\{0\}$ and using \eqref{e:cap-2point} yields for any $F = F_N \subset Q_N$ with $|F| \to \infty$ as $N \to \infty$ that
\begin{equation}
\label{eq:Lalphaasymp}
\P ( 0 \in \L^{\alpha}_{F})   \sim |F|^{-\alpha} \text{ as }N\rightarrow\infty,
\end{equation}
which accounts for the scaling in \eqref{e:u_N} and \eqref{e:u_F}.

\item\label{rk:boundonprobalate} Throughout \S\ref{subsec:btilde-compa}, the following consequence of \eqref{eq:boundonlateRI} and \eqref{eq:boundonlateRW} will be sufficient, cf.~Remark~\ref{rk:final},\ref{R:univ}: for all $ \beta_0>0$, all $N\geq1,$ $F\subset Q_N$, $K\subset F$ with $\mathrm{cap}(K)\leq \beta_0$, and  $\alpha\in{(0,2]}$,
\begin{equation}
\label{eq:upperboundonprobalate}
\P(K\subset \L^{\alpha}_F)\leq C(\beta_0){|F|^{-\frac{\alpha}{\alpha_{\sast}(K)}}}
\end{equation}
(recall the definition of $\alpha_*(K)$ from \eqref{eq:defalpha*K}). 
 In Lemmas~\ref{L:alpha_*} and~\ref{L:D_LB} as well as in  \S\ref{sec:denouement} and \S\ref{sec:extensions} below, we will also use the following lower bound implied by Lemma~\ref{lem:boundonlateRIRW}: under the same assumptions as those of \eqref{eq:upperboundonprobalate}, 
\begin{equation}
\label{eq:lowerboundonprobalate}
\P(K\subset \L^{\alpha}_F)\geq c(\beta_0){|F|^{-\frac{\alpha}{\alpha_{\sast}(K)}}}.
\end{equation}
Actually the constants $C(\beta_0)$ from \eqref{eq:upperboundonprobalate} and $c(\beta_0)$ from \eqref{eq:lowerboundonprobalate} could be replaced by $1+o(1)$ as $|F|\rightarrow\infty,$ but we will not need this fact except in the proof of \eqref{e:coup-crit} to obtain the exact constant $1-e^{-d}.$ In the proof of Lemma~\ref{L:D_LB} below, we will also need the following decoupling formula, which is easily implied by Lemma~\ref{lem:boundonlateRIRW} together with \eqref{eq:decouplatepoints} in the improved form discussed immediately below it (along with its analogue on the torus): for all $\beta_0>0$ and $\alpha>0,$ if $K,K'\subset F$ are such that  $ \frac{d(K,K')}{\log(|F|)^{1/(d-2)}} \to \infty$,
\begin{equation}
\label{eq:decouplingprobalate}
\P( (K \cup K')\subset\L^{\alpha}_F)=(1+o(1)) \P(K\subset\L^{\alpha}_F)\P(K'\subset\L^{\alpha}_F)\text{ as }|F|\rightarrow\infty,
\end{equation}
where $o(1)$ is uniform in $K,K'$ verifying $|K|, |K'| \leq \beta_0$.
\end{enumerate}
\end{Rk}

Before constructing our coupling between $\L$ and Bernoulli random variables, let us collect some interesting consequences of \eqref{eq:upperboundonprobalate}, \eqref{eq:lowerboundonprobalate} and \eqref{eq:decouplingprobalate}, which further elucidate the role of the parameters $\alpha_*$ from \eqref{eq:alpha_*} and $\alpha_*(K)$ from \eqref{eq:defalpha*K}. In view of
\eqref{defL}, the quantity $D^{\alpha}$ introduced in \eqref{e:double-pts} is naturally declared under $\P$ upon summing over all $x\sim y$ with $x,y \in Q_N$. For any set $\mathcal{S}\subset Q_N$ and $K\subset Q_N(0),$ we introduce similarly 
\begin{equation}
\label{eq:DNK}
D_{K}(\mathcal{S})=
\begin{cases}
\sum_{x\in{Q_N}}1\{x+K\subset\mathcal{S}\},&\text{if } \P=\P^I
\\\sum_{x\in{Q_N}}1\{x+\pi(K)\subset\mathcal{S}\},&\text{if } \P= \mathbf{P}\end{cases}
\end{equation}
the number of times a translate of $K$ (or its projection on the torus) by $x \in Q_N$ lies in $\mathcal{S}.$ We will often abbreviate $D_K^{\alpha}=D_K(\L^{\alpha}).$ Note that $D^{\alpha},$ see \eqref{e:double-pts}, is half the sum of $D_{K}^{\alpha}$ over all $K=\{0,x\}$ with $x\sim 0.$ The following result shows that $D_K^{\alpha}$ is small on average if and only if $\alpha>\alpha_*(K)$. 

\begin{lemma}[Representations of $\alpha_*$] \label{L:alpha_*}
For each $\emptyset \neq K\subset\subset\Z^d,$ with $\alpha_*(K)$ as in \eqref{eq:defalpha*K},
\begin{equation}
\label{e:aalpha_*K-equiv}
\alpha_*(K) = \sup\big\{\alpha >0 : \, \textstyle \lim_N \mathbb{E}[D_{K}^{\alpha}] =0 \big\}.
\end{equation}
In particular for all $x\sim y$,
\begin{equation}
\label{e:aalpha_*-equiv}
\begin{split}
\alpha_* &=\alpha_*(\{x,y\})=1-\frac{1}{2g(0)}
=\frac12\big(1+P_0(\widetilde{H}_0 < \infty)\big).
\end{split}
\end{equation}
\end{lemma}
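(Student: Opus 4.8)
\textbf{Proof plan for Lemma~\ref{L:alpha_*}.}

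The strategy is to prove \eqref{e:aalpha_*K-equiv} directly from the one-point estimates \eqref{eq:upperboundonprobalate}--\eqref{eq:lowerboundonprobalate} applied to $\L_F$ with $F = Q_N$, and then to specialize to $K=\{x,y\}$, $x\sim y$, using the explicit two-point capacity formula \eqref{e:cap-2point} together with the last-exit/Green's-function identity \eqref{eq:lastexit}. First I would compute $\mathbb{E}[D_K^\alpha]$ by linearity: by \eqref{eq:DNK} and translation invariance (of $\P^I$ on $\Z^d$, and of $\mathbf{P}$ on $\mathbf{T}$, the latter since $\mathbf{P}$ has uniform starting point),
\begin{equation*}
\mathbb{E}[D_K^\alpha] = |Q_N| \cdot \P\big(K \subset \L^\alpha\big) = |Q_N|\cdot \P\big(K \subset \L^\alpha_{Q_N}\big).
\end{equation*}
Since a nonempty $K$ always has $\mathrm{cap}(K) \geq \mathrm{cap}(\{0\}) = g(0)^{-1}$ by monotonicity of capacity and \eqref{e:cap-2point}, we have $\alpha_*(K) \leq 1$, so the relevant range of $\alpha$ is $(0,2]$ and \eqref{eq:upperboundonprobalate}--\eqref{eq:lowerboundonprobalate} apply with $\beta_0 = \mathrm{cap}(K)$ (a fixed constant, since $K$ does not depend on $N$). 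Plugging in and using $|Q_N| = N^d$, we get
\begin{equation*}
c(\beta_0)\, N^{d\left(1 - \frac{\alpha}{\alpha_*(K)}\right)} \;\leq\; \mathbb{E}[D_K^\alpha] \;\leq\; C(\beta_0)\, N^{d\left(1 - \frac{\alpha}{\alpha_*(K)}\right)}.
\end{equation*}
Hence $\mathbb{E}[D_K^\alpha] \to 0$ iff $1 - \frac{\alpha}{\alpha_*(K)} < 0$, i.e.~iff $\alpha > \alpha_*(K)$, while at $\alpha = \alpha_*(K)$ the expectation stays bounded away from $0$ and $\infty$, and for $\alpha < \alpha_*(K)$ it diverges. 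This gives $\sup\{\alpha > 0 : \lim_N \mathbb{E}[D_K^\alpha] = 0\} = \alpha_*(K)$, which is \eqref{e:aalpha_*K-equiv}. (Strictly speaking one checks that the supremum is indeed attained as an open-ended sup: for any $\alpha > \alpha_*(K)$ the limit is $0$, and for $\alpha \le \alpha_*(K)$ it is not, so the sup equals $\alpha_*(K)$ exactly.)

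For \eqref{e:aalpha_*-equiv}, specialize to $K = \{x,y\}$ with $x \sim y$, so $|x-y|_1 = 1$. By \eqref{e:cap-2point},
\begin{equation*}
\mathrm{cap}(\{x,y\}) = \frac{2}{g(0) + g(x-y)},
\end{equation*}
and therefore, by \eqref{eq:defalpha*K},
\begin{equation*}
\alpha_*(\{x,y\}) = \big(g(0)\,\mathrm{cap}(\{x,y\})\big)^{-1} = \frac{g(0) + g(x-y)}{2g(0)} = \frac12\left(1 + \frac{g(x-y)}{g(0)}\right).
\end{equation*}
Now invoke \eqref{eq:lastexit}: $Ge_{\{0\}} = h_{\{0\}}$, i.e.~$g(0)\, P_x(\widetilde H_0 = \infty)\cdot \mathbf{1} \ast (\cdot)$... more directly, from $g(z)/g(0) = P_z(H_0 < \infty) = P_0(H_z < \infty)$ (the displayed computation just below Lemma~\ref{lem:boundongreenfunction}) and $|x-y|_1 = 1$, we get $g(x-y)/g(0) = P_0(H_z < \infty)$ for $z$ a neighbour of $0$; but a single step of the walk from $0$ lands on a uniformly chosen neighbour, so $P_0(\widetilde H_0 < \infty) = \frac{1}{2d}\sum_{z \sim 0} P_z(H_0 < \infty) = P_0(H_z < \infty)$ for any fixed neighbour $z$ by symmetry. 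Hence $g(x-y)/g(0) = P_0(\widetilde H_0 < \infty)$, giving
\begin{equation*}
\alpha_*(\{x,y\}) = \frac12\big(1 + P_0(\widetilde H_0 < \infty)\big) = 1 - \frac{1}{2g(0)},
\end{equation*}
where the last equality uses the standard identity $g(0)^{-1} = P_0(\widetilde H_0 = \infty) = 1 - P_0(\widetilde H_0 < \infty)$. Finally, $\alpha_* = \alpha_*(\{x,y\})$ for $x \sim y$ is exactly the definition \eqref{eq:alpha_*} combined with \eqref{e:aalpha_*K-equiv} applied to $K = \{0, e_1\}$ (noting $D^\alpha$ from \eqref{e:double-pts} is $\frac12 \sum_{x \sim y}$ and the $2d$ unit-vector choices all give isomorphic $K$ with the same capacity, so $\mathbb{E}[D^\alpha] \to 0$ iff $\mathbb{E}[D_{\{0,e_1\}}^\alpha] \to 0$).

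\textbf{Main obstacle.} There is no serious analytic difficulty here — everything reduces to \eqref{eq:upperboundonprobalate}--\eqref{eq:lowerboundonprobalate} (themselves consequences of Lemma~\ref{lem:boundonlateRIRW}, which is already proved) plus classical Green's-function identities. The only point requiring a little care is bookkeeping the two cases $\P \in \{\P^I, \mathbf{P}\}$ uniformly: one must make sure translation invariance holds in both settings (it does, by the uniform start for $\mathbf{P}$), and that $|Q_N| = N^d \to \infty$ so that the $o(1)$/constant factors in \eqref{eq:upperboundonprobalate}--\eqref{eq:lowerboundonprobalate} do not interfere with the polynomial rate $N^{d(1 - \alpha/\alpha_*(K))}$. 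The claim $\alpha_* \in (\tfrac12, 1)$ asserted in \eqref{eq:alpha_*-value} then follows since $0 < P_0(\widetilde H_0 < \infty) < 1$ in transient dimensions $d \geq 3$.
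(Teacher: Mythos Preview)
Your proof is correct and follows essentially the same route as the paper: derive the two-sided polynomial bound $cN^{d(1-\alpha/\alpha_*(K))}\le \mathbb{E}[D_K^\alpha]\le CN^{d(1-\alpha/\alpha_*(K))}$ from \eqref{eq:upperboundonprobalate}--\eqref{eq:lowerboundonprobalate} with $F=Q_N$, read off the threshold, and then specialize to $K=\{x,y\}$, $x\sim y$, using \eqref{e:cap-2point} and the identity $P_0(\widetilde H_0<\infty)=g(x)/g(0)=1-1/g(0)$ for $x\sim 0$ (which the paper derives via the one-step Markov property exactly as you do). One small slip: your parenthetical ``the sup equals $\alpha_*(K)$ exactly'' is inconsistent with your own analysis, since the set $\{\alpha:\lim_N\mathbb{E}[D_K^\alpha]=0\}$ you identified is $(\alpha_*(K),\infty)$, whose supremum is $+\infty$; this reflects what appears to be a typo in the statement itself (it should read $\inf$ rather than $\sup$, or equivalently the condition should be $\lim_N\mathbb{E}[D_K^\alpha]\ne 0$), and the paper's proof glosses over it in the same way.
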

\begin{proof}
First, observe that \eqref{eq:upperboundonprobalate} and \eqref{eq:lowerboundonprobalate} applied with $F=Q_N$ immediately yields, for all $\alpha > 0$,
\begin{equation}
\label{eq:DNalpha}
 cN^{d(1-\frac{\alpha}{{\alpha}_{\sast}(K)})} \leq \mathbb{E}[D_{K}^{\alpha}] \leq C  N^{d(1-\frac{\alpha}{{\alpha}_{\sast}(K)})},
\end{equation}
from which \eqref{e:aalpha_*K-equiv} follows. The first equality in \eqref{e:aalpha_*-equiv} then follows by rotational invariance of the capacity. Applying the simple Markov property, one obtains, for all $x \sim 0$,
\begin{equation}
\label{eq:poH0vsGreen}
P_0(\widetilde{H}_0 < \infty)= \frac1{2d} \sum_{y \sim 0}P_y(H_0< \infty) \stackrel{\eqref{eq:lastexit}}{=} \frac1{2d} \sum_{y \sim 0}\frac{g(y)}{g(0)}= \frac{g(x)}{g(0)}=1-\frac{1}{g(0)},
\end{equation}
where the two last steps follow by invariance of $P_x$ under lattice rotations and translations. On account of \eqref{eq:alpha_*-value}, \eqref{eq:defalpha*K} and \eqref{e:cap-2point}, this gives the two last equalities in \eqref{e:aalpha_*-equiv}.
\end{proof}

We now collect a lower bound on $D_{K}^{\alpha},$ resp.\ $D^{\alpha},$ in the `supercritical' phase $ \alpha < \alpha_*(K),$ resp.\ $\alpha<\alpha_*,$ which will be useful in due course. For $D^{\alpha},$  a similar but weaker estimate was derived in \cite[p.\ 1040]{JasonPerla}, for a very specific choice of timescale asymptotic to $\alpha \tcov$, see also Remark~\ref{rk:thmsprinkling},\ref{rk:otherparametrization} below. In addition to yielding a stronger bound valid for any set $K$, the proof we present below is considerably simpler, which highlights the strength of Lemma~\ref{lem:boundonlateRIRW}.

\begin{lemma} \label{L:D_LB}For all $K\subset\subset\Z^d,$ $\alpha \in (0,\alpha_*(K)]$ and $\varepsilon_N > 0$ with $\lim_N \varepsilon_N =0$, one has
\begin{equation}
\label{e:D_N-LB}
\liminf_N \P\big(D_{K}^{\alpha} \geq \varepsilon_N N^{d(1-\frac{\alpha}{{\alpha}_{\sast}(K)})}\big)\begin{cases}
= 1&\text{ if }\alpha<\alpha_*(K),\\
>0&\text{ if }\alpha=\alpha_*(K),
\end{cases}
\end{equation}
and the same holds true with $\alpha_*$ in place of $\alpha_{*}(K)$  and $D^{\alpha}$ in place of $D_{K}^{\alpha}.$
\end{lemma}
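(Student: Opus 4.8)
\textbf{Proof plan for Lemma~\ref{L:D_LB}.}
The strategy is a standard second moment (Paley--Zygmund) argument built on top of the one- and two-point estimates from Lemma~\ref{lem:boundonlateRIRW}, specifically in the forms \eqref{eq:upperboundonprobalate}, \eqref{eq:lowerboundonprobalate} and the decoupling identity \eqref{eq:decouplingprobalate}. Write $a_N = N^{d(1-\alpha/\alpha_*(K))}$ for the target scale. By \eqref{eq:DNalpha} (an immediate consequence of \eqref{eq:upperboundonprobalate}--\eqref{eq:lowerboundonprobalate} applied with $F = Q_N$) we already know $c\,a_N \leq \mathbb{E}[D_K^\alpha] \leq C\,a_N$. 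Since $D_K^\alpha$ is a nonnegative integer-valued sum, by Paley--Zygmund it suffices to control $\mathbb{E}[(D_K^\alpha)^2]$ and show it is at most $(1+o(1))(\mathbb{E}[D_K^\alpha])^2$ when $\alpha < \alpha_*(K)$, and at most $C'(\mathbb{E}[D_K^\alpha])^2$ at $\alpha = \alpha_*(K)$; then
\[
\P\big(D_K^\alpha \geq \tfrac12 \mathbb{E}[D_K^\alpha]\big) \geq \frac{(\mathbb{E}[D_K^\alpha])^2}{4\,\mathbb{E}[(D_K^\alpha)^2]},
\]
which is $1-o(1)$ in the subcritical case and bounded below by a positive constant at criticality, and this implies \eqref{e:D_N-LB} once one notes $\varepsilon_N N^{d(1-\alpha/\alpha_*(K))} \leq \tfrac12\mathbb{E}[D_K^\alpha]$ eventually (using $\varepsilon_N \to 0$ and $\mathbb{E}[D_K^\alpha]\geq c\,a_N$).

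The second moment expands as $\mathbb{E}[(D_K^\alpha)^2] = \sum_{x,y \in Q_N} \P\big((x+K)\cup(y+K) \subset \L_{Q_N}^\alpha\big)$ (with $\pi(K)$ in the torus case, but the bookkeeping is identical). Split the double sum according to the distance $d(x,y)$. For pairs with $d(x+K, y+K) \geq \log(|Q_N|)^{1/(d-2)}$ and $d(x,y)/\log(|Q_N|)^{1/(d-2)} \to \infty$ — which is all but a vanishing fraction of pairs — apply \eqref{eq:decouplingprobalate} to get $\P((x+K)\cup(y+K)\subset\L^\alpha) = (1+o(1))\P(x+K\subset\L^\alpha)\P(y+K\subset\L^\alpha)$ uniformly, and summing yields $(1+o(1))(\mathbb{E}[D_K^\alpha])^2$. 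For the remaining `close' pairs, i.e.~$d(x,y) \leq C\log(|Q_N|)^{1/(d-2)}$ for a suitable constant — or more carefully, pairs where $K\cup K'$ has capacity strictly larger than $\mathrm{cap}(K)$ — one bounds the probability crudely: $(x+K)\cup(y+K)$ is a set containing $x+K$, hence by monotonicity of capacity and \eqref{eq:upperboundonprobalate} its probability of being late is at most $C|Q_N|^{-\alpha/\alpha_*(K')}$ where $\alpha_*(K') \leq \alpha_*(K)$ (in fact strictly less unless $x+K$ and $y+K$ overlap heavily), so each such term is at most $C a_N \cdot |Q_N|^{-(\text{positive gap})}$, and the number of such pairs is at most $|Q_N| \cdot (\log N)^{d/(d-2)}$; the product is $o(a_N^2)$. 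Finally the diagonal-ish contribution where $x+K$ and $y+K$ share vertices is handled by noting $(x+K)\cup(y+K)$ then still has capacity $\geq \mathrm{cap}(K)$, giving $\leq C|Q_N|\cdot|Q_N|^{-\alpha/\alpha_*(K)} = Ca_N$, which against $a_N^2$ is lower order provided $a_N \to \infty$ — and $a_N \to \infty$ precisely because $\alpha \leq \alpha_*(K)$, with the case $\alpha = \alpha_*(K)$ giving $a_N = N^0 = 1$, so one must be slightly more careful there.

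The one genuinely delicate point is the critical case $\alpha = \alpha_*(K)$, where $\mathbb{E}[D_K^\alpha]$ is of order $1$ rather than diverging, so the crude bounds on close and overlapping pairs (which were $O(a_N) = O(1)$, of the same order as $(\mathbb{E}[D_K^\alpha])^2$) do not become negligible; one only gets $\mathbb{E}[(D_K^\alpha)^2] \leq C'(\mathbb{E}[D_K^\alpha])^2$ with a constant $C'>1$, which still suffices for the `$>0$' conclusion via Paley--Zygmund but requires checking that the constant is finite and does not blow up — this comes down to the fact that for each fixed $x$, $\sum_{y} \P((x+K)\cup(y+K)\subset\L^\alpha) \leq \sum_y C|Q_N|^{-\alpha\, g(0)\,\mathrm{cap}((x+K)\cup(y+K))}$ is summable uniformly in $N$, using $\mathrm{cap}((x+K)\cup(y+K)) \geq \mathrm{cap}(\{x,y'\})$ for appropriate $y'$ together with the capacity lower bounds and the transience of the walk; effectively $\sum_{y\in\Z^d} |Q_N|^{-\alpha g(0)\mathrm{cap}(\{0,y\})} \leq \sum_y |Q_N|^{-\alpha g(0)/g(0)} \cdot(\text{correction})$ converges because $\mathrm{cap}(\{0,y\}) \to 2/g(0) > 1/g(0)$ as $|y|\to\infty$ and $\alpha g(0)\cdot 2/g(0) = 2\alpha > 1$ for $\alpha > 1/2$ (note $\alpha_*(K) > 1/2$ is exactly where the statement is nontrivial, and for $\alpha_*(K)\leq 1/2$ the claim at $\alpha=\alpha_*(K)$ is vacuous or trivial since $a_N\to\infty$ fast). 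For $D^\alpha$ in place of $D_K^\alpha$, one simply takes $K = \{0,e_1\}$ and uses $\alpha_*(\{0,e_1\}) = \alpha_*$ from \eqref{e:aalpha_*-equiv}, together with the fact that $D^\alpha \geq \frac{1}{2d}D_{\{0,e_1\}}^\alpha$ up to symmetry, so the same bound transfers verbatim.
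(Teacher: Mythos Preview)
Your approach is the same as the paper's --- a second-moment (Paley--Zygmund) argument with a three-way split of pairs by distance --- but there are two places where your execution needs tightening.

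First, your Paley--Zygmund step as written does not give the subcritical conclusion. With the fixed threshold $\theta=\tfrac12$, the inequality $\P(D_K^\alpha \geq \tfrac12\mathbb{E}[D_K^\alpha]) \geq (\mathbb{E}[D_K^\alpha])^2/(4\mathbb{E}[(D_K^\alpha)^2])$ only yields $\liminf_N \geq \tfrac14$, not $=1$, even when the second moment is $(1+o(1))(\mathbb{E}[D_K^\alpha])^2$. The paper instead applies Paley--Zygmund with the vanishing threshold $\theta = c\varepsilon_N$: since $\varepsilon_N a_N \leq c\varepsilon_N \mathbb{E}[D_K^\alpha]$ by \eqref{eq:DNalpha}, one gets $\P(D_K^\alpha \geq \varepsilon_N a_N) \geq (1-c\varepsilon_N)^2/(1+o(1)) \to 1$. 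This is an easy fix, but essential.

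Second, your ``positive gap'' for the intermediate-distance pairs is asserted rather than proved, and pointwise strict inequality $\alpha_*(K') < \alpha_*(K)$ is not enough --- you need a gap that is uniform over all relevant pairs. The paper obtains this directly from \eqref{eq:capdecoupling}: there is a constant $C=C(K)$ such that $\alpha_*((K+x)\cup(K+x')) \leq \tfrac23\alpha_*(K)$ whenever $d(x,x')\geq C$. Pairs with $d(x,x')<C$ are then finitely many per $x$ and go into the diagonal bucket (contribution $O(a_N)$), while pairs with $C \leq d(x,x') \leq \log(N)^{2/(d-2)}$ use the uniform $\tfrac23$-gap to give a contribution $\leq C\log(N)^{2d/(d-2)}N^{d-3\alpha d/(2\alpha_*(K))}$, which is $o(1)$ at $\alpha=\alpha_*(K)$ and $o(a_N^2)$ below. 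With this in hand the critical case is immediate --- the diagonal contributes $O(1)$, the intermediate $o(1)$, the far part $O(1)$ --- so your tangent about $\alpha_*(K)>\tfrac12$ and summing two-point capacities is unnecessary (and the remark that the critical claim is ``vacuous or trivial'' when $\alpha_*(K)\leq\tfrac12$ is incorrect: $a_N = N^0 = 1$ at $\alpha=\alpha_*(K)$ regardless).
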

\begin{proof}
It follows from \eqref{eq:defalpha*K} and \eqref{eq:capdecoupling} that there exists $C=C(K)<\infty$ such that $\alpha_*((K+x)\cup(K+x'))\leq 2\alpha_*(K)/3$ for all $x,x'\in{Q_N}$ with $d(x,x')\geq C.$ Applying \eqref{eq:upperboundonprobalate} (when $d(x,x')\leq C$ or  $C\leq d(x,x')\leq \log(N)^{\frac{2}{d-2}}$) and \eqref{eq:decouplingprobalate} (when $d(x,x')\geq \log(N)^{\frac{2}{d-2}}$), one has for all $\alpha > 0$
\begin{equation}\label{e:D_N-tilde-mom-2}
\begin{split}
\mathbb{E}\big[({D}_{K}^{\alpha})^2 \big] &= \sum_{x ,x'\in{Q_N}} \P\big(K+x,K+x' \subset \mathcal{L}^{\alpha} \big) 
\\&\leq CN^{d-\frac{\alpha d}{\alpha_{\sast}(K)}}+C\log(N^d)^{\frac{2d}{d-2}}N^{d-\frac{3\alpha d}{2\alpha_{\sast}(K)}}+\big(1+o(1)\big) \mathbb{E}[D_{K}^{\alpha} ]^2 
\end{split}
\end{equation}
as $N \to \infty.$ 
 If $\alpha < \alpha_*(K)$, then in view of \eqref{eq:DNalpha} 
 the second moment on the right-hand side of \eqref{e:D_N-tilde-mom-2} dominates. Combining \eqref{eq:DNalpha}, \eqref{e:D_N-tilde-mom-2} and a standard second-moment argument, it follows that
$$
\P\big(D_{K}^{\alpha} \geq \varepsilon_N N^{d(1-\frac{\alpha}{{\alpha}_{\sast}(K)})}\big)   \geq 
\P\big({D}_{K}^{\alpha} \geq c\eps_N \mathbb{E}[{D}_{K}^{\alpha}] \big) \geq  \frac{(1-c\eps_N)^2}{1+o(1)}, 
$$
for all $\alpha< \alpha_*(K)$, from which the claim follows since $\eps_N \to 0$. If $\alpha=\alpha_*(K)$ the proof is similar, except \eqref{e:D_N-tilde-mom-2} is now only smaller than $C\mathbb{E}[D_{K}^{\alpha} ]^2$. The statement for $D^{\alpha}$ then follows readily from \eqref{e:aalpha_*-equiv} and \eqref{e:D_N-LB} for $K=\{0,x\},$ $x\sim 0.$
\end{proof}

\subsection{Main approximation result for \texorpdfstring{$\mathcal{L}_F^{\alpha}$}{LFalpha}}\label{subsec:btilde-compa}
We now combine the modified Chen-Stein result (with sprinkling), Lemma~\ref{lem:corofchenstein}, with our main localization result, Theorem~\ref{thm:rwshortrange} and the asymptotic bounds \eqref{eq:upperboundonprobalate} (a consequence of Lemma~\ref{lem:boundonlateRIRW}) to derive our main approximation result for $\mathcal{L}_F^{\alpha}$, see Theorem~\ref{The:alpha>1/2} below. In a nutshell, we first apply Theorem~\ref{thm:rwshortrange} to replace (up to sprinkling) the family $\mathcal{L}_F$ by an approximation $\tilde{\L}_F$ with a certain finite-range property (see Proposition~\ref{The:shortrangeappro} below), to which we then apply Lemma~\ref{lem:corofchenstein}. The above estimate \eqref{eq:upperboundonprobalate} will serve to bound quantities such as $b_1$ and $b_2$ in \eqref{eq:b_1}-\eqref{eq:b_2}.

Theorem~\ref{The:alpha>1/2}, stated below, is of independent interest. In the next section, it will serve as a driving force to derive our main results from \S\ref{sec:intro}.  The approximation result it entails supplies a coupling between $\L_F$ and a suitably defined process $\tilde{\B}$ of i.i.d.~`patterns,' which we now introduce. Let $(U_K)_{K\subset Q_N}$ be an i.i.d.~family of uniform random variables on $[0,1]$ and for $F\subset \Z^d$ or ${\bf T}$ (depending on whether $\mathbb{P}$ equals $\mathbb{P}^I$ or $ \mathbf{P}$) define the set of admissible patterns by 
\begin{equation}
\label{eq:defAF}
\tilde{\A}_{F}=\big\{K\subset F:\,K\neq\varnothing,\,\mathrm{cap}(K)\leq \textstyle \frac{2}{g(0)},\delta(K)\leq R_F\big\}, \text{ where } R_F= \log(|F|)^{\frac1{d-2}}
\end{equation}
(note that compared to \eqref{eq:defAzd} we additionally ask for $\delta(K)\leq R_F$). We return to the choice of $\tilde{\A}_F$ in Remark~\ref{R:A_F} below.
We also define $p_{F}^{\alpha}(K)=\P(\mathcal{L}^{\alpha}_{F}\cap \BB(K,R_F)=K)$ for each $K\in{\tilde{\A}_F}$ and $\alpha>0,$ and note in passing that $p^{\alpha}(K)$ introduced above \eqref{eq:defBFK} corresponds exactly to $p_{F}^{\alpha}(K)$ for the choice $F=Q_N$ when $\P=\mathbf{P}$. We then define the family $\tilde{\mathcal{B}}_{F}=(\tilde{\mathcal{B}}^{\alpha}_{F})_{\alpha \geq 0}$ as
\begin{equation}
\label{eq:hatdefBalphaF}
\tilde{\mathcal{B}}^{\alpha}_{F}=\bigcup_{K\in{\tilde{\A}_F}:\,  U_K\leq p_F^{\alpha}(K)}K.
\end{equation}
The following result  gives quantitative control on the proximity of $\mathcal{L}_F$ and $\tilde{\mathcal{B}}_F$ above level $\frac12$, as measured in terms of $d_{\varepsilon}$, cf.~\eqref{eq:distancesprinkling}.
\begin{theorem}[$\L_F$ as in \eqref{defL}]
\label{The:alpha>1/2}
There exist $C,C'<\infty$ such that, for all $\alpha\in{(\frac12,1]},$ $N \geq 1$, $F\subset Q_N$ and $\eps\in{(0,\alpha)}$, one can couple $\L_F^{\alpha}$ with $(U_K)_{K\in{\tilde{\A}_F}}$ so that with probability one minus
\begin{equation}
\label{eq:boundonSgen}
1 \wedge C\log(|F|)^{C'}|F|^{1-2(\alpha-\eps)}\eps^{-\frac{2d}{d-2}},
\end{equation}
one has the inclusions
\begin{equation}
\label{eq:strongercoupling}
   \big\{U_K\leq p_F^{\alpha+\eps}(K)\big\}\subset\big\{\mathcal{L}^{\alpha}_{F}\cap \BB(K,R_F)=K\}\subset\big\{ U_K\leq p_F^{\alpha-\eps}(K)\big\}, \text{  for all $K\in{\tilde{\A}_F}$}.
\end{equation}
Moreover, $d_{\eps}\big(\L_F,\tilde{\B}_{F};\alpha\big)$ is bounded from above by \eqref{eq:boundonSgen}.
\end{theorem}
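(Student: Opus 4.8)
\textbf{Proof strategy for Theorem~\ref{The:alpha>1/2}.} The plan is to interpose the localized family $\tilde{\L}_F$ between $\L_F$ and $\tilde{\B}_F$, apply the modified Chen--Stein scheme of Lemma~\ref{lem:corofchenstein} to the pair $(\tilde{\L}_F, \tilde{\B}_F)$, and transfer the resulting coupling back to $\L_F$. Concretely, I would first record the consequence of Theorem~\ref{thm:rwshortrange} specialized to the high-intensity regime $u = u_F(\alpha)$: there is a family $\tilde{\L}_F = (\tilde{\L}_F^\alpha)_{\alpha\geq 0}$, coupled to $\L_F$, which is $R$-finite-range in the sense of \eqref{e:fr-intro} (with $R \approx (\eps^{-2}\log|F|)^{1/(d-2)}$, chosen so that the error in \eqref{e:incl-intro} is polynomially small) and satisfies $d_{\eps'}(\L_F, \tilde{\L}_F;\alpha') \leq $ (the probability bound from \eqref{eq:couplelltilell}) for $\eps'$ a fixed fraction of $\eps$ and $\alpha'\in\{\alpha-2\eps,\alpha\}$; this is exactly the content alluded to in Proposition~\ref{The:shortrangeappro}. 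The natural index set $S$ for Lemma~\ref{lem:corofchenstein} is not $F$ itself but the collection of translates $\{x + K : x\in Q_N,\ K\in\tilde{\A}_F\}$, or more precisely one works with indicator variables $Y^\alpha_{(x,K)} = \1\{\L_F^\alpha \cap \BB(x+K, R_F) = x+K\}$ recording the ``isolated pattern $x+K$'' events, so that $|S| \leq C|F|\log(|F|)^{C'}$.

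The second step is to feed the localized process into Lemma~\ref{lem:corofchenstein} with $Z = \tilde{\L}_F$ (monotone and finite-range by construction), $Y = \L_F$, and $W = \tilde{\B}_F$ realized via the uniforms $(U_K)$ exactly as in \eqref{eq:hatdefBalphaF}. The neighborhoods $\cN_{(x,K)}$ are all patterns $x'+K'$ intersecting $\BB(x+K, 2R)$, so $|\cN_{(x,K)}| \leq C\log(|F|)^{C'}$. The two combinatorial terms $b_1$ and $b_2$ from \eqref{eq:b_1}--\eqref{eq:b_2} must be bounded: for $b_1$ one uses the one-point density bound $\P(x+K\subset\L_F^\alpha)\leq C|F|^{-\alpha/\alpha_*(K)}\leq C|F|^{-\alpha}$ (by \eqref{eq:upperboundonprobalate} and $\alpha_*(K)\leq 1$), so each of the $\leq |S|\cdot\max_x|\cN_x|$ summands is $\leq C|F|^{-2\alpha}$, giving $b_1 \leq C\log(|F|)^{C'}|F|^{2}\cdot|F|^{-2\alpha} = C\log(|F|)^{C'}|F|^{2-2\alpha}$. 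For $b_2$, each summand is $\P(x+K\subset\L_F^\alpha,\ x'+K'\subset\L_F^\alpha)$ with $x'+K'$ within distance $2R$ of $x+K$; here $K\cup((x'-x)+K')$ is a set of bounded cardinality and, crucially, $\alpha_*$ of this union is strictly less than $\alpha_*(K)\vee\alpha_*(K')\leq 1$ — in fact the well-separatedness/capacity estimates \eqref{eq:cap-distance2}, \eqref{eq:capdecoupling} give $\alpha_*(K\cup\cdot) \leq 1 - c$ uniformly — so \eqref{eq:upperboundonprobalate} yields $\P(\cdots) \leq C|F|^{-(2-c)}$, hence $b_2 \leq C\log(|F|)^{C'}|F|^{-c'}$ which is subsumed by \eqref{eq:boundonSgen}; the dominant contribution is indeed $b_1 \asymp |F|^{2-2\alpha}$ together with the localization error, which is why the exponent $1-2(\alpha-\eps)$ appears (the $|F|^2 = |S|^2$ factor on $d_\eps(Y,Z;\cdot)$ in \eqref{eq:finalboundChenStein} combined with the sprinkled density $|F|^{-(\alpha-\eps)}$ of the localization coupling). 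The factor $\eps^{-2d/(d-2)}$ tracks the dependence of $R$ and the Chen--Stein error on the sprinkling parameter.

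The third step is bookkeeping: Lemma~\ref{lem:corofchenstein} produces a coupling of $\L_F^\alpha$ with $(\tilde{\B}_F^{\alpha-2\eps}, \tilde{\B}_F^{\alpha+2\eps})$ realized through the $(U_K)$, with the sandwich $\tilde{\B}_F^{\alpha+2\eps}\subset\L_F^\alpha\subset\tilde{\B}_F^{\alpha-2\eps}$ holding off an event of the stated probability; unwinding the definition \eqref{eq:hatdefBalphaF} of $\tilde{\B}_F$ in terms of $(U_K, p_F^{\alpha\pm\eps}(K))$ (after renaming $2\eps\to\eps$ and absorbing constants) gives precisely the inclusions \eqref{eq:strongercoupling}, and the last sentence of the theorem is then immediate from the definition \eqref{eq:distancesprinkling} of $d_\eps$. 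One should also note that the pattern events $\{\L_F^\alpha\cap\BB(K,R_F)=K\}$ determine $\L_F^\alpha$ itself up to the well-separatedness guaranteed for $\alpha>\frac12$ (Lemma~\ref{lem:wellseparated}), so controlling the pattern-indexed Bernoulli process is equivalent to controlling $\L_F^\alpha$ as a set — this is exactly where the hypothesis $\alpha>\frac12$ enters and cannot be dispensed with.

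\textbf{Main obstacle.} The genuinely delicate point is the simultaneous smallness of \emph{all three} ingredients fed into \eqref{eq:finalboundChenStein}: $b_1$, $b_2$, and $|S|^2\cdot d_\eps(\L_F,\tilde{\L}_F;\cdot)$. The first two are controlled by the capacity/density estimates above, but $|S|^2 \asymp |F|^2$ multiplies the localization error, so Theorem~\ref{thm:rwshortrange} must be invoked at a localization scale $R$ large enough (polynomial in $\log|F|$, with the right power of $\eps^{-1}$) that its exponential error $\exp(-c\eps\sqrt{u R^{d-2}})$ beats $|F|^2$; verifying that the resulting $R$ is still $\leq N/(1+\delta)$ and that $R_F = \log(|F|)^{1/(d-2)} \ll R$ (so that the patterns in $\tilde{\A}_F$ genuinely fit inside the localization windows) is the crux of making the whole scheme close. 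The bound \eqref{eq:boundonSgen} is then obtained by optimizing these competing errors, and the polynomial-in-$\log|F|$ slack $\log(|F|)^{C'}$ is exactly what absorbs the combinatorial factors $|\tilde{\A}_F|$ and $|\cN_x|$.
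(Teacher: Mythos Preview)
Your overall strategy matches the paper's: index by $S=\tilde{\A}_F$, set $Y_K^\alpha=1\{\L_F^\alpha\cap \BB(K,R_F)=K\}$, localize via Proposition~\ref{The:shortrangeappro} to obtain the finite-range $Z$, and feed into Lemma~\ref{lem:corofchenstein}. But two points need correction.

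\textbf{A real gap in $b_2$.} You bound $\P(Y_K^\alpha=1,Y_{K'}^\alpha=1)$ by $\P(K\cup K'\subset\L_F^\alpha)$ and then claim ``$\alpha_*(K\cup K')\leq 1-c$ uniformly'' yields $\P(\cdots)\leq C|F|^{-(2-c)}$. This does not follow: the best uniform bound is $\alpha_*(K\cup K')\leq\alpha_*$ (attained for neighboring singletons), giving only $|F|^{-\alpha/\alpha_*}$; summed over $|S|\cdot\sup_K|\cN_K|$ pairs this diverges for $\alpha\in(\tfrac12,\alpha_*]$, so your argument at best proves the theorem for $\alpha>\alpha_*$. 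What you are missing is built into the definition of $Y$: if $Y_K^\alpha=Y_{K'}^\alpha=1$ with $K\neq K'$, then $K'\subset\L_F^\alpha$ while $\L_F^\alpha\cap \BB(K,R_F)=K$, which forces $K'\cap \BB(K,R_F)\subset K$ and hence (using $\delta(K),\delta(K')\leq R_F$) $d(K,K')\geq R_F/2$. Thus only well-separated pairs contribute to $b_2$, and for those \eqref{eq:decouplatepoints} delivers $\P(K\cup K'\subset\L_F^\alpha)\leq C|F|^{-2\alpha}$, so $b_2$ obeys the same bound as $b_1$.

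\textbf{Bookkeeping.} Your count for $b_1$ is off by a factor $|F|$, and the $\eps^{-2d/(d-2)}$ is misattributed. One has $|S|=|\tilde{\A}_F|\leq C|F|\log(|F|)^{C'}$ by \eqref{eq:cardAF}, while $|\cN_K|\leq CR^d\log(|F|)^{C'}\leq C\eps^{-2d/(d-2)}\log(|F|)^{C''}$ since $\cN_K$ consists of admissible sets meeting $\BB(K,3R)$; this gives $b_1(\alpha')\leq C|S|\sup_K|\cN_K|\cdot|F|^{-2\alpha'}\leq C\log(|F|)^{C'}\eps^{-2d/(d-2)}|F|^{1-2(\alpha-2\eps)}$, which is precisely where both the exponent and the $\eps$-power in \eqref{eq:boundonSgen} originate. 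Your reading of the term $|S|^2\cdot d_\eps(Y,Z;\cdot)$ is inverted: one chooses $\lambda$ in $R=(\lambda/\eps)^{2/(d-2)}R_F$ large enough that $d_\eps(Y,Z;\alpha')\leq C|F|^{-3}$ uniformly (possible because the error in \eqref{eq:boundonSshortrange} is $|F|^{-c\lambda}$), whence $|S|^2\cdot d_\eps\leq C\log(|F|)^{C'}|F|^{-1}$ is negligible compared to $b_1$.
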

 The proof of Theorem~\ref{The:alpha>1/2} occupies the remainder of this section. We begin with two preliminary results. The first of these is a consequence of our main localization result, Theorem~\ref{thm:rwshortrange}, applied to $\mathcal{L}_F$ in \eqref{defL}. In what follows, a family of sets $(\tilde{\L}^{\alpha,(x)}_F)_{\alpha\in{(0,2]},x\in{Q_N}}$ is said to be decreasing if $\tilde{\L}^{\alpha,(x)}_F\subset\tilde{\L}^{\beta,(x)}_F$ for all $\alpha\geq \beta$ and $x\in{Q_N}$. 

\begin{proposition}[Short-range approximation for $\mathcal{L}_F$] 
\label{The:shortrangeappro} There exist $c,C,C' \in (0,\infty)$ such that the following holds. For all $N \geq 1$, $F \subset Q_N$, $\epsilon\in (0,1)$, $R=\big( \frac{\lambda}{\varepsilon} \big)^{\frac2{d-2}} R_F$ with $\lambda \geq C$, there exists a decreasing family $\tilde{\L}_F=(\tilde{\L}^{\alpha,(x)}_{F})_{\alpha\in{(0,2]},x\in{Q_N}}$ such that 
\begin{enumerate}[label=\roman*)]
\item\label{ite:ishortrangeappro} $\tilde{\L}^{\alpha,(x)}_F\subset \big(\BB(x,R)\cap F\big)$ for each $x\in{Q_N}$ and  $\alpha\in{(0,2]},$\\[-1.2em]
\item\label{ite:iishortrangeappro}  $(\tilde{\L}^{\alpha,(x)}_F)_{\alpha\in{(0,2]}}$ and $(\tilde{\L}^{\alpha,(y)}_F)_{\alpha\in{(0,2]},y \in{\BB(x,3R)^c}}$ are independent for each $x\in{Q_N},$ and \\[-1.2em]
\item\label{ite:iiishortrangeappro}  for each $\alpha\in{(0,2]},$ there exists a coupling $\til{\mathbb{Q}}=\til{\mathbb{Q}}_{\alpha}$ of $\mathcal{L}_F^{\alpha}$ and $\tilde{\mathcal{L}}_F$ such that for all $\eps\in{(0,\alpha)}$,\\
\begin{equation}
\label{eq:boundonSshortrange}
\til{\mathbb{Q}}\big(\tilde{\L}^{\alpha+\eps,(x)}_{F}\subset \big(\L_{F}^{\alpha}\cap \BB(x,R) \big)\subset\tilde{\L}^{\alpha-\eps,(x)}_{F}\text{ for all }x\in{F}\big)\geq 1-{C' \eps^{-C'}|F|^{-c\lambda}}.
\end{equation} 
\end{enumerate}
\end{proposition}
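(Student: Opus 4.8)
The plan is to produce $\tilde{\L}_F$ directly from the output of Theorem~\ref{thm:rwshortrange} (in the form discussed in Remark~\ref{rk:othershortrange},\ref{i}, so that we may simply take the local times of a finite-range field without worrying about whether they come from random interlacements). First I would apply Theorem~\ref{thm:rwshortrange} with the choice $\delta = 1$, the localization scale $R$ (more precisely $R/4$, as in the derivation of Theorem~\ref{The:shortrangeapprointro-new}), $u_0 = 2$, and $F$ as given, obtaining the extended measure $\til{\mathbf{P}}$ (resp.~$\tildePI$) and the short-range local time field $(\til{\ell}_{x,u(1\pm\eps)})_{x}$. The choice $u_0 = 2$ is dictated by \eqref{e:u_F} together with $\alpha \leq 1$: we need $u_F(\alpha)(1+\eps) \leq u_0 g(0)\log|F|$, and since $\alpha + \eps < 2$ this is fine. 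Set $\til{\L}^{\alpha,(x)}_F = \{ y \in \BB(x,R)\cap F : \til{\ell}_{y, u_F(\alpha)} = 0\}$. Monotonicity in $\alpha$ is immediate since local times are monotone in $u$ and $u_F(\alpha)$ is increasing in $\alpha$; property \ref{ite:ishortrangeappro} holds by construction; property \ref{ite:iishortrangeappro} follows from the finite-range property \eqref{e:loc-range} of the underlying field (range $2(1+\delta)R/4 = R$ in the relevant units — one has to track the constants carefully so that the ``$3R$'' in \ref{ite:iishortrangeappro} is comfortably satisfied), since two points $x,y$ with $y \notin \BB(x,3R)$ have disjoint $R$-neighbourhoods lying in the appropriate independence region.

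\textbf{The quantitative coupling.} For \ref{ite:iiishortrangeappro}, I would use the coupling $\til{\mathbb Q}_\alpha$ supplied by \eqref{eq:couplelltilell}/\eqref{eq:couplingshortrangeinter}: on the event in that display, $\til{\ell}_{y, u_F(\alpha)(1-\eps)} \leq \ell_{y,u_F(\alpha)} \leq \til{\ell}_{y, u_F(\alpha)(1+\eps)}$ for all $y \in \BB(x,R)$, $x \in F$, and since $u_F(\alpha)(1\pm\eps') = u_F(\alpha(1\pm\eps'))$ by linearity of \eqref{e:u_F} in $\alpha$, the inclusion of vacant sets $\til{\L}^{\alpha+\eps,(x)}_F \subset \L_F^\alpha \cap \BB(x,R) \subset \til{\L}^{\alpha-\eps,(x)}_F$ is exactly the ``zero set'' reformulation of that inequality (with an innocuous relabelling $\eps \leftrightarrow \eps'$ absorbed into constants). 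It then remains to bound the error term in \eqref{eq:couplelltilell}: with $v = u_F(\alpha)/2$, $u = u_F(\alpha)$ (for the random walk, to exploit increments and stationarity as in the proof of Theorem~\ref{The:shortrangeapprointro-new}), the error is of the form $C|F| R^{2d} \lceil u R^{d-2}\rceil \exp(-c\eps\sqrt{v R^{d-2}})$. Substituting $R = (\lambda/\eps)^{2/(d-2)} R_F$ with $R_F = \log(|F|)^{1/(d-2)}$ gives $R^{d-2} = (\lambda/\eps)^2 \log|F|$, hence $v R^{d-2} \gtrsim \lambda^2 \eps^{-2} \log|F|$ (using $v \gtrsim \log|F|$), so $\eps \sqrt{v R^{d-2}} \gtrsim \lambda \sqrt{\log|F|}$ and the exponential is $\leq |F|^{-c\lambda \sqrt{\log|F|}} \leq |F|^{-c\lambda}$ for $|F|$ large; the polynomial prefactors $|F| R^{2d}\lceil uR^{d-2}\rceil$ are all bounded by $C' \eps^{-C'} |F|^{-c\lambda}$ after shrinking $c$, for $\lambda \geq C$ large enough.

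\textbf{Main obstacle and loose ends.} The genuinely delicate point is not the analysis above but ensuring the setup is internally consistent: one must verify that the \emph{same} decreasing family $\tilde{\L}_F$ — fixed once and for all, not depending on $\alpha$ — can be coupled to $\L_F^\alpha$ for \emph{every} $\alpha \in (0,2]$ simultaneously in the sense of \ref{ite:iiishortrangeappro} (the coupling $\til{\mathbb Q}_\alpha$ is allowed to depend on $\alpha$, which helps). This works because Theorem~\ref{thm:rwshortrange} produces a single field $(\omega^{(x)})_x$ (equivalently $(\til{\ell}^{(x)}_{y,u})$) valid for all $u \leq u_0$ at once, with \eqref{eq:couplelltilell} holding for each pair $(v,u)$ separately; so $\tilde{\L}_F$ is defined from that one field, and only the coupling measure is re-chosen per $\alpha$. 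A second point requiring care is the constant bookkeeping between the ``$\delta$'' of Theorem~\ref{thm:rwshortrange}, the factor relating $R$ to the range, and the ``$3R$'' appearing in \ref{ite:iishortrangeappro} — one should choose $\delta$ so that $2(1+\delta) \cdot (\text{scale}) \leq 3R$ with room to spare, exactly as in the proof of Theorem~\ref{The:shortrangeapprointro-new}. Finally, for random interlacements one uses \eqref{eq:couplingshortrangeinter} directly (no increments needed), which only changes the error to $C|F|R^{2d}\exp(-c\eps^2 u R^{d-2})$; since $\eps^2 u R^{d-2} \gtrsim \lambda^2 \log|F|$ this is in fact a stronger bound, still dominated by \eqref{eq:boundonSgen}-type quantities after the same substitution.
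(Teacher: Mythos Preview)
Your approach is correct for $\P=\P^I$ and essentially matches the paper's there: define $\tilde{\L}^{\alpha,(x)}_F=\{y\in Q(x,R)\cap F:\ell^{(x)}_{y,u_F(\alpha)}=0\}$ and invoke \eqref{eq:couplingshortrangeinter}. The paper takes $\delta=\tfrac12$ with scale $R$ (not $\delta=1$ with scale $R/4$), so that the range is exactly $2(1+\delta)R=3R$ and the approximation in \eqref{eq:couplelltilell} holds on all of $Q(x,R)$; your choice would only control $y\in Q(x,R/4)$, clashing with the definition on $\BB(x,R)$. Also $u_0=2$ is a slip: you need $u_0\asymp u_F=g(0)\log|F|$.

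For $\P=\mathbf P$ there is a real gap. You define $\tilde{\L}^{\alpha,(x)}_F$ as the zero set of $\tilde\ell_{\cdot,u_F(\alpha)}$, a \emph{level set}, and then assert $\tilde\ell_{y,u_F(\alpha)(1-\eps)}\le \ell_{y,u_F(\alpha)}\le \tilde\ell_{y,u_F(\alpha)(1+\eps)}$ citing \eqref{eq:couplelltilell}. But \eqref{eq:couplelltilell} controls only \emph{increments} $\ell_{\boldsymbol y,u}-\ell_{\boldsymbol y,v}$, not $\ell_{\boldsymbol y,u}$ itself; this is precisely the first-excursion obstruction discussed in Remark~\ref{rk:othershortrange},\ref{rk:increments}. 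Your parenthetical fix ``$v=u_F(\alpha)/2$, $u=u_F(\alpha)$, increments and stationarity as in Theorem~\ref{The:shortrangeapprointro-new}'' does not repair this: that trick produces a field $\tilde\ell$ only at the two levels $u(1\pm\eps)$ for one fixed $u$, not a single monotone field at all levels $u_F(\alpha)$, $\alpha\in(0,2]$; moreover with your $(v,u)$ the walk increment has the law of $\L_F^{\alpha/2}$, not $\L_F^\alpha$, and the bounding objects $\{\ell^{(x)}_{\cdot,u(1+\eps)}-\ell^{(x)}_{\cdot,v(1-\eps)}=0\}$ are not the level sets you declared. Finally, $v=u_F(\alpha)/2\to0$ as $\alpha\to0$ makes the exponent in \eqref{eq:couplelltilell} depend on $\alpha$, so the bound \eqref{eq:boundonSshortrange} would not be uniform.

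The paper's resolution is to build the increment into the \emph{definition}: set
\[
\tilde{\L}^{\alpha,(\boldsymbol x)}_F=\pi\big(\big\{y\in Q(x,R):\ \ell^{(x)}_{y,(2+\alpha/2)u_F}-\ell^{(x)}_{y,(2-\alpha/2)u_F}=0\big\}\big)\cap F,
\]
with $u_0=3u_F$. The symmetric window $[(2-\alpha/2)u_F,(2+\alpha/2)u_F]$ is \emph{nested} in $\alpha$, so the family is decreasing; applying \eqref{eq:couplelltilell} with $v=(2-\alpha/2)u_F\ge u_F$ and $u=(2+\alpha/2)u_F$ gives an $\alpha$-uniform error, and by stationarity under $\mathbf P$ the walk's increment over this window has exactly the law of $\L_F^\alpha$, furnishing the $\alpha$-dependent coupling $\tilde{\mathbb Q}_\alpha$.
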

\begin{proof}
We first consider the case $\P=\mathbf{P}$ of the walk.
Consider the processes ${\ell}^{(x)}$ under $\til{\mathbf{P}}=\sum_{{x} \in \mathbf{T}} \til{\mathbf{P}}_{{x}},$ where $\til{\mathbf{P}}_x$ is the translation by $x$ of the probability $\til{\mathbf{P}}_{\boldsymbol{0}}$ from Theorem~\ref{thm:rwshortrange} for $\delta=\frac12$, $R$ as above and $u_0=3u_F$, where $u_F= u_F(\alpha=1)= g(0)\log(|F|)$, cf.~\eqref{e:u_F}. One then defines for each $\alpha\in{(0,4]}$ and $x\in{Q_N(0)},$ 
\begin{equation*}
\tilde{\L}^{\alpha,(\boldsymbol{x})}_F = \pi\big(\big\{y\in{Q(x,R)}:\,\ell^{(x)}_{y,(2+\frac{\alpha}{2})u_F} -\ell^{(x)}_{y,(2-\frac{\alpha}{2})u_F} =0\big\}\big)\cap F.
\end{equation*}
From this, $\ref{ite:ishortrangeappro}$ plainly follows and $\ref{ite:iishortrangeappro}$ is a consequence of \eqref{e:loc-range} since $\ell^{(x)}$ and hence $\tilde{\L}^{\alpha,(x)}_F$ is a function of $\omega^{(x)}$ alone, whose law does not depend on $x$ in view of \eqref{e:loc-law}.
Moreover, by \eqref{eq:couplelltilell} (applied with $\frac{\eps}{6}$ instead of $\eps$) one obtains, for all $\alpha\in{(0,2]}$ $\eps\in{(0,\alpha)},$ $x\in{Q_N}$ and $y\in{Q(x,R)}$
\begin{equation*}
\begin{split}
&\ell^{(x)}_{y,(2+\frac{\alpha-\varepsilon}{2})u_F} -\ell^{(x)}_{y,(2-\frac{\alpha-\varepsilon}{2})u_F} \leq \ell_{\boldsymbol{y},(2+\frac{\alpha}{2})u_F} -  \ell_{\boldsymbol{y},(2-\frac{\alpha}{2})u_F} \leq \ell^{(x)}_{y,(2+\frac{\alpha+\varepsilon}{2})u_F} -\ell^{(x)}_{y,(2-\frac{\alpha+\varepsilon}{2})u_F},
\end{split}
\end{equation*}
with probability at least $1-C|F|^{-c\lambda}\eps^{-C}$ by choice of $R,$ upon taking $\lambda$ large enough. By definition of $\L$ and $\tilde{\L}$, this yields, for all $\alpha\in{(0,2]}$ and $\eps\in{(0,\alpha)}$, that
\begin{equation}
\label{eq:tildeLinclusdifferenceL}
\tilde{\L}^{\alpha+\eps,(x)}_F\subset \big((\L_{F}^{2+\frac{\alpha}{2}}\setminus\L_{F}^{2-\frac{\alpha}{2}})\cap \BB(x,R)\big)\subset\tilde{\L}^{\alpha-\eps,(x)}_F\text{ for all }x\in{F}
\end{equation}
with probability at least $1-C|F|^{-c\lambda}\eps^{-C},$ and $\ref{ite:iiishortrangeappro}$ follows since $\L_{F}^{2+\frac{\alpha}{2}}\setminus\L_{F}^{2-\frac{\alpha}{2}}$ has the same law as $\L_F^{\alpha}$ for each $\alpha>0.$
The proof for $\P=\P^I$ is similar, except that, in view of \eqref{eq:couplingshortrangeinter} (applied with $\frac\eps\alpha$ instead of $\eps$) one simply defines $\tilde{\L}^{\alpha,(x)}_F=\{y\in{\BB(x,R)}:\,\ell_{y, u_F(\alpha)}^{(x)}=0\}$ under $\til{\mathbb{Q}}=\til{\P}.$ \end{proof}

\begin{Rk}
\leavevmode
\label{rk:shortrangeappro}
\begin{enumerate}[label*=\arabic*)]
\item \label{rk:lawtildeL}It follows from Theorem~\ref{thm:rwshortrange} and the independence and stationarity of the increments of random interlacements that for each $F\subset Q_N$ and $x\in{F},$ the process $(\tilde{\L}^{\alpha,(x)}_F)_{\alpha\in{(0,2]}}$ introduced in Proposition~\ref{The:shortrangeappro} has the same law as $\big(\L_{F\cap Q(x,R)}^{\alpha}\big)_{\alpha\in{(0,2]}}$ under $\PI$ when $\P=\P^I$, and the same law as $\big(\pi(\L_{Q_N(0)}^{\alpha})\cap F\cap Q(x,R)\big)_{\alpha\in{(0,2]}}$ under $\PI$ when $\P=\mathbf{P}$. While noteworthy, we will not need this fact in the sequel.
\item For the case $\P=\P^I$, one can afford to take $R=\big( \frac{\lambda}{\varepsilon} \big)^{\frac2{d-2}}$ in Proposition~\ref{The:shortrangeappro}. This can be traced back to the additional square root present in the bound \eqref{eq:couplelltilell} compared to~\eqref{eq:couplingshortrangeinter}. Moreover as can be seen plainly in the above proof, one actually has a coupling $\tilde{\Q}$ uniform in all $\alpha\in{(0,2]}$ in that case, rather than one depending on $\alpha$ as in the random walk case. The reason the coupling $\tilde{\Q}_{\alpha}$ depends on $\alpha$ when $\P=\mathbf{P}$ is that the sets $(\L_{F}^{2+\frac{\alpha}{2}}\setminus\L_{F}^{2-\frac{\alpha}{2}})_{\alpha\in{(0,2)}}$ does not have the same law (as a process in $\alpha$) as $(\L_F^{\alpha})_{\alpha \in (0,2)}$ under~$\mathbf{P}$ (only its one-dimensional $\alpha$-marginals do) since the corresponding random walks have different starting points for different values of $\alpha$. We refer to Remark~\ref{rk:othershortrange},\ref{rk:increments} for a variant of the approximation supplied by Theorem~\ref{thm:rwshortrange} by which this increment problem can be partially circumvented.
\end{enumerate}
\end{Rk}

Next, returning to  $\tilde{\A}_F$ in \eqref{eq:defAF}, we prove a separation property for $\L^{\alpha}_F$. Namely, for suitably large $R$, the set $\L^{\alpha}_F\cap \BB(0,R)$ either belongs to ${\tilde{\A}_F}$ or is empty with high probability.

\begin{lemma}
\label{lem:wellseparated}
For all $N \geq 1$, $F\subset Q_N,$ $\alpha\in{(\frac12,2]}$ and $R\in [R_F, \frac{N}{4})$,
\begin{equation*}
\P\big(\exists\,x\in{F}: \big(\L^{\alpha}_F\cap Q(x,R)\big)\notin ({\tilde{\A}_{F}\cup\{\varnothing\}})\big)\leq {CR^d\log(|F|)^{C}}{|F|^{1-2\alpha}}.
\end{equation*}
\end{lemma}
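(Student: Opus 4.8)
The plan is to bound the probability of the bad event by a first-moment (union bound) argument over $x\in F$, reducing everything to a capacity estimate. Fix $x\in F$ and consider the set $K_x=\L^\alpha_F\cap Q(x,R)$. If $K_x\notin(\tilde\A_F\cup\{\varnothing\})$, then, by the definition \eqref{eq:defAF} of $\tilde\A_F$, either (a) $\delta(K_x)>R_F$, or (b) $\mathrm{cap}(K_x)>\tfrac2{g(0)}$. In case (b), monotonicity of capacity and \eqref{eq:easy} force $|K_x|\geq 3$, so there exist two points $y,z\in K_x\subset\L^\alpha_F$ with $y\neq z$; moreover, by \eqref{e:cap-2point} and \eqref{eq:cap-distance2}, for \emph{any} pair of distinct points one has $\mathrm{cap}(\{y,z\})\geq \mathrm{cap}(\{0,e_1\})=(g(0)-\tfrac12)^{-1}$, which corresponds to $\alpha_*(\{y,z\})\leq\alpha_*<\tfrac12+\eta$ for appropriate $\eta$; in particular $\alpha_*(\{y,z\})\le 2\alpha_*-1<1$ is bounded away from... wait, more precisely $1/\alpha_*(\{y,z\}) = g(0)\,\mathrm{cap}(\{y,z\})\ge g(0)\,\mathrm{cap}(\{0,e_1\})=\tfrac{g(0)}{g(0)-1/2}>1$, hence $1/\alpha_*(\{y,z\})\ge 1/\alpha_*=2$ exactly when $y\sim z$ and $\ge 2$ in general by \eqref{eq:cap-distance2}. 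So $\P(\{y,z\}\subset\L^\alpha_F)\le C|F|^{-2\alpha}$ by \eqref{eq:upperboundonprobalate}. In case (a), $\delta(K_x)>R_F$ means there are $y,z\in K_x$ with $d(y,z)\geq R_F$; one must still produce a good capacity bound. Here one uses that for $K=\{y,z\}$ with $R_F\le d(y,z)< 2R< N/2$ one has, by \eqref{e:cap-2point}, $\mathrm{cap}(\{y,z\})=2/(g(0)+g(y-z))$, and since $g(y-z)\le \sup_{|w|_1\ge R_F}g(w)\to 0$ (indeed $g(w)\le C|w|^{-(d-2)}\le CR_F^{-(d-2)}=C/\log|F|$), one gets $1/\alpha_*(\{y,z\})=g(0)\mathrm{cap}(\{y,z\})=2g(0)/(g(0)+g(y-z))\ge 2-C/\log|F|$, so $\P(\{y,z\}\subset\L^\alpha_F)\le C|F|^{-\alpha(2-C/\log|F|)}=C'|F|^{-2\alpha}$, absorbing the correction into a constant (here $C'$ depends only on $d$).

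\textbf{Key steps in order.}
First, for each $x\in F$, observe that if $K_x\notin(\tilde\A_F\cup\{\varnothing\})$ then there exist two distinct vertices $y,z\in Q(x,R)$ with $\{y,z\}\subset\L^\alpha_F$; this is the content of the dichotomy above (in case (b) use $|K_x|\ge 2$; in case (a) pick the two points realizing the diameter). Second, by the union bound,
\[
\P\big(\exists\,x\in F:\,K_x\notin(\tilde\A_F\cup\{\varnothing\})\big)\le \sum_{x\in F}\sum_{\substack{y,z\in Q(x,R)\\ y\neq z}}\P\big(\{y,z\}\subset\L^\alpha_F\big).
\]
Third, bound each summand by $\P(\{y,z\}\subset\L^\alpha_F)\le C|F|^{-\alpha/\alpha_*(\{y,z\})}$ via \eqref{eq:upperboundonprobalate} (valid since $\mathrm{cap}(\{y,z\})\le 2/g(0)=\beta_0$ is bounded and $\alpha\le 2$), and then use $1/\alpha_*(\{y,z\})=g(0)\mathrm{cap}(\{y,z\})\ge 2-C/\log|F|$ by the Green's function decay discussed above (using $d(y,z)<2R<N/2$ so that $\mathrm{cap}$ on $\mathbf{T}$ agrees with $\mathrm{cap}$ on $\Z^d$, cf.\ the remark below \eqref{eq:easy}), giving $\P(\{y,z\}\subset\L^\alpha_F)\le C|F|^{-2\alpha}\cdot|F|^{C\alpha/\log|F|}\le C'|F|^{-2\alpha}$. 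Fourth, count: there are $|F|$ choices of $x$ and at most $R^{2d}$ choices of $(y,z)$, yielding the bound $C'|F|\cdot R^{2d}\cdot|F|^{-2\alpha}=C'R^{2d}|F|^{1-2\alpha}$. Since $R^{2d}\le CR^d\log(|F|)^{C}$ is \emph{not} automatic, one should instead track the exponent carefully: actually $R^{2d}$ is what the naive count gives, but one can reduce to $R^d\log(|F|)^C$ by noting that one of the two points, say $y$, ranges over $Q(x,R)$ (giving a factor $R^d$ after combining with the sum over $x$, since $\sum_x\sum_{y\in Q(x,R)}=R^d|F|$ up to boundary effects), while the second point $z$ ranges either over $Q(y,C)$ (in case (b), $O(1)$ choices) or over an annulus that can be handled by refining the diameter bound; more cleanly, sum over $x$ and the diameter-realizing pair and use that for the pair to contribute we need $d(y,z)\le 2R$, but also we may always extract a pair at distance $\le 2R_F\log|F|$ or so — this is the routine combinatorial point to be settled in the write-up, and matches the stated exponent $R^d\log(|F|)^C$.

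\textbf{Main obstacle.} The one genuinely delicate point is the passage from ``$K_x$ is not admissible'' to ``$\L^\alpha_F$ contains a pair of points whose capacity is close to $2/(g(0))$'' with the \emph{right} quantitative exponent, i.e.\ getting $R^d\log(|F|)^C$ rather than the crude $R^{2d}$: in case (a) (large diameter) the two diameter-realizing points are at distance $\ge R_F$, and one must check that enlarging them to a pair at controlled distance (so that both the count and the capacity estimate cooperate) loses at most a polylogarithmic factor; in case (b) (large capacity, $|K_x|\ge 3$) one must argue that among $\ge 3$ late points in a box of radius $R$ one can always find two at $\ell^1$-distance $\le C\log(|F|)^{1/(d-2)}$ or else invoke \eqref{eq:decouplatepoints}/\eqref{eq:decouplingprobalate} to split the probability — which is exactly where the $\log(|F|)^C$ factor enters. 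Everything else (the union bound, \eqref{eq:upperboundonprobalate}, and the Green's function decay $g(w)\le C|w|^{-(d-2)}$) is routine and already available in the excerpt.
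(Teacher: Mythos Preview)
Your case (a) argument is essentially the paper's: if $\delta(\L^\alpha_F\cap Q(x,R))>R_F$ then there exist $y,z\in\L^\alpha_F$ with $R_F\le d(y,z)\le R$, and $g(0)\,\mathrm{cap}(\{y,z\})=2g(0)/(g(0)+g(y-z))\ge 2-C/\log|F|$ yields $\P(\{y,z\}\subset\L^\alpha_F)\le C|F|^{-2\alpha}$. The $R^{2d}$ versus $R^d$ discrepancy disappears if you sum directly over pairs $y,z\in F$ with $d(y,z)\in[R_F,R]$ (at most $C|F|R^d$ such pairs) rather than over $x\in F$ and then pairs in $Q(x,R)$; this is exactly how the paper organises the count in \eqref{eq:firstincluAF}.

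The real gap is case (b). You claim $1/\alpha_*(\{y,z\})\ge 1/\alpha_*=2$, but this is false: by \eqref{e:aalpha_*-equiv} one has $1/\alpha_*=2g(0)/(2g(0)-1)<2$ since $g(0)>1$, and more generally \emph{every} two-point set satisfies $\mathrm{cap}(\{y,z\})=2/(g(0)+g(y-z))<2/g(0)$, i.e.\ $\alpha_*(\{y,z\})>\tfrac12$. So for an arbitrary pair---in particular for neighbours---the best you get from \eqref{eq:upperboundonprobalate} is $\P(\{y,z\}\subset\L^\alpha_F)\le C|F|^{-\alpha/\alpha_*}$, which for $\alpha$ close to $\tfrac12$ is far larger than $|F|^{-2\alpha}$ and does not survive the union bound over $|F|$. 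Your ``Third'' step applies the bound $1/\alpha_*(\{y,z\})\ge 2-C/\log|F|$ uniformly over all pairs $y\neq z$ in $Q(x,R)$, but that inequality is only valid when $d(y,z)\ge R_F$; in case (b) with $\delta(K_x)\le R_F$, the pairs you extract can be neighbours.

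The paper's fix for case (b) is to go beyond pairs: if $\mathrm{cap}(\L^\alpha_F\cap Q(x,R))\ge 2/g(0)$ and $\delta\le R_F$, then by subadditivity of capacity and \eqref{eq:easy} one can extract a subset $K$ with $|K|\le C$, $\delta(K)\le R_F$, and $\alpha_*(K)\le\tfrac12$; now \eqref{eq:upperboundonprobalate} genuinely gives $\P(K\subset\L^\alpha_F)\le C|F|^{-2\alpha}$. The number of such $K$ with a fixed anchor point is $\le R_F^{Cd}=\log(|F|)^{C'}$, which is precisely where the polylogarithmic factor in the statement originates. The key point you are missing is that reaching $\alpha_*(K)\le\tfrac12$ requires $|K|\ge 3$; two points never suffice.
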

\begin{proof}
By \eqref{eq:upperboundonprobalate} one has, for all $r \geq 1$, 
\begin{equation*}
\E{\big|\big\{x,y\in{\L^{\alpha}_F}:d(x,y)\in{[r,R]}\big\}\big|}\leq |F|(2R+1)^d\sup_{\substack{
d(x,y)\geq r}}|F|^{-\alpha g(0)\mathrm{cap}(\{x,y\})}.
\end{equation*}
In particular, by \eqref{eq:decouplatepoints} and since $\mathrm{cap}(\{x\})=g(0)^{-1}$ for all $x\in{F}$ (see \eqref{e:cap-2point}), one obtains with the choice $r=R_F$ by Markov's inequality that
\begin{equation}
\label{eq:firstincluAF}
\P\big(\exists\,x,y\in{\L^{\alpha}_F}:\, R\geq d(x,y)\geq R_F\big)\leq CR^d|F|^{1-2\alpha}.
\end{equation}
The bound \eqref{eq:firstincluAF} takes care of the contribution to the relevant event appearing in Lemma~\ref{lem:wellseparated} when violating the diameter constraint inherent to $\tilde{\A}_{F}$. 
It remains to address the possibility to violate the capacity constraint in \eqref{eq:defAF}. 
To this effect, observe that by  \eqref{eq:easy}, {\eqref{eq:upperboundonprobalate}} and a union bound we have
\begin{multline*}
\P\big(\mathrm{cap}\big(\L^{\alpha}_F\cap Q_R \big)\geq \textstyle \frac{2}{g(0)},\delta\big(\L^{\alpha}_F\cap Q_R\big)\leq R_F\big) \\
\leq \P\big(\exists K\subset (\L^{\alpha}_F\cap Q_R) :\,|K|\leq C,\alpha_*(K)\leq \textstyle \frac12,\delta(K)\leq R_F \big) \leq \displaystyle CR^{d}{\log(|F|)^{C'}}{|F|^{-2\alpha}}.
\end{multline*}
Combining this with a union bound over $x \in F$ and \eqref{eq:firstincluAF}, the claim follows.
\end{proof}

\begin{Rk}\label{R:A_F} We now briefly comment on the choice of $\tilde{\A}_F$ in \eqref{eq:defAF}. In view of Lemma \ref{lem:wellseparated}, it was selected so that $\L^{\alpha}_F\cap \BB(x,R)$ belongs to ${\tilde{\A}_F}$ or is empty, with high probability for all $x\in{F}$ when $R\geq R_F$ -- of course, the bound obtained in Lemma~\ref{lem:wellseparated} also implicitly entails an upper bound $R\leq|F|^{(2\alpha-1)/d}\log(|F|)^{-C}$, above which the estimate is useless. Moreover, as implicitly used in the proof, the definition of $\tilde{\A}_F$ also ensures that for all $x\in{F}$ and $R\in{[R_F, \frac N4)}$,
\begin{equation}
\label{eq:cardAF}
\left|\left\{K\subset \BB(x,R):\,K\in \tilde{\A}_F\right\}\right|\leq C|\BB(x,R)\cap F|\log(|F|)^{C'},
\end{equation}
which is an easy consequence of~\eqref{eq:easy} and the bounds on both capacity and diameter in \eqref{eq:defAF}.
\end{Rk}
 Combining Proposition~\ref{The:shortrangeappro} and Lemma~\ref{lem:wellseparated}, we are now ready to give the 

\begin{proof}[Proof of Theorem \ref{The:alpha>1/2}]
All subsequent considerations implicitly hold for all $N \geq 1$ and $F \subset Q_N$. 
We may assume that $\epsilon>|F|^{-C}$ for suitably large~$C$, for otherwise \eqref{eq:boundonSgen} is larger than $1$, and also that $\epsilon$ is small enough so that $\varepsilon^{-\frac2{d-2}} >2$ and $(\frac 12,1]\pm 3\eps\in{(\frac14,2]},$ which can be arranged without loss of generality by monotonicity of $ \varepsilon \mapsto d_{\varepsilon}$, cf.~\eqref{eq:distancesprinkling}.

We now apply Proposition~\ref{The:shortrangeappro} (for $N,F,\eps$ as appearing in Theorem \ref{The:alpha>1/2}) with $R=\big( \frac{\lambda}{\varepsilon} \big)^{\frac2{d-2}} R_F$ and a choice of $\lambda\geq1$ large enough such that the right-hand side of \eqref{eq:boundonSshortrange} is at least $1-C|F|^{-3}$ uniformly in $\alpha \in (0,2]$. In view of the above lower bound on $\varepsilon$, $\lambda$ can be chosen uniformly in $N,F$ and $\eps$. Thus, defining for $K \in \tilde{\A}_F$ and arbitrary (fixed) $x_K \in K$ the random fields
\begin{align*}
&Y_{K}^{\alpha}=1\{\mathcal{L}^{\alpha}_{F}\cap \BB(K,R_F)=K\} \\
&Z_{K}^{\alpha}=1\{\widetilde{\mathcal{L}}_F^{\alpha,(x_K)}\cap \BB(K,R_F)=K\},
\end{align*}
with $\widetilde{\mathcal{L}}$ as supplied by Proposition~\ref{The:shortrangeappro}, \eqref{eq:boundonSshortrange} implies that
\begin{align}\label{eq:rewriteyandz}
	d_{\eps}(Y,Z;\alpha)\leq {C}{|F|^{-3}}\text{ for all }\alpha\in{I} \stackrel{\text{def.}}{=} \textstyle (\frac14,2],
\end{align}
and all $N, F$ and $\varepsilon$ as above.

We now aim to apply Lemma \ref{lem:corofchenstein} for $S=\tilde{\A}_F$ and $I$ as above. First observe that $Y=(Y^{\alpha})_{\alpha \in I}$ is indeed a family of decreasing Bernoulli processes, as required by \eqref{eq:Y}. Thus \eqref{eq:W} defines a process $W=(W^{\alpha})_{\alpha \in I}$ with $W^{\alpha}= (W_K^{\alpha})_{K \in \tilde{\A}_F}$, which has the same law as $(1_{\{U_K\leq p_F^{\alpha}(K)\}})_{K \in \tilde{\A}_F, \alpha \in I}$. In particular, recalling \eqref{eq:hatdefBalphaF}, this means that
\begin{equation}\label{e:b-tilde'}
\tilde{\mathcal{B}}^{\alpha}_{F}\stackrel{\text{law}}{=}\bigcup_{\substack {K\in{\tilde{\A}_F}: W_K^{\alpha}=1}} K.
\end{equation}
Our aim is to control $d_{2\eps}(Y,W;\alpha)$ for $\alpha \in (\frac12,1]$ by means of \eqref{eq:finalboundChenStein}. With regards to the relevant condition~\eqref{eq:Z}, Proposition~\ref{The:shortrangeappro} implies that $Z=(Z^{\alpha})_{\alpha \in I}$ has the requested monotonicity and the finite-range property follows from item $\ref{ite:iishortrangeappro}$ of the same proposition upon choosing the neighborhood $\cN_K$ of $K \in \tilde{\A}_F$ as $\cN_K=\{K'\in{\tilde{\A}_F}:\,K'\cap \BB(K,3R)\neq\varnothing\}.$ Indeed, \
for all $K'\notin \cN_K$ we have that $x_{K'}\notin \BB(x_K,3R)$, and hence $(\tilde{\mathcal{L}}_F^{\alpha,(x_{K'})})_{\alpha\in (0,2],K'\notin{{\mathcal{N}}_K}}$ is independent of $(\tilde{\mathcal{L}}_F^{\alpha,(x_{K})})_{\alpha\in (0,2]}$, and $Z$ inherits this property. Thus, Lemma~\ref{lem:corofchenstein} is in force and the desired bound hinges on suitably estimating $b_1$ and $b_2$ in \eqref{eq:b_1}--\eqref{eq:b_2}. Combining \eqref{eq:upperboundonprobalate} and \eqref{eq:cardAF} gives
\begin{equation}
\label{eq:proofalpha>1/22}
    b_1(\alpha')\leq \frac{C|S|}{|F|^{2\alpha'}}\sup_{K\in{\tilde{\A}_F}}|\mathcal{N}_K|\leq \frac{C\log(|F|)^{C'}}{|F|^{2(\alpha-2\eps)-1}\eps^{\frac{2d}{d-2}}}, \text{ for all $ \textstyle\alpha\in{(\frac12,1]}$ and $\alpha'\geq \alpha-2\eps$.}
\end{equation}
Moreover for all $K, K'\in{\tilde{\A}_F}$ with $K \neq K'$ and such that $Y_K^{\alpha'}=Y_{K'}^{\alpha'}=1$ occurs with positive probability, we must have $d(K',K)\geq R_F/2$ by definition of $Y_{\cdot},$ and thus using \eqref{eq:decouplatepoints} and \eqref{eq:upperboundonprobalate}, one readily shows that $b_2(\alpha')$ verifies the same bound as $b_1(\alpha')$ in \eqref{eq:proofalpha>1/22} over the given range of parameters $\alpha'$. Therefore combining \eqref{eq:finalboundChenStein} with \eqref{eq:cardAF}, \eqref{eq:rewriteyandz} and \eqref{eq:proofalpha>1/22}, one obtains that
\begin{equation*}
	d_{2\eps}(Y,W;\alpha)\leq  \frac{C\log(|F|)^{C'}}{|F|^{2(\alpha-2\eps)-1}\eps^{\frac{2d}{d-2}}}+\frac{C|F|^2\log(|F|)^{C'}}{|F|^3}\leq \frac{C''\log(|F|)^{C'}}{|F|^{2(\alpha-2\eps)-1}\eps^{\frac{2d}{d-2}}},
\end{equation*}
for all $\alpha\in(\frac12,1]$. It readily follows from the definition of $d_{\eps},$ see \eqref{eq:defdepsilon} (and substituting $2\eps$ by $\eps$), that a coupling exists such that \eqref{eq:strongercoupling} occurs except on an event with probability bounded by the expression in~\eqref{eq:boundonSgen}.  Moreover by Lemma~\ref{lem:wellseparated}, since all sets in $\tilde{\A}_F$ have diameter at most $R_F$, one has
\begin{equation}
\label{eq:proofalpha>1/21}
\P\Big(\L^{\alpha}_F\neq\bigcup_{K\in{\tilde{\A}_F}:\,Y_K^{\alpha}=1}K\Big)\leq\P\big(\exists x\in{F}:\,\L^{\alpha}_F\cap \BB(x,2R_F)\notin{\tilde{\A}_F\cup\{\varnothing\}}\big)\leq \frac{C\log(|F|)^{C'}}{|F|^{2\alpha-1}}.
\end{equation}
To conclude, one simply notes comparing \eqref{e:b-tilde'} and \eqref{eq:proofalpha>1/21} that 
$d_{\eps}\big(\L_F,\tilde{\B}_{F};\alpha\big)$ is bounded by the sum of $d_{\eps}(Y,W;\alpha)$ and the probability on the left-hand side of \eqref{eq:proofalpha>1/21}, which is smaller than \eqref{eq:boundonSgen} up to increasing the constants $C,C'$. 
\end{proof}

\section{Denouement}
\label{sec:denouement}

Using the findings of \S\ref{sec:chen-stein} and \S\ref{sec:loc-cons}, with the latter drawing heavily from Theorem~\ref{thm:rwshortrange}, we now prove our main results for the set of late points, Theorems~\ref{thm:uncoveredset} and~\ref{cor:phasetransition2-intro}. Recall $\alpha_*(K)$ from \eqref{eq:defalpha*K}, the measure $\mathbb{P}$ introduced atop \S\ref{subsec:L_F}, and abbreviate $\tilde{\A}_N = \tilde{\A}_{Q_N}$, see \eqref{eq:defAF}, as well as $\tilde{\mathcal B}^{\alpha} = \tilde{\mathcal B}^{\alpha}_{Q_N}$, see \eqref{eq:hatdefBalphaF}. For any set $\mathcal{S}\subset Q_N$ and $K\subset\subset \Z^d$ we introduce, with $R_N=R_{Q_N}= \log (N^d)^{\frac1{d-2}}$ (see \eqref{eq:defAF}), the event
\begin{equation}\label{e:E_N-beta}
E_{K}(\mathcal{S})= \big\{ \exists \, K'\subset\mathcal{S}: \,
 K' \in \mathcal{A}_N  \text{ and }\alpha_*(K')\leq\alpha_*(K) \big\}
\end{equation}
corresponding to the existence of `admissible' sets $K'$ in $\mathcal{S}$ with capacity larger than the capacity of $K.$ Note that $E_K(\mathcal{S})$ depends implicitly on $N$ via the choice of $\mathcal{S}\subset Q_N.$  The constant $\alpha_*(K)$ is chosen so that the following result holds.

\begin{lemma}
\label{lem:condonalphatoobserveK}
For all $\eta\in{(0,1)}$, all sequences $(\alpha_N)$ with $\alpha_N \in(\frac12+\eta,2]$ for all $N \geq 1$, all $K\subset\subset \Z^d$ with $\alpha_*(K)>\frac12,$ abbreviating $E_K^{\alpha}=E_K(\tilde{\B}^{\alpha})$ one has
\begin{equation}
\label{eq:condonalphatoobserveK}
\lim_{N\to\infty}\P\big( E_{K}^{\alpha_N} \big) = 
\begin{cases}
	=0 &\text{ if } \lim_{N}\big(1-\frac{\alpha_N}{\alpha_{\mast}(K)}\big)\log N= -\infty,\\
	=1 &\text{ if } \lim_{N}\big(1-\frac{\alpha_N}{\alpha_{\mast}(K)}\big)\log N=\infty,
\end{cases}
\end{equation}
and 
\begin{equation}
\label{eq:condonalphatoobserveK'}
0< \liminf_{N\to\infty}\P\big( E_{K}^{\alpha_N} \big) \leq \limsup_{N\to\infty}\P\big( E_{K}^{\alpha_N} \big) < 1,\text{ if } \textstyle \lim_{N}\big[\big(1-\frac{\alpha_N}{\alpha_{\mast}(K)}\big)\log N\big]\in (-\infty,\infty).
\end{equation}
\end{lemma}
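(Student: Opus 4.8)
\textbf{Proof plan for Lemma~\ref{lem:condonalphatoobserveK}.}
The strategy is to reduce the event $E_K^{\alpha_N}$ to a first/second moment computation for the number of `admissible witness sets', and to use the explicit formula $p_F^\alpha(K')$ together with the capacity bound $\alpha_*(K')\leq\alpha_*(K)$ to control its expectation. Concretely, let $\mathcal{N}^{\alpha}$ denote the number of sets $K'\in\tilde{\A}_N$ with $\alpha_*(K')\leq\alpha_*(K)$ that appear in $\tilde{\B}^\alpha$, i.e.~$\mathcal{N}^\alpha=\sum_{K'\in\tilde{\A}_N,\,\alpha_*(K')\leq\alpha_*(K)}1\{U_{K'}\leq p^\alpha_N(K')\}$, where I abbreviate $p^\alpha_N=p^\alpha_{Q_N}$. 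Then $E_K^{\alpha}=\{\mathcal{N}^\alpha\geq1\}$. First I would compute $\mathbb{E}[\mathcal{N}^\alpha]=\sum p^\alpha_N(K')$ over the relevant $K'$. Using that $p^\alpha_N(K')\leq \P(K'\subset\L^\alpha)$ and the estimate \eqref{eq:upperboundonprobalate} with $F=Q_N$, each term is at most $C N^{-d\alpha/\alpha_*(K')}\leq CN^{-d\alpha/\alpha_*(K)}$ when $\alpha_*(K')\leq\alpha_*(K)$; since there are (by translation invariance and the cardinality bound \eqref{eq:cardAF}, or just counting translates) at most $CN^d\log(N)^{C'}$ admissible $K'$ up to the capacity constraint, one gets $\mathbb{E}[\mathcal{N}^\alpha]\leq CN^{d(1-\alpha/\alpha_*(K))}\log(N)^{C'}$. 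By Markov's inequality this gives $\P(E_K^\alpha)\to0$ whenever $(1-\alpha_N/\alpha_*(K))\log N\to-\infty$, which is the first line of \eqref{eq:condonalphatoobserveK}.

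For the lower bound (second line of \eqref{eq:condonalphatoobserveK} and the left inequality of \eqref{eq:condonalphatoobserveK'}), the plan is a second-moment argument restricted to a well-chosen sub-collection of witnesses. Fix a single set $K_0\subset\Z^d$ with $\alpha_*(K_0)=\alpha_*(K)$ that actually achieves the extremal capacity --- for $\alpha_*(K)=\alpha_*$ one takes $K_0=\{0,x\}$, $x\sim0$, and more generally one may simply take $K_0=K$ itself since $\alpha_*(K)=\alpha_*(K)$ trivially, noting $K\in\tilde{\A}_N$ for $N$ large as $\alpha_*(K)>\frac12$ implies $\mathrm{cap}(K)\leq 2/g(0)$ and $\delta(K)\leq R_N$ eventually. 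Let $\mathcal{N}_0^\alpha=\sum_{x\in Q_N}1\{U_{x+\pi(K)}\leq p^\alpha_N(x+\pi(K))\}$ count only translates of $K$. By \eqref{eq:lowerboundonprobalate} and the fact that $p^\alpha_N(K')=\P(K'\subset\L^\alpha)(1+o(1))$ --- here I would invoke a mild refinement: $p^\alpha_N(K')\geq\P(K'\subset\L^\alpha)-\P(\exists\,y\in\L^\alpha\cap\BB(K',R_N)\setminus K')$ and bound the correction term using \eqref{eq:firstincluAF}-type estimates so that $p^\alpha_N(K')=(1+o(1))c|F|^{-\alpha/\alpha_*(K)}$ uniformly --- one gets $\mathbb{E}[\mathcal{N}_0^\alpha]\geq cN^{d(1-\alpha/\alpha_*(K))}(1+o(1))$. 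For the second moment, $\mathbb{E}[(\mathcal{N}_0^\alpha)^2]=\mathbb{E}[\mathcal{N}_0^\alpha]+\sum_{x\neq x'}p^\alpha_N(x+\pi(K))p^\alpha_N(x'+\pi(K))$ since the $U$'s are independent across distinct sets; the double sum is $\leq\mathbb{E}[\mathcal{N}_0^\alpha]^2$, so the Paley--Zygmund inequality yields $\P(\mathcal{N}_0^\alpha\geq1)\geq\mathbb{E}[\mathcal{N}_0^\alpha]^2/\mathbb{E}[(\mathcal{N}_0^\alpha)^2]\geq\mathbb{E}[\mathcal{N}_0^\alpha]/(1+\mathbb{E}[\mathcal{N}_0^\alpha])$. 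When $(1-\alpha_N/\alpha_*(K))\log N\to+\infty$ this tends to $1$, giving the second line of \eqref{eq:condonalphatoobserveK}; when the quantity stays bounded, $\mathbb{E}[\mathcal{N}_0^\alpha]$ is bounded above and below by positive constants, so $\P(E_K^\alpha)\geq\P(\mathcal{N}_0^\alpha\geq1)$ stays bounded away from $0$, while the Markov bound above keeps it bounded away from $1$ --- this is \eqref{eq:condonalphatoobserveK'}.

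There is one subtlety in the boundary regime that I would handle carefully: $\tilde{\B}^\alpha$ may also contain admissible witnesses $K'$ with $\alpha_*(K')<\alpha_*(K)$ strictly, whose contribution to $\mathbb{E}[\mathcal{N}^\alpha]$ is of order $N^{d(1-\alpha/\alpha_*(K'))}$, which is \emph{smaller} than $N^{d(1-\alpha/\alpha_*(K))}$ exactly when $\alpha<\alpha_*(K')<\alpha_*(K)$ but could in principle be larger when $\alpha$ is very small. However throughout we have $\alpha_N>\frac12+\eta$, and the relevant regime for \eqref{eq:condonalphatoobserveK'} forces $\alpha_N$ close to $\alpha_*(K)$, so one always has $\alpha_N>\alpha_*(K')$ for all competing witnesses with strictly smaller $\alpha_*$, keeping every such term $o(1)$ and negligible; I would make this precise by noting there are only finitely many possible values of $\mathrm{cap}(K')$ below $2/g(0)$ achievable by sets of bounded size, so the sum over competing capacity-classes is finite and each class contributes $o(1)$. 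The main obstacle is thus not the moment computation itself but ensuring the approximation $p^\alpha_N(K')=(1+o(1))\P(K'\subset\L^\alpha)$ holds uniformly over all $K'\in\tilde{\A}_N$ with the correct rate, so that the second-moment argument delivers constants genuinely strictly between $0$ and $1$ rather than merely asymptotically; this requires combining the isolation estimate from Lemma~\ref{lem:wellseparated} with the sharp density asymptotics \eqref{eq:Lalphaasymp}, which is where most of the work lies.
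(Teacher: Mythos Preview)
Your overall architecture (first moment for the upper bound, second moment/Paley--Zygmund for the lower bound) is sound, and the lower-bound half is essentially complete. There are however two genuine gaps on the upper-bound side.

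\textbf{The identification $E_K^\alpha=\{\mathcal{N}^\alpha\geq1\}$ is false.} A witness $K'\subset\tilde{\B}^\alpha$ with $\alpha_*(K')\leq\alpha_*(K)$ need not be directly triggered: it may arise as a subset of the union of two (or more) triggered sets $K_1\neq K_2$, each individually satisfying $\alpha_*(K_i)>\alpha_*(K)$. You therefore only have $\{\mathcal{N}^\alpha\geq1\}\subset E_K^\alpha$, and for the upper bound you must also control the event $F^\alpha$ that two distinct triggered sets lie within distance $O(R_N)$ of each other. This is easy (a union bound as in \eqref{eq:wellseparated} gives $\P(F^{\alpha_N})\leq CN^{d}\log(N)^C N^{-2\alpha_Nd}\to0$ since $\alpha_N>\tfrac12+\eta$), but it must be said; the paper's proof does exactly this via the decomposition $\P(G_K^\alpha)\leq\P(E_K^\alpha)\leq\P(F^\alpha)+\P(G_K^\alpha)$.

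\textbf{The Markov bound is too weak for both conclusions.} Your estimate $\mathbb{E}[\mathcal{N}^\alpha]\leq CN^{d(1-\alpha/\alpha_*(K))}\log(N)^{C'}$ carries a spurious polylog factor coming from the coarse cardinality bound \eqref{eq:cardAF}. This factor is fatal in two places. First, in the regime $(1-\alpha_N/\alpha_*(K))\log N\to-\infty$ slowly (say like $-\sqrt{\log\log N}$), the product $N^{d(1-\alpha_N/\alpha_*(K))}\log(N)^{C'}$ diverges, so your Markov bound does not give $\P(E_K^{\alpha_N})\to0$. Second --- and more fundamentally --- in the critical regime of \eqref{eq:condonalphatoobserveK'}, $\mathbb{E}[\mathcal{N}^\alpha]$ is only bounded (and may exceed $1$), so Markov yields nothing; it cannot ``keep it bounded away from $1$'' as you claim. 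The fix for both issues, which is what the paper does, has two ingredients: (i) observe that sets $K'$ with $\alpha_*(K')=\alpha_*(K)$ must have uniformly bounded diameter (since $\alpha_*(\{x,y\})\searrow\tfrac12$ as $d(x,y)\to\infty$), so there are only $O(N^d)$ of them, while the remaining admissible $K'$ satisfy $\alpha_*(K')\leq\alpha_*(K)-\delta$ for some $\delta=\delta(K)>0$, hence contribute $o(1)$; and (ii) for $\limsup<1$, abandon Markov and instead exploit independence of the $U_{K'}$'s directly via the product formula
\[
\P(\mathcal{N}^\alpha=0)=\prod_{K'}(1-p^\alpha_N(K'))\;\geq\;\bigl(1-CN^{-d\alpha/\alpha_*(K)}\bigr)^{CN^d}\bigl(1-CN^{-d\alpha/(\alpha_*(K)-\delta)}\bigr)^{CN^d\log(N)^C},
\]
which stays bounded away from $0$ in the critical regime. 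Your ``finitely many capacity values'' remark in the subtlety paragraph gestures at (i), but as stated it is not correct (there are infinitely many capacity values below $2/g(0)$; the point is that those at most $\mathrm{cap}(K)$ are finite in number), and it does not address (ii).
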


We refer to Remark~\ref{R:refine-crit} below for a refinement of \eqref{eq:condonalphatoobserveK'}.

\begin{proof} 
Recall that the set $\tilde{\mathcal B}^{\alpha}$ is defined entirely in terms of the family $(U_{K'})_{K'\subset Q_N}$ of i.i.d.~uniform random variables from above \eqref{eq:defAF}. The set $K'$ with the properties postulated by $E_{K}^{\alpha}$ may arise in $\tilde{\mathcal B}^{\alpha}$ for two reasons: either because the uniform variable $U_{K'}$ was triggered, i.e.~it is at most $p_{Q_N}^{\alpha}(K'),$ or because the uniform random variables corresponding to at least two disjoint subsets whose union is included in $K'$ were triggered. Accordingly, let
$\mathcal{U}_{K'}^{\alpha}=\{U_{K'}\leq p_{Q_N}^{\alpha}(K')\}$ and 
consider the events
\begin{align}\label{eq:Falpha}
&F^{\alpha} =\big\{\exists\, K_1,K_2\in \tilde{\A}_N:  \,  K_1 \neq K_2,\delta(K_1\cup K_2)\leq R_{{N}} \text{ and } \mathcal{U}_{K_1}^{\alpha} \cap \mathcal{U}_{K_2}^{\alpha} \text{ occurs} \big\},\\[0.3em]
&G_K^{\alpha} =\big\{\exists \,K' \in{\tilde{\A}_{N}} : \, K' \subset Q_{N}, \, \alpha_*(K')\leq\alpha_*(K) \text{ and }  \mathcal{U}_{K'}^\alpha \text{ occurs}\big\}.\nonumber
\end{align}
With these definitions, one has 
\begin{align}\label{eq:boundingprobbyakn}
\begin{split}
	\P(G_{K}^{\alpha}) \leq \P(E_K^{\alpha}) \leq \P(F^{\alpha}) + \P(G_{K}^{\alpha}),
\end{split}	
\end{align}
for all $\alpha > 0$, $N \geq 1$. By a union bound and using~\eqref{eq:upperboundonprobalate} for singletons and~\eqref{eq:cardAF}, one gets 
\begin{align}
\label{eq:wellseparated}
\P(F^{\alpha_N}) \leq CN^{d} \log (N^d)^{C}\cdot N^{-2\alpha_N d}\rightarrow0 \text{ as } N\to\infty,
		\end{align}
since $\alpha_N>1/2+\eta$ for all $N.$ In view of \eqref{eq:boundingprobbyakn} and \eqref{eq:wellseparated}, it is enough to find the limit of $\P(G_{K}^{\alpha_N}) $ as $N\rightarrow\infty.$ 
To this end, first note that $\mathrm{cap}(\{x,y\})\nearrow 2\mathrm{cap}(\{0\})=2/g(0)$ as $d(x,y)\rightarrow\infty$ by \eqref{e:cap-2point}, and so for each $\alpha'\in (\frac12+\eta,\alpha_*(K)]$ and $x\in{Q_N},$ the number of sets $K'\subset Q_N$ containing $x$ such that $\alpha_*(K')\geq \alpha'$ is bounded uniformly in $N$ and $\alpha'.$ In particular, it follows that $|\{K'\subset Q_{N}:\,\alpha_*(K')=\alpha_*(K)\}|\leq CN^{d}$ and that there exists $\delta=\delta(K)>0$ such that $\alpha_*(K')\leq\alpha_*(K)-\delta$ for each $K'\subset Q_N$ with $\alpha_*(K')<\alpha_*(K).$ By~\eqref{eq:upperboundonprobalate} and~\eqref{eq:cardAF}  it thus follows that 
\begin{equation}
\label{eq:lbAKn}
\begin{split}
\P\big((G_{K}^{\alpha})^c\big)&=\prod_{\substack{K'\subset Q_{N}:\,K'\in{\tilde{\A}_{N}}\\\alpha_*(K')\leq\alpha_*(K)}}\left(1-\P(\L^{\alpha}\cap \BB(K',R_N)=K')\right)
\\&\geq \big(1-{C}{N^{-\frac{d \alpha }{\alpha_{\sast}(K)}}}\big)^{CN^{d}} \big(1-{C}{N^{-\frac{d \alpha }{\alpha_{\sast}(K)-\delta}}}\big)^{CN^{d}\log(N)^C}.
\end{split}
\end{equation}

We now derive an upper bound on $\P((G_{K}^{\alpha})^c).$ To this end, we first introduce a set $\overline{K} \supseteq K,$ or $\overline{K}\supset \pi(K)$ if $Q_N=Q_N(\boldsymbol{0}),$ as follows. First, applying
\eqref{eq:capdecoupling} one finds $r=r(K)<\infty$ such that, whenever  $x\notin{\BB({K},r)}$, one has $\mathrm{cap}({K}\cup\{x\}) > \mathrm{cap}({K})$. Then, considering all sets of the form $K \cup U$ for $U \subset \BB({K},r)$ one finds for $N$ large enough a set $\overline{K}\subset Q_N$ having this form (or a projection on the torus of a set having this form when $Q_N=Q_N(\boldsymbol{0})$) and such that both $\mathrm{cap}(\overline{K})=\mathrm{cap}(K)$ and $\mathrm{cap}(K')>\mathrm{cap}(K)$ for all $K'\subset Q_N$ with $\overline{K}\subsetneq K'$. It follows from this construction that 
\begin{equation}
\label{eq:capincreases}
\inf_{x\in{\overline{K}^c}}\mathrm{cap}(\overline{K}\cup\{x\})\geq \mathrm{cap}(K)+\delta',
\end{equation}
for some $\delta'=\delta'(K) > 0$. The desired upper bound on $\P\big((G_{K}^{\alpha})^c\big)$ will follow from a lower bound on the probability of the event $\{\L^{\alpha}\cap \BB(\overline{K},R_N)=\overline{K}\}$. Combining the upper bound \eqref{eq:upperboundonprobalate} and \eqref{eq:capincreases}, one obtains that for all $\alpha\in{(0,2]}$,
\begin{multline*}
\P\big(\, \overline{K}\subset\L^{\alpha}\big)-\P\big(\L^{\alpha}\cap \BB(\overline{K},R_N)=\overline K\,\big)\\ \leq \P\big(\exists\, x\in{ \BB(\overline{K},R_N)\setminus \overline{K}}:\,\overline{K}\cup\{x\}\subset\L^{\alpha}\big)
\leq {C\log(N)^{C'}}{N^{-\alpha dg(0)(\mathrm{cap}(K)+\delta')}}.
\end{multline*}
If $N$ is large enough, one deduces from this and the lower bound \eqref{eq:lowerboundonprobalate} that there exists $c=c(K)$ such that for all $\alpha\in{(0,2]}$,
\begin{equation}
\label{eq:lowerboundonprobalateKbar}
\P\big(\L^{\alpha}\cap \BB(\overline{K},R_N)=\overline{K}\, \big)\geq cN^{-\frac{\alpha d}{\alpha_{\sast}(K)}}.
\end{equation}
Now observe that the set $K'= \overline{K}+x$ satisfies $\alpha_*(K') = \alpha_*(K)$ by construction, see above \eqref{eq:capincreases}. Thus, by \eqref{eq:lowerboundonprobalateKbar} and translation invariance we have that for all $\alpha\in{(0,2]}$,
\begin{equation}
\label{eq:ubAKn}
\begin{split}
\P\big((G_{K}^{\alpha})^c\big)&\leq \prod_{x\in{Q_N}:\overline{K}+x\subset Q_{N}}\left(1-\P(\L^{\alpha}\cap \BB(\overline{K}+x,R_N)=\overline{K}+x)\right)\leq \big(1-{c}{N^{-\frac{d\alpha}{\alpha_{\sast}(K)}}}\big)^{cN^{d}}.
\end{split}
\end{equation}
Combining \eqref{eq:lbAKn} and \eqref{eq:ubAKn} with~\eqref{eq:wellseparated} and~\eqref{eq:boundingprobbyakn} for $\alpha=\alpha_N$ readily yields \eqref{eq:condonalphatoobserveK}--\eqref{eq:condonalphatoobserveK'}.
\end{proof}

\begin{Rk} \label{R:refine-crit} We now explain how to refine \eqref{eq:condonalphatoobserveK'}, which is of interest for the purposes of obtaining the exact constant $e^{-d}$ in \eqref{e:coup-crit}. For simplicity, we focus on the case $K=K_0=\{x,y\}$ for some $x \sim y$, whence $\alpha_*(K_0)=\alpha_*$ in view of \eqref{e:aalpha_*-equiv}.  The key is to observe that
\begin{equation}\label{e:ref-crit1}
\big\{K'\subset Q_{N}:\,\alpha_*(K')=\alpha_*\big\} = \big\{ \{z, z'\} \subset Q_{N} : \, z' \sim z \big\};
\end{equation}
indeed, recalling $\alpha_*(\cdot)$ from \eqref{eq:defalpha*K}, it follows immediately with the help of \eqref{e:cap-2point} that $\alpha_*(\{z\})=1$, which is larger than $\alpha_*$ on account of Lemma~\ref{L:alpha_*}, so the set in question in \eqref{L:alpha_*} does not contain singletons. If $K'$ is not a pair of neighbors then $K'$ contains at least two points at $\ell^{1}$-distance $\geq 2$ and it follows that $\alpha_*(K') < \alpha_*$ using \eqref{eq:cap-distance2}. From \eqref{e:ref-crit1}, one deduces in turn that
 \begin{equation}\label{e:ref-crit2}
\big|\big\{K'\subset Q_{N}:\,\alpha_*(K')=\alpha_*\big\} \big| \sim d | Q_{N} |, \text{ as } N \to \infty.
\end{equation}
Note also that as explained in Remark~\ref{R:asymp},\ref{rk:boundonprobalate}, the constants $C(\beta_0)$ and $c(\beta_0)$ in \eqref{eq:upperboundonprobalate} and \eqref{eq:lowerboundonprobalate} for $F=Q_N$ and $\beta_0=2$ can be respectively replaced by $1+o(1)$ and $1-o(1)$ as $N\rightarrow\infty,$ and thus the constant $c$ in \eqref{eq:lowerboundonprobalateKbar} can also be replaced by $1-o(1).$ Now, inspecting the above proof, substituting a suitable upper bound implied by \eqref{e:ref-crit2} into \eqref{eq:lbAKn} and a corresponding lower bound into \eqref{eq:ubAKn}, noting that $\overline{K_0}=K_0$ for the same reasons as those yielding \eqref{e:ref-crit1}, one obtains
\begin{equation}
\label{eq:condonalphatoobserveK''}
 \lim_{N\to\infty}\P\big( E_{K_0}^{\alpha_N} \big) =1-e^{-\gamma d},
 \end{equation}
 if $d \cdot \lim_{N} \big[\big(1-\frac{\alpha_N}{\alpha_{\mast}}\big)\log N\big]= \log \gamma$ for some $\gamma >0 $.
\end{Rk}

We now turn to the proof of our main results. These are formulated entirely within the framework of \eqref{defL}, which subsumes the setup of the introduction (corresponding to $F=Q_N$ for the choice $\mathbb{P}=\mathbf{P}$), thus lending themselves to immediate generalizations, notably to the case of random interlacements,~i.e.~the choice $\mathbb{P}=\mathbb{P}^I$, see Remark~\ref{rk:thmsprinkling},\ref{R:Ri-ext}. With this in mind, we extend the definitions of the sets $\B$ and $\B_K$ from above \eqref{eq:alpha_**} and \eqref{eq:defBFK} to being subsets of $Q_N,$ rather than just subsets of $\mathbf{T}=Q_N(\mathbf{0})$ as in \S \ref{sec:intro} in accordance with the notation introduced above \eqref{e:u_F}, this amounts to simply replacing $\mathbf{T}$ by $Q_N$ in their definition. Combining Theorem~\ref{The:alpha>1/2} and Lemma~\ref{lem:condonalphatoobserveK}, one now readily obtains the following.

\begin{proof}[Proof of Theorem~\ref{cor:phasetransition2-intro}]
Let $\eps_N=\eps_N(\alpha)=N^{-\frac{1}{4}(d-2)(2\alpha-1)}$ for all $N \geq 1.$ It follows from Theorem~\ref{The:alpha>1/2} applied with $F=Q_N$ that for each $\alpha\in{(\frac12,1]}$ and $N \geq 1$, with $\tilde{\B} = \tilde{\B}_{Q_N}$ and $\mathcal{L}= \mathcal{L}_{Q_N}$,
\begin{equation}
\label{eq:couplingatepsn}
 d_{\epsilon_N}\big(\mathcal{L},\tilde{\B};\alpha\big)\leq C\log(N)^{C'}N^{-d(2\alpha-1)/2+2\eps_Nd}\tend{N}{\infty}0.
\end{equation}
Fix a set $K\subset Q_N$ with $\alpha_*(K)> \frac12$. Theorem~\ref{cor:phasetransition2-intro} deals with $\B_K$ introduced in \eqref{eq:defBFK} rather than $\tilde{\B}$ as defined below \eqref{eq:hatdefBalphaF}. We proceed to compare the two sets using Lemmas~\ref{lem:wellseparated} and~\ref{lem:condonalphatoobserveK}. Indeed recalling the event $ E_{K}^{\alpha}=E_K(\tilde{\B}^{\alpha})$ from \eqref{e:E_N-beta}, one has for $N$ large enough that $\{ \tilde{\B}^{\alpha} \neq \B_K^{\alpha}\} \subset E_{K}^{\alpha}$. We used here that the inclusion $\B_K^{\alpha}\subset\tilde{\B}^{\alpha}$ is always satisfied for $N\geq N(K)$ large enough, since the condition $\delta(A)\leq R_N$ in \eqref{eq:defAF} holds for all $A\subset\mathbf{T}$ with $\alpha_*(A)>\alpha_*(K)(>1/2)$ when $N$ is large enough by~\eqref{eq:defalpha*K}, \eqref{e:cap-2point} and \eqref{eq:cap-distance2}. 

Now, first assume that $\alpha\in{(\alpha_*(K),1]}.$ By~\eqref{eq:condonalphatoobserveK} applied to the sequence $\alpha_N = \alpha \pm \varepsilon_N$, which satisfies $\lim_{N}\big(1-\frac{\alpha_N}{\alpha_{\mast}(K)}\big)\log N= -\infty$, one deduces that $\lim_{N}\P(\tilde{\B}^{\alpha\pm\eps_N} \neq \B_K^{\alpha\pm\eps_N})=0$. This implies in turn with \eqref{eq:couplingatepsn} that
$d_{\epsilon_N}\big(\mathcal{L},{\B}_K;\alpha\big)\to 0$ as $N \to \infty$, from which the first line in \eqref{eq:phasetransitionK} directly follows, using that $\varepsilon \mapsto d_{\varepsilon}$ is decreasing. A similar reasoning using the last bound in \eqref{eq:condonalphatoobserveK'} instead, with the choice $\alpha_N = \alpha_*(K) \pm \varepsilon_N$, yields the last bound in \eqref{eq:phasetransitionK'}.

Let us now assume that $\alpha\leq\alpha_*(K).$ 
  To obtain the second line in \eqref{eq:phasetransitionK} as well as the first bound in \eqref{eq:phasetransitionK'}, it suffices to show that for $\eps<\alpha(1-\frac{1}{2\alpha_{\mast}(K)})$ and for any coupling $\mathbb{Q}$ between $\mathcal{L}^{\alpha}$ and~$\B_K^{\alpha-\epsilon}$ one has
   \begin{align}\label{eq:nocoupling}
 \liminf_{N\rightarrow\infty}\mathbb{Q}(\mathcal{L}^\alpha\nsubseteq \mathcal{B}_K^{\alpha-\epsilon})\begin{cases}	=1 &\text{ if } \alpha<\alpha_*(K),
\\>0 &\text{ if } \alpha=\alpha_*(K).
\end{cases}
 \end{align}
Recall $D_{K}^{\alpha} \equiv D_{K}(\mathcal{L}^{\alpha})$ and $ D_{K}(\mathcal{B}_K^{\alpha})$ from \eqref{eq:DNK}; that is, $D_{K}^{\alpha}(\mathcal{B}_K^{\alpha})$ counts the number of times a translated version of $K$ (or its projection on the torus) appears in $\mathcal{B}^{\alpha}_K$. Now fix  $\rho$ such that $1- 2(\alpha-\eps)< \rho <  1- \frac{\alpha}{\alpha_{\mast}(K)}$ (the midpoint for instance), which exists by our assumption on $\eps.$ Then clearly since $\B_K$ is an independent field one has by a similar reasoning as in \eqref{eq:wellseparated} the bound $\mathbb{E}_{\mathbb{Q}}\big[D_{K}(\mathcal{B}_{K}^{\alpha-\eps})\big] \leq  C \log(N)^{C'}N^{d(1- 2(\alpha-\eps))}.$ Hence by Markov's inequality one immediately infers that $\mathbb{Q}\big(D_{K}(\mathcal{B}_K^{\alpha-\varepsilon}) \geq N^{d\rho }\big) \to 0$ as $N \to \infty$. On the other hand, applying Lemma~\ref{L:D_LB} with $\varepsilon_N=N^{d(\rho-1+\frac{\alpha}{\alpha_{\sast}(K)})}$, one finds that  $\liminf_{N}\mathbb{Q}\big(D_{K}(\mathcal{L}^{\alpha}) \geq N^{d\rho }\big)$ is equal to $1$ if $\alpha<\alpha_*(K),$ and is positive if $\alpha=\alpha_*(K)$. Together, these imply \eqref{eq:nocoupling}.

Let us finally assume that $\alpha_*(K)\leq1/2$. For each $x,y\in{Q_N}$, it follows from \eqref{e:cap-2point} that $\alpha_*(\{x,y\})> 1/2\geq \alpha_*(K)$ and so by \eqref{eq:defBFK} $\B_K^{\alpha}$ stochastically dominates $\{x\in{Q_N}:\,\exists\,y\in{Q_N},U_{\{x,y\}}\leq p^{\alpha}(\{x,y\})\}$. 
Let us define independent Bernoulli random variables $X_{x,y}$, $x,y\in{Q_N}$, with parameter $p^{\alpha}(\{x,y\})/2$ so that $X_{x,y}=X_{y,x}=0$  if $U_{\{x,y\}}> p^{\alpha}(\{x,y\})$. If $Y_{x}=1\{\exists\,y\in{Q_N}:\,X_{x,y}=1\}$, then $(Y_x)_{x\in{\Z^d}}$ are i.i.d.\ Bernoulli random variables and $Y_x=1$ implies $x\in{\B_K^{\alpha}}$. Moreover by \eqref{eq:lowerboundonprobalateKbar}
\begin{equation}
\label{eq:alpha*K<1/2}
\P(x\in{\B_K^{\alpha}})\geq \P(Y_x=1)\geq 1-\prod_{y\in{Q_N}}\P(X_{x,y}=0)
\geq 1-(1-cN^{-2\alpha d})^{N^d}\geq c'(N^{d-2\alpha d}\wedge 1).
\end{equation}
where in the last inequality we used $1-cN^{-2\alpha d}\leq e^{-cN^{-2\alpha d}}$ and $e^{-x}\leq 1- e^{-1}(x\wedge 1)$ for all $x\geq0$. Therefore $|\B_K^{\alpha}|$ stochastically dominates a binomial random variable with parameters $(N^d,c'(N^{d-2\alpha d}\wedge 1))$, and is thus larger than $c''(N^{2d(1-\alpha)}\wedge N^d)$ with probability going to $1$ as $N\rightarrow\infty$ by Chebyshev's inequality. Moreover $\mathbb{E}[|\L^{\alpha}|]\leq CN^{d(1-\alpha)}$ by \eqref{eq:upperboundonprobalate}, and so by Markov's inequality we have $|\L^{\alpha}|\leq N^{d-\alpha d}\log N$ with probability going to $1$ as $N$ goes to infinity, and we can conclude since $\alpha<1$.
\end{proof}

We postpone further comments for a few lines and first give the:

\begin{proof}[Proof of Theorem~\ref{thm:uncoveredset}] 
Fix two neighbors $x\sim y$, $x,y \in \Z^d$, and let $K_0=\{x,y\}$. As we now explain, $\ref{ite:imain}$ is in essence an application of Theorem~\ref{cor:phasetransition2-intro} for this choice of $K=K_0$.
First recall that
 $\alpha_*=\alpha_*(K_0)$ by Lemma~\ref{e:aalpha_*-equiv}, and that the only sets $K'$ with $\alpha_*(K')>\alpha_*(K_0)$ are singletons on account of \eqref{eq:defalpha*K} and \eqref{eq:cap-distance2}. Therefore, the set $\B_{K_0}^{\alpha}$ in \eqref{eq:defBFK} only has contributions from sets $A$ which are singletons, and recalling $\B^{\alpha}$ from the beginning of \S\ref{Subsec:PT} which we can define via the same uniform random variables $(U_{\{x\}})_{x\in{Q_N}}$ as $\B^{\alpha}_{K_0}$ in \eqref{eq:defBFK} (that we also define under the probability measure $\P$), it follows that for all $\alpha> \alpha_*$,
\begin{equation} \label{e:TV-B-Btilde}
\P\big({\B}^{\alpha}_{K_0}\neq{\B}^{\alpha}\big)\leq \sum_{z\in Q_N} \mathbb{P}\big(z\in \mathcal{L}^\alpha, \BB(z,R_{Q_N})\cap \mathcal{L}^\alpha\neq \{z\}\big)\leq CN^{d(1- \frac{\alpha}{\alpha_{\sast}})},
\end{equation}
where the last inequality relies on~\eqref{eq:upperboundonprobalate}. Combining this with the first line of \eqref{eq:phasetransitionK} immediately yields item $\ref{ite:imain}$ of Theorem~\ref{thm:uncoveredset} for all $\alpha\in{(\alpha_*,1)}$ and $\eps\in{(0,\eps_0)}$, for $\eps_0$ small enough. When $\alpha=1$, the first line of \eqref{eq:phasetransitionK} is still valid, as should be clear from the proof of Theorem~\ref{cor:phasetransition2-intro}, and so item $\ref{ite:imain}$ of Theorem~\ref{thm:uncoveredset} is also fulfilled when $\alpha=1$, as well as when $\alpha>1$ as both $\B^{\alpha}$ and $\L^{\alpha}$ are then empty with high probability by \eqref{eq:Lalphaasymp}. Note that we can actually take $\eps_0(\alpha)=\alpha$ for all $\alpha>\alpha_*$ by monotonicity of $\eps\mapsto d_{\eps}.$

We now turn to the proof of $\ref{ite:iimain}.$ Let $\widetilde{E}(\L^{\alpha_{\mast}})$ refer to the event appearing in the statement of Lemma~\ref{lem:wellseparated} for the choices $F=Q_N$, $R=R_N(=R_{Q_N})$ and $\alpha=\alpha_*$. Thus, if $\widetilde{E}(\L^{\alpha_{\mast}})$ does not occur, for any $x \in Q_N$ one has that $\L^{\alpha_{\mast}}\cap \BB(x,R_N)$ is either empty or an element of $\tilde{\A}_N$.
If in addition $E_{K_0}(\L^{\alpha_{\mast}})$ does not occur, 
then assuming that $\L^{\alpha_{\mast}}\cap Q(x,R_N)=K$ for some $x \in Q_N$ and $K\neq\varnothing,$ one has that 
$\alpha_*(K)>\alpha_*(K_0)$ by \eqref{e:E_N-beta}, and so $|K|=1.$ In particular, applying this to $x \in \mathcal{L}^{\alpha_{\mast}}$, one obtains that on the complement of $\widetilde{E}(\L^{\alpha_{\mast}}) \cup E_{K_0}(\L^{\alpha_{\mast}})$, the set $\L^{\alpha_{\mast}}$ is the union of all the $x\in{Q_N}$ such that $\L^{\alpha_{\mast}}\cap \BB(x,R_N)=\{x\}.$ Therefore if $\widetilde{E}(\L^{\alpha_{\mast}}) \cup E_{K_0}(\L^{\alpha_{\mast}})$ does not occur and  \eqref{eq:strongercoupling} at $\alpha=\alpha_*$ is verified for all $K\subset Q_N$ with $|K|=1$, it follows that $\B_{K_0}^{\alpha_{\mast}+\eps}\subset\L^{\alpha_{\mast}}\subset\B_{K_0}^{\alpha_{\mast}-\eps}$. Combining now Theorem~\ref{The:alpha>1/2} 
with the trivial inclusion $\B_{K_0}^{\alpha_{\mast}-\eps}\subset\B^{\alpha_{\mast}-\eps}$ and \eqref{e:TV-B-Btilde} at $\alpha=\alpha_*+\eps$, and using Lemma~\ref{lem:wellseparated} to bound $\P(\widetilde E(\L^{\alpha_{\mast}})) $, one obtains that
\begin{equation}
\label{eq:upperbounddeps}
d_{\eps}(\L,\B;\alpha_*)\leq \P\big(E_{K_0}(\L^{\alpha_{\mast}})\big)+CN^{-\frac{d\eps}{\alpha_{\sast}}}+C\log(N)^{C'}N^{d-2(\alpha_{\mast}-\eps)d}.
\end{equation}
for some constant $C,C'<\infty$ depending only on $\eps$ and $d.$  An asymptotically matching lower bound is
\begin{equation}
\label{eq:lowerbounddeps}
d_{\eps}(\L,\B;\alpha_*)\geq\P\big(E_{K_0}(\L^{\alpha_{\mast}})\big)-\P\big(E_{K_0}(\B^{\alpha_{\mast}-\eps})\big),
\end{equation} 
which simply follows from the fact that $\L^{\alpha_{\mast}}\subset\B^{\alpha_{\mast}-\eps}$ cannot occur when $\L^{\alpha_{\mast}}$ contains sets $K\in{\tilde{\A}_N}$ and $\alpha_*(K)\leq \alpha_*(K_0)$ but not $\B^{\alpha_{\mast}-\eps}.$ Moreover the event $E_{K_0}(\B^{\alpha_{\mast}-\eps})$ is included in the event $F^{\alpha_{\mast}-\eps}$ from \eqref{eq:Falpha} by definition, and so by \eqref{eq:wellseparated} we have for all $\eps \in (0, \eps_0)$, with $\eps_0(\alpha_*)=\alpha_*-\frac{1}{2},$
\begin{equation}
\label{eq:separationforB}
\begin{split}
&\P\big(E_{K_0}(\mathcal{B}^{\alpha_{\mast}-\varepsilon})\big) \leq C\log(N)^{C'}N^{d-2(\alpha_{\mast}-\eps)d}\tend{N}{\infty}0.
\end{split}
\end{equation}
Combining this with  \eqref{eq:upperbounddeps} and \eqref{eq:lowerbounddeps}, we deduce that for each $\eps\in{(0,\eps_0)},$ $d_{\eps}(\L,\B;\alpha_*)$ is asymptotically equivalent to $\P\big(E_{K_0}(\L^{\alpha_{\mast}})\big)$ as $N\rightarrow\infty.$ 
 Using \eqref{eq:condonalphatoobserveK''}, and combining with~\eqref{eq:couplingatepsn} for $\alpha=\alpha_*$ (recall that $\eps_N=N^{-\frac{1}{4}(d-2)(2\alpha_*-1)}$ therein and so $\eps_N\log(N)\rightarrow 0$), we can conclude since
\begin{align}\label{eq:dichotomyforlalpha}
\begin{split}
	\lim_{N\to\infty} \P\big(E_{K_0}(\mathcal{L}^{\alpha_*})\big)=\lim_{N\to\infty} \P\big(E_{K_0}(\tilde{\mathcal{B}}^{\alpha_*\pm\eps_N})\big)=1-e^{-d}.
\end{split}	
\end{align} 
Finally, $\ref{ite:iiimain}$ can be shown 
in exactly the same way as the case $\alpha<\alpha_*(K)$ in the proof of Theorem~\ref{cor:phasetransition2-intro}, replacing throughout the proof $\alpha_*(K)$ by $\alpha_*,$ $\B_K$ by $\B,$ $D_{K}$ by $D,$ see \eqref{e:double-pts}, and $K$ by $K_0,$ upon taking $\eps_0(\alpha)=\alpha(1-\frac1{2\alpha_{\mast}})$ for $\alpha<\alpha_*$ in Theorem~\ref{thm:uncoveredset}. 
\end{proof}

\begin{Rk}[Extensions of Theorems~\ref{thm:uncoveredset} and~\ref{cor:phasetransition2-intro}] $\quad$\\[-0.8em]
\label{rk:thmsprinkling}
\begin{enumerate}[label*=\arabic*)]
\item\label{R:Ri-ext} Although stated in \S\ref{sec:intro} for $\mathcal{L}^{\alpha}$ as defined in \eqref{e:L^alpha-RW}, the conclusions of Theorems~\ref{thm:uncoveredset} and~\ref{cor:phasetransition2-intro} hold for either of the choices for $\P=\P^I$ or $\P=\mathbf{P}$ above \eqref{e:u_F}, i.e.\ by \eqref{defL} they have an analogue for random interlacements in $Q_N(0)$. Indeed, the proofs of these theorems are actually written so that they are also valid for random interlacements.

\item \label{R:direct-Thm1} The proof of Theorem~\ref{thm:uncoveredset},$\ref{ite:imain}$ given above uses Theorem~\ref{cor:phasetransition2-intro} to first compare $\mathcal{L^{\alpha}}$ to $\B_K$ with $K$ a set of neighbors (which in turn follows via comparison of $\mathcal{L^{\alpha}}$ and $\tilde{\B}$ using Theorem~\ref{The:alpha>1/2} and Lemma~\ref{lem:condonalphatoobserveK} to relate $\tilde{\B}$ and $\B_K$) and then $\B_K$ to $\B$. If $\alpha > \alpha_*$, for the sole purpose of deducing the relevant conclusions $\ref{ite:imain}$ in Theorem~\ref{thm:uncoveredset}, one can actually bypass the intermediate use of $\B_K$ (and $\tilde{\B}$) completely. Indeed, item $\ref{ite:imain}$ can be deduced directly using Lemma~\ref{lem:corofchenstein}, combined with Theorem \ref{The:shortrangeappro} and \eqref{eq:upperboundonprobalate}, similarly as in the proof of Theorem \ref{The:alpha>1/2} itself,  thus yielding that $d_{\eps}\big(\mathcal{L},\B;\alpha\big)\rightarrow0$ if $\alpha>\alpha_*$. Note that the proof of Theorem~\ref{thm:uncoveredset},$\ref{ite:iiimain}$ above also does not require Theorems~\ref{cor:phasetransition2-intro} and~\ref{The:alpha>1/2}, and that if one only wants to prove that the the supremum in Theorem~\ref{thm:uncoveredset},$\ref{ite:iimain}$ is bounded away from $0$ and $1$ uniformly in $N,$ one could bypass the use of $\tilde{\B}$ by proceeding similarly as in the proof of \eqref{eq:phasetransitionK'}. It seems however difficult to obtain the exact constant $e^{-d}$ at criticality in \eqref{e:coup-crit} without using the more general Theorem~\ref{The:alpha>1/2} (or Theorem~\ref{cor:phasetransition2-intro}). In a nutshell, this is because $\tilde{\B}$ provides us with more precise information about $\mathcal{L}^{\alpha_{\mast}},$ see \eqref{eq:dichotomyforlalpha}, than direct moment methods, see Lemma~\ref{L:D_LB}. More generally, proceeding similarly as in \eqref{eq:lowerbounddeps}, \eqref{eq:separationforB} and the first equality in \eqref{eq:dichotomyforlalpha}, one could see Theorems~\ref{thm:uncoveredset}  and~\ref{cor:phasetransition2-intro} when $\alpha>1/2$ as direct consequences of  Theorem~\ref{The:alpha>1/2} and Lemma~\ref{lem:condonalphatoobserveK},  without using the moment methods of Lemma~\ref{L:D_LB} (which is still required for $\alpha\leq1/2$). Lastly we note that if $\alpha_*(K)>1/2$ there is some flexibility in defining $\B_K^{\alpha}$ in \eqref{eq:defBFK} without spoiling the conclusions of Theorem~\ref{cor:phasetransition2-intro}, e.g.~by adding a constraint
on the diameter of $A$ in \eqref{eq:defBFK} of the form $\delta(A)\leq R_N$, similarly as in \eqref{eq:defAF}-\eqref{eq:hatdefBalphaF}. However if $\alpha_*(K)\leq 1/2$, it is not clear if \eqref{eq:phasetransitionK} is still verified when adding such a diameter constraint, see \eqref{eq:alpha*K<1/2}, and it is thus an interesting question if a change in \eqref{eq:defBFK} might be relevant when studying $\L^{\alpha}$ in the phase $\alpha<\frac12$.

 \item\label{rk:constantatcriticality}  As explained in the proof of Theorem~\ref{thm:uncoveredset},\ref{ite:iimain}, the constant $e^{-d}$ in \eqref{e:coup-crit} corresponds to $\P\big(E_{K_0}(\L^{\alpha_{\mast}})^c\big),$ which is essentially the probability that there do not exist two neighbors in $\L^{\alpha_{\mast}};$ we refer to \eqref{eq:condonalphatoobserveK''} and \eqref{eq:dichotomyforlalpha} for how to compute this probability. On the other hand, the limit in the critical case \eqref{eq:phasetransitionK'} of Theorem~\ref{cor:phasetransition2-intro} is not explicit (nor is it clear whether this limit actually exists). This is due to the fact that we only proved \eqref{eq:condonalphatoobserveK''} for $K=\{x,y\},$ $x\sim y,$ and in fact for this choice of $K$ the limit in \eqref{e:coup-crit} is also $e^{-d}$ by a similar reasoning. To obtain the limit in \eqref{e:coup-crit} for other choices of $K$ with $\alpha_*(K)>\frac12,$ one would need to extend \eqref{e:ref-crit2} when replacing $\alpha_*$ by $\alpha_*(K).$ The limit in \eqref{eq:phasetransitionK'} would then be $1-e^{-\beta},$ where $\beta=\beta(K)$ would be the constant multiplying $|Q_N|$ on the right-hand side of this new version of  \eqref{e:ref-crit2}.

\item\label{rk:MP} Theorem~\ref{cor:phasetransition2-intro} focuses on sets $K$ with $\alpha_*(K)>\frac12,$ and we refer to \eqref{eq:proofalpha>1/22} for the main reason why this condition appears.  The pertinence of the first line in \eqref{eq:phasetransitionK} at values $\alpha \in ( \alpha_*(K),  \frac12]$ for any $K$ with $ \alpha_*(K)  \leq \frac12$ is another matter entirely. We refer to \S\ref{subsec:outlook} for some results in this direction. The parameter $\frac12$ is also the critical parameter from \cite{MP_unif}. For the random walk on $\mathbf{T},$  they prove that if $B\subset {\bf T}$ is an independent set chosen uniformly at random, then the total variation distance between $(\L^{\alpha})^c\cap B$ and $B$ goes to $0$ as $N\rightarrow\infty$ if $\alpha>1/2,$ and to $1$ if $\alpha<1/2.$ Actually, this  can be directly deduced from Theorem~\ref{cor:phasetransition2-intro} for the case $\alpha>1/2$ (which is the more difficult case), and also when considering random interlacements instead of the random walk on the torus. 
Indeed, by \cite[Lemma~3.1 and Proposition~3.2]{MP_unif} (applied to $\mathbb{P}(\cdot\,|\,A)$), it is enough to find an event $A$ so that
\begin{equation}
\label{eq:toverifyPeMi}
\mathbb{E}\Big[2^{|\L^{\alpha}\cap\hat{\L}^{\alpha}|}1_A\Big]\tend{N}{\infty}1\quad\text{ and }\quad\mathbb{P}(A)\tend{N}{\infty}1,
\end{equation}
where $\hat{\L}^{\alpha}$ is an independent copy of $\L^{\alpha}.$ Let $\alpha>1/2$ and $x\in{\Z^d}$ be such that $\alpha_*(K)\in{(1/2,\alpha)},$ with $K=\{0,x\},$ which exists since $\alpha_*(\{0,x\})\rightarrow 1/2$ as $x\rightarrow\infty$ by \eqref{e:cap-2point}. Let $A$ be the event that $\{\L^{\alpha}\subset {\B}_K^{\alpha-\eps}\},$ with $\eps<\alpha-\frac12,$ intersected with $\{\hat{\L}^{\alpha}\subset \hat{\B}_K^{\alpha-\eps}\},$ where $\hat{\B}_K^{\alpha-\eps}$ are independent copy of $\B_K^{\alpha-\eps}.$ Then by Theorem~\ref{cor:phasetransition2-intro} we have $\mathbb{P}(A)\rightarrow1$ (up to changing the probability space), and one can easily verify that the left-hand side of \eqref{eq:toverifyPeMi} is satisfied since $\B_K^{\alpha-\eps}\cap \hat{\B}_K^{\alpha-\eps}$ consists of $CN^d$ independent Bernoulli variables each with parameter smaller than $CN^{-2d(\alpha-\eps)},$ and $2(\alpha-\eps)>1.$ 

\item \label{rk:otherparametrization} One could also modify the relevant timescale in the definition \eqref{defL} of $\L^{\alpha},$  and the results of Theorems~\ref{thm:uncoveredset} and~\ref{cor:phasetransition2-intro} remain true as long as \eqref{eq:upperboundonprobalate}-\eqref{eq:lowerboundonprobalate} hold. For instance one could take $\L^{\alpha}$ in \eqref{defL} as the vacant set at time $\alpha \tcov,$ where $\tcov$ is the expected cover time of $Q_N$ for either random walk or random interlacements, or in fact replace $\alpha$ by any sequence $(\alpha_N)$ with $\alpha_N\log(N)\sim \alpha \log(N)$ as $N\rightarrow\infty.$ 
 Indeed if $\gamma=\lim_N(1-\frac{\alpha_N}{\alpha_{\mast}})\log N$ exists in $[-\infty,+\infty],$ using \eqref{eq:condonalphatoobserveK}, as long as $\alpha_N>\frac12+\eta,$ for any $\eta>0,$ one can use Theorem~\ref{The:alpha>1/2}, \eqref{eq:condonalphatoobserveK}, \eqref{eq:condonalphatoobserveK''}  and \eqref{e:TV-B-Btilde} for $\alpha_N$ instead of $\alpha,$ to show that  for $\eps$ small enough $\lim_Nd_{\eps}\big(\mathcal{L},\B;\alpha_N\big)$ is equal to $0$ if $\gamma=-\infty,$ is equal to $1-e^{-d}$ if $\gamma\in{(-\infty,\infty)},$ and is equal to $1$ if $\gamma=+\infty,$  similarly as in the proof of Theorem~\ref{thm:uncoveredset}. Thus, proceeding similarly for Theorem~\ref{cor:phasetransition2-intro}, our results are robust with respect to small changes of time- reparametrization, contrary to those of \cite{JasonPerla} or \cite{SamPerla} where one had to consider the walk at timescales $\alpha t_*$ for a specific choice of $t_*\sim \tcov.$

\item\label{rk:precisebound} As we now explain, inspection of the proof of \eqref{e:coup-subcrit} and of the first line of \eqref{eq:phasetransitionK} (used in the proof of \eqref{e:coup-subcrit}) reveals that item $\ref{ite:imain}$ of Theorem~\ref{thm:uncoveredset} can be quantified as follows; for all $\alpha\in{(\alpha_*,1]},$ $N \geq 1$, $F\subset Q_N,$ $\eps\in{(0,\alpha)}$, and $\mathcal{L}^{\alpha}_{F}$ as in \eqref{defL}, one has
\begin{equation}
\label{eq:smallestsprinkling}
d_{\eps}\big(\mathcal{L}_{F},\mathcal{U}_F;\alpha\big)\leq{C} |F| \Big( |F|^{-2(\alpha-\eps)}\log(|F|)^{C'}\eps^{-\frac{2d}{(d-2)}}+ |F|^{- \frac{\alpha}{\alpha_{\sast}}} \Big),
\end{equation} 
where $\mathcal{U}_F^{\alpha} \subset F$ and $\{x\in{\mathcal{U}_F^{\alpha}}\}\stackrel{\text{def.}}{=}\{U_x\leq\mathbb{P}(x\in{\mathcal{L}^{\alpha}_{F}})\}$ for all $x\in{F},$ so that in particular, $\mathcal{U}_{Q_N} = \B$. The first term on the right-hand side of \eqref{eq:smallestsprinkling} can be traced back to \eqref{eq:boundonSgen}, which appears in the course of proving \eqref{eq:phasetransitionK}, and the second term to \eqref{e:TV-B-Btilde}. Note also that this second term is also a bound on the first line of \eqref{eq:condonalphatoobserveK} when $K=\{x,y\},$ $x\sim y,$ by \eqref{eq:boundingprobbyakn}, \eqref{eq:wellseparated} and \eqref{eq:lbAKn}. Similarly, for all $\alpha>\alpha_*(K)$ one could prove a bound on the first line of \eqref{eq:phasetransitionK} similar to \eqref{eq:smallestsprinkling} when replacing $\alpha_*$ by $\alpha_*(K)$. The extension to general $F \subset Q_N$ in \eqref{eq:smallestsprinkling} comes for free since all the results of \S\ref{sec:loc-cons} utilized in the proof (namely, Theorem~\ref{The:alpha>1/2} and the bound \eqref{eq:upperboundonprobalate}) hold at this level of generality.

\item  \label{rk:removingsprinkling} In particular, the quantitative bound \eqref{eq:smallestsprinkling} (applied to $F=Q_N$) allows one to choose $\varepsilon= N^{-c}$ for suitable $c=c(\alpha)$ and $\alpha>\alpha_*$, for which 
\begin{equation}
\label{eq:couplingatepsn2}
d_{\eps_N}(\mathcal{L},\mathcal{B};\alpha)\to 0 \text{ as $N \to \infty.$}
\end{equation} 
As we now argue, this yields a non-trivial regime of parameters $\alpha \leq 1$, for which $\eps_N$ is so small that \eqref{eq:couplingatepsn2} can be boosted to $d_{\text{TV}}=d_0$ in place of $d_{\eps_N}$. Let $\eta\in{(0,2\alpha-1)}$. Then in fact \eqref{eq:smallestsprinkling} implies that \eqref{eq:couplingatepsn2} holds for $\eps_N=N^{-c(\alpha)}$ with $c(\alpha)=\frac12(d-2)(2\alpha-1-\eta)$. Moreover, by virtue of Lemma~\ref{lem:boundonlateRIRW} one has that if $\alpha\in{(0,1)}$
\begin{multline}
\label{eq:removesprinkling}
\P(\B^{\alpha-\eps_N}\neq \B^{\alpha+\eps_N})\leq N^d\P(0\in{\L^{\alpha-\eps_N}\setminus\L^{\alpha+\eps_N}})\\
\leq \frac{N^d}{N^{d(\alpha-\eps_N)}}\Big(1-\frac{1}{N^{2d\eps_N}}+CN^{-\frac{d-2}2}\log(N)^{3/2}\Big) \leq C\eps_N N^{d-\alpha d}\log N,
\end{multline}
where in the last inequality we used that $N^{2d\eps_N} \to 1$  and the inequality $N^{-\frac{d-2}2}\log(N)^{3/2}\leq C\eps_N$ valid by our choice of $\eps_N$ and $\alpha.$
Therefore, if $d-\alpha d-\frac12(d-2)(2\alpha-1)<0,$ that is if $\alpha>\frac{3}{4}(d-\frac{2}{3})/(d-1),$ upon choosing $\eta>0$ small enough one deduces that $\P(\B^{\alpha-\eps_N}=\B^{\alpha+\eps_N})\rightarrow1$ as $N\rightarrow\infty,$ which together with  \eqref{eq:couplingatepsn2} and when $\alpha>\alpha_*$ yields that $d_{\text{TV}}(\L^{\alpha},\B^{\alpha})\to 0$.
Interestingly the parameter $\alpha_2=\alpha_*\vee\frac{3}{4}(d-\frac{2}{3})/(d-1)$ thereby emanating coincides with the parameter from \cite{SamPerla}, as one can see by carefully inspecting \cite{SamPerla} (see in particular the term $b_2$ p.10 therein).
In view of~\ref{rk:MP} and~\ref{rk:otherparametrization} above, our findings thus recover the results for the random walk on the torus of \cite{MP_unif,JasonPerla,Prata_thesis,SamPerla} in full (and also extend results such as  \eqref{e:Perla-Jason-main} to an arguably more natural choice of time-parametrization). It is an intriguing question to determine whether $\alpha_2$ and $\alpha_{**},$ see \eqref{eq:alpha_**}, coincide, or whether the (multiple) occurrences of $\alpha_2$ are an artefact of the methods and in reality $\alpha_{**}=\alpha_*$. Note that $\frac{3}{4}(d-\frac{2}{3})/(d-1)>\frac34$ and $\alpha_*,$ which is decreasing in $d$ by  \eqref{e:aalpha_*-equiv} and \cite[Lemma~C.1]{MR1174248}, verifies $\alpha_*<0.68$ by computer-assisted methods, see \eqref{e:aalpha_*-equiv} and Lemma~\ref{lem:computerassisted}. Overall this yields $\alpha_2 =\frac{3}{4}(d-\frac{2}{3})/(d-1)>\alpha_*.$
\item  \label{rk:latecalphapoint} Let us define the $\alpha$-cover time $C^{\alpha}(\mathcal{L})=\inf\big\{\beta>0:|\L^{\beta}|\leq N^{(1-\alpha)d} \big\}$ for $\alpha \in (0,1]$. Recalling ${\B}$ and $\mathcal{B}_K$ from above \eqref{eq:alpha_**} and \eqref{eq:defBFK}, define $C^{\alpha}({\B})$ and $ C^{\alpha}(\mathcal{B}_K)$ similarly but replacing $\mathcal{L}$ by ${\B}$ and $\mathcal{B}_K$, respectively. 
 With $C^{\alpha}= C^{\alpha}(\L)$, one could also show results akin to Theorems~\ref{thm:uncoveredset} when $\alpha\neq\alpha_*$ and to Theorem~\ref{cor:phasetransition2-intro} when $\alpha\neq\alpha_*(K)$, when replacing the set $\L^{\alpha}$ by $\mathcal{L}^{ C^{\alpha}},$  which is the set of late points which contains exactly $N^{d(1-\alpha)}$ points (or $\lceil N^{d(1-\alpha)} \rceil$ in case $N^{d(1-\alpha)}$ is not an integer), and the sets ${\B}_K^{\alpha}$ and $\mathcal{B}^{\alpha}$ by the sets ${\B}_K^{ C^{\alpha}({\B}_K)}$ and $\mathcal{B}^{ C^{\alpha}(\mathcal{B})}.$ 
We refer to \cite[Theorem~1.2]{JasonPerla} for a similar result without sprinkling when $\alpha$ is close enough to one.
The proof relies on Theorem~\ref{thm:uncoveredset}, which in particular implies for all $\alpha\in{(\alpha_*,1]}$ and $\eps\in{(0,\alpha)}$ the inequality
\begin{multline*}
\P\big(\textstyle \alpha-\frac{\eps}{3}\leq  C^{\alpha}\leq \alpha+\frac{\eps}{3}\big)\geq \P\big( |\mathcal{L}^{\alpha+\eps/3}|<N^{d(1-\alpha)}< |\mathcal{L}^{\alpha-\eps/3}|\big)
\\{\geq}  \P\big(|\B^{\alpha+\eps}|< N^{d(1-\alpha)}< |\B^{\alpha-\eps}|\big)+o(1)\rightarrow 1
\end{multline*}
as $N\rightarrow\infty$, where the last bound is an easy consequence of concentration bounds for binomial variables and \eqref{eq:Lalphaasymp}, along with similar concentration estimates for $ C^{\alpha}(\mathcal{B})$ (and analogues in the context of Theorem~\ref{cor:phasetransition2-intro} when $\alpha>\alpha_*(K)>1/2$, replacing $\B$ by $\B_K$ throughout). 
\end{enumerate}
\end{Rk}

\section{Extensions}
\label{sec:extensions}

We now discuss two extensions of our main results, one concerning the set $\mathcal{L}^{\alpha}_F$ from \eqref{defL} viewed as a process in $\alpha > 0$, the other regarding a partial description of the behaviour of $\L^{\alpha}_F$ valid in the regime $\alpha\leq \tfrac12$; see Theorems~\ref{thm:processusabovealpha*} and~\ref{The:gen}, respectively, along with the subsequent remarks.

\subsection{The process \texorpdfstring{$ \alpha \mapsto \mathcal{L}^{\alpha}_F$}{alpha->LFalpha}} \label{subsec:alpha-process} 
\label{sec:processus}

Recall the process $(\alpha_x)_{x \in Q_N}$ from \eqref{eq:bfalpha-intro} (see also our convention in \eqref{defL}, by which $(\alpha_x)_{x \in Q_N}$ implicitly refers to either of two choices). Note also that
$\alpha_x \geq \alpha_*$ for any $ x  \in{\L^{\alpha_{\sast}}}$.
\begin{theorem}
\label{thm:processusabovealpha*}
For all $N \geq 1$, there exists a coupling of $({\alpha}_x)_{x\in{Q_N}}$ with a family $(\widehat{\alpha}_x)_{x\in{Q_N}}$ of i.i.d.~exponential random variables of mean $ d\log (N)$ each,
such that for all $\eps>0$,
\begin{equation}
\label{eq:alphahatalpha}
\lim\limits_{N\rightarrow\infty}\P\left(\widehat{\alpha}_x-\eps\leq {\alpha}_x-\alpha_*\leq \widehat{\alpha}_x+\eps\text{ for all }x\in{\L^{\alpha_{\sast}}}\right)=1.
\end{equation}
\end{theorem}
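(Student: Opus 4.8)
The plan is to combine the $\alpha$-indexed coupling afforded by Theorem~\ref{The:alpha>1/2} (in the refined version with $u$-uniform control from Remark~\ref{rk:othershortrange},\ref{rk:coup-u}) with the observation that, conditionally on the set $\L^{\alpha_{\sast}}$, the ``excess'' $\alpha_x-\alpha_*$ of a late point $x$ should behave like an independent exponential of mean $d\log N$, because by \eqref{eq:Lalphaasymp}, for $x$ isolated in $\L^{\alpha_{\sast}}$ one has $\P(x\in\L^{\alpha_{\sast}+t}\mid x\in\L^{\alpha_{\sast}})= \P(x\in\L^{\alpha_{\sast}+t})/\P(x\in\L^{\alpha_{\sast}}) \sim N^{-td}$, i.e.~$\P(\alpha_x-\alpha_{\sast}>t \mid x\in\L^{\alpha_{\sast}})\approx e^{-td/(d\log N)}$ after writing $t=s\cdot d\log N$. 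So the strategy is: (i) localize the event $\{\alpha_x-\alpha_{\sast}\in\cdot\}$ to a box of radius $R_N$ around $x$ using Proposition~\ref{The:shortrangeappro} (or Theorem~\ref{The:shortrangeapprointro-new}), so that these excesses become (up to a small error) a finite-range functional; (ii) feed this into the modified Chen--Stein scheme of Lemma~\ref{lem:corofchenstein}, but now applied not to a single level but to the whole profile, to produce a coupling with an i.i.d.\ field of ``shapes with marks'' where the mark of a singleton $\{x\}$ in $\L^{\alpha_{\sast}}$ records its persistence level; (iii) check that for singletons the marginal law of the mark is, up to the $\eps$-sprinkling, exponential of mean $d\log N$, and that on $\L^{\alpha_{\sast}}$ (which for $\alpha>\alpha_*$ consists, w.h.p., only of well-separated singletons by Theorem~\ref{thm:uncoveredset}) only singleton shapes contribute.

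More concretely, I would first fix $\eps>0$ and, mimicking \S\ref{subsec:btilde-compa}, set $u_1 = u_N(\alpha_{\sast})$ and $u_0 = u_N(\alpha_{\sast}+T)$ for a large constant $T$ (the tail beyond level $\alpha_{\sast}+T$ being negligible since $\P(\L^{\alpha_{\sast}+T}\neq\varnothing)\to 0$ for $T>1-\alpha_*$, by \eqref{eq:Lalphaasymp}). Apply Proposition~\ref{The:shortrangeappro} with $R=(\lambda/\eps)^{2/(d-2)}R_N$, $\lambda$ large, to obtain the decreasing family $(\tilde\L^{\alpha,(x)})$; by the $u$-uniform bound in Remark~\ref{rk:othershortrange},\ref{rk:coup-u} the coupling error \eqref{eq:boundonSshortrange} can be made uniform over all $\alpha\in[\alpha_{\sast},\alpha_{\sast}+T]$ at the cost of an extra polynomial-in-$N$, $\eps^{-1}$ factor, which is still beaten by $N^{-c\lambda}$. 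Define the localized persistence level $\tilde\alpha_x = \sup\{\alpha:\ x\in\tilde\L^{\alpha,(x)}\}$; by \ref{ite:iishortrangeappro} the random variables $\{(\1\{x\in\tilde\L^{\alpha_{\sast},(x)}\},\tilde\alpha_x):x\}$ form a finite-range field, and by \ref{ite:iiishortrangeappro} together with \eqref{eq:strongercoupling} (the singleton case) one gets $|\alpha_x-\tilde\alpha_x|\le\eps$ for all $x\in\L^{\alpha_{\sast}}$ with probability $1-o(1)$. At this point I invoke Lemma~\ref{lem:corofchenstein} with $S=Q_N$, $Y_x^{\alpha}=\1\{x\in\L^{\alpha},\ \L^{\alpha}\cap\B(x,R_N)=\{x\}\}$ marked by the level $\alpha_x$, and $Z$ the analogous localized version; the terms $b_1,b_2$ are bounded exactly as in \eqref{eq:proofalpha>1/22} using \eqref{eq:upperboundonprobalate}, giving a total-variation-with-sprinkling bound that vanishes. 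The output is a coupling with an i.i.d.\ family $(W_x,\hat\alpha_x)$ where $W_x=\1\{U_x\le \P(x\in\L^{\alpha_{\sast}})\}$ and $\hat\alpha_x$ has the conditional law of $\alpha_x$ given $\{x\}$ is an isolated late point at level $\alpha_{\sast}$.

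Finally I would identify this conditional law. Using Lemma~\ref{lem:boundonlateRIRW} (for interlacements exactly, for the walk up to the $O(N^{-(d-2)/2}\log^{3/2}N)$ error), $\P(x\in\L^{\alpha_{\sast}+t})/\P(x\in\L^{\alpha_{\sast}}) = |Q_N|^{-g(0)\cp(\{x\})\,t}\,(1+o(1)) = N^{-dt}(1+o(1))$ since $g(0)\cp(\{x\})=1$; hence $\P(\alpha_x-\alpha_{\sast}>t\mid x\in\L^{\alpha_{\sast}}) = N^{-dt}(1+o(1)) = e^{-t/(d\log N)^{-1}\cdot\text{stuff}}$, i.e.~after the change of variables $t\mapsto s\,d\log N$ this is $e^{-s}(1+o(1))$, which is exactly the exponential of mean $d\log N$ claimed; the correction from conditioning on \emph{isolation} (rather than just $x\in\L^{\alpha_{\sast}}$) is negligible by \eqref{eq:decouplingprobalate} and a union bound over $\B(x,R_N)$, as in the proof of \eqref{eq:lowerboundonprobalateKbar}. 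Replacing the exact exponential by one of mean $d\log N$ costs at most another sprinkling of size $\eps$ in the coupling, which can be absorbed. Combining the three coupling steps (localization, Chen--Stein, law identification), each introducing an error $o(1)$ in $N$ and a sprinkling that is at most a constant multiple of $\eps$, and finally relabeling $3\eps\rightsquigarrow\eps$, yields \eqref{eq:alphahatalpha}. The main obstacle I anticipate is controlling the \emph{increments/profile} uniformly in $\alpha$ over the whole range $[\alpha_{\sast},\alpha_{\sast}+T]$ rather than at a single level: one must carefully deploy the $u$-uniform localization estimate \eqref{eq:couplelltilell2} (and its random-interlacements analogue) and ensure the Chen--Stein bound is applied to the right marked Bernoulli field so that the monotonicity hypothesis \eqref{eq:Z} survives the passage to marks; the walk's starting-point issue flagged in Remark~\ref{rk:shortrangeappro} reappears here, and the cleanest fix is to work with the increment field as in Remark~\ref{rk:othershortrange},\ref{rk:increments}, restricting to $x\notin Q_{R'}(0)$ and handling the exceptional box separately.
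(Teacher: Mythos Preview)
Your route via Chen--Stein with marks is not the one the paper takes, and as stated it has a real gap: Lemma~\ref{lem:corofchenstein} is formulated for $\{0,1\}$-valued processes and produces a coupling at a \emph{single} level $\alpha$ (with sprinkling); it does not output a coupling of the whole $\alpha$-profile, nor of a marked Bernoulli field. You flag this yourself (``ensure the Chen--Stein bound is applied to the right marked Bernoulli field''), but you do not supply the extension, and stitching together the single-level couplings at $\alpha_*+k\eps$ into a consistent coupling of $(\alpha_x)_x$ is exactly the missing idea. Discretizing in $\alpha$ and enlarging $S$ to $Q_N\times\{0,\dots,K\}$ does not obviously preserve the hypotheses in \eqref{eq:Z}, and the output $W$ of the lemma is again Bernoulli, not marked.

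The paper bypasses Chen--Stein entirely here (see the remark right after the proof). The key observation is the Markov property: set $t_*=u_N(\alpha_*)N^d$ and condition on $\mathcal{F}_*=\sigma(X_n:n\le t_*)$. Then $\L^{\alpha_*}$ is $\mathcal{F}_*$-measurable, while the shifted walk $(X_{t_*+t}-X_{t_*})_{t\ge0}$ has law $\mathbf{P}_{\boldsymbol 0}$ \emph{independently of} $\mathcal{F}_*$. One applies the $u$-uniform localization of Remark~\ref{rk:othershortrange},\ref{rk:coup-u} to this \emph{future} walk, obtaining a short-range field $(\ell^{(x)}_{\cdot,u})$ with exact interlacement marginals by \eqref{e:loc-law}. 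On the set $\mathcal{K}$ of $2R$-isolated points of $\L^{\alpha_*}$ (which is $\mathcal{F}_*$-measurable), the variables $\widehat{\alpha}_x=\sup\{\alpha:\ell^{(x-X_{t_*})}_{x-X_{t_*},u_N(\alpha)}=0\}$ are then \emph{automatically} conditionally i.i.d.: independence comes from the finite-range property \eqref{e:loc-range} combined with well-separatedness, and the exact exponential marginal comes directly from \eqref{e:loc-law} and \eqref{eq:boundonlateRI}, with no Chen--Stein step and no law-identification-up-to-sprinkling needed. The rest of the proof controls $\L^{\alpha_*}\setminus\mathcal{K}$ and the small box $Q(X_{t_*},2R)$ by first-moment bounds. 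In short: your localization step (i) is right, but the decoupling step (ii) should be replaced by conditioning on the past up to $t_*$ and exploiting Markov, which makes the independence of the $\widehat\alpha_x$ immediate and renders the marked-Chen--Stein machinery unnecessary.
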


The intuition behind Theorem~\ref{thm:processusabovealpha*} is roughly the following. By similar considerations as in the proof of Lemma~\ref{lem:condonalphatoobserveK}, one argues that, for each $\alpha>\alpha_*,$ all the vertices in $\mathcal{L}^{\alpha}$ are at distance at least $R_N = \log(N)^{\frac1{d-2}}$ from each other with high probability. Applying our localization results of Section~\ref{sec:shortrangeRWRI} at this scale then implies that the hitting time of each late point is roughly independent and distributed as exponential random variable with the above mean.

\begin{proof} We first consider the case $\mathbb{P}=\mathbf{P}$.  Let $t_{\sast}=  u_N(\alpha_{\sast}) N^d$ with $u_N(\alpha)= \alpha g(0) \log(N^d)$ as in \eqref{e:u_N} and define $ \mathcal{F}_{\mast} =\sigma(X_n: 0 \leq n \leq t_{\sast})$. Remark~\ref{rk:othershortrange},\ref{rk:coup-u}  can be applied for the random walk $(X_{t+t_{\sast}}-X_{t_{\sast}})$ under $\mathbf{P}_{\boldsymbol{0}}(\cdot\,|\,\mathcal{F}_{\sast})$ instead of $X$ under $\mathbf{P}_{\boldsymbol{0}}$ since it has law $\mathbf{P}_{\boldsymbol{0}}$ by Markov's property, and we denote by $(\ell_{y,u}^{(x)})_{y\in{\Z^d},x\in{Q_N}, u \in [u_1,u_0]}$ the associated short-range field of local times when $F=Q_N,$ $R=\log(N)^{\frac 2{d-2}},$ $\delta=1,$ $\eps/3$ instead of $\eps$, $u_1=u_N(\eps/2)$ and $u_0=u_N(2)$, defined on some extended probability space $\til{\mathbf P}_{\boldsymbol{0}}.$ Let 
\begin{equation}\label{eq:well-sep-K}
\mathcal{K}=\big\{x\in{\mathcal{L}^{\alpha_{\sast}}}:\,\mathcal{L}^{\alpha_{\sast}}\cap Q(x,2R)=\{x\},\ell_{x-X_{t_{\sast}}, u_N(2)}^{(x-X_{t_{\sast}})}>0\big\}
\end{equation}
denote the set of points visited according to the short-range field by the terminal time $u_N(2)N^d$. Conditionally on $\mathcal{F}_{\mast},$ we then define 
\begin{equation*}
\widehat{\alpha}_x=\sup\big\{\alpha\in{[0,2]}:\,\ell_{x-X_{t_{\sast}}, u_N(\alpha)}^{(x-X_{t_{\sast}})}=0\big\}\text{ for each }x\in{\K};
\end{equation*}
similarly, for each $x\in{Q_N\setminus\K}$ such that $\mathcal{L}^{\alpha_{\sast}}\cap Q(x,2R)=\{x\},$ we define independently $\widehat{\alpha}_x$ as $2$ plus an Exp($d\log(N)$)-distributed random variable, and for each $x\in{Q_N\setminus\K}$ such that $\mathcal{L}^{\alpha_{\sast}}\cap Q(x,2R)\neq\{x\},$ define independently $\widehat{\alpha}_x$ as some Exp($d\log(N)$)-distributed random variable. In view of \eqref{e:loc-law}, \eqref{eq:boundonlateRI}, and the  memorylessness property of the exponential random variable, one checks that $\widehat{\alpha}_x$ is Exp($d\log(N)$)-distributed for each $x\in{Q_N}.$ Moreover, it follows from the short-range property of $\ell^{(x)}$, see \eqref{e:loc-range}, that conditionally on $\mathcal{F}_{\mast}$, $\widehat{\alpha}_x$ is independent of $\sigma(\widehat{\alpha}_{y},y\in{\mathcal{K}\setminus\{x\}})$ for each $x\in{K}$, since $d(x,y)\geq 2R$ for all $x\neq y\in{\mathcal{K}}.$ Therefore $(\widehat{\alpha}_x)_{x\in{Q_N}}$ is an i.i.d.\ family of Exp($d\log(N)$)-distributed random variables.

It thus remains to prove \eqref{eq:alphahatalpha}. First \eqref{eq:couplelltilell2} for our choice of parameters implies that a.s.\
\begin{equation}
\label{eq:proofprocalpha1}
	\til{\mathbf{P}}_{\boldsymbol{0}}\left(\widehat{\alpha}_x-\eps\leq {\alpha}_x-\alpha_*\leq \widehat{\alpha}_x+\eps\text{ for all }x\in{\K\cap Q(X_{t_{\sast}},2R)^c}\,\big|\,\mathcal{F}_{{\mast}}\right)\tend{n}{\infty}1.
\end{equation}
Indeed for each $x\in{\mathcal{K}\cap Q(X_{t_{\sast}},2R)^c},$ if ${\alpha}_x-\alpha_*\geq \eps/2$ then the event $\widehat{\alpha}_x-\eps\leq {\alpha}_x-\alpha_*\leq \widehat{\alpha}_x+\eps$ is directly implied by the event in \eqref{eq:couplelltilell2} for $u_1= u_N(\eps/2)$ as above and the inequalities $\widehat{\alpha}_x/(1-\eps/3)\leq \widehat{\alpha}_x+\eps $ (for $\eps$ small enough) as well as $\widehat{\alpha}_x/(1+\eps/3)\geq \widehat{\alpha}_x-\eps.$  If ${\alpha}_x-\alpha_*\leq \eps/2$ the inequalities $\widehat{\alpha}_x-\eps\leq 0\leq{\alpha}_x-\alpha_*$ are similarly implied by \eqref{eq:couplelltilell2} for $u=u_N(\eps/2),$ and the inequality ${\alpha}_x-\alpha_*\leq \widehat{\alpha}_x+\eps$ is trivial, which concludes the proof of \eqref{eq:proofprocalpha1}. Moreover
\begin{multline}
\label{eq:proofprocalpha2}
\mathbf{P}_{\boldsymbol{0}}\big(\mathcal{L}^{\alpha_{\sast}}\cap Q(X_{t_{\sast}},2R)\neq\varnothing\big)\leq \mathbf{P}_{\boldsymbol{0}}\big(\mathcal{L}^{\frac12}\cap Q(X_{t_{\sast}},2R)\neq\varnothing\big)
\\\leq \sup_{x\in\mathbf{T}}\mathbf{P}_{\boldsymbol{0}}\big(\mathcal{L}^{\frac12}\cap Q(x,2R)\neq\varnothing\big)+\exp(-cN^{d-2}) \leq \frac{CR^d}{N^{d/4}},
\end{multline}
where in the second inequality we applied the Markov property at time $t_{\frac12}= u_N(\frac12)N^d$, observed that $t_{\sast} - t_{\frac12} \geq cN^d$ since $\alpha_{\sast}> \frac12$ (see \eqref{e:aalpha_*-equiv}) and applied a classical bound on the mixing time of $X,$ see for instance \cite[Theorem~5.6]{LPW}, to deduce that $X_{t_{\sast} - t_{1/2}}$ conditionally on $\mathcal{L}^{\frac12}$ is suitably close to being uniformly distributed on a sub-lattice; the last inequality then follows by \eqref{eq:boundonlateRW} combined with a union bound. Let us now consider vertices $x\in{\mathcal{L}^{\alpha_{\sast}}\setminus \K}.$ By \eqref{eq:boundonlateRI}, \eqref{eq:boundonlateRW} and a union bound and Markov's inequality, we have for large enough $N$
\begin{multline}
\label{eq:proofprocalpha3}
\til{\mathbf{P}}_{\boldsymbol{0}}\left(\widehat{\alpha}_x\leq \eps,{\alpha}_x-\alpha_*\leq \eps\text{ for all }x\in{\mathcal{L}^{\alpha_{\sast}}\setminus\K}\right)
\\\geq \til{\mathbf{P}}_{\boldsymbol{0}}(\widehat{\alpha}_x\leq 2\text{ for all }x\in{Q_N})
-3e^{-\eps d\log(N)}\mathbf{E}_0\left[\left|\left\{x\in{\mathcal{L}^{\alpha_{\sast}}}:\mathcal{L}^{\alpha_{\sast}}\cap Q(x,2R)\neq\{x\}\right\}\right|\right].
\end{multline}
Moreover, by \eqref{e:cap-2point}, \eqref{eq:cap-distance2}, \eqref{eq:upperboundonprobalate} and \eqref{e:aalpha_*-equiv} we have 
\begin{multline}
\label{eq:proofprocalpha4}
\mathbf{E}_0\left[\left|\left\{x\in{\mathcal{L}^{\alpha_{\sast}}}:\mathcal{L}^{\alpha_{\sast}}\cap Q(x,2R)\neq\{x\}\right\}\right|\right]\\
\leq \mathbf{E}_0\left[\left|\left\{x,y\in{\mathcal{L}^{\alpha_{\sast}}}:d(x,y)\in{[1,2R]}\right\}\right|\right] \leq CN^dR^dN^{-d}\leq C\log(N)^{\frac{2d}{d-2}}
\end{multline}
since  $R=\log(N)^{\frac2{d-2}},$ and by \eqref{eq:upperboundonprobalate} again and a union bound we know that
\begin{equation}
\label{eq:proofprocablpha5}
\til{\mathbf{P}}_{\boldsymbol{0}}(\widehat{\alpha}_x\leq 2\text{ for all }x\in{Q_N})\geq 1-CN^{-d}.
\end{equation}
 The claim now readily follows by combining \eqref{eq:proofprocalpha1}, \eqref{eq:proofprocalpha2}, \eqref{eq:proofprocalpha3}, \eqref{eq:proofprocalpha4} and \eqref{eq:proofprocablpha5}. Finally when $\P=\PI$ the proof is similar except that one conditions on $\F_{\mast}=\sigma(\omega^{u_N(\alpha_*)}_{Q_N})$ instead, see \eqref{eq:definterprocess}, and uses a version of \eqref{eq:couplingshortrangeinter} for the process consisting of the trajectories of interlacements above level $u_N(\alpha_*)$, which has the same law conditionally on $\F_{\mast}$ as an interlacement process since the increments are stationary and independent.
\end{proof}

\begin{Rk} \begin{enumerate}[label*=\arabic*)]
\item In much the same way as in Remarks~\ref{rk:thmsprinkling},\ref{R:Ri-ext} and~\ref{rk:precisebound}, Theorem~\ref{thm:processusabovealpha*} implicitly applies to both random walk on $\mathbf{T}$ and random interlacements in the box $Q_N$. It further naturally generalises to $F\subset Q_N$, i.e.~to the process ${\alpha}^F_x\stackrel{\text{def.}}{=} \sup\{\alpha>0:\ x\in{\L^{\alpha}_F}\}$, $x\in{F}$ (so that $\alpha_x=\alpha^{Q_N}_x$, see \eqref{eq:bfalpha-intro}). The conclusions of Theorem~\ref{thm:processusabovealpha*} remain true upon replacing the reference process $\widehat{{\alpha}}$ by i.i.d.~exponential random variables with mean $\log|F|$. Note however that the proof of Theorem~\ref{thm:processusabovealpha*} relies heavily on the Markov property of the random walk, or the independence of the increments of interlacements, and thus might be harder to generalize to other models than our other results, see  Remark~\ref{rk:final},\ref{R:univ}.

\item It might be at first surprising that the proof of Theorem~\ref{thm:processusabovealpha*}, giving a description of $\L^{\alpha}$ as a \textit{process} in $\alpha$ for $\alpha>\alpha_*$, does not rely on the Chen-Stein method from Section~\ref{sec:chen-stein}, contrary to the proof of Theorem~\ref{thm:uncoveredset}, which gives a description of $\L^{\alpha}$ at \textit{fixed} $\alpha>\alpha_*$. The reason is that Theorem~\ref{thm:processusabovealpha*} only describes the law of the hitting times of the points in $\L^{\alpha}$, $\alpha>\alpha_*$, and not their position on the lattice. In particular, one cannot deduce Theorem~\ref{thm:uncoveredset} from Theorem~\ref{thm:processusabovealpha*}. In the proof of Theorem~\ref{thm:processusabovealpha*}, the use of Chen-Stein is essentially bypassed by our localization result, Theorem~\ref{The:shortrangeapprointro-new}, applied to the random walk after time $u_N(\alpha_*)$, which gives a short-range field $\tilde{\ell}$ independent of $\L^{\alpha_{\sast}}$. The short-range property of $\tilde{\ell}$ then manifests itself as independence property on the set $\K$ of well-separated points in $\L^{\alpha_{\sast}}$, which is almost equal to $\L^{\alpha_{\sast}}$, see \eqref{eq:proofprocalpha3} and \eqref{eq:proofprocalpha4}. However, if $\tilde{\ell}$ is now the process from Theorem~\ref{The:shortrangeapprointro-new} applied to the random walk after time $0$, then it is not independent of $\L^{\alpha_{\sast}}$, and so it is not clear at all that the process $\tilde{\ell}_{x,u}$, $x\in{\K}$, is independent, hence our use of the Chen-Stein method to overcome this issue in the proof of Theorem~\ref{thm:uncoveredset}.

\item One can also generalize the description of $\mathcal{L}_{\alpha}$ as a processus in $\alpha$ for $\alpha\geq\alpha_*$ from Theorem~\ref{thm:processusabovealpha*} to a description of  $\mathcal{L}_{\alpha}$ as a processus in $\alpha$ for $\alpha>1/2$ as follows. For simplicity we focus on the case $\mathbb{P}=\mathbf{P}$, cf.~\eqref{defL}. Informally, for each $\eta>0,$ on an event $E$ occurring with high probability, the set $\mathcal{L}^{\frac12+\eta}$ is a union of islands drawn from $\tilde{\A}_{\mathbf{T}}$ in \eqref{eq:defAF}, each at distance at least $\log(N)^{\frac2{d-2}}$ (say) from one another. Then $({\alpha}_x)_{x\in Q_N}$ behaves almost independently on each island as the hitting time of this island by interlacements. 

We now formulate this precisely. For $R>0$, we say that $\mathcal{K}$ is an $R$-well-separated partition of $S\subset Q_N$ if $\mathcal{K}$ is a partition of $S$ such that $\delta(K)< R$ for all $K\in{\mathcal{K}}$ and $d(K,K')\geq R$ for all $K\neq K'\in{\mathcal{K}}.$ Note that there is at most one $R$-well separated partition of $S.$ An example is the set $\mathcal{K}$ from \eqref{eq:well-sep-K}, which forms an $R$-well-separated partition of $S=\mathcal{L}^{\alpha_{\sast}}$ into singletons $K=\{x\}$, $x \in \mathcal{L}^{\alpha_{\sast}}$, with high probability as $N \to \infty$, as shown above. 

In a similar vein, let now $R= \log(N)^{\frac2{d-2}}$ and fix $\eta \in (0,\frac12)$. Define $E=E(\mathcal{L}^{\frac12+\eta})$ the event that $\mathcal{L}^{\frac12+\eta}$ has a $4R$-well-separated partition $\mathcal{K}$, where $t_{\eta}=  u_N(\frac12 + \eta) N^d$. Then similarly as in Theorem~\ref{thm:processusabovealpha*} one can define on a suitable extension of $\P$  an independent family $(\widehat{\alpha}^{K})_{K\in{\mathcal{K}}},$ such that $\widehat{\alpha}^{K}$ has the same law as $(\alpha_x)_{x\in K}$ under $\PI$ (corresponding to~\eqref{eq:bfalpha-intro} for interlacements) for every $K \in \mathcal{K}$ and in addition, for all $\eps >0$,
\begin{equation}
\label{eq:thm:processusabove1/2}
\P\big(E,\widehat{\alpha}^{K}_x-\eps\leq {\alpha}_x-\textstyle\frac12-\eta\leq \widehat{\alpha}^{K}_x+\eps\text{ for all }x\in{K}\text{ and }K\in{\mathcal{K}}\big) \to 1 \text{ as } N \to \infty.
\end{equation}
In essence, \eqref{eq:thm:processusabove1/2} asserts that, with high probability, the set $\mathcal{L}_{\frac12+\eta}$ consists of `islands' $K$ (corresponding to the elements of $\mathcal{K}$) which are far away from one another and such that the law of $\alpha_{|K}-\frac12-\eta$ is close, up to sprinkling, to the hitting level of each island by independent random interlacements. The proof of \eqref{eq:thm:processusabove1/2} follows similar lines as that of \eqref{eq:alphahatalpha} and relies on our localization result \eqref{eq:couplelltilell2}. Note that the law of $\alpha_{|K}$ for random interlacements, i.e.\ the law of $\widehat{\alpha}^K$ in \eqref{eq:thm:processusabove1/2}, can be explicitly described as follows: first wait a time Exp($d\mathrm{cap}(K)g(0)\log N$), at which a first trajectory in the random interlacements hits $K.$ This trajectory has law $P_{\bar{e}_K}$ and visits a subset $K'$ of $K.$ One can then let $K_1=K\setminus K',$ and similarly obtain a set $K_2$ by repeating the previous procedure but with $K_1$ instead of $K.$  Iterating this procedure until $K_n$ is empty, the law of ${\alpha}_x,$ $x\in{K},$ is then the same as the law of the first time at which $x\notin{K_n},$ $x\in{K}.$
\item One can readily deduce Theorem~\ref{thm:processusabovealpha*} from \eqref{eq:thm:processusabove1/2}. To this end, one takes $\eta=\alpha_*-1/2$ and defines $\widehat{\alpha}_x=\widehat{\alpha}_x^{\{x\}}$ for each $x\in{Q_N}$ such that $\{x\}\in{\mathcal{K}},$ on the event $E(\mathcal{L}^{\alpha_{\sast}}),$ and for each other vertex $x\in{Q_N}$ samples $\widehat{\alpha}_x$ as independent Exp($d\log(N)$) random variable. One can control $\mathbb{E}\left[\left|\left\{x\in{\mathcal{L}_{\alpha_*}}:\{x\}\notin{\mathcal{K}}\right\}\right|\right]$ in effectively the same way as \eqref{eq:proofprocalpha4}, and conclude as in \eqref{eq:proofprocalpha3}. Using the explicit description of the law of 
$\widehat{\alpha}^{K}$ above, one checks that $(\widehat{\alpha}_x)_{x\in{Q_N}}$ are indeed i.i.d.~Exp($d\log(N)$)-distributed. 
\item It would also be interesting to prove a version of \eqref{eq:alphahatalpha} without sprinkling, at least for $\alpha$ close enough to $1$, that is to show that with high probability the hitting times of $x$, $x\in{\L^{\alpha}}$, are close in total variation to i.i.d.~exponentials with mean $d\log(N)$ for $\alpha$ large enough. This does not seem to follow easily from Theorem~\ref{thm:processusabovealpha*}, as the method from Remark~\ref{rk:thmsprinkling},\ref{rk:removingsprinkling} only shows that $\B^{\alpha-\eps_n}=\B^{\alpha+\eps_n}$ for an adapted choice of $\eps_n\rightarrow0$ at a fixed level $\alpha$, and not as a process in $\alpha$.
\end{enumerate}
\end{Rk}

\subsection{Outlook: the regime \texorpdfstring{$\alpha \leq \frac12$}{alpha<1/2}} \label{subsec:outlook}

Let us finish this section with a partial description of the behaviour of the late points $\L^{\alpha}_{F}$ for $\alpha\leq 1/2.$ For each $F,K\subset Q_N$ with $\delta(K)\in{(0,R_F]}$ let $S_{F,K}=\{x\in{Q_N}:\,x+K\subset F\}$, recall from \eqref{eq:defAF} that $R_F= \log(|F|)^{\frac1{d-2}}$ and leet
\begin{equation}
\label{eq:defLFK}
    \L^{\alpha}_{F,K}=\big\{x\in{S_{F,K}}:\,\L^{\alpha}_F\cap Q(x+K,R_F)=x+K\},
\end{equation}
be the set of $x\in{S_{F,K}}$ such that $x+K$ is exactly the set of $\alpha$-late points in $F$ in a small neighborhood around $x+K,$ and we take $\L^{\alpha}_{F,K}=\varnothing$ if $\delta(K)\geq R_F.$ Correspondingly, we also define 
\begin{equation*}
{\B}^{\alpha}_{F,K}=\big\{x\in S_{F,K}: \ U_{x+K}\leq  \P\big(\mathcal{L}^{\alpha}_{F}\cap Q(x+K,R_F)=x+K\big)\big\}.
\end{equation*}

\begin{theorem}
\label{The:gen}
Fix $\beta_0\in{(0,\infty)}.$ There exists $C=C(\beta_0)<\infty,$ such that for all $N\in\N,$ $F,K\subset Q_N$ with $\mathrm{cap}(K)\leq \beta_0,$ $\delta(K)\in{(0,R_F]},$ $\alpha\in{(\frac{\alpha_{\mast}(K)}{2},1]}$ and $\eps\in{(0, \frac\alpha2)}$,
\begin{equation}
\label{eq:gen}
    d_{\eps}\big(\L_{F,K},{\B}_{F,K};\alpha\big)\leq {CR_F^{d}} {|F|^{-(\frac{2(\alpha-2 \eps)}{\alpha_{\sast}(K)}-1)}\eps^{-\frac{2d}{d-2}}}.
\end{equation}
\end{theorem}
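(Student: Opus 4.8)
The plan is to imitate the proof of Theorem~\ref{The:alpha>1/2}, the key difference being that the single-point patterns (singletons) that are tracked there must now be replaced by translates of the fixed pattern $K$ together with their `completions'. The driving inputs will again be the short-range approximation of Proposition~\ref{The:shortrangeappro}, the modified Chen--Stein scheme of Lemma~\ref{lem:corofchenstein}, the separation property of Lemma~\ref{lem:wellseparated}, and the late-point probability estimates \eqref{eq:upperboundonprobalate}--\eqref{eq:decouplingprobalate}. The condition $\alpha > \frac{\alpha_*(K)}{2}$ replaces the condition $\alpha>\frac12$ in Theorem~\ref{The:alpha>1/2}; it is exactly what is needed so that the relevant $b_1,b_2$ terms, which involve probabilities of the shape $\P(x+K\subset \L^\alpha)^2 \asymp |F|^{-2\alpha/\alpha_*(K)}$ multiplied by $|F|$ (from the sum over $x\in S_{F,K}$) and by a polylog factor (from the number of admissible completions), are summable to the claimed bound.

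Concretely, first I would reduce to $\eps > |F|^{-C}$ and $\eps$ small (so $\eps^{-2/(d-2)}>2$ and $\alpha\pm 3\eps$ stays in a fixed interval $I=(\frac{\alpha_*(K)}{4},2]$, say), using monotonicity of $\eps\mapsto d_\eps$. Then apply Proposition~\ref{The:shortrangeappro} with $R=(\lambda/\eps)^{2/(d-2)}R_F$ and $\lambda$ large enough (uniformly in $N,F,\eps$, using the lower bound on $\eps$) that the right-hand side of \eqref{eq:boundonSshortrange} is at least $1-C|F|^{-3}$ uniformly in $\alpha\in I$. Next, take the index set $S$ for Chen--Stein to be the collection of translates $\{x+K : x\in S_{F,K}\}$ restricted so that $x+K\in\tilde{\A}_F$ (which holds automatically once $\mathrm{cap}(K)\le\beta_0$, $\delta(K)\le R_F$ and $N$ is large), and set
\[
Y_{x}^{\alpha}=1\{\mathcal{L}^{\alpha}_{F}\cap \BB(x+K,R_F)=x+K\},\qquad
Z_{x}^{\alpha}=1\{\widetilde{\mathcal{L}}_F^{\alpha,(y_x)}\cap \BB(x+K,R_F)=x+K\},
\]
with $y_x$ a fixed point of $x+K$. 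As in \eqref{eq:rewriteyandz}, Proposition~\ref{The:shortrangeappro}\ref{ite:iiishortrangeappro} gives $d_\eps(Y,Z;\alpha)\le C|F|^{-3}$ for $\alpha\in I$, and item \ref{ite:iishortrangeappro} gives the finite-range property of $Z$ with $\cN_x=\{x': (x'+K)\cap \BB(x+K,3R)\neq\varnothing\}$, so $|\cN_x|\le CR^d$. The process $W$ produced by \eqref{eq:W} then has $\bigcup_{x: W_x^\alpha=1}(x+K)$ equal in law to $\B^\alpha_{F,K}$ essentially by construction, since $p^\alpha_F$ there was defined exactly as these probabilities.

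Then I would bound $b_1$ and $b_2$. For $b_1(\alpha')$, $\alpha'\ge \alpha-2\eps$: using \eqref{eq:upperboundonprobalate}, $\P(Y_x^{\alpha'}=1)\le \P(x+K\subset\L^{\alpha'}_F)\le C|F|^{-\alpha'/\alpha_*(K)}$, and summing over $x\in S_{F,K}$ ($\le |F|$ terms) and $x'\in\cN_x$ ($\le CR^d\le C(\lambda/\eps)^{2d/(d-2)}R_F^d$ terms, and $R_F^d$ is polylog in $|F|$) yields a bound of the shape $C\log(|F|)^{C'}|F|^{1-2(\alpha-2\eps)/\alpha_*(K)}\eps^{-2d/(d-2)}$. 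For $b_2(\alpha')$, whenever $Y_x^{\alpha'}=Y_{x'}^{\alpha'}=1$ has positive probability with $x\ne x'$ one must have $d(x+K,x'+K)\ge R_F/2$, so \eqref{eq:decouplatepoints} applies and gives $\P(Y_x^{\alpha'}=1,Y_{x'}^{\alpha'}=1)\le C\P(x+K\subset\L^{\alpha'}_F)\P(x'+K\subset\L^{\alpha'}_F)$, and one gets the same bound as for $b_1$. Feeding these, together with $d_\eps(Y,Z;\alpha)\cdot|S|^2\le C|F|^2\cdot|F|^{-3}=C|F|^{-1}$, into \eqref{eq:finalboundChenStein} gives $d_{2\eps}(Y,W;\alpha)\le CR_F^d|F|^{-(2(\alpha-2\eps)/\alpha_*(K)-1)}\eps^{-2d/(d-2)}$ (the $|F|^{-1}$ term being absorbed since $2(\alpha-2\eps)/\alpha_*(K)-1<1$). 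Finally, relabel $2\eps\mapsto\eps$ and compare $\L_{F,K}$ to $\bigcup_{x:Y_x^\alpha=1}(x+K)$: by Lemma~\ref{lem:wellseparated} (all sets in $\tilde{\A}_F$ have diameter $\le R_F$), $\L^{\alpha}_F$ agrees with such a disjoint union of admissible islands off an event of probability $\le C\log(|F|)^{C'}|F|^{-(2\alpha/\alpha_*(K)-1)}$, hence so does $\L_{F,K}$ with $\bigcup_{x:Y_x^\alpha=1}(x+K)$ up to a comparable error; adding this to $d_\eps(Y,W;\alpha)$ and absorbing constants yields \eqref{eq:gen}.

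The main obstacle I anticipate is purely bookkeeping rather than conceptual: one must check carefully that the `completion' subtleties present in Lemma~\ref{lem:condonalphatoobserveK} (the distinction between an island being exactly $x+K$ versus $x+K$ arising as a union of smaller admissible pieces) do not interfere here — but in fact they do not, because the statement of Theorem~\ref{The:gen} compares $\L_{F,K}$ directly with ${\B}_{F,K}$ defined through the very same probabilities $\P(\L^{\alpha}_F\cap Q(x+K,R_F)=x+K)$, so no decomposition of islands into sub-patterns is needed and the argument stays strictly parallel to Theorem~\ref{The:alpha>1/2}. The only genuine quantitative point requiring care is verifying that the exponent $2(\alpha-2\eps)/\alpha_*(K)-1$ is positive over the stated range $\alpha\in(\frac{\alpha_*(K)}{2},1]$, $\eps\in(0,\frac\alpha2)$ — which is why the hypothesis $\alpha>\frac{\alpha_*(K)}{2}$ appears — and that all implied constants can be taken to depend only on $\beta_0$ (hence on $d$), which follows since $\mathrm{cap}(K)\le\beta_0$ forces $|K|\le C(\beta_0)$ by \eqref{eq:easy} and all uses of \eqref{eq:upperboundonprobalate}, \eqref{eq:decouplatepoints} are then uniform.
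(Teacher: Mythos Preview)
Your approach is essentially identical to the paper's: both apply Proposition~\ref{The:shortrangeappro} with $R=(\lambda/\eps)^{2/(d-2)}R_F$, set up $Y_x^{\alpha'}=1\{\L^{\alpha'}_F\cap Q(x+K,R_F)=x+K\}$ and its short-range analogue $Z_x^{\alpha'}$, invoke Lemma~\ref{lem:corofchenstein} with $\cN_x\subset Q(x,3R)$, and bound $b_1,b_2$ via \eqref{eq:upperboundonprobalate} and \eqref{eq:decouplatepoints} exactly as you describe.

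The one superfluous step in your outline is the final comparison via Lemma~\ref{lem:wellseparated}. By definition~\eqref{eq:defLFK}, $\L^{\alpha}_{F,K}$ is the set of \emph{indices} $x\in S_{F,K}$ (not the union of translates $x+K$), so $\L^{\alpha}_{F,K}=\{x:Y_x^\alpha=1\}$ and likewise $\B^{\alpha}_{F,K}=\{x:W_x^\alpha=1\}$ on the nose. Hence $d_{2\eps}(Y,W;\alpha)$ already \emph{is} $d_{2\eps}(\L_{F,K},\B_{F,K};\alpha)$, and the paper concludes directly from~\eqref{eq:finalboundChenStein} without any appeal to Lemma~\ref{lem:wellseparated}.
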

We refer to Remark~\ref{rk:final} below for further comments on the above theorem.
\begin{proof}
Consider $\alpha,F,K,\eps$ as in the statement of Theorem~\ref{The:gen}. By translation invariance we may assume that $0\in{K}.$ We may also assume that $\eps\geq |F|^{-\frac{(d-2)(2\alpha-1)}{2d}},$ since otherwise the right-hand side of \eqref{eq:gen} is always larger than $1$ (up to taking $C\geq1$ therein). Consider the field $(\tilde{\L}_F^{\alpha',(x)})_{\alpha'\in{(0,2]}, x\in Q_N}$  from Theorem \ref{The:shortrangeappro}, where $\lambda\geq2$ is a large enough constant chosen so that \eqref{eq:boundonSshortrange} with $R=\big( \frac{\lambda}{\varepsilon} \big)^{\frac2{d-2}} R_F$ is larger than $1-C/|F|^3,$ uniformly in $\eps$ as before.  We now define 
\begin{align*}
	\tilde{\L}_{F,K}^{\alpha'} = \big\{ x\in S_{F,K}: \ \tilde{\L}^{\alpha',(x)}_F\cap Q(x+K,R_F)=x+K\big\}
\end{align*}
and aim to apply Lemma \ref{lem:corofchenstein} with the choices $S=S_{F,K},$ $I=[0,2],$ $Y_{x}^{\alpha'}=1\{x\in{\L^{\alpha'}_{F,K}}\},$  $Z_{x}^{\alpha'}=1\{x\in{\tilde{\L}^{\alpha'}_{F,K}}\}$  and  $\cN_x=S_{F,K}\cap Q(x,3R),$ $x\in{S_{F,K}}$. Assumption~\eqref{eq:Z} is verified by our choice of $\tilde{\L}$  in Theorem \ref{The:shortrangeappro}. Moreover, by \eqref{eq:boundonSshortrange} and since $x+K\subset Q(x,R)$ for each $x\in{S_{F,K}}$ under our assumptions we have that
\begin{align}\label{eq:proofgen1}
	d_{\eps}(Y,Z;\alpha')\leq {C}{|F|^{-3}}\text{ for all }\alpha'\in{I}.
\end{align} 
  We thus only need to bound the constants $b_1$ and $b_2$ from Lemma \ref{lem:corofchenstein}. By \eqref{eq:upperboundonprobalate}, we have for all $\alpha\in{(\frac{\alpha_{\sast}(K)}{2},1]}$ and $\alpha'\geq \alpha-2\eps$,
\begin{equation}
\label{eq:proofgen2}
    b_1(\alpha')\leq {C|S_{F,K}|} \cdot \big(\sup_{x\in{S_{F,K}}}|\mathcal{N}_x| \big) \cdot {|F|^{-\frac{2\alpha'}{\alpha_{\sast}(K)}}}\leq R_F^d {|F|^{1- \frac{2 (\alpha-2\eps)}{\alpha_{\sast}(K)}}\eps^{-\frac{2d}{d-2}}},
\end{equation}
for some constant $C=C(\beta_0)<\infty.$ Moreover for $x\in{S_{F,K}}$ and $y\in{\mathcal{N}_x\setminus\{x\}}$ with $Y_x^{\alpha'}=Y_y^{\alpha'}=1$ we have $d(x,y)\geq R_F/2,$ and so by \eqref{eq:decouplatepoints}, \eqref{eq:easy} and \eqref{eq:upperboundonprobalate} we readily see that $b_2(\alpha')$ satisfies a bound similar to $b_1(\alpha')$  in \eqref{eq:proofgen2}.
 We can now conclude by combining~\eqref{eq:finalboundChenStein} with~\eqref{eq:proofgen1} and~\eqref{eq:proofgen2}.
\end{proof}

\begin{Rk}
\label{rk:final}
\begin{enumerate}[label*=\arabic*)]
\item Theorem \ref{The:gen} indicates that for each $K\subset\subset\Z^d,$ when considering only the subsets of the late points $\L_F^{\alpha}$ which look locally exactly a translation of $K,$  or its projection on the torus, these sets can be well-approximated up to a sprinkling by independent translations of $K,$ as long as $\alpha>\frac{\alpha_{\sast}(K)}{2}.$ For $K=\{0\},$ since $\alpha_*(\{0\})=1,$ this corresponds to an approximation of the isolated vertices of $\L_F^{\alpha}$ by independent vertices as long as $\alpha>\frac12,$ which is essentially contained in Theorem \ref{cor:phasetransition2-intro}. But when $|K| \geq 2$, Theorem~\ref{The:gen} describes the behaviour of sets which are translations of $K$ for some $\alpha\leq\frac12$ as well. 

\item Actually, Theorem \ref{The:gen} is mainly interesting when $\alpha\leq \alpha_*(K).$ Indeed, for $\alpha>\alpha_*(K),$ using a first-moment bound and \eqref{eq:upperboundonprobalate}, one easily sees that ${\B}^{\alpha}_{F,K}$ and $\L^{\alpha}_{F,K}$ are both empty with high probability as $|F|\rightarrow\infty,$ so $\L^{\alpha}_{F,K}$ is trivially well-approximated by ${\B}^{\alpha}_{F,K}.$ In particular, for any sets $K\subset\Z^d$ with $|K|\geq2,$ we thus have that both $\L^{\alpha}_{F,K}$ and ${\B}^{\alpha}_{F,K}$ are empty with high probability for all $\alpha>\alpha_*,$ a fact which is already implicit in Theorem~\ref{thm:uncoveredset}.

\item\label{rk:pointprocess} Another result which remains true in the regime $\alpha\leq 1/2$ is the convergence of the empirical process associated to $\mathcal{L}^{\alpha}$ to a Poisson point process on $[0,1]^d.$ More precisely, for each $\alpha\in{(0,1)}$ the point process $\sum_{x\in{\mathcal{L}^{\alpha}}}\delta_{x/N^{\alpha}}$ converges  in law to a point process on $\R^d$ with intensity the Lebesgue measure. This can be proved using \eqref{eq:Lalphaasymp} in exactly the same way as in  \cite[Corollary~0.2]{BEL1} for random interlacements and as in \cite[Corollary~3.4]{BEL} for the random walk.

\item \label{rk:alpha<1/2} It is an interesting open question to obtain a description for the asymptotic law of the full set $\L^{\alpha}_F$ for $\alpha\leq \frac12,$ and not only of $\L^{\alpha}_{F,K}$ for large enough $K$ as in Theorem~\ref{The:gen}. The main obstacle in order to do so is the lack of clustering for $\alpha\leq \frac12.$ Indeed, Theorem~\ref{The:alpha>1/2}, see also Lemma \ref{lem:wellseparated} and \eqref{eq:separationforB}, indicates that for $\alpha>\frac12,$ $\L^{\alpha}$ consists with high probability of `islands' with capacity smaller than $\frac2{g(0)},$ each with diameter smaller than $R_N$ and at distance at least $N^{2\alpha-1-\eta}$ for any $\eta>0$ from one another. Adapting Lemma \ref{lem:wellseparated}, one could even show that for each $\alpha\in{(\frac12,1]},$ these islands have diameter at most $C=C(\alpha,\eta)$ with high probability, and are thus asymptotically independent, as highlighted in Theorem \ref{The:alpha>1/2}. 

However, when $\alpha\leq\frac12$ a reasoning similar to the proof of \eqref{eq:firstincluAF} shows that for each $p\in\N$ the average number of points in $\L^{\alpha}$ at distance at least $p$ from one another, but less than $\log(N^d),$ diverges to infinity as $N\rightarrow\infty.$ In other words, $\L^{\alpha}$ cannot be decomposed in bounded islands at infinite asymptotic distance from one another, which is the main conceptual obstacle in extending Theorem~\ref{The:alpha>1/2} to $\alpha\leq\frac12.$ Nevertheless, for large enough sets $K,$ $\L^{\alpha}_{K,Q_N}$ still consists of bounded islands at infinite asymptotic distance from one another, which are thus independent as highlighted in Theorem~\ref{The:gen}.

\item\label{R:univ} With future applications in mind, let us briefly explain which properties of random interlacements and random walk are used to obtain all main results from Sections~\ref{sec:loc-cons}-\ref{sec:denouement}, including Theorem~\ref{The:gen}. First Theorem~\ref{The:alpha>1/2} only uses the bound \eqref{eq:upperboundonprobalate} and Proposition~\ref{The:shortrangeappro}. From this, one can also obtain the case $\alpha>\alpha_*$ from Theorem~\ref{thm:uncoveredset}, the case $\alpha>\alpha_*(K) > \frac12 $ from Theorem~\ref{cor:phasetransition2-intro}, the last bound in \eqref{eq:phasetransitionK'}, as well as Theorem~\ref{The:gen}. One additionally needs the lower bound \eqref{eq:lowerboundonprobalate} and the decoupling \eqref{eq:decouplingprobalate} to obtain Lemma~\ref{L:D_LB} and its consequences, namely the case $\alpha<\alpha_*$ from Theorem~\ref{thm:uncoveredset}, the case $\alpha<\alpha_*(K)$  or $\alpha_*(K)\leq \frac12$ from Theorem~\ref{cor:phasetransition2-intro} and the first bound in \eqref{eq:phasetransitionK'}. Except for the critical case \eqref{eq:phasetransitionK'}, one could afford weaker versions of the bounds \eqref{eq:upperboundonprobalate} and  \eqref{eq:lowerboundonprobalate} with some additional subpolynomial term, that is only the polynomial order of $\P(0\in{\L^{\alpha}})$ is important, i.e.~the limit of $ \frac {\log \P(0\in{\L^{\alpha}})}{\log N}$ as $N \to \infty$. To obtain the precise asymptotic in the critical case $\alpha=\alpha_*,$ see \eqref{e:coup-crit}, one needs the  asymptotics of $\P(0\in{\L^{\alpha}})$ as $N \to \infty$, by which the constants $C(\beta_0)$ and $c(\beta_0)$ from \eqref{eq:upperboundonprobalate}-\eqref{eq:lowerboundonprobalate} are replaced by $1+o(1)$.  

 This hints at a universal phenomenon, valid for essentially any model satisfying estimates like \eqref{eq:upperboundonprobalate}, \eqref{eq:lowerboundonprobalate} (possibly up to some subpolynomial factor except at criticality) and \eqref{eq:decouplingprobalate}, and allowing for a `finite-range' approximation with properties akin to $\ref{ite:ishortrangeappro}$-$\ref{ite:iiishortrangeappro}$ of Proposition~\ref{The:shortrangeappro}. We hope to return to this elsewhere. Another possible extension is to consider other graphs than the $d$-dimensional torus under suitable hypotheses (e.g.\ polynomial decay of the Green function and polynomial volume growth as in \cite{drewitz2018geometry}), for which our method should be stable, see for instance Remark~\ref{R:robust}. In a related direction, we refer to \cite{berestycki2023universality} for recent work characterizing the `universality class' of Gumbel fluctuations for cover times. 
 \end{enumerate}
\end{Rk}

\appendix

\section{Appendix: proofs of Lemmas~\ref{lem:concsoftrwri} and \ref{lem:conc}}
\label{sec:app}
\renewcommand*{\thetheorem}{A.\arabic{theorem}}
\renewcommand{\theequation}{A.\arabic{equation}}

In this appendix, we prove Lemmas~\ref{lem:concsoftrwri} and \ref{lem:conc} using some large deviations estimates for excursions of random walks or random interlacements, see Propositions~\ref{prop:largedeviationrw} and \ref{prop:largedeviationri}. In order to finish the proof of Lemma~\ref{lem:concsoftrwri}, we are also going to need some Harnack-type estimate to show that the function $g_{\zeta}(z),$ see \eqref{eq:defg} and \eqref{eq:defg2}, does not depend, up to constants, on the choice $\zeta \in \partial B_2\times \partial B_3^c$.

\subsection{Harnack-type estimates}\label{sec:harnack}
The following results are tailored to our purposes. Throughout this section we only deal with the process $X$ under $P_x$, but the results immediately transfer to the walk on $\mathbf{T}$ as long as the events in question are measurable in terms of $X_{\cdot \wedge T_{Q}}$ with $Q=Q(x,r)$ for some $r< N$ under $P_x$ (this typically means $r_3< N$ below). We refer to Section~\ref{sec:notation} for notation. A function $f:\Z^d \to \R$ is called harmonic in $K \subset \Z^d$ if $f(x)=E_x[f(X_1)]$ for all $x \in K$ (which only requires knowing $f$ in the $1$-neighborhood of $K$). By \cite[Theorem~1.7.2]{Law91}, one knows that for all $\delta \in (0,1)$, $r \geq 1$ and $f$ non-negative and harmonic in $Q(0,{r(1+\delta)})$,  
\begin{align}\label{e:Harnack-0}
{f(x)}\leq  C(\delta) {f(y)}, \text{ for all }x,y\in Q(0,r)
\end{align}
(note that the ref.~\cite{Law91} states \eqref{e:Harnack-0} for Euclidean balls but \eqref{e:Harnack-0} can be deduced from it via a straightforward chaining argument). In the sequel we abbreviate $B_k= Q(0,r_k)$, for $k=1,2,3$ with $1 \leq r_1< r_2 < r_3$; similarly as in \eqref{e:B_i-choice}. Let $g_K$ denote the Green's function killed on the set $K \subset \Z^d$, so that $g_{\emptyset}(x,y)= g(x,y)$, cf.~above \eqref{e:e_K}. We start with a control which involves killing in nearby $\ell^{\infty}$-boxes. Note that the following result is completely standard for large $\delta$ (larger than $C \in (1,\infty)$) but the case of small $\delta$ requires some care.

\begin{lemma}\label{lem:green-killed}
For all $\delta\in(0,1)$, $r_{3}\geq r_1(1+\delta)^2$, $K \subset B_1$ a box (possibly $K=\emptyset$) and $x,y \in Q(0, {r_3}/{(1+ \frac{\delta}2})) \setminus  Q(K, \frac\delta2 r_1)$, one has
\begin{equation}\label{eq:green-killed}
c(\delta)|x-y|^{2-d} \leq g_{K\cup B_3^c}(x,y) \leq C|x-y|^{2-d}.
\end{equation}
Moreover, with $r_1, r_3$ as above and for all $x\in Q(0, (1+ \delta)r_1) \setminus Q(0, (1+ \frac\delta2)r_1) $,
\begin{equation}
\label{eq:ctehittingproba}
c(\delta)\leq P_x(H_{B_1}<T_{B_3})\leq P_x(H_{B_1}<\infty)\leq 1-c(\delta).
\end{equation}
\end{lemma}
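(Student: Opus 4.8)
\textbf{Proof strategy for Lemma~\ref{lem:green-killed}.}

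The plan is to treat the two estimates in turn, with \eqref{eq:ctehittingproba} serving as an auxiliary building block that I would actually establish first, since it is the source of the uniform gap that prevents the killed Green's function from degenerating. For \eqref{eq:ctehittingproba}: the upper bound $P_x(H_{B_1} < \infty) \leq 1 - c(\delta)$ for $x$ in the annulus $Q(0,(1+\delta)r_1) \setminus Q(0,(1+\frac\delta2)r_1)$ is classical — it follows from $h_{B_1}(x) = Ge_{B_1}(x)$ in \eqref{eq:lastexit} together with the fact that the equilibrium potential of a box is bounded away from $1$ at fixed relative distance from the box, or alternatively from a direct estimate using that simple random walk from $x$ has a bounded-below chance of escaping to $\ell^1$-distance $\sim \delta r_1$ in a direction away from $B_1$ and then never returning (using transience, $d \geq 3$). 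The lower bound $P_x(H_{B_1} < T_{B_3}) \geq c(\delta)$ is the more delicate half: I would produce an explicit deterministic path from $\partial Q(0,(1+\delta)r_1)$ into $B_1$ of length $O(\delta r_1)$ staying inside $B_3$, and lower-bound the probability the walk follows a tube of radius $\sim \delta r_1$ around it before exiting $B_3$; since $r_3 \geq (1+\delta)^2 r_1 \geq (1+\delta) r_2 > (1+\delta) r_1$ there is room of order $\delta r_1$ between the starting annulus and $\partial B_3$, so a Harnack-type/tube estimate (or the standard fact that the walk from a point at distance $s$ from $B_1$ hits $B_1$ before traveling distance $Cs$ with probability bounded below, $d\geq 3$) gives the claim. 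I expect to invoke \eqref{e:Harnack-0} here applied to $x \mapsto P_x(H_{B_1} < T_{B_3})$, which is harmonic in $B_3 \setminus B_1$.

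For \eqref{eq:green-killed}: the upper bound $g_{K \cup B_3^c}(x,y) \leq g(x,y) \leq C|x-y|^{2-d}$ is immediate from monotonicity of the killed Green's function under enlarging the killing set, and the classical asymptotics/bounds on $g$ (Section~\ref{sec:notation}, e.g.\ $g(z) \asymp |z|^{2-d}$). The lower bound is where the hypotheses $x,y \notin Q(K,\frac\delta2 r_1)$ and $x,y \in Q(0, r_3/(1+\frac\delta2))$ enter. The idea: write $g_{K\cup B_3^c}(x,y) = E_x[\mathbbm 1\{H_y < T_{B_3}, H_y < H_K\}\, g_{K\cup B_3^c}(y,y)]$ via a last-exit / first-hit decomposition, and $g_{K\cup B_3^c}(y,y) = (P_y(\widetilde H_y = \infty \text{ or killed first}))^{-1} \geq g(0)^{... }$, in any case bounded below by a dimensional constant. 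So the task reduces to showing $P_x(H_y < T_{B_3} \wedge H_K) \geq c(\delta)|x-y|^{2-d}$. For this I would split into the regime $|x-y| \geq \eta r_1$ and $|x-y| < \eta r_1$ for a suitable $\eta = \eta(\delta)$. In the "far" regime, $|x-y| \asymp r_1 \asymp r_3$, the starting point, endpoint, $K$ and $\partial B_3$ are all at mutual distances of order $\delta r_1$, and a tube/chaining argument along a path from $x$ to $y$ staying in $Q(0, r_3/(1+\frac\delta4)) \setminus Q(K, \frac\delta4 r_1)$ gives a probability bounded below by $c(\delta) \asymp c(\delta)|x-y|^{2-d} r_1^{d-2}$, absorbing $r_1^{d-2}$ into the constant. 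In the "near" regime $|x-y| < \eta r_1$, both points lie well inside $B_3$ and well outside $K$ (at distance $\geq \frac\delta2 r_1 \gg |x-y|$ once $\eta < \frac\delta2$), so on the scale $|x-y|$ the killing sets are effectively at infinity: one compares $g_{K \cup B_3^c}$ with the free $g$ on the ball of radius $c\delta r_1$ around $x$ and uses that $g_{B(x,c\delta r_1)^c}(x,y) = g(x,y) - E_x[g(X_{T_{B(x,c\delta r_1)}}, y)] \geq c|x-y|^{2-d}$ by the standard computation (the subtracted term is $O((\delta r_1)^{2-d})$, negligible against $|x-y|^{2-d}$), together with $g_{K\cup B_3^c} \geq g_{B(x,c\delta r_1)^c}$ since $B(x,c\delta r_1) \subset B_3 \setminus K$.

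\textbf{Main obstacle.} The routine parts are the monotonicity upper bounds and the far-regime tube estimates; the genuinely delicate point is making the near-regime lower bound uniform in $\delta$ down to $\delta \to 0$, i.e.\ verifying that the ball $B(x, c\delta r_1)$ on which one runs the free-Green's-function comparison genuinely fits inside $B_3 \setminus K$ for every admissible $x$ — this is exactly what the precise conditions $x \in Q(0, r_3/(1+\frac\delta2)) \setminus Q(K, \frac\delta2 r_1)$ and $r_3 \geq (1+\delta)^2 r_1$ are engineered to guarantee, and I would need to track the geometric constants carefully (a factor like $c = \frac\delta4$ should work, leaving a safety margin). A secondary subtlety is ensuring \eqref{eq:ctehittingproba} is proved with constants depending only on $\delta$ (and $d$) and not on $r_1$, which the scale-invariance of simple random walk estimates delivers, but it must be stated with care since the paper later needs Harnack constants tending to $1$ as $\delta \to \infty$ (cf.\ the discussion after Corollary~\ref{cor:coupling}); here, however, only the existence of \emph{some} $c(\delta) > 0$ is required.
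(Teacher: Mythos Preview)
Your overall architecture matches the paper's: trivial upper bound via $g_{K\cup B_3^c}\leq g$, a near/far dichotomy for the lower bound with the free-Green's-function comparison $g_{K\cup B_3^c}(x,y)=g(x,y)-E_x[g(X_{H_{K\cup B_3^c}},y)]$ in the near case and a chain-of-boxes argument in the far case. The treatment of \eqref{eq:ctehittingproba} is also fine; you prove its lower bound directly by a tube estimate, whereas the paper deduces it from the $K=\emptyset$ case of \eqref{eq:green-killed} via a last-exit decomposition --- both routes work.

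There is, however, a genuine gap in your far-regime argument. You assert ``$|x-y|\asymp r_1\asymp r_3$'', but the lemma only assumes $r_3\geq(1+\delta)^2 r_1$ with no upper bound, and the applications in \S\ref{sec:harnack} (e.g.\ Lemma~\ref{lem:separation}, where $y,z\in\partial B_2$ and only $r_3\geq(1+\delta)r_2$ is assumed) genuinely need the general case. If $r_3\gg r_1$ then $|x-y|$ can be of order $r_3$, and a chain with tubes of radius $\sim\delta r_1$ through the region $Q(0,r_3/(1+\tfrac\delta4))\setminus Q(K,\tfrac\delta4 r_1)$ requires $\sim|x-y|/(\delta r_1)$ steps, producing a bound that decays exponentially in $r_3/r_1$ rather than the required $c(\delta)|x-y|^{2-d}$. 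Splitting at scale $\eta r_1$ is simply the wrong scale once $r_3$ is large.

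The paper resolves this with a reduction you are missing: by monotonicity of $r_3\mapsto g_{K\cup B_3^c}$ one may shrink $r_3$ until $y\in\partial Q(0,r_3/(1+\tfrac\delta2))$, and then, using only the already-proved upper bound $P_x(H_{B_1}<\infty)\leq 1-c(\delta)$, one may pass from $x$ to some $z\in\partial Q(0,r_3/(1+\tfrac\delta2))$ at the cost of a factor $c(\delta)$. With both points on this outer shell the near/far split is at scale $\sim\delta r_3$, and the far-case chain runs along the shell with $C(\delta)$ boxes of radius $\sim\delta r_3$ (each well separated from both $B_1$ and $B_3^c$), giving the correct $c(\delta)r_3^{2-d}\geq c'(\delta)|x-y|^{2-d}$. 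Your ``main obstacle'' paragraph identifies the near case as the delicate point; in fact the near case is routine and it is this reduction for the far case that carries the argument.
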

\begin{proof}
First we observe that the first bound in \eqref{eq:ctehittingproba} is an easy consequence of \eqref{eq:green-killed}. Indeed, by a last-exit decomposition similar to \eqref{eq:lastexit}, one obtains that for all $x\in Q(0, (1+ \delta)r_1) $,
\begin{equation}
\label{eq:proofctehittingproba}
P_x(H_{B_1}<T_{B_3})\geq \inf_{x' \in B_{1}} g_{B_3^c}(x,x')  \text{cap}_{B_3}(B_{1}) \geq c(\delta),
\end{equation}
where the last step uses \eqref{eq:green-killed} for $K=\emptyset$ along with the capacity estimate $\text{cap}_{B_3}(B_1) \geq \text{cap}(B_{1}) \geq c r_2^{d-2}.$ In order to prove the last bound in \eqref{eq:ctehittingproba}, first notice that $P_x(H_{B_1} \geq T_{Q(0,\lambda r_1)})\geq c(\lambda,\delta)$ for any $\lambda>0 $ by projecting onto a coordinate and using a Gambler's ruin estimate. Moreover if $\lambda$ is chosen large enough, it follows from \eqref{eq:lastexit} similarly as in \eqref{eq:proofctehittingproba} that $P_y(H_{B_1}<\infty)\leq \frac12$ for all $y\in{Q(0,\lambda r_1)^c}.$ The upper bound follows by the Markov property.

We now prove \eqref{eq:green-killed}. For any $K \subset \Z^d$ one has $g_K \leq g$ and the upper bound in \eqref{eq:green-killed} follows immediately from standard estimates on the Green kernel, see for instance \cite[Theorem 1.5.4]{Law91}. We now show the lower bound, and note that by monotonicity of $B_3\mapsto g_{K\cup B_3^c}(x,y)$ and symmetry in $x$ and $y$ we may simply assume that $|y|_{\infty}\geq |x|_{\infty}$ and $y\in{\partial Q(0,r_3/(1+\frac{\delta}2))}$. Noting that by the Markov property $g_{K\cup B_3^c}(x,y)\geq P_x(H_{B_1}=\infty)\inf_{z\in{\partial Q(0,r_3/(1+\frac{\delta}2))}}g_{K\cup B_3^c}(z,y)$ and using the last bound in \eqref{eq:ctehittingproba}, we may further assume that $x\in{\partial Q(0,r_3/(1+\frac{\delta}2))}.$ Let us first fix $\lambda=\lambda(d) \in (1,\infty)$ large enough such that for all $x \in \Z^d$, 
\begin{equation}
\label{eq:lambda-choice}
\sup_{|z|_{\infty} \geq \lambda |x|_{\infty}} g(z) \leq \frac12 g(x)
\end{equation}
(recall that $g(x)=g(0,x)$); the bound \eqref{eq:lambda-choice} is obtained again using e.g.~\cite[Theorem 1.5.4]{Law91}. We now distinguish two cases. Suppose first that $x,y\in{\partial Q(0,r_3/(1+\frac{\delta}2))}$ and $|x-y|_{\infty} \leq \frac{\delta r}{10\lambda},$ where we abbreviate $r=r_3.$ Then applying the strong Markov property at time $H_{K\cup B_3^c}$, it follows that
\begin{equation*}
 g_{K\cup B_3^c}(x,y) = g(x,y) - E_x\big[ g(X_{H_{K\cup B_3^c}},y)\big] \stackrel{\eqref{eq:lambda-choice}}{\geq} \frac12 g(x,y),
\end{equation*}
where in the last step, we used that $|X_{H_{K\cup B_3^c}}-y|_{\infty} \geq \frac12\delta r \geq \lambda |x-y|_\infty$. Along with the standard bounds on $g$, this completes the verification of \eqref{eq:green-killed} in that case. 

Now suppose that $|x-y|_{\infty} \geq \frac{\delta r}{10\lambda}$. Then since $x,y\in{\partial Q(0,r/(1+\frac{\delta}2))}$ the boxes $B_y= \BB(y, \frac{\delta r}{100\lambda})$ and $B_x=\BB(x, \frac{\delta r}{100\lambda})$ can be joined using a chain of $C(\delta)$ many boxes $B_i$, each having radius $\frac{\delta r}{100\lambda}$, in such a manner that $i)$ any two consecutive boxes overlap (i.e.~$B_i \cap B_{i+1} \neq \emptyset$) 
and $ii)$ if $\widetilde{B}_i\supset B_i$ refers to the concentric box having radius $\frac{\delta r}{10}$, then 
$\tilde{B}_i$ does not intersect $B_1\cup B_3^c$. It follows that for all $ x\in B_i$,
\begin{equation}\label{eq:box-to-box-GF}
P_x(H_{B_{i+1}}< H_{K\cup B_3^c}) \stackrel{{ii)}}{\geq} P_x(H_{B_{i+1}}< T_{\widetilde{B}_i}) 
\geq c,
\end{equation}
 where the last bound uses monotonicity and \eqref{eq:ctehittingproba}, which is in force due to the first inequality in \eqref{eq:proofctehittingproba} and the lower bound on the killed Green's function at `short' distances already obtained, see also $i)$ and the choice of radius for $\widetilde{B}_i.$ Iterating \eqref{eq:box-to-box-GF} using the Markov property yields
\begin{equation*}
 g_{K\cup B_3^c}(x,y) \geq P_{x}\big( H_y < H_{K\cup B_3^c}\big) \geq c \inf_{y' \in B_y} P_{y'}\big( H_y < H_{K\cup B_3^c}\big) \geq c'(\delta) r^{2-d} \geq c''(\delta)|x-y|^{2-d},
\end{equation*}
where the penultimate step follows by bounding $P_{y'}(H_y < H_{K\cup B_3^c})\geq {g_{K\cup B_3^c}(y',y)}/g(0)$ and using the lower bound already derived, and the last step because $|x-y| \geq c(\delta)r $ by assumption. 
\end{proof}

We are now ready to prove that the function $g_{\zeta}(\Theta),$ see \eqref{eq:defg2}, is of constant order for suitable choice of the radii $r_k$ for $B_k$.

\begin{lemma}\label{lem:separation}
For all $\delta\in(0,1)$, $r_{k+1}\geq r_k(1+\delta)$, $k=1,2$, and all $y\in \partial B_2$ and $w\in\partial B_3$,
	\[
	P_y\big(T_{B_3}<H_{{B_1}} \, \big|\, X_{T_{B_3}}=w\big) \geq c(\delta).
\]
\end{lemma}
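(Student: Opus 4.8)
\textbf{Proof plan for Lemma~\ref{lem:separation}.}

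The plan is to bound $P_y(T_{B_3}<H_{B_1}\mid X_{T_{B_3}}=w)$ from below by rewriting it as a ratio of two quantities of the same order. Writing $q(y,w)=P_y(X_{T_{B_3}}=w)$ for the exit distribution from $B_3$ and $q^{B_1}(y,w)=P_y(T_{B_3}<H_{B_1},X_{T_{B_3}}=w)$ for the analogous quantity along paths avoiding $B_1$, the conditional probability in question is exactly $q^{B_1}(y,w)/q(y,w)$. The strategy is to show that both $q(y,w)$ and $q^{B_1}(y,w)$ are comparable, up to constants depending only on $\delta$, to some reference quantity that does not depend on $y\in\partial B_2$ -- namely the exit distribution from $B_3$ started from the `inner shell' $\partial B_2$ -- whence the ratio is bounded below by $c(\delta)$.

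First I would reduce to a well-separated configuration. If $\delta$ is large (say $\geq C$ for a suitable absolute constant), the sets $\partial B_2$ and $\partial B_3^c$ are at comparable distance, and the claim is standard: by the Harnack inequality \eqref{e:Harnack-0} applied to the harmonic functions $x\mapsto P_x(X_{T_{B_3}}=w)$ and $x\mapsto P_x(T_{B_3}<H_{B_1},X_{T_{B_3}}=w)$ (both harmonic in a neighborhood of $B_2$, the latter because avoiding $B_1$ is automatic away from $B_1$), one gets that each is of constant order on $\partial B_2$ times its value at a fixed reference point, and the reference values agree up to using \eqref{eq:ctehittingproba} to see that from $\partial B_2$ one has a positive chance to reach $\partial B_3$ before $B_1$ and a positive chance to reach $\partial B_3$ at all with comparable probabilities. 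For small $\delta$, I would insert the intermediate shell $\partial Q(0,(1+\tfrac\delta2)r_1)$ (or rather work with $\ell^\infty$-boxes concentric with $B_1$): starting from $y\in\partial B_2$, decompose the walk at its last visit to $B_3^c$-boundary region, and use a last-exit / first-entrance decomposition expressing $q(y,w)$ and $q^{B_1}(y,w)$ in terms of the killed Green's functions $g_{B_3^c}$ and $g_{B_1\cup B_3^c}$ from Lemma~\ref{lem:green-killed}. The estimate \eqref{eq:green-killed} gives two-sided bounds $c(\delta)|x-w|^{2-d}\leq g_{B_1\cup B_3^c}(x,w)\leq g_{B_3^c}(x,w)\leq C|x-w|^{2-d}$ uniformly over $x$ on an appropriate inner shell and $w\in\partial B_3^c$, so the two Green's functions are comparable, and hence so are $q$ and $q^{B_1}$.

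The cleanest route, which I would follow, is: (i) from $y\in\partial B_2$, by \eqref{eq:ctehittingproba} the walk reaches $\partial B_3^c$ before hitting $B_1$ with probability at least $1-c(\delta)>0$, say at a point $z$; conditionally, this already avoids $B_1$. (ii) It then remains to compare, for $z$ ranging over the outer region, the laws $P_z(X_{T_{B_3}}=w\mid\text{avoid }B_1)$ and the unconditioned $P_z(X_{T_{B_3}}=w)$; but from $z$ near $\partial B_3^c$ the walk has probability bounded below (by \eqref{eq:ctehittingproba}, last bound) of never returning to $B_1$ at all, so these two differ by at most a constant factor via a last-exit decomposition off $\partial B_3^c$ and \eqref{eq:green-killed}. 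Combining (i) and (ii) and summing over the relevant boundary points gives the lower bound $c(\delta)$, uniformly in $y$ and $w$. I expect the main obstacle to be handling the small-$\delta$ regime carefully: making sure the chaining of boxes and the shells are positioned so that $g_{B_1\cup B_3^c}$ inherits genuinely two-sided bounds (Lemma~\ref{lem:green-killed} requires the arguments to stay in $Q(0,r_3/(1+\tfrac\delta2))\setminus Q(B_1,\tfrac\delta2 r_1)$), and tracking that all constants depend on $\delta$ alone. Once the Green's-function comparison is set up this way, the remaining steps are routine applications of the strong Markov property.
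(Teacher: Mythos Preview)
Your high-level strategy is the same as the paper's --- write the conditional probability as $q^{B_1}(y,w)/q(y,w)$ and show the two are comparable via the killed Green's-function bounds of Lemma~\ref{lem:green-killed}. However, the concrete decompositions you describe both stumble at the same point. In your Green's-function comparison you want bounds on $g_{B_3^c}(x,w)$ and $g_{B_1\cup B_3^c}(x,w)$ for $w\in\partial B_3^c$, but these quantities vanish identically because $w$ lies in the killing set $B_3^c$; so the two-sided estimate you quote is vacuous. Your ``cleanest route'' has the same defect in disguise: in step~(i) you stop the walk at a point $z\in\partial B_3^c$, but reaching $\partial B_3^c$ \emph{is} the event $T_{B_3}$, so $X_{T_{B_3}}=z$ is already determined and there is nothing left to compare in step~(ii). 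To carry out your plan you would need to decompose at a shell strictly inside $B_3$, not at $\partial B_3^c$.

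The paper does exactly this, and in fact the natural inner shell is $\partial B_2$ itself. A last-exit decomposition in $B_2$ gives
\[
q^{B_1}(y,w)=\sum_{z\in\partial B_2} g_{B_1\cup B_3^c}(y,z)\,P_z\big(\tilde{H}_{B_2}>T_{B_3},\,X_{T_{B_3}}=w\big),
\]
and the identical formula for $q(y,w)$ with $g_{B_3^c}$ in place of $g_{B_1\cup B_3^c}$. Now both arguments $y,z$ lie on $\partial B_2$, which sits safely in the region where \eqref{eq:green-killed} applies, giving $g_{B_1\cup B_3^c}(y,z)\geq c(\delta)\,g_{B_3^c}(y,z)$ and hence $q^{B_1}(y,w)\geq c(\delta)\,q(y,w)$ in one line. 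No separate treatment of large vs.\ small $\delta$, no intermediate shell near $B_1$, and no Harnack chaining is needed.
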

\begin{proof} By a last-exit decomposition in $B_2$, one finds that
\begin{equation}
\label{eq:A-1.1}
P_y\big(T_{B_3}<H_{{B_1}} , \,X_{T_{B_3}}=w\big) = \sum_{z\in \partial B_2} g_{B_1 \cup B_3^c} (y,z) P_z( \widetilde{H}_{B_2} > T_{B_3}, \, X_{T_{B_3}}=w).
\end{equation}
Using the lower bound in \eqref{eq:green-killed} with the choice $K=B_1$ and the upper bound with $K=\emptyset$, it follows that $g_{B_1 \cup B_3^c} (y,z) \geq c(\delta) g_{ B_3^c} (y,z) $ for all $y,z\in \partial B_2$. Substituting above, it follows that the right-hand side of \eqref{eq:A-1.1} is bounded from below by
$$
c(\delta)  \sum_{z\in \partial B_2} g_{ B_3^c} (y,z) P_z( \widetilde{H}_{B_2} > T_{B_3}, \, X_{T_{B_3}}=w) = c(\delta) P_y(X_{T_{B_3}}=w),
$$
where the last equality follows again by last-exit decomposition.
\end{proof}

Next, we prove that the function $g_{\zeta}(z),$ $z\in{\mathcal{K}},$ see \eqref{eq:defg}, does not depend on the choice of $\zeta\in{\partial B_2\times\partial B_3^c},$ up to constants. Recall from \eqref{eq:defLKt} that $L_{B_2}(T_{B_3})$ denotes the time of last visit to $B_2$ prior to exiting $B_3$.

\begin{lemma}\label{lem:indepofy}
For all $\delta\in (0,1)$, $r_{k+1}\geq r_k(1+\delta)$, $k=1,2$, and $x,y,z\in \partial B_2$, $v\in \partial B_1$, $w\in \partial B_3^c$,\begin{align}\label{eq:g_xi_stable}
c(\delta)\leq  \frac{P_y\big(X_{H_{B_1}\wedge T_{B_3}}=v, X_{L_{B_2}(T_{B_3})}=z \, \big| \, X_{T_{ B_3}}=w\big)
}{P_x\big(X_{H_{B_1}\wedge T_{B_3}}=v,X_{L_{B_2}(T_{B_3})}=z\big)} \leq C(\delta).
\end{align}
\end{lemma}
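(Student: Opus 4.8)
The statement \eqref{eq:g_xi_stable} is a two-sided Harnack-type comparison asserting that the density $g_{\zeta}(z)$ in \eqref{eq:defg}, viewed as a function of the clothesline point $\zeta=(y,w)\in\partial B_2\times\partial B_3^c$ and the excursion $z\in\K$, depends on $\zeta$ only through constants (depending on $\delta$). The plan is to reduce the numerator and denominator to quantities involving only the \emph{endpoints} of the excursion $z$, namely its starting point $v\in\partial B_1$ and terminal point $z\in\partial B_2$ (overloading notation as in \eqref{eq:defg}), via last-exit decompositions, and then to invoke the killed-Green's-function estimates of Lemma~\ref{lem:green-killed} (together with the hitting-probability bounds \eqref{eq:ctehittingproba} and the separation estimate of Lemma~\ref{lem:separation}) to remove the $y$- and $w$-dependence up to constants. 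This is essentially the same mechanism already used in the proofs of Lemmas~\ref{lem:separation} and \ref{lem:indepofy}'s companions, so I expect the bulk of the argument to be bookkeeping rather than genuinely new input.

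\textbf{Key steps.} First I would rewrite the numerator: conditioning on $X_{T_{B_3}}=w$ introduces a factor $P_y(X_{T_{B_3}}=w)^{-1}$, and by decomposing the trajectory at $H_{B_1}$ (where it first lands at $v$) and at $L_{B_2}(T_{B_3})$ (where it last sits at $z$ before exiting), one obtains a product over three independent pieces: a crossing from $y$ to $v$ killed on $B_3^c$ (equivalently, $g_{B_3^c}(y,v)\cdot e_{B_1}(v)$-type mass, using $\eqref{eq:lastexit}$), a middle piece from $v$ to the last-exit point $z$ of $B_2$, and a final piece from $z$ to $w$ through $\partial B_3^c$ without returning to $B_2$. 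The analogous (unconditioned) decomposition applies to the denominator. Second, I would observe that the middle piece is \emph{identical} in numerator and denominator, so it cancels and need not be controlled. Third, for the first piece I would apply Lemma~\ref{lem:green-killed}, specifically \eqref{eq:green-killed} with $K=\emptyset$ (for the upper bound) and with a box $K\supseteq B_1$ shielding the starting point (for the lower bound), to get $g_{B_1\cup B_3^c}(y,v)\asymp g_{B_3^c}(y,v)\asymp |y-v|^{2-d}$, and since both $y$ and $v$ lie within bounded multiplicative distance of $r_2$, this is $\asymp r_2^{2-d}$ uniformly in $y$; the same applies to $x$ in place of $y$. Fourth, for the last piece together with the conditioning normalisation $P_{\bullet}(X_{T_{B_3}}=w)$, I would invoke Lemma~\ref{lem:separation} to see that $P_{z}(\widetilde H_{B_2}>T_{B_3}, X_{T_{B_3}}=w)$ is comparable to $P_z(X_{T_{B_3}}=w)$ up to a $\delta$-constant, and then a further Harnack application \eqref{e:Harnack-0} on the harmonic function $x\mapsto P_x(X_{T_{B_3}}=w)$ on $\partial B_2$ (which is harmonic in a neighbourhood of $B_2$ inside $B_3$) shows $P_y(X_{T_{B_3}}=w)\asymp P_x(X_{T_{B_3}}=w)$. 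Assembling these comparisons yields \eqref{eq:g_xi_stable}.

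\textbf{Main obstacle.} The delicate point is ensuring that all the Green's-function and hitting estimates hold with constants depending only on $\delta$ (and $d$), not on the absolute sizes $r_1<r_2<r_3$ or on $N$, in the regime of \emph{small} $\delta$ where the boxes $B_1\subseteq B_2\subseteq B_3$ are only barely separated. This forces careful use of the shielding-box versions of \eqref{eq:green-killed}: one must choose the auxiliary box $K$ so that the relevant points ($y$, $x$, $v$, $z$) lie at distance $\gtrsim \delta r_1$ from $\partial K$, which is exactly the hypothesis built into Lemma~\ref{lem:green-killed}, and one must verify that $\partial B_2$ sits inside the region where $x\mapsto P_x(X_{T_{B_3}}=w)$ is harmonic with enough room ($\sim\delta r_2$) to apply \eqref{e:Harnack-0}. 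I would handle the borderline cases — where $v$ or $z$ happens to be very close to $\partial B_1$ relative to the separation scale — by the same two-step ``escape then Harnack'' trick used in \eqref{eq:box-to-box-GF}, chaining $C(\delta)$ overlapping boxes. A secondary, purely notational nuisance is the overloading of $z$ (used both for the excursion in $\K$ and for its terminal point in $\partial B_2$); I would either keep this overload, as the paper does in \eqref{eq:defg}, or introduce $z_0=v$ and $z_\ell=z$ explicitly to avoid confusion. Once these uniformity issues are dispatched, the rest is routine.
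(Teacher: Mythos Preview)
Your overall decomposition is sound: splitting at $H_{B_1}$ and at $L_{B_2}(T_{B_3})$, the shared middle factor $g_{B_3^c}(v,z)$ cancels, and your treatment of the last factor together with the normalisation $P_y(X_{T_{B_3}}=w)$ via Lemma~\ref{lem:separation} and Harnack is correct and coincides with the paper's Claim~\ref{cl:harnack}. The gap is in your handling of the \emph{first} factor $P_y(X_{H_{B_1}}=v,\, H_{B_1}<T_{B_3})$. Your plan to control it via $g_{B_1\cup B_3^c}(y,v)\asymp |y-v|^{2-d}$ fails because $v\in\partial B_1\subseteq B_1$ lies \emph{on} the killing set, so $g_{B_1\cup B_3^c}(y,v)=0$ identically, and the hypothesis of \eqref{eq:green-killed} that both arguments lie outside $Q(K,\tfrac\delta2 r_1)$ is violated. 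Passing to exterior neighbours $v'\sim v$, $v'\notin B_1$, via a first-entrance decomposition does not help either, since $d(v',B_1)=1$ is still far outside the regime of Lemma~\ref{lem:green-killed}. Nor is the ``$g_{B_3^c}(y,v)\cdot e_{B_1}(v)$-type mass'' you invoke from \eqref{eq:lastexit} an identity for the hitting \emph{distribution}: \eqref{eq:lastexit} only gives the total hitting probability $P_y(H_{B_1}<\infty)$. This is not a borderline case as your obstacle section suggests; $v$ is \emph{always} on $\partial B_1$.

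The fix is to bypass Green's-function estimates for this factor and instead observe that $y\mapsto P_y(X_{H_{B_1}}=v,\, H_{B_1}<T_{B_3})$ --- or, as the paper does more economically, the full denominator $x\mapsto P_x(X_{H_{B_1}\wedge T_{B_3}}=v,\, X_{L_{B_2}(T_{B_3})}=z)$ --- is non-negative and harmonic in $B_3\setminus B_1$. Since $\partial B_2$ has room $\gtrsim \delta r_1$ from both $B_1$ and $\partial B_3^c$, a chaining of \eqref{e:Harnack-0} along $\partial B_2$ gives the comparison in $y$ versus $x$ directly. The paper does precisely this as its first step (reducing to $x=y$), then applies Bayes' rule and Claim~\ref{cl:harnack} for the remaining $w$-dependence; once your first-factor gap is patched this way, your route and the paper's are essentially the same argument with the steps reordered.
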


We first isolate the following:

\begin{claim}\label{cl:harnack}
	For all $\delta \in (0,1)$, $r_3\geq (1+\delta) r_2$,
	all $y,z\in \partial B_2$ and $w\in \partial B_3^c$, 
	\[
c(\delta)\cdot P_y\big(X_{T_{B_3}}=w\big)
 \leq P_z\big(X_{T_{B_3}}=w \, \big| \, T_{B_3}<\til{H}_{B_2} \big)\leq C(\delta) \cdot P_y\big(X_{T_{B_3}}=w\big).
	\]
\end{claim}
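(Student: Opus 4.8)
\textbf{Proof plan for Claim~\ref{cl:harnack}.}

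The plan is to prove this by a last-exit decomposition in $B_3$ combined with the boundary Harnack principle \eqref{e:Harnack-0}, in the form of the killed-Green-function two-sided bound \eqref{eq:green-killed}. First I would write, by decomposing the trajectory at its last visit to $B_2$ before exiting $B_3$ (i.e. at the time $L_{B_2}(T_{B_3})$ the last position in $B_2$ and then a final excursion from $\partial B_2$ to $\partial B_3^c$ without returning to $B_2$), the identity
\begin{equation*}
P_y\big(X_{T_{B_3}}=w\big) = \sum_{z'\in\partial B_2} G^{B_3}_{B_2}(y,z')\, P_{z'}\big(T_{B_3}<\til H_{B_2},\, X_{T_{B_3}}=w\big),
\end{equation*}
where $G^{B_3}_{B_2}(y,z')$ denotes the expected number of visits to $z'$ starting from $y$ before exiting $B_3$. (Here I am using that $y\in\partial B_2$, so $y$ itself is a possible value of the last visit.) The point is that the factor $P_{z'}(T_{B_3}<\til H_{B_2},\,X_{T_{B_3}}=w)$ is exactly the numerator we wish to estimate (for $z'=z$), so it suffices to show that $G^{B_3}_{B_2}(y,z')$ is bounded above and below by constants $c(\delta),C(\delta)$ uniformly in $y,z'\in\partial B_2$: then both the upper and lower bounds in the Claim follow by comparing the sum for a generic $y$ with the single term $z'=z$, using non-negativity of all summands.

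The key step is therefore the two-sided bound on $G^{B_3}_{B_2}(y,z')$ for $y,z'\in\partial B_2$. For the upper bound, $G^{B_3}_{B_2}(y,z')\le g_{B_3^c}(y,z')\le C|y-z'|^{2-d}$ is not immediately constant-order since $y,z'$ can coincide; but $G^{B_3}_{B_2}(y,z')= g_{B_3^c}(y,z')$ only counts visits, and a cleaner route is to use $G^{B_3}_{B_2}(y,z')\le g_{B_3^c}(z',z') = g_{B_3^c}(0,0)\cdot(1+o(1))\le C(\delta)$ after applying the strong Markov property at the first visit to $z'$ (so $G^{B_3}_{B_2}(y,z')\le g^{B_3}(z',z')$, and the latter is $\le g(0)<\infty$). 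For the lower bound I would argue that, starting from $y\in\partial B_2$, with probability $\ge c(\delta)$ the walk reaches $z'$ before exiting $B_3$: this follows by chaining a bounded number of overlapping $\ell^\infty$-boxes of radius $\sim\delta r_2$ joining $y$ to $z'$ and staying inside $B_3$ (away from $\partial B_3^c$), exactly as in the box-to-box argument \eqref{eq:box-to-box-GF} in the proof of Lemma~\ref{lem:green-killed} — each step costs at least a constant by a Gambler's-ruin/monotonicity estimate, and the number of boxes is $C(\delta)$ since $r_3\ge(1+\delta)r_2$. Once $z'$ is reached, $G^{B_3}_{B_2}(y,z')\ge g^{B_3}(z',z')\ge 1 \ge c$. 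Combining, $c(\delta)\le G^{B_3}_{B_2}(y,z')\le C(\delta)$.

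I expect the main obstacle to be the lower bound when $|y-z'|$ is comparable to $r_2$ (diametrically opposite points on $\partial B_2$): one must be careful that the chain of boxes can be routed entirely inside $Q(0,r_3)$ while staying a distance $\ge c(\delta)r_2$ from both $B_3^c$ and — if one insisted on it, though here it is not needed — from $B_1$. This is the same geometric bookkeeping already handled in Lemma~\ref{lem:green-killed}, so it should go through, but it is the place where the hypothesis $r_3\ge(1+\delta)r_2$ is genuinely used and where one cannot afford to be sloppy. With the two-sided bound on $G^{B_3}_{B_2}$ in hand, the Claim follows immediately from the displayed last-exit identity and non-negativity of the summands, with the constants $c(\delta),C(\delta)$ inherited directly from those bounds.
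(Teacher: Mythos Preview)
There is a genuine gap. Your key claim, that $G^{B_3}_{B_2}(y,z')=g_{B_3^c}(y,z')$ is bounded below by a constant $c(\delta)$ uniformly over $y,z'\in\partial B_2$, is false. Indeed $g_{B_3^c}(y,z')\le g(y,z')\le C|y-z'|^{2-d}$, and when $y,z'$ are on opposite faces of $\partial B_2$ this is of order $r_2^{2-d}\to 0$. Your chaining argument only lands you in a box of radius $\sim\delta r_2$ around $z'$; from there, hitting the \emph{single point} $z'$ before exiting $B_3$ costs an extra factor $\asymp(\delta r_2)^{2-d}$ (this is exactly the content of \eqref{eq:green-killed}), so the chain does not yield a uniform constant lower bound. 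You correctly flag the diametrically-opposite case as the obstacle, but the issue is not geometric routing---it is that single-point hitting probabilities decay polynomially in $d\ge 3$.

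The paper's proof circumvents this by inserting an intermediate surface $V=\partial Q(0,r_2(1+\delta'))$ between $\partial B_2$ and $\partial B_3^c$, applying the strong Markov property at $H_V$, and then invoking the Harnack inequality \eqref{e:Harnack-0} for the harmonic function $z'\mapsto P_{z'}(X_{T_{B_3}}=w)$ on $V$ (together with Lemma~\ref{lem:separation} and \eqref{eq:ctehittingproba} to handle the remaining factors). The Harnack step is the missing idea: it compares exit distributions from different points of $V$ without ever needing a pointwise lower bound on a Green's function between two far-apart boundary points.
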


\begin{proof}
Let $V=\partial Q(0,r_2(1+\delta')),$ where $(1+\delta')^2=1+\delta.$ Since $V$ separates $B_2$ from $B_3^c$, applying the strong Markov property at time $H_{V}$, one obtains that 
		\begin{multline}\label{eq:firstequ}
		P_z\big(X_{T_{B_3}}=w, \,  T_{B_3}<\til{H}_{B_2} \big)
			 = \sum_{z'\in V} P_z\big(X_{T_{B_3}}=w, \,  T_{B_3}<\til{H}_{B_2}, \, X_{H_{V}}=z' \big)\\
		=\sum_{z'\in V} P_z \big(H_{V}<\til{H}_{B_2}, X_{H_{V}}=z' \big) P_{z'} \big(T_{B_3}<H_{B_2} \, \big| \, X_{T_{B_3}}=w \big) P_{z'}\big(X_{T_{B_3}}= w \big).
	\end{multline}
	Using Lemma~\ref{lem:separation} we know that the middle term in the second line is at least $c(\delta')$ (and at most $1$). 
Since $z' \mapsto P_{z'}(X_{T_{B_3}}= w)$ is harmonic in $B_3$, by \eqref{e:Harnack-0} one obtains that the last term is bounded from above and below by $P_{y}\big(X_{T_{B_3}}= w \big)$, up to constants depending only on $\delta$. Finally one knows by \eqref{eq:ctehittingproba} that $c(\delta') \leq P_{z'} (T_{B_3}<H_{B_2}) (\leq 1) .$ 
Substituting all of this into~\eqref{eq:firstequ} yields that the left-hand side of \eqref{eq:firstequ} is bounded up to constants from above and below by
\[
P_{y}\big(X_{T_{B_3}}= w \big) \sum_{z'\in V} P_z \big(H_{V}<\til{H}_{B_2}, X_{H_{V}}=z' \big) P_{z'}\big(T_{B_3}<H_{B_2} \big)= P_{y}\big(X_{T_{B_3}}= w \big)  \cdot P_{z}\big(T_{B_3}<\til{H}_{B_2} \big),
\]
which is the claim.
\end{proof}

It remains to give the

\begin{proof}[Proof of Lemma~\ref{lem:indepofy}]
We first reduce the task to the case $x=y$, by applying \eqref{e:Harnack-0} and a chaining argument  to the function $x \mapsto P_x(X_{H_{B_1}\wedge T_{B_3}}=v,X_{L_{B_2}(T_{B_3})}=z)$, which is harmonic in $B_3 \setminus B_1$. Harmonicity holds crucially because $v \in \partial B_1$, which forces the walk to visit $B_1$ prior to time $L_{B_2}(T_{B_3})$, whence $L_{B_2}(T_{B_3}) \geq 1$ under $P_x$. 
This allows to effectively replace the starting point $x$ by $y$ in the denominator appearing in \eqref{eq:g_xi_stable}.

We now show \eqref{eq:g_xi_stable} for $x=y$, and write 
\begin{multline}\label{eq:eqfrac}
P_y\big(X_{H_{B_1}\wedge T_{B_3}}=v, X_{L_{B_2}(T_{B_3})}=z \, \big| \, X_{T_{ B_3}}=w\big)
\\
= \frac{P_y\big(X_{T_{ B_3}}=w \, \big| \, X_{H_{B_1}\wedge T_{B_3}}=v, X_{L_{B_2}(T_{B_3})}=z\big)}{P_y\big(X_{T_{ B_3}}=w \big)} P_y\big(X_{H_{B_1}\wedge T_{B_3}}=v, X_{L_{B_2}(T_{B_3})}=z \big)
\end{multline}
By a last-exit decomposition in $\partial B_2$,
\begin{align*}
	P_y\big(X_{T_{ B_3}}=w \, \big| \, X_{H_{B_1}\wedge T_{B_3}}=v, X_{L_{B_2}(T_{B_3})}=z\big) = P_z\big(X_{T_{ B_3}}=w \, \big| \,  T_{B_3}< \widetilde{H}_{B_2}\big).
\end{align*}
Inserting this into \eqref{eq:eqfrac} and using Claim~\ref{cl:harnack} completes the proof.
\end{proof}

\subsection{Large deviation estimate for excursions}
\label{sec:largedeviationapp}

In this section, we prove Lemma \ref{lem:concsoftrwri} on the concentration of the soft local times, as well as Lemma~\ref{lem:conc} on the concentration of the number of excursions, both for random walk and random interlacements. As in \S\ref{sec:harnack}, we assume that $B_k = Q(0, r_k)$ for $k=1,\dots 3$ with $r_1 < r_2 < r_3<N$.

We start by collecting some preliminary large-deviation estimates, which concern the random walk on the torus $\mathbf{T}$ of side length $N \geq 1$ (in dimension $ d \geq 3$). Recall the definition of the successive return times $R_k= R_k (\widehat{X},B_2,B_3)$ from \eqref{eq:deftilrhorho} and \eqref{e:X-hat} (well-defined when $r_3 < N$).
In the sequel, we denote by $\nu$ the stationary measure of $(\widehat{X}_{R_k})_{k \geq 1}$, which is supported on $\partial B_2$. To avoid clumsy notation, we identify $\nu$ with its projection on the torus (which is the invariant distribution of $({X}_{R_k})_{k \geq 1}$), and abbreviate $X_{[s,t]} = (X_n)_{s \leq n \leq t}$ in the sequel.

\begin{lemma}\label{lem:invarexitchain}
For all $\delta>0$ and $N >r_3\geq (1+\delta)r_2 \geq 1$, the Markov chain $(Y_k)_{k\geq 0}$ with $Y_k=X_{[R_k, R_{k+1}] }$ has invariant distribution $\mathbf{P}_{\nu}(X_{[0,  R_{1}]}\in{\cdot})$. Moreover, 
\begin{equation}
\label{eq:boundmixing}
d_{\textnormal{TV}}\big( \LL( (Y_{ iM})_{i=1}^K ), \,  (\mathbf{P}_{\nu}(X_{[0,  R_{1}]}\in{\cdot}))^{\otimes K} \big)\leq C K e^{-c(\delta)M},
\end{equation}
for all $M, K\in{\N}=\{1,2,\dots \}$, with $\mathcal{L}$ denoting the joint law of $(Y_{ iM})_{i=1}^K$ under $\mathbf{P}_{{x}}$, ${x} \in \mathbf{T}$. 
\end{lemma}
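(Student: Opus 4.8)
The statement has two parts, and both follow from a single structural observation: the chain $(Y_k)_{k \geq 0}$ of ``full inter-return pieces'' $Y_k = X_{[R_k, R_{k+1}]}$ is a deterministic function of the exit-point chain $(X_{R_k})_{k \geq 0}$ together with independent fresh randomness, and the exit-point chain itself mixes geometrically at the scale of one excursion when $\partial B_2$ and $\partial B_3$ are well-separated (which is exactly what $\delta > 0$ buys us via the Harnack-type estimates of \S\ref{sec:harnack}).

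First I would identify the invariant distribution. Write $\kappa$ for the transition kernel of the exit-point chain $(X_{R_k})_{k \geq 1}$ on $\partial B_2$; by the strong Markov property applied at the times $R_k$, this is a genuine (time-homogeneous) Markov chain, and I claim its invariant measure is $\nu$ by definition (this is just the defining property of $\nu$ in the paragraph preceding the lemma). Then, again by the strong Markov property at $R_1$, conditionally on $X_{R_1} = y$ the piece $Y_1 = X_{[R_1, R_2]}$ has the law of $X_{[0, R_1]}$ started from $y$, independently of the past; hence $(Y_k)_{k \geq 1}$ is itself Markov, and if $X_{R_1} \sim \nu$ then each $Y_k$ has law $\mathbf{P}_{\nu}(X_{[0,R_1]} \in \cdot)$ and this law is preserved — so $\mathbf{P}_{\nu}(X_{[0,R_1]} \in \cdot)$ is invariant for $(Y_k)_{k \geq 0}$, giving the first assertion. (One should note here that $X_{[0,R_1]}$ for the chain $Y$ starts from the initial exit point, so there is a harmless index shift: $Y_0 = X_{[R_0, R_1]} = X_{[0,R_1]}$ carries the ``burn-in'' before the first return, which is why the invariant statement is cleanest phrased for $(Y_k)_{k\geq 1}$ or started from $\nu$; I would state it carefully.)

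Next, the quantitative bound \eqref{eq:boundmixing}. The key point is that $X_{R_{k+1}}$ is a deterministic measurable function of $Y_k$, so the thinned sequence $(X_{R_{iM}})_{i=1}^K$ is a function of $(Y_{iM-1})_{i=1}^K$, and conversely, conditionally on the exit points $(X_{R_k})_{0 \leq k \leq KM}$, the pieces $(Y_k)_{0\leq k < KM}$ are conditionally independent with $Y_k$ depending only on $(X_{R_k}, X_{R_{k+1}})$. So it suffices to (i) control $d_{\mathrm{TV}}$ between the thinned exit-point chain $(X_{R_{iM}})_{i=1}^K$ and $\nu^{\otimes K}$, and (ii) deduce the bound for $(Y_{iM})_{i=1}^K$ by ``attaching'' the conditionally-independent fresh randomness, which does not increase total variation distance (a standard coupling argument: couple the exit points optimally, then build the pieces using the same external randomness). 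For (i) I would show the one-step exit-point kernel $\kappa$ satisfies a Doeblin / uniform minorization condition $\kappa(y, \cdot) \geq c(\delta)\, \nu(\cdot)$ — or at least $\kappa^{\ell}(y,\cdot) \geq c(\delta)\nu(\cdot)$ for some fixed $\ell = \ell(\delta)$ — uniformly in $y \in \partial B_2$, which is precisely the kind of Harnack-type comparison established in Lemma~\ref{lem:indepofy} and Claim~\ref{cl:harnack}: the law of the next exit point from a given starting point on $\partial B_2$ is comparable, up to $\delta$-dependent constants, to the law from any other starting point. Geometric ergodicity with rate $e^{-c(\delta)}$ per step then follows from the standard coupling-from-minorization argument, giving $d_{\mathrm{TV}}(\mathcal L(X_{R_M}), \nu) \leq C e^{-c(\delta)M}$ after $M$ steps, and a union bound over the $K$ coordinates (using the Markov property to reduce each coordinate to a single-chunk estimate conditionally on the previous one) yields the factor $CKe^{-c(\delta)M}$ in \eqref{eq:boundmixing}.

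\textbf{Main obstacle.} The routine part is the minorization-implies-geometric-ergodicity machinery; the substantive step is establishing the uniform (in the starting point on $\partial B_2$, and crucially with constants depending only on $\delta$, \emph{not} on $r_2, r_3, N$) minorization of the exit-point kernel $\kappa$. This is where the separation hypothesis $r_3 \geq (1+\delta)r_2$ is essential and where one must invoke the Harnack estimates of \S\ref{sec:harnack} — in particular a version of Claim~\ref{cl:harnack} / Lemma~\ref{lem:indepofy} applied not to the ``first time hitting $B_1$'' decomposition but to the plain exit distribution from $B_3$ conditioned on re-entering $B_2$. The other delicate bookkeeping point is the index shift between $Y_k$ and $X_{[0,R_1]}$ noted above, and making sure the thinning in \eqref{eq:boundmixing} (which thins $Y$ at multiples of $M$, i.e. uses $Y_{iM}$ not $Y_{iM-1}$) lines up with exit points separated by $M$ returns; this is a matter of careful indexing rather than a real difficulty.
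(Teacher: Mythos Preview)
Your proposal is correct and follows the same two-step structure as the paper: identify the invariant law via the exit-point chain $(X_{R_k})$ plus conditionally independent fresh randomness, then lift a geometric-ergodicity bound for $(X_{R_k})$ to the pieces $(Y_k)$. The only difference is that the paper does not prove the Doeblin minorization for the exit-point chain from scratch but instead invokes \cite[Lemma~2.2]{JasonPerla} (noting that the square-box-to-square-box setup allows the weaker separation $r_3\geq(1+\delta)r_2$ in place of the $r_3\geq 10\sqrt{d}\,r_2$ assumed there); your plan to derive this minorization directly from the Harnack-type estimates of \S\ref{sec:harnack} is exactly how that cited lemma is proved, so you are simply making the argument self-contained.
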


\begin{proof}
By definition, $\nu$ is the invariant distribution of $(X_{R_k})_{k\geq1}$ and since $(Y_n)_{n\geq1}$ are independent random variables conditionally on $(X_{R_k})_{k\geq1},$ one readily concludes that the invariant distribution of $(Y_k)_{k\geq1}$ is $\mathbf{P}_{\nu}(X_{[0,  R_{1}]}\in{\cdot})$.  A claim on the total variation similar to \eqref{eq:boundmixing} but concerning $(X_{R_k})_{k\geq0}$ instead of  $Y$ holds by~\cite[Lemma 2.2]{JasonPerla}, and \eqref{eq:boundmixing} then follows easily. Note that \cite[Lemma 2.2]{JasonPerla} is stated for the exit chain, i.e.~$(X_{D_k(\widehat{X},B_1,B_2)})_{k \geq 0}$, but the proof for the entrance points is identical. There it is further assumed that $r_3\geq 10\sqrt{d}r_2,$ but this is owed to the fact that excursions from square boxes to round boxes are considered. If instead one considers excursions from square boxes to square boxes as in the present case, the assumption $r_3 \geq (1+\delta) r_2$ for some $\delta>0$ is sufficient.
\end{proof}

\begin{Rk}[Identifying $\nu$] \label{R:nu-equ} 
Recalling that $\overline{\mathbf{e}}_{2}^{3}$ denotes the projection of the equilibrium measure $\overline{e}_{B_{2}}^{B_3}$ onto $\mathbf{T}$, see above \eqref{eq:gbar-def} and below \eqref{e:cap-2point} for notation, it follows from \cite[Lemma 6.1]{MR3563197} that
\begin{equation}\label{eq:ident}
\nu= \overline{\mathbf{e}}_{2}^{3}
\end{equation}
for $r_3 < N$. 
Moreover, by \cite[eq.~(9.4)]{MR3563197}, one has the exact(!) formula
\begin{equation}
\label{eq:boundrho1}
 \mathbf{E}_{\overline{\mathbf{e}}_{2}^{3}}[R_1(X,B_2,B_3)]  = \big(\cpc{B_3}{B_2}\big)^{-1} N^d.
\end{equation}
The identities \eqref{eq:ident} and \eqref{eq:boundrho1} are needed to precisely match both the number of excursions and the soft local times between random walk and random interlacements, but are otherwise unnecessary; cf.~Remark~\ref{rk:othershortrange},\ref{i}.
\end{Rk}

Let $W_{2,3}$ denote the set of nearest-neighbors paths in $\mathbf{T}$ starting in $\partial B_2,$ hitting $\partial B_3^c,$ and then ending the next time $\partial B_2$ is hit. Thus $W_{2,3}$ represents the state space of the excursion process $(Y_k)_{k \geq0}$ appearing in Lemma~\ref{lem:invarexitchain}. We now prove the following large deviations estimate for these excursions, from which Lemmas~\ref{lem:concsoftrwri} and \ref{lem:conc} for the random walk will later follow. 

The following setup is tailored to our purposes. We consider $(Z_i)_{i \geq 0}$ an i.i.d.~sequence of random variables with values in a measurable space $(A,\mathcal{A})$ and independent of $X$ (under $\mathbf{P}_{{x}}$). 
For a measurable function $F:W_{2,3}\times A\rightarrow[0,\infty)$ we then introduce the random variables $V_i=F(X_{[R_{i}, R_{i+1}]},Z_i)$ for $i \geq 0 $ and for $i_0 \geq 0$ and $m\in{\N}$ the average
\begin{equation}\label{e:shifted-avg}
\overline{V}_m= \overline{V}_{i_0,m} =\frac 1m\sum_{i=0}^{m-1}V_{i_0 + i},
\end{equation}
\begin{proposition}
\label{prop:largedeviationrw}
For all $i_0\geq0,$ $\delta>0$, $N > r_3\geq (1+\delta)r_2 \geq 1$, the following holds. If for $\theta\geq 1$,
\begin{equation}
\label{eq:assumptionV1}
\begin{split}
&
 \sup_{{x}\in{\mathbf{T}}}
\mathbf{E}_{{x}}[V_{i_0}]\leq \theta 
\inf_{{x}\in{\mathbf{T}}}
\mathbf{E}_{{x}}[V_{i_0}] < \infty \text{ and }  \\
&\mathbf{E}_{x}[V_{i_0}^k]\leq k!\theta^k\mathbf{E}_{x}[V_{i_0}]^k\text{ for all } {x}\in{\mathbf{T}}, \,   k\in{\N}, 
\end{split}
\end{equation}
then there exist $C=C(\theta,\delta)<\infty$ and $c=c(\theta,\delta)>0$ such that for all $m\in{\N}$ and $\eta\in{(0,1)}$, 
\begin{equation*}
\sup_{{x}\in{\mathbf{T}}}
 \mathbf{P}_{{x}}\big(\, |\overline{V}_m-\mathbf{E}_{\nu}[{V}_{i_0}]|>\eta \mathbf{E}_{\nu}[{V}_{i_0}]\big)\leq Cm\exp\big\{ -c\sqrt{\eta^2m}\big\}.
\end{equation*}
\end{proposition}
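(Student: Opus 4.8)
\textbf{Proof strategy for Proposition \ref{prop:largedeviationrw}.} The plan is to reduce the large-deviation estimate for the time-inhomogeneous average $\overline{V}_m$ to a large-deviation estimate for an \emph{i.i.d.}\ average, by paying a mixing cost via Lemma~\ref{lem:invarexitchain}, and then to handle the i.i.d.\ (but heavy-tailed-after-blocking) sum by a standard exponential Chernoff argument exploiting the moment bound \eqref{eq:assumptionV1}. The appearance of $\sqrt{\eta^2 m}$ rather than $\eta^2 m$ in the exponent is the tell-tale sign that we cannot get a genuine sub-Gaussian bound for the whole sum at once; instead, we split $m$ into roughly $\sqrt{m}$ blocks of length roughly $\sqrt{m}$, use Lemma~\ref{lem:invarexitchain} to decouple the blocks down to an i.i.d.\ comparison with an error $\leq C\sqrt{m}\,e^{-c\sqrt{m}}$ (take the mixing parameter $M \asymp \sqrt{m}$ and the number of sampled points $K \asymp \sqrt{m}$ in \eqref{eq:boundmixing}), and then run the concentration argument on a sum of $K\asymp\sqrt m$ i.i.d.\ block-averages.

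More precisely, first I would record the elementary consequence of \eqref{eq:assumptionV1} that the law of $V_{i_0}$ has exponential moments: $\mathbf{E}_{x}[e^{\lambda V_{i_0}}] \leq (1-\lambda\theta\mathbf{E}_x[V_{i_0}])^{-1} \leq 2$ for $\lambda \leq (2\theta\mathbf{E}_x[V_{i_0}])^{-1}$ uniformly in $x \in \mathbf{T}$, and also that $\mathbf{E}_x[V_{i_0}]$ is comparable to $\mathbf{E}_\nu[V_{i_0}]$ up to the constant $\theta$, uniformly in $x$, by \eqref{eq:assumptionV1} and the fact that $\nu$ is a probability measure on $\mathbf{T}$. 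Fix $m$ and set $K = \lceil\sqrt m\rceil$, $M = \lceil\sqrt m\rceil$ (so $KM \geq m$; one truncates the sum at $m$ at the cost of a term that is itself $O(M/m)$ times a typical value and is absorbed). Group the indices $i_0, i_0+1, \dots$ into $K$ consecutive blocks, each of length $M$, and let $S_j$ denote the sum of the $V$'s over block $j$. Conditionally on the excursion endpoints $(X_{R_k})_{k}$ and on $(Z_i)_i$, the $V_i$ are independent; the dependence between blocks comes only through the exit chain $(X_{R_k})_k$, whose mixing is controlled by \eqref{eq:boundmixing}. Applying Lemma~\ref{lem:invarexitchain} with this $M$ and $K$, the joint law of $(S_1,\dots,S_K)$ (after a harmless re-indexing so that the relevant conditioning uses the sampled points $Y_{iM}$) is within total variation $CKe^{-c(\delta)M} \leq C\sqrt m\, e^{-c(\delta)\sqrt m}$ of the law of $K$ i.i.d.\ copies $(\widetilde S_1,\dots,\widetilde S_K)$, each distributed as a block of $M$ i.i.d.\ samples of $V_{i_0}$ started from $\nu$. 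This bound is of the claimed order, so it remains to estimate $\mathbf{P}(|\widetilde S_1 + \dots + \widetilde S_K - m\mathbf{E}_\nu[V_{i_0}]| > \eta m \mathbf{E}_\nu[V_{i_0}])$.

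For the i.i.d.\ sum, I would use a two-sided Chernoff bound. Writing $a = \mathbf{E}_\nu[V_{i_0}] > 0$ and using the uniform exponential-moment bound above together with independence within each block (so that $\mathbf{E}[e^{\lambda \widetilde S_j}] \leq \exp(C\lambda M a)$ for $\lambda \leq c/(\theta a)$, with a matching lower-tail bound via $\mathbf{E}[e^{-\lambda \widetilde S_j}]$), one gets $\mathbf{P}(|\sum_j \widetilde S_j - Kma/K|>\eta m a) \leq 2\exp(-c\eta^2 m a^2 / (\text{variance proxy}))$ in the regime where the optimal $\lambda$ stays below the threshold $c/(\theta a)$, i.e.\ precisely when $\eta m$ is not too large relative to $M = \sqrt m$; since the fluctuation we are controlling is of size $\eta m a$ spread over $K = \sqrt m$ summands each of mean $\sqrt m a$, the per-summand deviation is $\eta\sqrt m a$, and the Bernstein/Chernoff bound gives an exponent of order $\eta^2 m$ in the sub-Gaussian regime and order $\eta\sqrt m$ (linear regime) otherwise — in both cases at least $c\sqrt{\eta^2 m}$ since $\eta \in (0,1)$. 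Collecting the mixing error and the Chernoff bound yields $Cm\exp(-c\sqrt{\eta^2 m})$, uniformly in $x \in \mathbf{T}$ and in $i_0$, as claimed. The main obstacle I anticipate is purely bookkeeping: carefully choosing the block length and number of blocks so that the mixing error from \eqref{eq:boundmixing} and the Chernoff error balance at the same exponential rate $e^{-c\sqrt{\eta^2 m}}$, and handling the boundary effects (the $\leq M$ leftover excursions when $KM > m$, and the shift by $i_0$, which is immaterial since \eqref{eq:assumptionV1} and \eqref{eq:boundmixing} are uniform in the starting point). One should also be slightly careful that \eqref{eq:assumptionV1} is assumed only for the single index $i_0$, but since $V_{i_0+i}$ for $i\geq 1$ has the \emph{same} conditional law given the excursion endpoints — the $Z$'s are i.i.d.\ and the excursion law depends only on the endpoint — the moment bounds transfer to all $V_{i_0+i}$ with the endpoint playing the role of the starting point $x$, which is exactly what the uniformity over $x \in \mathbf{T}$ in \eqref{eq:assumptionV1} provides.
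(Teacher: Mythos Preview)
Your high-level plan (mixing via Lemma~\ref{lem:invarexitchain} plus a Bernstein/Chernoff bound, balanced so that both errors are $e^{-c\sqrt{\eta^2 m}}$) matches the paper's, but the decomposition you choose and the way you handle it has a genuine gap. The paper does \emph{not} group into consecutive blocks. Instead, for a spacing $M$ it fixes an offset $p\in\{0,\dots,M-1\}$, applies \eqref{eq:boundmixing} to the subsampled sequence $(V_{i_0+p+iM})_{i=1}^K$ with $K\approx m/M$, which is then close in total variation to a genuinely i.i.d.\ sequence of copies of $V_{i_0}-\mathbf E_\nu[V_{i_0}]$ under $\mathbf P_\nu$, runs the Bernstein inequality of \cite[Cor.~2.11]{MR3185193} on that i.i.d.\ sequence, and finally takes a union bound over the $M$ offsets (plus two boundary terms). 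The optimal spacing is $M=\lceil\sqrt{\eta^2 m}\rceil$, not $\sqrt m$.

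Your consecutive-block route runs into trouble at the Chernoff step. After any reasonable coupling the limiting block $\widetilde S_j$ is the sum of $M$ consecutive terms of the \emph{stationary Markov chain} started from $\nu$; the terms inside a block are \emph{not} i.i.d., so ``independence within each block'' is false. The only MGF bound you can get by iterated conditioning is $\mathbf E_\nu[e^{\lambda\widetilde S_j}]\le(\sup_y\mathbf E_y[e^{\lambda V_{i_0}}])^M$, which after centering at $Ma$ with $a=\mathbf E_\nu[V_{i_0}]$ leaves a first-order term $M\lambda a(\theta^2-1)$ in $\log\mathbf E_\nu[e^{\lambda(\widetilde S_j-Ma)}]$ (because $\sup_y\mathbf E_y[V_{i_0}]$ may exceed $a$ by a factor $\theta$). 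That linear term cannot be absorbed when $\eta$ is small, so the sub-Gaussian/Bernstein bound you invoke for $\sum_j\widetilde S_j$ does not follow. The paper's interleaving avoids exactly this: the TV comparison delivers i.i.d.\ samples from $\mathbf P_\nu$, hence each sampled term is centered at $a$ on the nose, and Bernstein applies without any centering defect. If you want to salvage the block route you would need sharp centered moment bounds for the stationary block sum, which in turn require using the mixing \emph{inside} each block---essentially the interleaving argument at a smaller scale.
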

\begin{proof}
Let $W=(W_i)_{i\geq0}$ be i.i.d.~centered random variables each having the law of ${V}_{i_0}-\mathbf{E}_{\nu}[{V}_{i_0}]$ under $\mathbf{P}_{\nu}$ and $P^W$ denote their joint law. Then,  using \eqref{eq:boundmixing} and e.g.~the characterization of $d_{\textrm{TV}}$ in terms of couplings, one obtains for all $M, K \in \mathbb{N}$ and $1\leq i \leq K$ the bound
	 \begin{align}\label{eq:couplevandw}
	d_{\textnormal{TV}}\big( \LL\big((V_{iM}-\mathbf{E}_{\nu}[{V}_{i_0}])_{i=1}^{K} \big), \,  \LL\big((W_i)_{i=1}^{K}\big) \big) \leq CK e^{-c(\delta)M},
	\end{align} 
with $\mathcal{L}$ governing the $V_{\cdot}$'s on the left-hand side referring to their joint law under $\mathbf{P}_{{x}}$ for any ${x} \in \mathbf{T}$. Assuming $\frac{m}{M}\geq2$ we have $K\stackrel{\text{def.}}{=}\lfloor m/M\rfloor-1\geq 1,$ and by the triangle inequality and a union bound, 
	\begin{equation}
	\label{eq:1stboundlargedeviationrw}
	\mathbf{P}_{{x}}\big(|\overline{V}_m-\mathbf{E}_{\nu}[{V}_{i_0}]|>\eta \mathbf{E}_{\nu}[{V}_{i_0}]\big)\leq a_1 + a_2 + a_3,
	\end{equation}
where
\begin{align*}
&a_1=\mathbf{P}_{{x}} \bigg( \exists\,p\in{\{0,\dots,M-1\}}:\,\Big|\sum_{i=1}^{K}(V_{i_0 + p+iM}-\mathbf{E}_{\nu}[{V}_{i_0}])\Big|>\frac{\eta m\mathbf{E}_{\nu}[{V}_{i_0}]}{2M} \bigg),
	\\&a_2= \mathbf{P}_{{x}} \bigg(\sum_{i=0}^{M-1}\big|V_{i_0 + i}-\mathbf{E}_{\nu}[{V}_{i_0}]\big|>\frac{\eta m\mathbf{E}_{\nu}[{V}_{i_0}]}{4}\bigg), \\
	&a_3= \mathbf{P}_{{x}} \bigg(\sum_{i=M+KM}^{m-1}\big|V_{i_0 +i}-\mathbf{E}_{\nu}[{V}_{i_0}]\big|>\frac{\eta m\mathbf{E}_{\nu}[{V}_{i_0}]}{4}\bigg).
	\end{align*}
	Applying~\eqref{eq:couplevandw}, the strong Markov property at time $R_{k}$ for $i_0\leq k< i_0+M$, a union bound and letting $t=\frac{\eta m}{2M}\mathbf{E}_{\nu}[{V}_{i_0}],$ $a_1$ is bounded by 
	\begin{align}
		\label{eq:2ndboundlargedeviationrw}
		M\sup_{{x}\in{\mathbf{T}}}\mathbf{P}_{{x}}\bigg(\Big|\sum_{i=1}^{K}V_{iM}-\mathbf{E}_{\nu}[{V}_{i_0}]\Big|>t \bigg) \leq M P^W \bigg(\Big|\sum_{i=1}^{K}W_{i}\Big|>t \bigg) + CMKe^{-cM}.
	\end{align}
Under \eqref{eq:assumptionV1}, it follows from the Bernstein inequality, see for instance \cite[Corollary 2.11]{MR3185193} for the version we use here, that for some constant $c>0,$
\begin{equation}
\label{eq:Bernstein}
P^W \bigg(\Big|\sum_{i=1}^{K}W_{i}\Big|>t \bigg)\leq2\exp\bigg\{-\frac{ct^2}{K\theta^4\mathbf{E}_{\nu}[{V}_{i_0}]^2+\theta^2\mathbf{E}_{\nu}[{V}_{i_0}]t}\bigg\};
\end{equation}
here we are implicitly using that the controls on higher moments appearing in the second line of \eqref{eq:assumptionV1} hold with $\theta^2$ in place of $\theta$ and $\mathbf{E}_{\nu}[\,\cdot \,]$ in place of $\mathbf{E}_{x}[\,\cdot \,]$ everywhere. Recalling $t$ and that $K \leq  \frac{m}{M}$, one sees that the right-hand side of \eqref{eq:Bernstein} is bounded by $\exp\{ -c(\theta) \frac{\eta^2m}{M}\}$, and together with \eqref{eq:2ndboundlargedeviationrw} this yields that 
\begin{equation}
	\label{eq:3rdboundlargedeviationrw}
a_1 \leq M e^{-c(\theta) \frac{\eta^2m}{M}} +  CMKe^{-cM}.
\end{equation}
Next, we bound $a_2$ for $\eta m\geq C(\theta) M$. To this effect, first note that, combining the assumptions in \eqref{eq:assumptionV1} and the strong Markov property, one readily obtains that $\theta^{-1} \mathbf{E}_{\nu}[{V}_{i_0}] \leq \mathbf{E}_{{x}}[{V}_{i_0}] \leq \theta \mathbf{E}_{\nu}[{V}_{i_0}]$. Feeding this into $a_2$, a union bound and the strong Markov property (applied at time $R_i$) then give that if $\eta m\geq C(\theta) M$,
	\begin{equation}
		\label{eq:4thboundlargedeviationrw}
\begin{split}
		a_2
		&\leq M\sup_{{x} \in \mathbf{T}}\mathbf{P}_{{x}}\bigg(\big|V_{i_0}-\mathbf{E}_{{x}}[{V}_{i_0}]\big|\geq \frac{c\eta m\mathbf{E}_{{x}}[{V}_{i_0}]}{4M} \bigg)
		\leq M\exp\left(-\frac{c'(\theta)\eta m}{M}\right),
\end{split}
	\end{equation}
where the second inequality is obtained by Bernstein's inequality similarly as in \eqref{eq:3rdboundlargedeviationrw} for $K=1.$ Similarly, since $M+KM\geq m-M+1$ by definition of $K,$ we have that if $\eta m\geq C(\theta)M$, 
	\begin{align}
		\label{eq:5thboundlargedeviationrw}
		a_3 \leq M e^{-\frac{c(\theta)\eta m}{M}}.
	\end{align}
	Choosing $M=\lceil \sqrt{\eta^2 m}\rceil,$ noting that $\eta m\geq C(\theta)M$ and $m/M\geq2$ hold if $\eta m \geq C'(\theta)$, which is no loss of generality, we conclude by combining \eqref{eq:1stboundlargedeviationrw}, \eqref{eq:3rdboundlargedeviationrw}, \eqref{eq:4thboundlargedeviationrw} and \eqref{eq:5thboundlargedeviationrw}.
\end{proof}

We now have all the tools to give the proof of Lemma~\ref{lem:concsoftrwri} in the random walk case.

\begin{proof}[Proof of Lemma~\ref{lem:concsoftrwri} (Random walk case)] One applies Proposition \ref{prop:largedeviationrw} with the following choices. Let $\eta=\eps$. Recalling $G^{\rm{RW}}$ from \eqref{eq:softlocattimeRW}, one takes $Z_i=\widehat{\xi}_i$, $V_i=\widehat{\xi}_ig_{\zeta_i}(z),$ and notes that $\zeta_i$ is $X_{[R_{i},R_{i+1}]}$-measurable, cf.~\eqref{eq:deftilrhorho}-\eqref{eq:defclothesline}. Then with  $\overline{V}_m= \overline{V}_{1,m}$, i.e.~$i_0=1$ in \eqref{e:shifted-avg}, one has $m\overline{V}_m=G_m^{\rm RW}(z),$ see \eqref{eq:softlocattimeRW}, and by \eqref{eq:ident}, it follows that $\mathbf{E}_{\nu}[{V}_1]=\bar{g}(z),$ see \eqref{eq:gbar-def}. Moreover $\mathbf{E}_{{x}}[\xi_1^j]=j!$ since $\xi_1$ is an exponential random variable with parameter one, and one readily deduces that the assumption \eqref{eq:assumptionV1} holds for some $\theta=\theta(\delta)<\infty$ by Lemmas~\ref{lem:separation} and~\ref{lem:indepofy}, see also \eqref{eq:defg} and \eqref{eq:defg2}, if $r_3\geq (1+\delta)r_2$ and $r_2\geq (1+\delta)r_1$. The claim follows.
\end{proof}

The proof of Lemma~\ref{lem:conc} for the random walk involves another application of Proposition~\ref{prop:largedeviationrw}. Verifying the relevant condition \eqref{eq:assumptionV1} in that case will rely on the following result.

\begin{lemma}\label{L:hit-torusbox}
	For any $\delta > 0$ and $r\leq N$, abbreviating $Q= {Q}(\mathbf{0},r)$, one has
\begin{align}
\sup_{{x}\in \mathbf{T}} \mathbf{E}_{{x}}\big[H_{Q} \big] &\leq C \cdot N^2  \Big(\frac{N}{r}\Big)^{d-2}, \label{eq:hit-torusbox-ub}\\
\inf_{{x}\in \mathbf{T} \setminus {Q}(\mathbf{0}, r(1+\delta)) }  \mathbf{E}_{{x}}\big[H_{{Q}} \big] &\geq c(\delta) \cdot N^2  \Big(\frac{N}{r}\Big)^{d-2} \label{eq:hit-torusbox-lb}.
\end{align}
\end{lemma}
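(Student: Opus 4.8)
\textbf{Proof plan for Lemma~\ref{L:hit-torusbox}.}
The plan is to estimate $\mathbf{E}_{x}[H_Q]$ via the Green's function of the walk killed on $Q$, using the identity $\mathbf{E}_{x}[H_Q] = \sum_{y \in \mathbf{T}} g_{\mathbf{T}, Q^c}(x,y)$, where $g_{\mathbf{T},Q^c}$ is the Green's function of the walk on $\mathbf{T}$ killed upon entering $Q$. The key heuristic, which both bounds will quantify, is that the walk on $\mathbf{T}$ needs order $(N/r)^d$ excursions across a well-separated annulus around $Q$ before hitting $Q$, each excursion lasting a typical time of order $N^2$, and the last-exit/capacity bookkeeping converts this into $N^2 (N/r)^{d-2}$ after accounting for $\mathrm{cap}(Q) \asymp r^{d-2}$ and $|\mathbf{T}| = N^d$.

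For the upper bound \eqref{eq:hit-torusbox-ub}, I would proceed as follows. First, by a last-exit decomposition at $Q$ (or equivalently, writing $\mathbf{E}_x[H_Q]$ as a sum of killed Green's function values and using reversibility), one gets $\mathbf{E}_x[H_Q] \leq \sup_{z} \mathbf{E}_z[\text{time between successive visits to } \partial Q \text{ region}]$ times the expected number of such returns, which is governed by $N^d/\mathrm{cap}(Q)$. More concretely: I would introduce the concentric box $Q' = Q(\mathbf{0}, r(1+\delta))$ with a fixed $\delta$ (one is free here since it only enters constants), decompose the trajectory into excursions between $\partial Q'$ and the ``far away'' part of $\mathbf{T}$, bound the length of each excursion by $C N^2$ using a standard estimate on $\mathbf{E}_z[T_{\mathbf{T}}]$-type quantities (the walk on the torus has diameter $O(N)$ and mixing time $O(N^2)$; a crude bound $\mathbf{E}_z[\text{excursion length}] \leq C N^2$ follows from a variance/coordinate-projection Gambler's-ruin argument combined with $\mathbf{E}[\text{time to traverse distance} \sim N] \asymp N^2$), and bound the number of excursions before hitting $Q$ by a geometric random variable whose success probability per excursion is at least $c\, \mathrm{cap}(Q)/N^{d-2} \geq c (r/N)^{d-2}$, using the heat-kernel/Green-function lower bound $\min_{z,y} p_{cN^2}^{\mathbf{T}}(z,y) \geq c N^{-d}$ together with $\mathrm{cap}(Q) \geq c r^{d-2}$ as in \eqref{e:M-bound}. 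Multiplying the per-excursion time bound $CN^2$ by the expected excursion count $C (N/r)^{d-2}$ gives \eqref{eq:hit-torusbox-ub}.

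For the lower bound \eqref{eq:hit-torusbox-lb}, I would argue symmetrically but need a matching lower bound on the number of excursions and on the duration of (at least) one excursion. Starting from $x \notin Q' = Q(\mathbf{0}, r(1+\delta))$, first use a Gambler's-ruin-type estimate (projecting one coordinate) to see that with probability $\geq c(\delta)$ the walk travels $\ell^\infty$-distance $\gtrsim N$ before reaching $Q$, and that conditionally this takes time $\geq c N^2$; more robustly, I would lower-bound $\mathbf{E}_x[H_Q]$ by the expected number of returns to a mesoscopic sphere $\partial Q(\mathbf{0}, N/4)$ before hitting $Q$, each contributing $\geq c N^2$ to the total time. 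The number of such returns is bounded below using the \emph{upper} bound $P_z(H_Q < \infty\text{-like event}) \leq C\, \mathrm{cap}(Q)/N^{d-2}$ — more precisely, a last-exit decomposition gives $P_z(\text{hit } Q \text{ before returning to } \partial Q(\mathbf{0},N/4)) \leq C (r/N)^{d-2}$ — so the number of excursions stochastically dominates a geometric variable with mean $\geq c (N/r)^{d-2}$. Combining: $\mathbf{E}_x[H_Q] \geq c(\delta) N^2 \cdot c (N/r)^{d-2}$, which is \eqref{eq:hit-torusbox-lb}.

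\textbf{Main obstacle.} The genuinely delicate point is obtaining the sharp polynomial order $(N/r)^{d-2}$ rather than $(N/r)^{d}$ or $(N/r)^{d-2}\log$-type corrections: this requires the excursion-counting argument to correctly see the \emph{capacity} $\mathrm{cap}(Q) \asymp r^{d-2}$ (an $\ell^\infty$-boundary quantity), not the volume $r^d$. Concretely, one must justify that the per-excursion hitting probability of $Q$, started from a torus-mesoscopic sphere, is $\asymp \mathrm{cap}_{\mathbf{T}}(Q)/N^{d-2} \asymp (r/N)^{d-2}$ on both sides; this follows from comparing the killed Green's function on $\mathbf{T}$ with that on $\mathbf{Z}^d$ away from a neighborhood of $Q$ (legitimate since $r, |x| < N$ and the relevant events are measurable w.r.t.\ $X_{\cdot \wedge T_{Q(\cdot, N/2)}}$), combined with the $\mathbf{Z}^d$ last-exit formula $P_z(H_Q < \infty) = \sum_{y} g(z,y) e_Q(y)$ and standard Green-function two-sided bounds $g(z,y) \asymp |z-y|^{2-d}$ — i.e.\ essentially the same machinery as Lemmas~\ref{lem:green-killed} and~\ref{lem:boundongreenfunction}, transplanted to the torus. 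Getting the constants in \eqref{eq:hit-torusbox-lb} to depend only on $\delta$ (and $d$) is what forces the detour through the well-separated sphere $\partial Q(\mathbf{0}, r(1+\delta))$ and the intermediate sphere $\partial Q(\mathbf{0}, N/4)$.
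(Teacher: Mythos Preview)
Your proposal is correct and follows essentially the same strategy as the paper: decompose the trajectory into excursions across an annulus at scale $\asymp N$, show the number of excursions until hitting $Q$ is geometric with success parameter $\asymp (r/N)^{d-2}$ via capacity/Green-function estimates, and bound each excursion length by $\asymp N^2$. The paper's implementation differs in two minor technical respects worth noting. First, it splits into the cases $r\geq N/100$ and $r\leq N/100$: in the former the target bound degenerates to $\mathbf{E}_x[H_Q]\asymp N^2$, handled by a direct tail bound after lifting to $\Z^d$, while your heat-kernel argument would cover this too. Second, for $r\leq N/100$ the paper fixes the geometry more precisely than your sketch, taking both spheres at scale $N$ (namely $V=\partial Q(\mathbf{0},N/20)$ and the outer set $Q(\mathbf{0},N/10)^c$), so that the relevant success probability is $\mathbf{P}_y(H_Q<T_{Q(\mathbf{0},N/10)})\asymp (r/N)^{d-2}$ for $y\in V$; your description with inner sphere $\partial Q'=\partial Q(\mathbf{0},r(1+\delta))$ would need adjusting, since from $\partial Q'$ the probability to hit $Q$ before escaping is $\asymp 1$, not $(r/N)^{d-2}$.
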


\begin{proof} We will often use the classical fact that for all $\delta \in (0,1)$, $x \in Q(\mathbf{0},r(1-\delta))$ and $r< N$,
\begin{equation}\label{eq:exit-time}
c(\delta) r^2 \leq \mathbf{E}_{{x}}[T_{{Q}}] \leq Cr^2,
\end{equation}
which follows e.g.~by observing that $\mathbf{E}_{{x}}[T_{{Q}}]= {E}_{\tilde{x}}[T_{{Q}({0},r)}] = \sum_{y \in Q(0,r)} g_{\Z^d \setminus {Q}({0},r)}(\tilde{x},y)$, where $\tilde{x} \in Q(0,r) \subset \Z^d$ is such that $\pi(\tilde{x})=x$, and performing the sum using \eqref{eq:green-killed} (the upper bound in \eqref{eq:green-killed} remains valid without restriction on $x$ and $y$).

We now first assume that $r\geq \frac{N}{100}$. In this case \eqref{eq:hit-torusbox-lb} is immediate since $\mathbf{E}_{{x}}[H_{Q} ] \geq \mathbf{E}_{{x}}[T_{{Q}({x},\frac{\delta}{2} r)} ]  \geq c(\delta) N^2$ by \eqref{eq:exit-time} and assumption on $r$. As to  \eqref{eq:hit-torusbox-ub}, writing the expected value in terms of its tail probabilities, one readily obtains for any $\lambda \geq 1$ that
\begin{equation}\label{eq:hit-torus1}
\mathbf{E}_{{x}}[H_{{Q}} ] \leq \lambda N^2 \big( 1 + \sum_{k \geq1}\mathbf{P}_{{x}}[H_{{Q}} > k \lambda N^2 ] \big).
\end{equation}
One then argues that for all $x \in \mathbf{T}$, with $\tilde{x}$ as above and $\widetilde{Q}=Q(\tilde{x}, \lambda^{1/4} N)$,
\begin{equation*}
\mathbf{P}_{{x}}(H_{{Q}} \leq \lambda N^2) = P_{\tilde{x}}(H_{\pi^{-1}(Q)} \leq \lambda N^2)  \geq P_{\tilde{x}}(H_{\pi^{-1}(Q)} \leq T_{\widetilde{Q}}) - c\lambda^{-1/2} 
 \geq c_0
\end{equation*}
upon choosing $\lambda$ large enough; here the first lower bound follows from \eqref{eq:exit-time} and a first-moment estimate and the second one simply by observing that $\pi^{-1}(Q) \cap \widetilde{Q}$ always contains at least one translate of $Q(0,r)$ in its bulk, i.e.~at distance at most $N$ from $\tilde{x},$ and so since $r \geq cN$ it follows from \eqref{eq:ctehittingproba} and monotonicity that the hitting probability $P_{\tilde{x}}[H_{\pi^{-1}(Q)} \leq T_{\widetilde{Q}}]$ admits a uniform lower bound. Feeding the resulting estimate into \eqref{eq:hit-torus1} and applying the Markov property yields that the sum on the right-hand side is bounded by $\sum_{k \geq1} (1-c_0)^k< \infty$, and \eqref{eq:hit-torusbox-ub} follows.

Assume now that $r\leq \frac{N}{100}$. Consider the set $V= \partial Q(\boldsymbol{0},\frac{N}{20})$. As we now explain, it is enough to argue that 
\begin{equation}\label{eq:hit-torus2}
c \cdot N^2  \Big(\frac{N}{r}\Big)^{d-2} \leq \mathbf{E}_{{y}}\big[H_{Q} \big] \leq C \cdot N^2  \Big(\frac{N}{r}\Big)^{d-2}, \quad y \in V;
\end{equation}
indeed, once \eqref{eq:hit-torus2} is shown, the bound \eqref{eq:hit-torusbox-ub} immediately follows by applying the strong Markov property at time $H_V$, by which $\mathbf{E}_{{x}}[H_Q] \leq \mathbf{E}_{{x}}[H_V] + \sup_{y \in V} \mathbf{E}_{{y}}[H_{Q} ]$, using \eqref{eq:hit-torus2} to bound the second term  and \eqref{eq:hit-torusbox-ub} in the case already treated to deduce that $\sup_{x\in \mathbf{T}}\mathbf{E}_{{x}}[H_V] \leq CN^2$. To obtain \eqref{eq:hit-torusbox-lb}, one writes instead $\mathbf{E}_{{x}}[H_{Q} ] \geq \mathbf{P}_{{x}}(H_{Q} > H_V)  \inf_{y \in V} \mathbf{E}_{{y}}[H_{Q} ]$. The desired lower bound now follows from \eqref{eq:hit-torus2} and since by  \eqref{eq:ctehittingproba} $$\mathbf{P}_{{x}}(H_{Q} > H_V) \geq \inf_{z \in \Z^d \setminus Q(0,r(1+\delta))} P_z[H_{Q(0,r)}= \infty] \geq c(\delta).$$

It thus remains to show \eqref{eq:hit-torus2}, under the assumption $r\leq \frac{N}{100}$. Throughout the rest of this proof we abbreviate $R_k=R_k({X},Q(\boldsymbol{0},\frac{N}{10})^c\cup Q,V^c)$ and $D_k=D_k({X},Q(\boldsymbol{0},\frac{N}{10})^c\cup Q,V^c),$ see \eqref{eq:deftilrhorho} whose definition can easily be extended from boxes in $\Z^d$ to general sets in $\mathbf{T}$, the system of successive stopping times corresponding to the excursions from $Q(\boldsymbol{0},\frac{N}{10})^c\cup Q$ to $V.$  
Let
\begin{equation}\label{eq:hit-torus3}
K=  \min \{ k \geq 1: X_{R_k} \in Q\},
\end{equation}
which counts the number of excursions of type $X_{[D_k, R_{k+1}]}$ until the first one that visits $Q$. For all $y \in V$ one obtains by a reasoning similar to \eqref{eq:proofctehittingproba}, using the assumption on $r$, that $\mathbf{P}_{{y}}(X_{R_1} \in Q)$ is comparable with $(\frac{r}{N})^{d-2}$, which together with the Markov property, is readily seen to imply that $K$ stochastically dominates/is stochastically dominated by geometric random variables with corresponding parameters. In particular, it follows that for all $y \in V$,
\begin{equation}\label{eq:hit-torus7}
c \cdot  \Big(\frac{N}{r}\Big)^{d-2} \leq \mathbf{E}_{{y}}[K] \leq C \cdot  \Big(\frac{N}{r}\Big)^{d-2} ,
\end{equation}
which is all we will use in the sequel. Now, by definition of $R_k$ and $D_k$, for any $y \in V$, one has that $\mathbf{P}_y$-a.s.~$H_Q= R_{K}$, hence
\begin{equation}\label{eq:hit-torus4}
\mathbf{E}_{{y}}\big[H_{Q} \big]  = \mathbf{E}_{{y}}\Big[ \sum_{1\leq k \leq K} (R_{k}-R_{k-1})\Big] 
\end{equation}
Owing to the strong Markov property, with $\mathcal{F}_k = \sigma(X_{\cdot \wedge R_k} )$, since $\{ K \geq k\} \in \mathcal{F}_{k-1}$, one has that
\begin{multline}\label{eq:hit-torus5}
\mathbf{E}_{{y}}\Big[ \sum_{1\leq k \leq K} (R_{k}-R_{k-1}) \Big] = \sum_{k \geq 1} \mathbf{E}_{{y}}\big[ (R_{k}-R_{k-1}) 1\{ K \geq k\}\big]\\=  \sum_{k \geq 1} \mathbf{E}_{{y}}\big[\mathbf{E}_{{y}}[ (R_{k}-R_{k-1}) | \mathcal{F}_{k-1} ] 1\{ K \geq k\}\big]
\end{multline}
and for any $y \in V$ and $k\geq1$
$$
\mathbf{E}_{{y}}[ (R_{k}-R_{k-1}) | \mathcal{F}_{k-1} ] =\mathbf{E}_y\big[\mathbf{E}_{{X_{R_k}}}[R_1]\big] \leq \sup_{v \in V}\mathbf{E}_{v}[ T_{Q (\boldsymbol{0},\frac{N}{10})}] + \sup_{w \in \mathbf{T}} \mathbf{E}_{w}[H_{V}] \leq CN^2, 
$$
which follows on account of \eqref{eq:exit-time} and \eqref{eq:hit-torusbox-ub} for the choice $r=\frac{N}{20}$ (already treated). One also has a corresponding deterministic lower bound of the same order by \eqref{eq:exit-time}, since by the Markov property, $R_{k}-R_{k-1}$ under $\mathbf{P}_y(X_{D_{k-1}}={y'}\,|\,\mathcal{F}_{k-1}),$ $y'\in{V},$ is stochastically dominated by $T_{Q(y', \frac{N}{50})}$, using the fact that $r\leq \frac{N}{100}$. Feeding the above deterministic upper/lower bound on the conditional expectation into \eqref{eq:hit-torus5} and using \eqref{eq:hit-torus7} to bound  the resulting $\mathbf{E}_{{y}}[K]$, one deduces \eqref{eq:hit-torus2} from \eqref{eq:hit-torus4}. 
\end{proof}

With Lemma~\ref{L:hit-torusbox} at hand, we proceed with the:

\begin{proof}[Proof of Lemma~\ref{lem:conc} (Random walk case)] 

We aim to apply Proposition~\ref{prop:largedeviationrw} with the choices $i_0=0$, $V_i=R_{i+1}-R_{i},$ so that $V_{i_0}=R_1$ and $m\overline{V}_m=R_{m}$ for any $m \geq 1$ in view of \eqref{e:shifted-avg}. Now pick $\eta=1-\frac1{1+\eps}$ and $m=\lceil(1+\eps)uM\rceil$ with 
\begin{equation} \label{e:avg-number'}
M= \frac{N^d}{\mathbf{E}_{\nu}[V_{0}]},
\end{equation} 
which equals the value defined by \eqref{e:avg-number} on account of \eqref{eq:ident}-\eqref{eq:boundrho1}. Note in passing that \eqref{e:avg-number'} is very intuitive (more so than its pendant \eqref{e:avg-number}): $uM$ with $M$ as in \eqref{e:avg-number'} is the total time $uN^d$ for the walk, divided by the `average' time $\mathbf{E}_{\nu}[V_{0}]$ consumed by an excursion, whence $M$ counts the `average' number of excursions.
With the above choices,
\begin{equation*}
	\mathbf{P}_{\boldsymbol{0}}\big(\Excrw(B_2,B_3,u)\geq (1+\epsilon)uM\big)
	\stackrel{\eqref{eq:defnumberexcursionRW}}{\leq} \mathbf{P}_{\boldsymbol{0}}\big( R_{\lceil(1+\epsilon)uM\rceil}\leq uN^d\big)\leq\mathbf{P}_{\boldsymbol{0}}\big( \, \overline{V}_m\leq (1-\eta)\mathbf{E}_{\nu}[{V}_0]\big),
\end{equation*}
with $B_k$ as in \eqref{e:B_i-choice}.
 Similarly, taking $m'=\lfloor(1-\eps)uM\rfloor+1$ and $\eta'=-1+\frac1{1-\eps/2},$ one obtains that if $\eps uM\geq2$
\begin{align*}
	\mathbf{P}_{\boldsymbol{0}}\big(\Excrw(B_2,B_3,u)\leq (1-\epsilon)uM\big)\leq\mathbf{P}_{\boldsymbol{0}}\big(\, \overline{V}_{m'}\geq (1+\eta')\mathbf{E}_{\nu}[{V}_0]\big).
\end{align*}
Since by monotonicity, we may assume that $\eps uM\geq2$ and $\eps\leq1/2$, the claim immediately follows by means of Proposition~\ref{prop:largedeviationrw}, provided we show that \eqref{eq:assumptionV1} holds, which we proceed to do with the help of Lemma~\ref{L:hit-torusbox}.

Recalling the definition of $R_1 (= V_0)$ from \eqref{eq:deftilrhorho}, identifying $B_k$ with its projection onto $\mathbf{T}$, one has that $R_1 \geq T_{B_3}$ holds $\mathbf{P}_{{y}}$-a.s.~for any $y \in \mathbf{T}$. Thus, applying the strong Markov property at time $T_{B_3}$, and combining \eqref{eq:hit-torusbox-ub} and the exit time estimate $ \mathbf{E}_y[T_{B_3}] \leq Cr_3^2$, valid for all $x \in \mathbf{T}$ (cf.~\eqref{eq:exit-time}), one sees that
\begin{equation}\label{eq:RW-1stmom-V1}
\sup_{{y}\in{\mathbf{T}}}
\mathbf{E}_{{y}}[V_0] \leq  \sup_{y\in B_3} \mathbf{E}_y[T_{B_3}]+ \sup_{{z}\in{\mathbf{T}}}
\mathbf{E}_{{z}}[H_{B_2}]  \leq C \Big(r_3^2 + \frac{N^d}{r_2^{d-2}}\Big) \leq C' \frac{N^d}{r_2^{d-2}}.
\end{equation} 
In particular, this implies that $V_0 \in L^1(\mathbf{P}_{{y}})$ for any $y \in \mathbf{T}$, as required by \eqref{eq:assumptionV1}. On the other hand, combining the Markov property and \eqref{eq:hit-torusbox-lb}, it follows that
\begin{equation}\label{eq:RW-1stmom-V2}
\inf_{{y}\in{\mathbf{T}}}
\mathbf{E}_{{y}}[V_0] \geq \inf_{z \notin B_3} \mathbf{E}_{{z}}[H_{B_2}] \geq c(\delta)  \frac{N^d}{r_2^{d-2}}.
\end{equation}
Combining \eqref{eq:RW-1stmom-V1} and \eqref{eq:RW-1stmom-V2}, the condition in the first line of \eqref{eq:assumptionV1} immediately follows, for all suitably large $\theta \geq C(\delta)$.
Regarding higher moments, noting that by the Markov property $V_0 -T_{B_3}$ has the same law as $H_{B_2}$ starting from some random point of $B_3^c,$ and applying the bound $(a+b)^k \leq 2^{k}(a^k+ b^k)$ valid for all $a,b \geq 0$,  yields that for all $x \in \mathbf{T}$, and $ k \geq 2$,
\begin{equation}\label{eq:RW-higher-mom-V1}
\mathbf{E}_{x}[V_0^k] \leq 2^{k} \big(\mathbf{E}_{x}[T_{B_3}^k] + \sup_{y \notin B_3} \mathbf{E}_{y}[H_{B_2}^k] \big).
\end{equation}
Applying a similar argument as e.g.~in \cite[(2.21)]{MR3936156}, one obtains that for all $x \in \mathbf{T}$,
\begin{equation}\label{eq:RW-higher-mom-V2}
 \mathbf{E}_{x}[T_{B_3}^k]  \leq k!\sup_{{y}\in{\mathbf{T}}}
\mathbf{E}_{{y}}[T_{B_3}]^k \stackrel{ \eqref{eq:exit-time}}{\leq}  k! C^k r_3^{2k} \stackrel{\eqref{eq:RW-1stmom-V2}}{\leq} k! C'(\delta)^k \inf_{{y}\in{\mathbf{T}}}
\mathbf{E}_{{y}}[V_0]^k. 
\end{equation}
Similarly, using the fact that $V_0 \geq H_{B_2}$ holds $\mathbf{P}_{{y}}$-a.s.~for any $y \in \mathbf{T}$, one finds that for all $y \notin B_3$,
\begin{equation}\label{eq:RW-higher-mom-V3}
 \mathbf{E}_{y}[H_{B_2}^k]  \leq k!\sup_{{x}\in{\mathbf{T}}}
\mathbf{E}_{{x}}[H_{B_2}]^k \leq k!\sup_{{x}\in{\mathbf{T}}}\mathbf{E}_{{x}}[V_0]^k \leq k! C(\delta)^k\inf_{{x}\in{\mathbf{T}}}\mathbf{E}_{{x}}[V_0]^k ,
\end{equation}
where the last step follows from the first moment comparison in \eqref{eq:assumptionV1} already established. Feeding \eqref{eq:RW-higher-mom-V2} and \eqref{eq:RW-higher-mom-V3} into \eqref{eq:RW-higher-mom-V1} completes the verification of \eqref{eq:assumptionV1}, for suitably large choice of $\theta=\theta(\delta) \in (1,\infty)$, and with it the proof.
\end{proof}

We now turn to the proofs of Lemmas~\ref{lem:concsoftrwri} and \ref{lem:conc} for random interlacements. The starting point is the following large deviation estimate, similar to Proposition \ref{prop:largedeviationrw} above, but simpler. For finite $B\subset\Z^d$ let $W_B$ denote the set of infinite nearest-neighbor paths in $\Z^d$ starting in $\partial B$ escaping all finite sets in finite time.  Recalling \eqref{eq:definterprocess} with $B=Q_r(0)$, $r \geq 1,$ $(X^j)_{j\geq 1}$ denotes in the sequel the random walks on $\Z^d$ corresponding to the restriction of the interlacement process to $Q_r(0),$ which are i.i.d.~with law $P_{\overline{e}_{r}}$, where $\overline{e}_{r} \equiv \overline{e}_{Q_r(0)}$, and at level $u$ the number $N_r^u\equiv N_{Q_r(0)}^u$ of trajectories hitting $Q_r(0)$ is a Poisson random variable with parameter $u\mathrm{cap}(Q_r)$, independent of $(X^j)_{j\geq 1}$. Similarly as above
\eqref{e:shifted-avg}, we consider an $(A,\mathcal{A})$-valued sequence $(Z^j)_{j \geq1}$, which we assume to be declared under $\mathbb{P}^I$ and independent of $((X^j)_{j\geq 1}, N_r^u)$, and study for measurable $F:W_{Q_r}\times A\rightarrow[0,\infty),$ $u>0$ and $r \geq 1$ the averages
\begin{equation}\label{eq:avg-RI}
\overline{V}_u= \frac1{u\mathrm{cap}(Q_r)}\sum_{j=1}^{N_r^u}V_j, \quad V_{j}=F(X^j,Z^{j}).
\end{equation}
(with $\overline{V}_u=0$ by convention whenever $N_r^u=0$). 
\begin{proposition}
\label{prop:largedeviationri}
For all $r, \theta \geq 1,$ there exist $c=c(\theta), C=C(\theta) \in (0,\infty)$ such that, if
\begin{equation}
\label{eq:assumptionV2}
\mathbb{E}^I[V_1 ]< \infty \text{ and } \mathbb{E}^I\big[V_1^k\big]\leq k!\theta^k\mathbb{E}^I\left[V_1\right]^k, \text{ for all }k \geq 2,
\end{equation}
then for all $u>0$ and $\eta\in{(0,1)}$, one has 
\begin{equation}
\label{eq:largedeviationRI}
\mathbb{P}^I\big(|\overline{V}_{u}-\mathbb{E}^I[{V}_1]|>\eta \mathbb{E}^I[{V}_1]\big)\leq C\exp\big\{-c\eta^2u\mathrm{cap}(Q_r)\big\}.
\end{equation}
\end{proposition}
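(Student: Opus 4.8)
\textbf{Proof plan for Proposition~\ref{prop:largedeviationri}.}
The overall strategy is to decompose $\overline{V}_u$ into a fixed (deterministic) number of i.i.d.~contributions plus a random fluctuation coming from the Poisson count $N_r^u$, and to control each separately via Bernstein's inequality (exactly as for the term $a_1$ in the proof of Proposition~\ref{prop:largedeviationrw}, but without the mixing error since the $V_j$'s here are genuinely i.i.d.). Write $\mu=\mathbb{E}^I[V_1]$ and $\kappa=\mathrm{cap}(Q_r)$, so that the target deviation event is $\{|\overline{V}_u - \mu| > \eta\mu\}$ with $\overline{V}_u = (u\kappa)^{-1}\sum_{j=1}^{N_r^u}V_j$. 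Since $N_r^u$ is Poisson with mean $u\kappa$ and independent of $(V_j)_{j\ge1}$, a first-moment/Chernoff bound for the Poisson law gives that $N_r^u \in [\,(1-\tfrac\eta4)u\kappa,\ (1+\tfrac\eta4)u\kappa\,]$ fails with probability at most $C\exp(-c\eta^2 u\kappa)$. On this good event, the number of summands is sandwiched between two deterministic integers $m_\pm = \lceil (1\pm\tfrac\eta4)u\kappa\rceil$, and by monotonicity of partial sums in the number of (nonnegative) terms it suffices to bound, for $m$ ranging over $\{m_-,m_+\}$, the probability that $\big|\sum_{j=1}^m V_j - m\mu\big|$ exceeds $\tfrac\eta4 \cdot u\kappa \cdot \mu$ (a bit of bookkeeping turns $|\overline V_u - \mu|>\eta\mu$ into such a statement for $m=m_\pm$, up to adjusting constants, using $|m-u\kappa|\le \tfrac\eta4 u\kappa+1$).

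For the i.i.d.~sum $\sum_{j=1}^m V_j$ of nonnegative random variables, I would apply the one-sided Bernstein inequality in the form already invoked in \eqref{eq:Bernstein} (e.g.~\cite[Corollary~2.11]{MR3185193}): the moment hypothesis \eqref{eq:assumptionV2}, namely $\mathbb{E}^I[V_1^k]\le k!\,\theta^k\mu^k$, is precisely a Bernstein condition, yielding for all $t>0$
\begin{equation*}
\mathbb{P}^I\Big(\Big|\sum_{j=1}^m V_j - m\mu\Big| > t\Big) \le 2\exp\Big\{-\frac{ct^2}{m\theta^2\mu^2 + \theta\mu t}\Big\}.
\end{equation*}
Taking $t = \tfrac\eta4 u\kappa\mu$ and $m\asymp u\kappa$ (both $m_\pm$ are of this order, using $\eta<1$), both terms in the denominator are $O(\theta^2 u\kappa\mu^2)$, so the exponent is $\le -c(\theta)\eta^2 u\kappa$. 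Summing the Poisson-count bound and the two Bernstein bounds for $m=m_\pm$ gives \eqref{eq:largedeviationRI} with constants depending only on $\theta$ (the dependence on $r$ disappears because everything is expressed through $\kappa=\mathrm{cap}(Q_r)$, and the $V_j$'s are identically distributed for fixed $r$). One should also note the trivial boundary case $u\kappa$ small, where the right-hand side of \eqref{eq:largedeviationRI} exceeds $1$ after enlarging $C$, so the inequality holds vacuously.

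The main (and essentially only) obstacle is the correct handling of the random index $N_r^u$: one must be careful that conditioning on $N_r^u$ does not alter the law of the $V_j$'s (it does not, by independence), and that the monotonicity argument reducing a random number of nonnegative summands to the two deterministic endpoints $m_\pm$ is applied on the event where $N_r^u$ is well-controlled. Everything else is a direct, and in fact simpler, version of the argument already carried out for the random walk in Proposition~\ref{prop:largedeviationrw}. Finally, once Proposition~\ref{prop:largedeviationri} is in hand, the interlacement halves of Lemma~\ref{lem:concsoftrwri} and Lemma~\ref{lem:conc} follow by the same substitutions as in the random-walk case: for Lemma~\ref{lem:concsoftrwri} one takes $Z^j=\hat\xi_j$, $V_j=\hat\xi_j g_{\lambda_j}(z)$, notes $u\kappa \asymp m$ after reparametrising by the excursion count and uses Lemmas~\ref{lem:separation}--\ref{lem:indepofy} to verify \eqref{eq:assumptionV2}; for Lemma~\ref{lem:conc} one takes $V_j=T^j$ (the number of excursions of the $j$-th trajectory), for which \eqref{eq:assumptionV2} is a standard geometric-type tail estimate, and $\mathcal{N}_{\rm RI}(\omega,B_2,B_3,u)=\sum_{j=1}^{N^u}T^j$ matches $u\kappa\,\overline V_u$ up to the identification $M=\mathrm{cap}_{B_3}(B_2)$.
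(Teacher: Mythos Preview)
Your proof of the proposition itself is correct and follows essentially the same route as the paper: a Chernoff bound for the Poisson count $N_r^u$ combined with Bernstein's inequality for the i.i.d.\ partial sums at the two deterministic endpoints $m_\pm$, assembled by a union bound (the paper's parameter choices $(\varepsilon,\delta,m)=(\eta/2,\eta/(2\pm\eta),\lfloor u(1\pm\eta/2)\kappa\rfloor)$ are just a slightly different bookkeeping of the same idea).

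One correction to your closing remark on consequences: for the interlacement half of Lemma~\ref{lem:concsoftrwri}, the grouping must be per \emph{trajectory}, not per excursion. The framework \eqref{eq:avg-RI} sums over the $N_r^u$ trajectories hitting $B_2$, so the paper takes $V_j=\sum_{0\le i\le T^j}\hat\xi_i^{\,j} g_{\zeta_i(X^j,B_2,B_3)}(z)$, and then relates $u\kappa\,\overline V_u$ back to $G_m^{\rm RI}$ for a fixed excursion count $m$ via the already-proved Lemma~\ref{lem:conc}. Your suggested per-excursion choice $V_j=\hat\xi_j g_{\lambda_j}(z)$ does not fit \eqref{eq:avg-RI} since the $\lambda_j$'s are indexed by excursions, not trajectories. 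Similarly, for Lemma~\ref{lem:conc} the paper takes $V_j=T^j+1$ rather than $T^j$ (a harmless shift due to the walks now starting inside $B_2$).
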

\begin{proof}
 Under \eqref{eq:assumptionV1}, it follows from Bernstein's inequality, see for instance \cite[Corollary 2.11]{MR3185193} for the version we use here, that for all $\delta\in{(0,1)}$ and integers $m \geq 1$ (see around \eqref{eq:Bernstein} for a similar argument)
\begin{equation}
\label{eq:1ststeplargedeviationri}
\mathbb{P}^I\Big(\Big|\frac1m\sum_{j=1}^m{V}_{j}-\mathbb{E}^I[{V}_1]\Big|>\delta \mathbb{E}^I[{V}_1]\Big) 
\leq 2\exp\big\{-c(\theta)\delta^2m\big\},
\end{equation}
Using a Chernoff bound for Poisson random variables, see for instance \cite[p.21-23]{MR3185193} combined with the inequality $(1+\eps)\log(1+\eps)-\eps\geq \eps^2/4$ for all $\eps\in{(-1,1)},$ one has for all $u>0$ and $\eps\in{(0,1)}$
\begin{equation*}
\P^I(|N_r^u-u\mathrm{cap}(Q_r)|>\eps u\mathrm{cap}(Q_r))\leq \exp\big\{-c\eps^2u\mathrm{cap}(Q_r)\big\}.
\end{equation*}
Combining this with \eqref{eq:1ststeplargedeviationri}, applied with $(\eps,\delta,m)$ chosen either as $(\eta/2,\eta/(2+\eta),\lfloor u(1+\eta/2)\mathrm{cap}(Q_r)\rfloor)$ or $(\eta/2,\eta/(2-\eta),\lceil u(1-\eta/2)\mathrm{cap}(Q_r)\rceil),$ and assuming that $\eta u\mathrm{cap}(Q_r)\geq 2$ and $\eta<1/4$ which is no loss of generality, \eqref{eq:largedeviationRI} readily follows by means of a suitable union bound.
\end{proof}

\begin{Rk}
Similarly as in Remark~\ref{R:nu-equ} one can derive an exact formula for a key quantity associated to random interlacements. First, defining $T=T(X, B_2,B_3)$ as in \eqref{eq:defTj}, by \cite[eq.~(6.9)-(6.11)]{MR3563197} we have
\begin{equation}
\label{eq:expectedgzeta}
{E}_{\overline{e}_{B_2}}\Big[\sum_{i=0}^{T}1\{X_{R_i(X,B_2,B_3)}=x\}\Big]= \frac{{e}_{B_2}^{B_3}(x)}{\mathrm{cap}(B_2)}\text{ for all }x\in{B_2}.
\end{equation}
Note that compared to \cite{MR3563197} we started the sum at $i=0$ instead of $i=1,$ which is due to the fact that we started the definition of $R_i$ from $R_0=0$, see~\eqref{eq:deftilrhorho}, while in the paragraph above~(4.8) in~\cite{MR3563197} it starts from $R_1=0$. In particular in view of \eqref{eq:ident}, the left-hand side of \eqref{eq:expectedgzeta} is crucially proportional (up to projection on the torus) to the invariant distribution of the stopping times~$R_i$ for the random walk on the torus. Note that we only use these exact formulas to prove that the means in Lemmas~\ref{lem:concsoftrwri} and~\ref{lem:conc} are the same for random interlacements and the random walk, and thus that the process $\omega^{(x)}$ is an interlacement process in \eqref{e:loc-law} under $\til{\mathbf{P}}_0.$ In particular, if one is only interested in the proof of all our results for random interlacements, these formulas are never required. 
\end{Rk}

Similarly as for the random walk, we are now ready to prove Lemmas~\ref{lem:concsoftrwri} and~\ref{lem:conc} for random interlacements (starting with the latter), using Proposition \ref{prop:largedeviationri} instead of Proposition \ref{prop:largedeviationrw}.

\begin{proof}[Proof of Lemma~\ref{lem:conc} (Random interlacements case)] 
For $r=r_2$ take $V_j=T^j+1$ in \eqref{eq:avg-RI}, see~\eqref{eq:defTj},  where $T^j=T^j(B_2,B_3)$ is defined as in \eqref{eq:defnumberexcursionRI} but for the walk $X^j$ now starting in~$B_2,$ and $\eta=\eps.$  With these choices $\overline{V}_{u}= (u\mathrm{cap}(Q_r))^{-1}\mathcal{N}_{\rm{RI}}(\omega,B_2,B_3,u)$. Note that we consider here $T^j+1$ instead of $T^j$ since  the walks $X^j$ in \eqref{eq:defnumberexcursionRI} were started outside of $B_3,$ and thus the times $R_i,$ see \eqref{eq:deftilrhorho}, are shifted by $1$ compared to the corresponding walk started in $B_2.$  It moreover follows from \eqref{eq:ctehittingproba} and monotonicity that the random variable $T^1$ is dominated by a geometric random variable of parameter $p$ for some $p=p(\delta)>0,$ which readily implies that condition \eqref{eq:assumptionV2} is verified for some $\theta=\theta(\delta)$.  Moreover by the lower bound in  \eqref{eq:ctehittingproba}, we have $\cpc{B_3}{B_2}\leq C(\delta)\cpc{}{B_2}$. Since $\mathbb{E}^I[{V}_1]={E}_{\overline{e}_{B_2}}[T(X,B_2,B_3)]+1$ 
the claim follows by an application of \eqref{eq:largedeviationRI} and summing \eqref{eq:expectedgzeta} over $x\in{B_2}.$
\end{proof}

\begin{proof}[Proof of Lemma~\ref{lem:concsoftrwri} (Random interlacements case)] 
Take $r=r_2$ and denote again by $T^j=T(X^j,B_2,B_3)$ the total number of excursions that the walk $X^j$ performs across the annulus $B_3\setminus B_2,$  see \eqref{eq:defTj}, and let $(\zeta_i(X^j,B_2,B_3))_{i\geq 0}$ be the clothesline process associated to $X^j,$ see \eqref{eq:defclothesline} (whose definition can clearly be extended to the case $i=0$), where $(X^j)_{j\geq1}$ is now the set of walks in the interlacements process $\omega_{B_2}$ that hit $B_2$, started after first hitting $B_2.$ Let  $ \widehat{\xi}_i^j $ refers to $\widehat{\xi}_k$ as appearing in \eqref{eq:GnRI} for the unique choice of $k$ such that $k= i + \sum_{1 \leq n < j} (T^n+1),$ which is an independent and i.i.d.~sequence of exponential random variables with mean~1. Define for some fixed $z\in{\Sigma}$
\[V_j=\sum_{0 \leq i \leq T^j} \widehat{\xi}_i^jg_{\zeta_i(X^j,B_2,B_3)}(z).\]
As in the random walk case, by assumption on the $r_k$'s  and Lemmas~\ref{lem:separation} and~\ref{lem:indepofy}, one has that $g_\zeta(z)\leq C(\delta)g_{\zeta'}(z)$ for all $\zeta,\zeta' \in \partial B_2\times \partial B_3^c$ and $z\in \Sigma.$ One can then deduce that assumption \eqref{eq:assumptionV2} holds similarly as in the proof of Lemma~\ref{lem:conc} for random interlacements, for suitable choice of $\theta=\theta(\delta) \in (1, \infty).$ Moreover, it follows from \eqref{eq:defclothesline}, \eqref{eq:gbar-def}, \eqref{eq:expectedgzeta} and a small calculation involving the Markov property that
\begin{equation*}
	\mathbb{E}^I[V_1] = \bar{g}(z) \cdot \frac{\cpc{B_3}{B_2}}{\cpc{}{B_2}}.
\end{equation*}
Finally, for all $u_1<u_2$ we have by the definitions of $\mathcal{N}_{\rm RI}$ in \eqref{eq:defnumberexcursionRI} and $G_{\cdot}^{\rm RI}$ in \eqref{eq:GnRI} (note that similarly as before the clotheslines in these definition are shifted by $1$ since the walk therein is started outside of $B_3$ instead of inside $B_2$)
\[\mathcal{N}_{\rm RI}(\omega,B_2,B_3,u_1)\leq m \leq \mathcal{N}_{\rm RI}(\omega,B_2,B_3,u_2)\quad\Longrightarrow\quad  u_1\overline{V}_{u_1}\leq\frac{G_{m}^{\rm RI}(z)}{\mathrm{cap}(B_2)}\leq u_2\overline{V}_{u_2}.\]
Taking $u_1=m/(M(1+\eps/3))$ and $u_2=m/(M(1-\eps/3))$ we conclude by combining \eqref{eq:largedeviationRI} for $u=u_1,u_2$ and $\eta=\eps/3,$ with the concentration of $\Excri$ supplied by Lemma~\ref{lem:conc}, for $u=u_1,u_2$ and $\eps/3$ in place of $\eps,$ as well as the inequality $\cpc{B_3}{B_2}\leq C\cpc{}{B_2}.$ 
\end{proof}

\begin{Rk}[Extensions]\label{R:robust}
As with all results of Section~\ref{sec:shortrangeRWRI}, both Lemmas~\ref{lem:concsoftrwri} and~\ref{lem:conc} involve square boxes. In particular, this means for instance that within the setup of Theorem~\ref{thm:rwshortrange} or Corollary~\ref{cor:coupling}, no $\ell^2$-smoothing of boxes as used e.g.~in \cite{MR3563197} is necessary. 
This degree of flexibility is relevant for applications to more general classes of graphs (for which a meaningful notion of smoothing is often not even clear) under suitable hypotheses (e.g.\  polynomial volume growth, polynomial decay of the Green function and the validity of an elliptic Harnack inequality, cf.~\S\ref{sec:harnack}), to which the above arguments can likely be extended.
\end{Rk}

\section{Appendix: admissible sets}
\label{app:B}
\renewcommand*{\thetheorem}{B.\arabic{theorem}}
\renewcommand{\theequation}{B.\arabic{equation}}

As explained below Theorem~\ref{cor:phasetransition2-intro}, an important question is to determine which sets are admissible, i.e.~belong to $\cA_{\mathbf{T}}$ in \eqref{eq:defAzd}, for these are precisely the `patterns' that can be seen as part of $\mathcal{L}^{\alpha}$ for some $\alpha > \frac12$. We classify these sets in Theorem~\ref{thm:admissible} below. Once this is established, one readily deduces Corollary~\ref{c:trichotomy} from Theorem~\ref{cor:phasetransition2-intro}; the short proof appears at the end of this appendix.

Let
\begin{equation}
\label{eq:defAzd-real}
\cA_{\Z^d}=\{K\subset \Z^d: K\neq \emptyset,  \textstyle \cp(K)\leq \frac 2{g(0)}\}
\end{equation}
so that, in view of \eqref{eq:defalpha*K}, 
\eqref{eq:defAzd}, \eqref{eq:easy} and our definition of the capacity for subsets of $\mathbf{T}$, see below \eqref{e:e_K}, the family $\cA_{\mathbf{T}}$ corresponds precisely to projections onto $\mathbf{T}$ of sets belonging to $\cA_{\Z^d}$ 
when $N \geq C$.

\begin{theorem}[Admissible sets]
\label{thm:admissible}
		\begin{align*}
			&	\cA_{\Z^d} =\{K\subset \Z^d: |K|\leq 2\}, \text{ for all $d \geq 4$, and}\\
	&\mathcal{A}_{\Z^3} = \{K\subset \Z^d: |K|\leq 2 \text{ or } |K| =3  \text{ and } K \text{ is connected}\}. 
	\end{align*}
\end{theorem}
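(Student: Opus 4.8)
The statement reduces, via \eqref{eq:defalpha*K} and \eqref{eq:defAzd-real}, to a purely combinatorial/analytic question about the capacity function $\mathrm{cap}(\cdot)$ on finite subsets of $\Z^d$: we must show that the only sets $K$ with $\mathrm{cap}(K) \le 2/g(0)$ are singletons, pairs, and (when $d=3$) connected triples. The plan is to exploit monotonicity of capacity, i.e.~$K \subseteq K'$ implies $\mathrm{cap}(K) \le \mathrm{cap}(K')$ (see \cite[Prop.~2.2.1]{Law91}, cited around \eqref{eq:cap-distance2}), to cut down to a finite list of \emph{minimal} candidate sets, and then to compute the capacities of those candidates with sufficient precision. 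Since capacity is monotone, to prove $\mathcal{A}_{\Z^d}$ contains no set of cardinality $\ge 3$ (for $d\ge 4$) it suffices to show that \emph{every} three-point set $K$ has $\mathrm{cap}(K) > 2/g(0)$; and by lattice symmetries and \eqref{eq:cap-distance2} (which already handles three-point sets with two points at $\ell^1$-distance $\ge 2$, since then $\mathrm{cap}(K) > \mathrm{cap}(\{x,y\})$ with $\mathrm{diam}_{\ell^1}\ge 2$, and $\mathrm{cap}(\{x,y\})$ at distance $2$ needs to be compared to $2/g(0) = \lim \mathrm{cap}(\{x,y\})$), the remaining cases are the two \emph{connected} triples $K_1$ and $K_2$ from \eqref{e:3points} — exactly the content of \eqref{eq:enoughtodetermineA1}, proved in the appendix. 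So for $d\ge 4$ the task is: show $\mathrm{cap}(\mathbf{K_1}), \mathrm{cap}(\mathbf{K_2}) > 2/g(0)$, i.e.~$\alpha_*(K_1), \alpha_*(K_2) < \tfrac12$.

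\textbf{Key steps.} First, reduce via \eqref{eq:enoughtodetermineA1} (for $d\ge 4$) and \eqref{eq:enoughtodetermineA2} (for $d=3$) the problem to deciding membership of the finitely many sets $K_1, K_2$ and, when $d=3$, additionally $A_1,\dots,A_8$. Second, for $d=3$, one must also \emph{verify} that the connected triples \emph{are} admissible, i.e.~$\mathrm{cap}(K_i) \le 2/g(0)$ for $i=1,2$; this is the converse direction and again is a finite numerical check. Third — the computational heart — invoke Lemma~\ref{lem:computerassisted}, which supplies rigorous rational (interval) bounds, to precision $10^{-30}$, for $g(0)$ and for $\mathrm{cap}(K_1), \mathrm{cap}(K_2)$ (in dimensions $d=3$ and $d\ge4$, using that these capacities depend only on finitely many Green function values by solving the linear system \eqref{eq:lastexit} restricted to $K$, cf.~the derivation of \eqref{e:cap-2point}) and for $\mathrm{cap}(A_i)$, $i=1,\dots,8$, when $d=3$. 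Concretely: write $\mathrm{cap}(K)$ as an explicit rational expression in $\{g(x-y): x,y\in K\}$ obtained by inverting the symmetric matrix $(g(x-y))_{x,y\in K}$ and summing entries of the inverse (equivalently $\mathrm{cap}(K) = (\mathbf{1}^\top G_K^{-1} \mathbf{1})$ where $G_K = (g(x-y))_{x,y\in K}$), then substitute the certified enclosures for the finitely many Green values appearing, propagate the interval arithmetic through the (small, fixed-size) matrix inversion, and compare the resulting enclosure of $\alpha_*(K) = (g(0)\,\mathrm{cap}(K))^{-1}$ against $\tfrac12$. For $K_1, K_2$ with $d\ge 4$ the enclosures will lie strictly below $\tfrac12$; with $d=3$ they lie in $(\tfrac{?}{},\tfrac12]$ or above depending on the set, with $K_1,K_2$ admissible and $A_1,\dots,A_8$ not; the numbers are separated from $\tfrac12$ by far more than $10^{-30}$, so the certified precision is ample. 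Finally, for $d\ge 5$ one should note that the Green function values, hence capacities, are even easier to bound (the walk is ``more transient''), and in fact monotonicity in $d$ of $\alpha_*$ — available from \eqref{e:aalpha_*-equiv} and \cite[Lemma~C.1]{MR1174248}, as used in Remark~\ref{rk:thmsprinkling},\ref{rk:removingsprinkling} — lets one deduce the $d\ge 5$ cases from the $d=4$ computation once one checks $\alpha_*(K_i)$ is decreasing in $d$ for these specific shapes (or simply redo the trivial $d\ge 5$ numerics directly).

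\textbf{Main obstacle.} The conceptual reductions \eqref{eq:enoughtodetermineA1}–\eqref{eq:enoughtodetermineA2} are themselves monotonicity arguments: one must argue that \emph{every} set of cardinality $\ge 3$ (resp.~$\ge 4$, in the $d=3$ subcase) contains a subset isomorphic to one of the finitely many ``forbidden'' configurations with capacity exceeding $2/g(0)$, so that by monotonicity the larger set is not in $\mathcal{A}$; dually one must check the finitely many ``minimal admissible'' shapes. Pinning down that this finite list is \emph{complete} — i.e.~that no three-point (resp.~four-point) shape has been overlooked up to lattice symmetry, and that the chosen $A_1,\dots,A_8$ together with $K_1,K_2$ really dominate all larger connected configurations — is the delicate bookkeeping step; it is partially automated following \cite[Appendix~B]{MR1174248}. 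The remaining obstacle is then purely a matter of rigor in the numerics: one must ensure the Green function enclosures of Lemma~\ref{lem:computerassisted} are produced by a \emph{certified} method (e.g.~rigorous truncation bounds on the relevant integral/series representations of $g$ in each dimension, or validated linear algebra), so that the final comparisons with $\tfrac12$ are genuine proofs rather than floating-point heuristics. Given the $10^{-30}$ precision and the macroscopic gap between the capacities in question and $2/g(0)$, this is straightforward but must be stated carefully.
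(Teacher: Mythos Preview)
Your high-level plan matches the paper's: reduce via \eqref{eq:enoughtodetermineA1}--\eqref{eq:enoughtodetermineA2} to a finite list of shapes, compute their capacities as $\mathbf{1}^\top G_K^{-1}\mathbf{1}$ using certified Green-function enclosures (Lemma~\ref{lem:computerassisted}), and handle $d\ge 5$ by monotonicity in dimension. The numerics part and the dimension lift are essentially as in the paper (the paper's Lemma~\ref{pro:increasingdimension} gives exactly the monotonicity of $g(0)\,\mathrm{cap}(K)$ in $d$ you need).

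However, your description of \emph{how} the reductions \eqref{eq:enoughtodetermineA1}--\eqref{eq:enoughtodetermineA2} are proved contains a genuine misconception. You write that ``every set of cardinality $\ge 3$ \dots\ contains a subset isomorphic to one of the finitely many `forbidden' configurations'', invoking subset monotonicity of capacity. This is false: a spread-out triple such as $\{(0,0,0),(5,0,0),(10,0,0)\}$ contains neither $K_1$ nor $K_2$, nor any of the $A_i$. Likewise \eqref{eq:cap-distance2} only tells you such a set has capacity larger than a \emph{nearest-neighbour} pair, which is strictly below $2/g(0)$ and hence not enough. The reduction cannot be done by subset monotonicity alone.

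The paper's mechanism (Proposition~\ref{pro:finite-red}) is different: it uses the variational characterisation of capacity to observe that if there is a bijection $\phi:K\to K'$ with $g(x,y)\le g(\phi(x),\phi(y))$ for all $x,y\in K$, then $\mathrm{cap}(K)\ge \mathrm{cap}(K')$ --- see \eqref{cap-monot-cond}. One then shows that \emph{any} three-point set $\{x,y,z\}$ with $|x-z|_1\ge 2$ admits such a Green-function-increasing bijection onto $K_1$ or $K_2$, using Lemma~\ref{lem:boundongreenfunction} to bound $g(x,z)$, $g(x,y)$, $g(y,z)$ from above by the corresponding values on the connected triple. This yields $\inf_{|K|\ge 3}\mathrm{cap}(K)\ge \min_i\mathrm{cap}(K_i)$ directly, with no subset containment involved. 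The refinement for $d=3$ (excluding $K_1,K_2$ and bringing in the $A_i$) proceeds by the same comparison criterion with a finer case analysis on which $v$ with $|v|_1=2,3$ maximises $g(0,v)$. If you rewrite your ``main obstacle'' paragraph around this comparison principle rather than subset monotonicity, the argument goes through.
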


The following lemma will be used to reduce to the cases $d\in{\{3,4\}}$ when proving Theorem~\ref{thm:admissible}. In the sequel we add subscripts $\Z^d$ to various quantities such as $\text{cap}(\cdot)$ or $g(\cdot)$ to highlight their dependence on the underlying graph.

\begin{lemma}
\label{pro:increasingdimension}
	For all $d \geq 3$ and finite $K\subset \Z^{d}$, the function $$d' \in \{ d,d+1,\dots\} \mapsto \frac{\cp_{\Z^{d'}}(K\times \{0\}^{d'-d})}{\cp_{\Z^{d'}}(\{0\})}$$ is non-decreasing. 
\end{lemma}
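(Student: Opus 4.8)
The plan is to express the capacity of a finite set $K\subset\Z^{d'}$ in terms of the hitting distribution of $K$, more precisely via the identity $\mathrm{cap}_{\Z^{d'}}(K)=\sum_{x\in K}e_K(x)$ together with the energy/variational characterization. The cleanest route is to use the formula relating the capacity ratio to return probabilities of the walk on the trace of $K$. Write $V=K\times\{0\}^{d'-d}$ viewed inside $\Z^{d'}$. I would first reduce to the variational formula
\begin{equation*}
\frac{\mathrm{cap}_{\Z^{d'}}(V)}{\mathrm{cap}_{\Z^{d'}}(\{0\})}=\Big(\min_{\mu}\, g_{d'}(0)^{-1}\sum_{x,y\in V}\mu(x)g_{\Z^{d'}}(x,y)\mu(y)\Big)^{-1},
\end{equation*}
where the minimum ranges over probability measures $\mu$ on $V$, and similarly note $\mathrm{cap}_{\Z^{d'}}(\{0\})=g_{\Z^{d'}}(0)^{-1}$. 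Thus it suffices to show that the normalized Green kernel $\bar g_{d'}(z):=g_{\Z^{d'}}(z)/g_{\Z^{d'}}(0)$, restricted to vectors of the form $z=(w,0)\in\Z^d\times\{0\}^{d'-d}$, is \emph{pointwise non-increasing} in $d'$: if $\bar g_{d'+1}(w,0)\le\bar g_{d'}(w,0)$ for all $w\in\Z^d\setminus\{0\}$ (with equality $=1$ at $w=0$), then every quadratic form above only gets larger as $d'$ increases, hence the minimum increases, hence the capacity ratio is non-decreasing. This monotonicity of the quadratic form is the key structural step and is immediate once the kernel comparison is in hand, since $\mu(x)g(x,y)\mu(y)$ with $\mu\ge 0$ is monotone in each entry $g(x,y)$.

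\textbf{The Green kernel comparison.} The heart of the argument is therefore the inequality $\bar g_{d'+1}(w,0)\le\bar g_{d'}(w,0)$ for $w\neq 0$. I would prove this probabilistically. Use the last-exit / hitting decomposition $\bar g_{d'}(w,0)=P^{d'}_{(w,0)}(H_{0}<\infty)=P^{d'}_{0}(H_{(w,0)}<\infty)$, valid on the transient lattice $\Z^{d'}$. Now realize the simple random walk on $\Z^{d'+1}$ as follows: at each step, with probability $\tfrac{d'}{d'+1}$ move in one of the first $d'$ coordinates according to an SRW step on $\Z^{d'}$, and with probability $\tfrac{1}{d'+1}$ move $\pm 1$ in the last coordinate. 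Projecting onto the first $d'$ coordinates gives a lazy version of SRW on $\Z^{d'}$, whose range (set of visited sites) has the same law as that of the non-lazy SRW on $\Z^{d'}$. Therefore $P^{d'+1}_{0}(\text{first }d'\text{ coords ever hit }w)=P^{d'}_{0}(H_{w}<\infty)=\bar g_{d'}(w,0)$. But hitting the point $(w,0)\in\Z^{d'+1}$ requires the first $d'$ coordinates to hit $w$ \emph{at a moment when the last coordinate equals $0$}, which is a strictly smaller event (for $w\ne 0$), giving $\bar g_{d'+1}(w,0)\le\bar g_{d'}(w,0)$. This is exactly the comparison needed. I would make the coupling precise: condition on the infinite sequence of "which block of coordinates moves" and on the $\Z^{d'}$-steps; then the event "first $d'$ coords hit $w$" is fixed, while the last coordinate performs an independent lazy walk on $\Z$, and the additional constraint is that it sit at $0$ at one of the (possibly several) hitting times.

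\textbf{Expected obstacle and cleanup.} The main technical obstacle is making the last paragraph's coupling argument airtight, in particular handling the case where the $\Z^d$-projection hits $w$ infinitely often (so the last coordinate gets infinitely many chances to be at $0$): one must argue that even so the event for $\Z^{d'+1}$ is still contained in (a coupled copy of) the event for $\Z^{d'}$, which is a containment of events on a common probability space, hence the inequality; there is no issue as long as one is comparing probabilities of nested events rather than trying to get an exact formula. A secondary point is justifying the variational (Dirichlet/energy) formula for $\mathrm{cap}/\mathrm{cap}(\{0\})$ in the form above; this is standard — e.g.\ it follows from $Ge_K=h_K$ in \eqref{eq:lastexit} and the fact that $e_K$ minimizes the energy among measures of given mass, or one can cite \cite[Prop.~2.2.1]{Law91} and the classical dual variational principle — so I would simply invoke it. An alternative, fully elementary finish that avoids the variational principle: write $\mathrm{cap}(V)^{-1}=\overline{e}_V^{\,t}\,G_V\,\overline{e}_V$ where $\overline e_V$ is the normalized equilibrium measure and $G_V=(g(x,y))_{x,y\in V}$; since $\overline e_V$ for $\Z^{d'}$ is a fixed probability vector, $\mathrm{cap}_{\Z^{d'+1}}(V)^{-1}\ge \overline{e}_V^{(d')\,t}G_V^{(d'+1)}\overline{e}_V^{(d')}\ge \overline{e}_V^{(d')\,t}G_V^{(d')}\overline{e}_V^{(d')}\cdot(\text{normalization})$, using the kernel comparison entrywise — though one must be careful with the $g(0)$ normalizations, so I would present the variational version as the main line and keep this as a remark. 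Once the lemma is proved, Theorem~\ref{thm:admissible} reduces to checking the $d=4$ (resp.\ $d=3$) cases directly, which is done via the computer-assisted capacity computations of Lemma~\ref{lem:computerassisted}.
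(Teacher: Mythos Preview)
Your approach is essentially the same as the paper's: both reduce to showing that the normalized Green function $g_{\Z^{d'}}(x,y)/g_{\Z^{d'}}(0)=P^{\Z^{d'}}_x(H_y<\infty)$ is non-increasing in $d'$ via the projection/coupling argument (hitting the column $\{y\}\times\Z^{d'-d}$ has the same probability as hitting $y$ in $\Z^d$, and hitting the single point $(y,0)$ is harder), and then conclude by the variational characterization of capacity. The paper compares dimension $d$ to $d'$ in one step rather than $d'$ to $d'+1$, and is terser about the coupling, but the argument is the same.
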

\begin{proof}
To stress its dependence on dimension, we write ${P}^{\Z^d}_x$ for the canonical law of simple random walk on $\Z^d$ starting from $x\in \Z^d$. For all $x,y\in K$ writing $x'=(x,0,\ldots,0)$ and  $y'=(y,0,\ldots,0)\in \Z^{d'}$ one has 
\begin{align*}
	\frac{g_{\Z^d}(x,y)}{g_{\Z^d}(0,0)} = P^{\Z^d}_x(H_y<\infty) = P_{x'}^{\Z^{d'}} (H_{\{y\}\times {\Z^{d'-d}}}<\infty) \geq P^{\Z^{d'}}_{x'}(H_{\{y\}\times \{0\}^{d'-d}}<\infty) 
	= \frac{g_{\Z^{d'}}(x', y')}{g_{\Z^{d'}}(0,0)}.
\end{align*}
The claim follows using a well-known variational characterisation of the capacity, see \cite[(1.61)]{MR2932978}, whose proof easily extend to infinite transient graphs.
\end{proof}

Our next result will be used as a further reduction step, by which fully determining $\mathcal{A}_{\Z^d}$ in \eqref{eq:defAzd} will effectively boil down to computing the capacities of a small number (at most ten) of sets. 
 To simplify notation we will from now for each $d'\leq d$ identify $\Z^{d'}$ with $\Z^{d'}\times \{0\}^{d-d'}\subset\Z^d$. Recall the sets $K_1=\{(0,0), (0,1), (0,2)\}$ and $K_2=\{(0,0), (0,1), (1,0)\}$ from \eqref{e:3points}, viewed as subsets of $\Z^2\times\{0\}^{d-2}$ for $d\geq3$ according to our convention, and which correspond to all connected sets with cardinality three. Further, let
\begin{equation}
\label{eq:defAi}
\begin{aligned}
	A_1&= \{ (0,0,0), (0,2,0), (0,1,1)\},&
	A_2&= \{ (0,0,0), (0,2,0), (0,3,0)\},\\
	A_3 &=  \{ (0,0,0), (1,1,0), (0,3,0)\}, & 
	A_4 &=  \{ (0,0,0), (0,2,0), (1,2,0)\}, \\
	A_5 &=  \{ (0,0,0), (1,1,0), (1,2,0)\}, & 
	A_6 &=  \{ (0,0,0), (0,2,0), (1,1,1)\}, \\
	A_7 &=  \{ (0,0,0), (1,1,0), (1,1,1)\}, &
	A_8 &=  \{ (0,0,0), (0,0,1), (0,1,0), (0,1,1)\}.
\end{aligned}
\end{equation}
The following result mirrors Theorem~\ref{thm:admissible}. Its first part \eqref{pro:highd} will be enough to treat the cases $d \geq 4$; the more refined \eqref{pro:d=3} will be used to deal with the case $d=3$. In what follows, $K$ and $K'$ are called isomorphic if $K$ can be obtained from $K'$ by lattice symmetries.

\begin{proposition}[$d \geq 3$, $K\subseteq \Z^d$, $|K|\geq 3$]\label{pro:finite-red}
	\begin{align}
	 \inf_K\, \cp(K)&\geq \min_{i=1,2}\cp(K_i), \label{pro:highd}\\
	 \widetilde{\inf_K} \,  \cp(K) &\geq \min_{1\leq i\leq 8} \cp(A_i), \label{pro:d=3}
	\end{align}
	where $ \widetilde{\inf}_K $ refers to a restricted infimum over sets $K$  not isomorphic to $K_1$ or $K_2$.
\end{proposition}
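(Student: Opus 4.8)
\textbf{Proof plan for Proposition~\ref{pro:finite-red}.}

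The plan is to reduce, by a sequence of elementary monotonicity arguments, the infimum of $\mathrm{cap}(K)$ over all sets $K$ with $|K|\geq 3$ (resp.\ all such $K$ not isomorphic to $K_1$ or $K_2$) to the minimum over the explicit finite list $\{K_1,K_2\}$ (resp.\ $\{A_1,\dots,A_8\}$). The starting point is monotonicity of capacity under inclusion, $\mathrm{cap}(K')\leq\mathrm{cap}(K)$ for $K'\subseteq K$ (see \cite[Prop.~2.2.1]{Law91}), which immediately shows that the infimum is attained (if at all) on sets of cardinality exactly $3$ when proving \eqref{pro:highd}: any $K$ with $|K|\geq 3$ contains a subset of size $3$ with no larger capacity. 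For \eqref{pro:d=3} one must be slightly more careful, since deleting a point from a $4$-point set might land inside the isomorphism class of $K_1$ or $K_2$; this is precisely why $A_8$, a $4$-point set, appears in the list — one keeps those $4$-point sets all of whose $3$-point subsets are isomorphic to $K_1$ or $K_2$. So the first reduction step is: it suffices to consider $K$ with $|K|=3$ for \eqref{pro:highd}, and $K$ with $|K|=3$ not isomorphic to $K_1,K_2$, together with the finitely many `minimal' $4$-point sets of the above kind, for \eqref{pro:d=3}.

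The second reduction step uses Lemma~\ref{pro:increasingdimension}: for a fixed $3$- or $4$-point configuration lying (after a lattice symmetry) in $\Z^{d'}\times\{0\}^{d-d'}$ with $d'\in\{2,3,4\}$, the normalized capacity $\mathrm{cap}_{\Z^d}(K)/\mathrm{cap}_{\Z^d}(\{0\})$ is non-decreasing in the ambient dimension. Since $\mathrm{cap}(\{0\})=g(0)^{-1}$ and $\alpha_*(K)=(g(0)\mathrm{cap}(K))^{-1}$, this means that passing to the smallest ambient dimension in which the configuration actually lives can only decrease $\mathrm{cap}(K)$ relative to the singleton, hence only increases $\alpha_*(K)$; for the purpose of bounding the infimum of $\mathrm{cap}$ from below it therefore suffices to work in the minimal ambient dimension. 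A $3$-point set spans an affine subspace of dimension at most $2$, so up to lattice symmetry it sits in $\Z^2$; a connected $3$-point set in $\Z^2$ is isomorphic to $K_1$ or $K_2$, and a disconnected one contains two points at $\ell^1$-distance $\geq 2$, so by \eqref{eq:cap-distance2} it has strictly larger capacity than any two-point set at distance $2$, which after a short check exceeds $\min_i\mathrm{cap}(K_i)$ (the disconnected $3$-point configurations in $\Z^2$ and $\Z^3$ to be inspected form a short list). This proves \eqref{pro:highd}. For \eqref{pro:d=3} one instead enumerates, up to lattice isomorphism, all $3$-point sets in $\Z^3$ \emph{not} isomorphic to $K_1$ or $K_2$ and all the relevant `minimal' $4$-point sets in $\Z^3$; this is a finite combinatorial classification, and one argues that every such set either contains one of the $A_i$ as a subset (so has capacity $\geq\mathrm{cap}(A_i)$ by monotonicity) or is itself isomorphic to one of the $A_i$. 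The sets $A_1,\dots,A_8$ in \eqref{eq:defAi} are exactly the representatives of the `smallest' such configurations; sets with points further apart have larger capacity by the decoupling estimate \eqref{eq:capdecoupling} together with \eqref{eq:cap-distance2}, which lets one discard the infinitely many remaining configurations by comparison with a nearby smaller one.

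I expect the main obstacle to be the second part, \eqref{pro:d=3}: carrying out the finite classification of $3$-point subsets of $\Z^3$ (and the extremal $4$-point sets) up to the $48$ lattice symmetries of $\Z^3$, and then verifying for each remaining infinite family that it is dominated — in the sense of containing a subset isomorphic to some $A_i$, or being comparable via \eqref{eq:capdecoupling}–\eqref{eq:cap-distance2} — is bookkeeping-heavy and is where all the case analysis concentrates. (The actual numerical verification that $\min_{1\leq i\leq 8}\mathrm{cap}(A_i)>\tfrac{2}{g(0)}$ fails only for $A_8$ when $d=3$, etc., is deferred to the computer-assisted Lemma~\ref{lem:computerassisted} and is not part of this proposition.) The $d\geq 4$ case \eqref{pro:highd} is comparatively easy, essentially a one-line consequence of Lemma~\ref{pro:increasingdimension}, monotonicity, and the observation that a $3$-point set lives in a plane.
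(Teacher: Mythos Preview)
Your plan has a genuine gap at its core: the geometric reduction you propose does not work, and the substitute the paper uses is a different (and sharper) tool that you are missing.

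First, the claim that ``a $3$-point set spans an affine subspace of dimension at most $2$, so up to lattice symmetry it sits in $\Z^2$'' is false. Lattice symmetries are signed coordinate permutations; they do not rotate an arbitrary $2$-plane onto a coordinate plane. For instance $A_6=\{(0,0,0),(0,2,0),(1,1,1)\}$ and $A_7=\{(0,0,0),(1,1,0),(1,1,1)\}$ are genuinely three-dimensional: any signed permutation sends $(1,1,1)$ to a point with all coordinates nonzero. So your reduction to $\Z^2$ (and with it the use of Lemma~\ref{pro:increasingdimension}) collapses. In any case Lemma~\ref{pro:increasingdimension} compares the \emph{ratio} $\mathrm{cap}(K)/\mathrm{cap}(\{0\})$ across dimensions; since both $K$ and the $K_i$ move in the same direction under such a change, it does not yield the fixed-dimension inequality $\mathrm{cap}_{\Z^d}(K)\geq \min_i\mathrm{cap}_{\Z^d}(K_i)$ that the proposition asserts. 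Similarly, \eqref{eq:cap-distance2} only compares with two-point sets, not with the three-point $K_i$, and \eqref{eq:capdecoupling} is an asymptotic decoupling bound that does not give a clean finite reduction of the infinitely many remaining $3$-point configurations.

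The key device you are missing is the following consequence of the variational formula for capacity: if $\phi:K\to K'$ is a bijection with $g(x,y)\leq g(\phi(x),\phi(y))$ for all $x,y\in K$, then $\mathrm{cap}(K)\geq\mathrm{cap}(K')$. Combined with Lemma~\ref{lem:boundongreenfunction} (strict monotonicity of $g$ in the $\ell^1$-norm), this immediately handles \emph{every} $3$-point set $\{x,y,z\}$ at once: choose the labeling so that $|x-z|_1\geq 2$, bound each of $g(x,y),g(y,z),g(x,z)$ by the maximum of $g(0,v)$ over $|v|_1=1,1,2$ respectively, and note that the maxima are realized by the vertices of $K_1$ or $K_2$. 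This gives \eqref{pro:highd} in two lines. For \eqref{pro:d=3} the same idea applies with $|x-z|_1\geq 3$ (any $3$-set not isomorphic to $K_1,K_2$ or $A_1$ has such a pair), and the case split over which $v$ with $|v|_1=2,3$ achieves the maximum produces exactly $A_2,\dots,A_7$; the residual $|K|\geq 4$ case is handled by observing that if all $3$-subsets are isomorphic to $K_1$ or $K_2$ then $K$ must be (isomorphic to) $A_8$. No dimension reduction, no infinite enumeration, no asymptotic capacity estimates are needed.
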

\begin{proof} We start by making the following observation, which will be used extensively throughout the proof. Using again the variational characterization \cite[(1.61)]{MR2932978} of $\text{cap}(\cdot)$, one sees that,
\begin{equation}
\label{cap-monot-cond}
\text{\parbox{14cm}{if $K,K' \subset \Z^d$ are such that there exists a bijection $\phi:K\rightarrow K'$ with $g(x,y) \leq g(\phi(x), \phi(y))$ for all $x,y \in K$, then $\text{cap}(K) \geq \text{cap}(K')$.}}
\end{equation}
We proceed to show \eqref{pro:highd}. By monotonicity of $K \mapsto \text{cap}(K)$, it is sufficient to prove the claim for a set $K$ with $|K|=3$. Write $K=\{x,y,z\}$ and without loss of generality suppose that $|{x-z}|_1\geq 2$. Then by Lemma~\ref{lem:boundongreenfunction} and translational and rotational invariance we have
\begin{align*}
	g(x,z)\leq \sup_{v: |v|_1=2} g(0,v) = (g((0,0),(0,2)) \vee  g((0,0),(1,1)).
\end{align*}
	Similarly, 
	\begin{align*}
		(g(x,y) \vee g(y,z) )\leq \sup_{v: |v|_1=1} g(0,v) =g((0,0),(0,1))=g((0,1),(0,2)) = g((0,1), (1,1)).
	\end{align*}
	So if $g((0,0),(0,2))\geq g((0,0),(1,1))$, then \eqref{cap-monot-cond} applies with $K'= \{(0,0), (0,1), (0,2)\}= K_1$, $\phi(x)=(0,0)$, $\phi(y)=(0,1)$ and $\phi(z)=(0,2)$ and yields that $
		\cp(K)\geq \cp(K_1)$. If instead $g((0,0),(0,2))\leq g((0,0),(1,1))$, then \eqref{cap-monot-cond} applies similarly with $K' = \{(0,0), (0,1), (1,1) \} = K_2$ to give 
$\cp(K)\geq  \cp(K_2).$ Overall, \eqref{pro:highd} follows.

We now show \eqref{pro:d=3}, and first assume to this effect that $|K|=3$ and $K$ is not isomorphic to $K_1$ or $K_2$. Writing $K=\{x,y,z\}$, one notices since $K$ is not isomorphic to $K_1$ or $K_2$, then unless $K$ is isomorphic to $A_1$, in which case $\text{cap}(K)$ is evidently bounded from below by the right-hand side of \eqref{pro:d=3}, the set $K$ must contain two points at $\ell^1$-distance at least three, say $x$ and $z$. Without loss of generality suppose that $|x-y|_1\geq 2$ (otherwise $|y-z|_1 \geq 2$ by the triangle inequality). Then by  Lemma~\ref{lem:boundongreenfunction},
\begin{align*}
	g(x,z)\leq \sup_{v: |v|_1=3} g(0,v),\qquad
	g(x,y)\leq \sup_{v: |v|_1=2} g(0,v),\qquad
	g(y,z)\leq \sup_{v: |v|_1=1} g(0,v).
\end{align*}
	We then consider three different cases depending on which $v\in \{(0,3,0), (1,2,0), (1,1,1)\}$ achieves $\sup_{v: |v|_1=3} g(0,v)$. Then for each possible $v$ we consider two different cases depending on which $v'\in{\{(0,2,0),(1,1,0)\}}$ achieves $\sup_{v': |v'|_1=2} g(0,v')$. Note also that $\sup_{v'': |v''|_1=1} g(0,v'')$ must be achieved at $g((0,0,0),(0,0,1))$ by symmetry. If for instance the previous suprema are achieved at $v=(0,3,0)$ and $v'=(0,2,0),$ \eqref{cap-monot-cond} implies $\mathrm{cap}(K)\geq \mathrm{cap}(A_{2}).$ Considering all five other possible cases gives us that if $K$ is not isomorphic to $K_1$ or $K_2,$ then
	\[
	\cp(K)\geq \min_{1  \leq i\leq 7} \cp(A_i),
	\]
which completes the proof in the case $|K|=3$. Suppose next that $|K|\geq 4$. It suffices to prove that assuming $\cp(K)<\min_{i\leq 7} \cp(A_i)$, then $\cp(K)=\cp(A_8)$. The assumption that $\cp(K)<\min_{i\leq 7} \cp(A_i)$ implies that all $K'\subseteq K$ with $|K'|=3$ must satisfy $\cp(K')<\min_{i\leq 7}\cp(A_i)$. From the proof above for $|K|=3$ we can now deduce that any such $K'$ must be isomorphic to either $K_1$ or $K_2.$ By an elementary geometric argument, it then follows that the only possible shape for $K$ for which this is possible is $K=A_8$ modulo isomoprhisms. Hence $\cp(K)=\cp(A_8)$, which finishes the proof.
\end{proof}

Combining Lemma~\ref{pro:increasingdimension} and Proposition~\ref{pro:finite-red} with the next result, we will soon see that in order to identify the set $\mathcal{A}_{\Z^d}$ for all $d\geq3,$ it will be enough to compute the Green's function $g(0)$ and the capacities of the sets $K_1$ and $K_2$ from \eqref{e:3points} in dimensions three and four, as well as the capacities of the sets $A_i,$ $i\in{\{1,\dots,8\}}$ from \eqref{eq:defAi} in dimension three. The following lemma gathers these numerical computations, and, in doing so, also isolates the parts of the argument which rely on computer-assisted methods. Note that we express our numerical results with an absolute error of $10^{-30}$ as this might be useful in the future, but we will actually only need a precision $10^{-3}$.

\begin{lemma}
\label{lem:computerassisted}
With an absolute error of at most $10^{-30},$ one has when $d=3$ 
\begin{equation*}
\begin{split}
g_{\Z^3}(0)&=1.516386059151978018156012159681\\
\cp_{\Z^3}(K_1)\vee\cp_{\Z^3}(K_2)&=1.271113197748638670916474203095\\
\min_{1\leq i\leq 8}\cp_{\Z^3}(A_i)&=1.335471948363948449723770501931
\end{split}
\end{equation*}
and when $d=4$
\begin{equation*}
\begin{split}
g_{\Z^4}(0)&=1.239467121848481712678697664859\\
\cp_{\Z^4}(K_1)\wedge\cp_{\Z^4}(K_2)&=1.849398784221098051683201012328.
\end{split}
\end{equation*}
\end{lemma}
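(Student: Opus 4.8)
\textbf{Proof proposal for Lemma~\ref{lem:computerassisted}.}

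The plan is to reduce each of the five numerical quantities to a finite-dimensional linear-algebra problem governed by the lattice Green's function $g_{\Z^d}(\cdot)$, and then to produce rigorous (interval-arithmetic) approximations of the finitely many values of $g_{\Z^d}$ that enter. For the capacity of a finite set $K=\{x_1,\dots,x_m\}\subset\Z^d$, recall from \eqref{eq:lastexit} that $Ge_K=h_K$; evaluating this identity at the points of $K$ (where $h_K\equiv 1$) gives the linear system $\sum_{j}g_{\Z^d}(x_i-x_j)\,e_K(x_j)=1$ for $1\le i\le m$, i.e.\ $\mathbf{G}_K\mathbf{e}=\mathbf{1}$ where $(\mathbf{G}_K)_{ij}=g_{\Z^d}(x_i-x_j)$. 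Hence $\cp_{\Z^d}(K)=\sum_j e_K(x_j)=\mathbf{1}^{\top}\mathbf{G}_K^{-1}\mathbf{1}$, a rational function of at most $\binom{m}{2}+1$ values of the Green's function (using $g_{\Z^d}(0)$ on the diagonal and $g_{\Z^d}(x_i-x_j)$ off-diagonal, with symmetry and lattice-symmetry reductions cutting the count down further). For all the sets in question $m\le 4$, so each $\mathbf{G}_K$ is a $3\times3$ or $4\times4$ symmetric positive-definite matrix and $\mathbf{G}_K^{-1}$ is computed exactly by Cramer's rule in terms of the entries. Thus once the relevant finitely many numbers $g_{\Z^d}(v)$, $|v|_1\le 3$, $d\in\{3,4\}$, are known with a guaranteed error bound, each capacity, and then each of the five displayed quantities (including the $\max$ and $\min$, which are just comparisons of finitely many explicitly computed numbers), follows with a controlled error.

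The remaining task is therefore to compute $g_{\Z^d}(v)$ for the finitely many $v$ with $|v|_1\le 3$, $d=3,4$, to absolute error below $10^{-30}$. I would do this via the standard Fourier representation
\begin{equation*}
g_{\Z^d}(v)=\frac{1}{(2\pi)^d}\int_{[-\pi,\pi]^d}\frac{e^{i\,v\cdot\theta}}{1-\frac1d\sum_{k=1}^d\cos\theta_k}\,\mathrm{d}\theta,
\end{equation*}
which converges absolutely for $d\ge 3$. It is classical (and goes back to Watson) that these integrals reduce to one-dimensional integrals of products of modified Bessel functions: writing $P(t)=\int_0^{\infty}e^{-t}\prod_{k}I_{0}(t/d)\,\cdots$ one obtains rapidly convergent series/integral expressions with explicit tail bounds, and all $g_{\Z^d}(v)$ with small $|v|$ are rational combinations of $g_{\Z^d}(0)$ and a few ``basic'' constants (for $d=3$, all $g_{\Z^3}(v)$, $|v|_1\le 3$, are rational linear combinations of $g_{\Z^3}(0)$, $g_{\Z^3}(1,0,0)=1-\tfrac{1}{2\cdot 3 g_{\Z^3}(0)}$-type identities obtained from the walk's recursion, and one further transcendental constant). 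Concretely I would: (i) use the recursion $g_{\Z^d}(v)=\frac{1}{2d}\sum_{e}g_{\Z^d}(v+e)+\delta_{v,0}$ together with lattice symmetries to express every needed $g_{\Z^d}(v)$ in terms of a minimal generating set; (ii) evaluate that generating set with certified error bars either by interval arithmetic applied to the Bessel-integral representation with an explicit truncation-tail estimate, or by the method of \cite[Appendix~B]{MR1174248} which the text explicitly cites as the inspiration; (iii) propagate the interval bounds through the (exact, rational) formulas for $\mathbf{G}_K^{-1}$ and the capacities. I expect the value $g_{\Z^3}(0)=\tfrac{\sqrt{6}}{32\pi^3}\,\Gamma(\tfrac1{24})\Gamma(\tfrac5{24})\Gamma(\tfrac7{24})\Gamma(\tfrac{11}{24})$ (Watson's integral / Glasser--Zucker formula) can be used as a sanity check against the computed $1.5163860\ldots$.

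The main obstacle is purely one of \emph{certification rather than ingenuity}: turning a floating-point computation of lattice Green's function values and the ensuing matrix inversions into a proof with a rigorously controlled absolute error of $10^{-30}$. Concretely this means (a) choosing a representation of $g_{\Z^d}(v)$ for which the truncation tail admits an explicit, easily-bounded estimate at the $10^{-30}$ level — the Bessel-integral form is convenient because $I_0(t/d)^d e^{-t}$ decays like $t^{-d/2}$ and the incomplete-Gamma tail is elementary; (b) carrying out all subsequent arithmetic (Cramer's rule on $\le 4\times 4$ symmetric matrices, the final comparisons defining $\max$ and $\min$) in interval arithmetic so that rounding errors never exceed the claimed bound; and (c) bookkeeping the lattice-symmetry reductions so that only a handful of Green's function values actually need to be certified. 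None of this is mathematically deep, but it is the step where care is genuinely required, and it is exactly the content that \cite[Appendix~B]{MR1174248} provides a template for; since the paper states it only needs precision $10^{-3}$ for the applications (the gap between $g_{\Z^3}(0)\approx1.5164$ versus $\cp_{\Z^3}(K_1)\vee\cp_{\Z^3}(K_2)\approx1.2711$, and between $\cp_{\Z^4}(K_1)\wedge\cp_{\Z^4}(K_2)\approx1.8494$ versus $\tfrac{2}{g_{\Z^4}(0)}\approx1.6137$, is comfortably larger than $10^{-3}$), even a fairly crude certified bound suffices, and the $10^{-30}$ figure is a (verifiable) bonus recorded for potential future use.
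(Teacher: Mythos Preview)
Your proposal is correct and follows essentially the same approach as the paper: reduce capacities to $\mathbf{1}^{\top}\mathbf{G}_K^{-1}\mathbf{1}$ via \eqref{eq:lastexit}, compute the finitely many required values of $g_{\Z^d}(v)$ through the Bessel-function integral representation with explicit truncation bounds following \cite[Appendix~B]{MR1174248}, and propagate errors. The paper differs only in being fully explicit about the numerical scheme---it records the specific parameter choices $(h,M,T,J,\tilde J)$ and the five analytic error terms from \cite{MR1174248} that together yield a $10^{-32}$ bound on each $g_{\Z^d}(v)$, then bounds the propagated error in the capacities via $\sum_{x,y}|G_K^{-1}(x,y)|\le 2|K|$, and points to accompanying Mathematica notebooks; it also records (as you anticipated) the Watson-integral value of $g_{\Z^3}(0)$ as a consistency check.
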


Before explaining how the values in Lemma~\ref{lem:computerassisted} are obtained, let us conclude the proof of Theorem~\ref{thm:admissible}.
\begin{proof}[Proof of Theorem~\ref{thm:admissible}] We first observe that the inclusion 
\begin{equation}\label{eq:A-easy}
\cA_{\Z^d} \supset\{K\subset \Z^d: 0< |K|\leq 2\}
\end{equation}
holds true for all $d \geq 3$ on account of \eqref{e:cap-2point}. We now proceed in increasing order of difficulty, and start with the case $d=4$.  By Lemma~\ref{lem:computerassisted} we have that
$\cp_{\Z^4}(K_1)\wedge\cp_{\Z^4}(K_2)\geq 1.84>1.62\geq {2}/{g_{\Z^4}(0)}$. Using~\eqref{pro:highd}, we deduce that there are no sets $K\in{\mathcal{A}_{\Z^4}}$ with $|K|\geq3.$ It follows that the inclusion in \eqref{eq:A-easy} is in fact an equality.

Next when $d\geq5$, one deduces from Lemma~\ref{pro:increasingdimension} and the previous case that $g_{\Z^d}(0)(\cp_{\Z^d}(K_1)\wedge\cp_{\Z^d}(K_2))\geq g_{\Z^4}(0)(\cp_{\Z^4}(K_1)\wedge\cp_{\Z^4}(K_2))>{2}$. Using \eqref{pro:highd}, we can conclude similarly as before.

Finally, assume that $d=3.$ Then by Lemma~\ref{lem:computerassisted} one has 
\begin{equation*}
\cp_{\Z^3}(K_1)\vee\cp_{\Z^3}(K_2)\leq 1.28<1.31\leq \frac{2}{g_{\Z^3}(0)}\text{ and }\min_{1\leq i\leq 8}\cp_{\Z^3}(A_i)\geq 1.33>1.32\geq \frac{2}{g_{\Z^3}(0)}.
\end{equation*}
Using \eqref{pro:d=3}, we deduce that the only sets $K\in{\mathcal{A}_{\Z^3}}$ with $|K|\geq3$ are isomorphic to $K_1$ and $K_2$. Together with \eqref{eq:A-easy}, the claim follows. 
\end{proof}

We now explain in detail how Lemma~\ref{lem:computerassisted} is obtained. The first step in our algorithm consists in computing the Green's function $g(x,y)$ for any $x,y$ belonging to the sets for which we want to compute the capacity, which will be enough in view of \eqref{eq:equiviagreen} below. We follow the strategy developed in \cite[Appendix~B]{MR1174248}. Let us provide some details for the reader's convenience. The main idea is to use the formula 
\begin{equation}
\label{eq:formulaforGreen}
g(x)=\int_{-\infty}^{\infty}F_x(u)\mathrm{d}u,\text{ where }F_x(u)=de^u\prod_{k=1}^d\exp(-e^u)I_{|x_k|}(e^u)\text{ for all }x\in{\Z^d},
\end{equation}
and $I_k(t)$ denotes the modified Bessel function of the first kind with parameter $k$ at time $t.$ The formula \eqref{eq:formulaforGreen} is a simple consequence of \cite[(2.10)]{MR88110} and the substitution $u\mapsto de^u.$ In \cite{MR1174248} the integral in \eqref{eq:formulaforGreen} is approximated by a finite sum in five steps, which we now summarize.
\begin{enumerate}[label*=\arabic*)]
 \item Replace the integral in \eqref{eq:formulaforGreen} by the Riemann sum $h\sum_{m=-\infty}^{\infty}F_x(mh)$ for some small $h>0$ to be chosen later. We denote the absolute error made in this step by Error1$(h,d)$, which corresponds to \cite[(B.64)]{MR1174248} for the choice $s=\arctan(2\pi/h).$
 \item Remove $h\sum_{m=-\infty}^{-(M+1)}F_x(mh)$ in the previous Riemann sum, for some large $M$ to be chosen later. We denote the absolute error made in this step by Error2$(h,d,M)$, cf.~\cite[(B.68)]{MR1174248}. Note in particular that Error1 and Error2 are uniform in $x\in{\Z^d}$.
 \item Replace $h\sum_{m=M+1}^{\infty}F_x(mh)$ in the previous Riemann sum by
\begin{equation*}
\frac{d}{(2\pi)^{\frac{d}2}}\frac{h\exp\big(-(M+1)(\frac{d}2-1)h\big)}{1-\exp\big(-(\frac{d}2-1)h\big)}.
\end{equation*}
We denote the absolute error made in this step for any  $|x|_{\infty} \leq N$ by Error3$(h,d,M,N)$, which corresponds to \cite[(B.72)]{MR1174248}. Note that in \cite[(B.72)]{MR1174248} it is assumed that $N\leq 54$ and $Mh\geq 45$, as will be the case for us in \eqref{eq:choiceparameters}.
\item For some large $T>0$ and $J\in{\N}$ with $T\leq J/2$ to be chosen later, replace the occurrence of $e^{-t}I_k(t)$  (part of $F_x$) in the remaining finite sum  by $T(t,k,J)$ for each $t\leq T$, where 
\begin{equation*}
T(t,k,J)=e^{-t}\left(\frac{t}{2}\right)^k\sum_{j=0}^J\frac{(t^2/4)^j}{j!(j+k)!}.
\end{equation*} 
The relative error made in this step is Error4$(T,J)$, which is uniform in $k$; it corresponds to \cite[(B.40)]{MR1174248}.
\item For some $\tilde{J}\in{\N}$ to be chosen later, replace $e^{-t}I_k(t)$ by $A(t,k,\tilde{J})$ for each $t> T$ and $k\leq N,$ where 
\begin{equation*}
A(t,k,\tilde{J})=\frac{1}{\sqrt{2\pi t}}\sum_{j=0}^{\tilde{J}}\frac{(-1)^j(k,j)}{(2t)^j},\text{ where }(k,j)=\frac{1}{4^jj!}\prod_{i=1}^{j}(4k^2-(2i-1)^2).
\end{equation*} 
We denote the relative error made in this step by Error5$(T,\tilde{J},N)$, which corresponds to \cite[(B.7) and (B.46)-(B.47)]{MR1174248}.
\end{enumerate}

Combining all these steps one can approximate $g_{\Z^d}(x)$ by
\begin{equation*}
\begin{split}
\tilde{g}(x,d,h,M,T,J,\tilde{J})\stackrel{\text{def.}}{=}&\sum_{m=-M}^Mdhe^{hm}\prod_{k=1}^d\left(T(e^{mh},|x_k|,J)\1_{e^{mh}\leq T}+A(e^{mh},|x_k|,\tilde{J})\1_{e^{mh}>T}\right)
\\&+\frac{d}{(2\pi)^{\frac{d}2}}\frac{h\exp\big(-(M+1)(\frac{d}2-1)h\big)}{1-\exp\big(-(\frac{d}2-1)h\big)}.
\end{split}
\end{equation*}
The function $\tilde{g}$ consists of finite sums and products of usual functions, and can thus be approximated using a computer with high precision. Such computations were performed in high dimensions in \cite[Section~5]{MR3719063} using a Mathematica notebook called ``SRW.nb'' available at \cite{FitznerWebsite}. We modified this notebook to include as well the computation of the capacity, and a version called ``Cap.nb'' is available at \cite{code}. One still needs to choose the parameters $h,M,T,J$ and $\tilde{J}.$ Since $g_{\Z^d}(0)\leq 2,$ we have for all $x\in{\Z^d}$ with $|x|_{\infty}\leq N$
\begin{equation}
\label{eq:error}
\begin{split}
|g(x)-\tilde{g}(x,d,h,M,T,J,\tilde{J})|\leq& \text{Error1}(h,d)+\text{Error2}(h,d,M)+\text{Error3}(h,d,M,N)
\\&+2\big((1+\text{Error4}(T,J)\vee \text{Error5}(T,\tilde{J},N))^d-1\big).
\end{split}
\end{equation}
One can find the exact formulas for these errors in the file ``Errors.nb'', also available at \cite{code}, where they are also computed. To make these errors small, one typically needs to choose $h$ small, and $hM,$  $T,$  $J/T$ and $\tilde{J}$ large. Choosing
\begin{equation}
\label{eq:choiceparameters}
N=3,\quad d\in{\{3,4\}},\quad h=\frac{76}{630},\quad M=630,\quad T=80,\quad J=139\text{ and }\tilde{J}=30
\end{equation}
we obtain that the total error in \eqref{eq:error} is at most $10^{-32}.$ We then compute the values of $\tilde{g}$ for these parameters and any $x$ appearing in the sets from Lemma~\ref{lem:computerassisted}, which are stored in the file ``SRWIntegralsData.nb''.

Let us now briefly explain how the capacities can be deduced from these Green's functions. For each $K\subset\Z^d,$ denoting by $G_K$ the matrix $(g(x,y))_{x,y\in{K}},$ by ${e_K}$ the vector $(e_K(x))_{x\in{K}}$ and by $\mathbf{1}$ the vector of size $|K|$ with all coordinates equal to one, we have by \eqref{eq:lastexit}
\begin{equation}
\label{eq:equiviagreen}
G_K{e_K}=\mathbf{1}.
\end{equation}
Once $G_K$ is known, one can thus use Mathematica again to solve the linear system \eqref{eq:equiviagreen}, which yields the equilibrium measure and the capacity after summation. Note that we do not know the values of $G_K$, but only their approximate values $\tilde{G}_K$ with $|\tilde{G}_K(x,y)-G_K(x,y)|\leq 10^{-32}$ for each $x,y\in{K}$. Denoting by $\tilde{e}_K$ the computed solution of $\tilde{G}_K\tilde{e}_K=1$, we have $e_K(x)-\tilde{e}_K(x)=G_K^{-1}(\tilde{G}_K-G_K)\tilde{e}_K(x)$. Using (1.38) and Proposition~1.11 in \cite{MR2932978}, one can moreover easily show that $\sum_{x,y\in{K}}|G_K^{-1}(x,y)|\leq 2|K|$, and for our choices of $K$ one easily deduces that the values we obtain for the capacity have an error of at most $5\cdot 10^{-31}$. This solution to the system \eqref{eq:equiviagreen} is also implemented in our Mathematica notebook ``Cap.nb'', and the results are stored in the file ``SRWCapacityData.nb''. Running this program, which should take under fifteen minutes on modern computers, finishes the proof of Lemma~\ref{lem:computerassisted}.

\begin{Rk} In order to check the consistency of our results, let us also mention that the value of $g_{\Z^3}(0)$ can alternatively also be computed using the formula from \cite[(2.11)]{MR88110}. This formula corresponds to three times the integral $I_3$ from \cite{MR1257}, whose approximate value can for instance found at \url{http://oeis.org/A091672}. The difference between the value obtained by this method and the value obtained by our Mathematica notebook is $2.8\cdot10^{-33},$ which is consistent with our error of $10^{-32}.$ Note that this error seems to mainly come from the term Error2 in \eqref{eq:error}.
\end{Rk}

We conclude this appendix with the short 

\begin{proof}[Proof of Corollary~\ref{c:trichotomy}] Let $\eta>0$ be small enough so that $\frac12 + \eta < \alpha_{\mast}(K_1) \wedge \alpha_{\mast}(K_2)$ when $d=3$ and $\frac12 + \eta < \alpha_{\mast}$ when $d=4$ (with $\alpha_{\mast}$ as in \eqref{eq:alpha_*}). Now fix any two-point set $K$ such that $\frac12<\alpha_{\mast}(K)< \frac{1+ \eta}{2}$ and that there exists $D<\infty$ verifying for any $x,y\in{\mathbb{T}}$ that $\alpha_{\mast}(K)<\alpha_{\mast}(\{x,y\})$ if and only if $|x-y|_1\leq D$. Such a set can always be found since $\alpha_{\mast}(\{x,y\}) \searrow \frac12$ as $|x-y|_1 \to \infty$, and since $(x,y)\mapsto\alpha_{\mast}(\{x,y\})$ is decreasing in $|x-y|_1$ by \eqref{lem:boundongreenfunction} and \eqref{e:cap-2point}.

Consider now first the case $d=3$. Applying Theorem~\ref{cor:phasetransition2-intro} (specifically, the first line of \eqref{eq:phasetransitionK}) to this choice of $K$ and with $\alpha = \frac12 + \eta (> \alpha_{\mast}(K))$ yields a coupling $\mathbf{Q}$ with the property \eqref{e:coupling1/2+} upon identifying $\mathcal{D}^{\cdot}$ with $\mathcal{B}_K^{\cdot}$. In view of \eqref{eq:defBFK}, \eqref{eq:defAzd}, the condition on $\eta$ and by Theorem~\ref{thm:admissible}, the patterns sampled independently as part of $\mathcal{D}^{\alpha}$ for $\alpha > \frac{1+ \eta}{2}$ are precisely of the form $i)$-$iii)$ in Corollary~\ref{c:trichotomy}.

Let now $\mathcal{D}^{\alpha}_-$ be obtained from $\mathcal{D}^{\alpha}$ by removing all sets $A$ included in \eqref{eq:defBFK} corresponding to images of $K_i$ by torus isomorphisms for any $i\in \{1,2\}$. 
By Lemma~\ref{L:D_LB} applied with $K=K_i$ one knows that $\lim_N \mathbf{Q}(D_{K_i}(\mathcal{L}^{\frac12 + \eta}) \geq N^{c(\eta)})=1$ for some $c(\eta)>0$. On the other hand, by a similar calculation as below \eqref{eq:nocoupling} one sees that $\lim_N \mathbf{Q}(D_{K_i}(\mathcal{D}_-^{\frac{1 + \eta}{2}}) \geq N^{c(\eta)})=0$. It follows that the limit in \eqref{e:coupling1/2+} vanishes if one replaces $\mathcal{D}^{\cdot}$ by $\mathcal{D}^{\cdot}_-$.

Finally, if $d\geq4$, the above coupling $\mathbf{Q}$ satisfies \eqref{e:coupling1/2+} but does not include patterns of type $iii)$ on account of Theorem~\ref{thm:admissible}. However, including images  $A$ by torus isomorphisms of $K_i$ for $i=1,2$ independently with probability $p^{\alpha}(A)$ for $\alpha>\frac12$ makes no difference. Indeed by Markov's inequality, the probability to sample such $A$ in $Q_N$ is bounded by $C N^d \mathbb{P}(K_1 \subset \mathcal{L}^{\alpha}) \to 0$ as $N \to \infty$ on account of \eqref{eq:upperboundonprobalate} since $\frac{\alpha}{\alpha_{\sast}(K_1)} >1$ for $\alpha > \frac12$ when $d \geq 4$. 
\end{proof}

\bibliography{biblio-new}
\bibliographystyle{abbrv}

\end{document}